\documentclass[hyperref, 11pt]{article}
\usepackage{pifont}
\usepackage{hyperref}
\hypersetup{colorlinks=true}
\usepackage{amsfonts,amscd}
\usepackage{amsmath,amsthm,amssymb,epsfig,mathrsfs}
\usepackage{latexsym}
\usepackage[all]{xy}
\usepackage{dsfont}
\usepackage{color}
\usepackage{comment}
\usepackage{citeref}

\newcommand{\K}{\mbox{$\mathcal K$}}
\newcommand{\U}{\mbox{$\mathcal O$}}

\usepackage[a4paper]{geometry}
 \makeatother
\newtheorem{thm}{Theorem}[section]

\newtheorem{lem}[thm]{Lemma}

\newtheorem{cor}[thm]{Corollary}

\newtheorem{prop}[thm]{Proposition}
\newtheorem{rmk}[thm]{Remark}
\newtheorem{order}[thm]{}

\numberwithin{equation}{thm}

\begin{document}
	\title{Hyperfocal subalgebras of hyperfocal abelian Frobenius blocks}
\author{Xueqin Hu$^{1,}$\footnote
	{E-mail address: {\it hxq@mail.ccnu.edu.cn}} , \,\  Kun Zhang$^2$
	and Yuanyang Zhou$^1$}
\date{\small
	$1.$ School of Mathematics and Statistics, Central China Normal University, Wuhan, 430079, China\\
	$2.$ Faculty of Mathematics and Statistics, Hubei University, Wuhan, 430062, China}
\maketitle


%
	
	\vskip 1cm
	
	\noindent{\small{{\bf Abstract}
	In this paper, we introduce a class of blocks
	which is called hyperfocal abelian Frobenius blocks.
	This class of blocks is an analogous version of blocks with an abelian defect group and Frobenius inertial quotient at the hyperfocal level
	and includes the blocks with a Klein four hyperfocal subgroup or a cyclic hyperfocal subgroup.
	We show that there is a stable equivalence of Morita type 
	between the hyperfocal subalgebra of a hyperfocal abelian Frobenius block
	and 
	a group algebra of a Frobenius group associated with the hyperfocal subgroup of the block. 	  
	As applications, 
	we can partially describe some structures of the blocks with a 	
	Klein four hyperfocal subgroup or a cyclic hyperfocal subgroup,
	such as the structures of their hyperfocal subalgebras in terms of derived categories
	and the structure of their characters.
	As consequences of these applications,
	we  show that Brou$\acute{\mathrm{e}}$'s abelian defect group conjecture holds for 
	blocks with a Klein four hyperfocal subgroup and the forward  direction of a conjecture
	proposed by Kessar and Linckelmann and Navarro, which can be viewed as a 
	`hyperfocal height zero' version of Brauer's height zero conjecture holds for
	blocks with a Klein four hyperfocal subgroup or a cyclic hyperfocal subgroup.
	}}

	\medskip\noindent{\small{{{\bf Keywords} 	
				hyperfocal abelian Frobenius block; hyperfocal subalgebra;
				Brou$\acute{\mathrm{e}}$'s abelian defect group conjecture;  
				Rouquier's conjecture; KLN conjecture
			} }
		
		\medskip\noindent{\small{\bf Mathematics Subject Classification} 20C15; 20C20}
		
		\vskip 2cm\section{Introduction}
In this paper, we always assume that $p$ is a prime.
Let $(\mathcal{K},\U,k)$ be a complete $p$-modular system
consisting of a complete discrete valuation ring $\U$ 
with a residue field $k$ of prime characteristic $p$
and a quotient field $\mathcal{K}$ of characteristic $0$.
Moreover, we always assume that the residue field $k$ is algebraically closed 
and the quotient field $\K$ is large enough for all finite groups considered below.

Let $G$ be a finite group and $b$ a block of $G$ over $\U$ with defect group $P$. 		
This means that $b$ is a primitive idempotent of the center $Z(\U G)$ of the group algebra $\U G$.
In \cite{P00}, Puig introduced a particular normal subgroup $D$ of the defect group $P$,
where he called a hyperfocal subgroup of $P$.
Like the defect group $P$,
the hyperfocal subgroup $D$ also plays a key role in determining the algebra structure of
the block algebra $\U Gb$.
In \cite[A.2]{Rou98}, Rouquier made a conjecture,
now known as Rouquier's conjecture, that predicts that the block algebras
$\U Gb$ and $\U N_G(D)c$ are basically Rickard equivalent in the sense of
\cite[19.1]{Puig's book99} when the hyperfocal subgroup $D$ is abelian.
Here, the block $c$ denotes the Brauer correspondent of the block $b$ in $N_G(D)$.
Rouquier's conjecture is a generalization of 
Brou$\acute{\mathrm{e}}$'s abelian defect group conjecture  and 
would also offer, if true, an explanation for the blockwise Alperin weight conjecture
for the case where the defect group is nonabelian.

In the light of Rouquier's conjecture,
some recent work focuses on understanding the structure of the block algebra $\U Gb$
with some special abelian hyperfocal subgroup.
In \cite{W14} and \cite{HZ19},
the numbers of ordinary irreducible characters and irreducible Brauer characters 
were determined for blocks with a cyclic or a Klein four hyperfocal subgroup, respectively.
In \cite{HZ22}, the first and third authors constructed an isotypy,
which is a character-theoretic shadow of Rouquier's conjecture,
between the blocks $b$ and $c$ 
when the hyperfocal subgroup $D$ is cyclic and the quotient group $P/D$ is abelian.
These results reveal the influences of the hyperfocal subgroup
on the character structure of block algebras.
It may be more interesting and meaningful if the algebra structure of 
the block algebras with some special abelian hyperfocal subgroups can be determined.

Recently,  we  investigated the algebra structure of
the block algebra $\U Gb$ with cyclic hyperfocal subgroup $D$ in \cite{HZZ24}.
We showed that Brou$\acute{\mathrm{e}}$'s abelian defect group conjecture 
holds for blocks with cyclic hyperfocal subgroups (see \cite[Theorem 1.1]{HZZ24}).
One of the key steps of its proof is the description of the structure of
the hyperfocal subalgebra of the block $b$,
which is introduced in \cite{P00} as a unitary subalgebra of
the source algebra of the block $b$.
More explicitly, in \cite[Theorem 1.2]{HZZ24},
we showed that the hyperfocal subalgebra is Rickard equivalent to a symmetric serial algebra
associated with the hyperfocal subgroup $D$ 
when the hyperfocal subgroup $D$ is cyclic and the defect group $P$ is abelian.
This is an analogous result of the structure of blocks with cyclic defect groups 
at the hyperfocal level.
This work suggests that 
in order to successfully describe the structure of 
the block algebras with some special hyperfocal subgroups,
one should first determine the structure of their hyperfocal subalgebras.

Motivated by this, in this paper,
we will focus on the structure of the hyperfocal subalgebra of a block
with a special class of hyperfocal subgroups,
which includes the two classical cases where 
the hyperfocal subgroup is a Klein four group or cyclic, respectively. 
To make this explicit, we introduce some notation.
Let $\mathbb{A}$ be a hyperfocal subalgebra of the block $b$.
Let $D_\delta$ be a local pointed group of $D$ on the block algebra $\U Gb$
and denote by $N_G(D_\delta)$ its stabilizer under the conjugation action of $G$
(see the paragraphs 2.9-2.11 below).
As shown in Proposition \ref{p-nilpotent quot} below,
the quotient group $N_G(D_\delta)/DC_G(D)$ is $p$-nilpotent.
Its normal $p$-complement is denoted by $E_{\mathfrak{hyp}}$.
Since the order of $E_{\mathfrak{hyp}}$ is coprime to $p$,
we can view it as a subgroup of the automorphism group $\mathrm{Aut}(D)$ of $D$.
Then the block $b$ is called with a \emph{Frobenius hyperfocal inertial quotient}
if $E_{\mathfrak{hyp}}$ acts freely on $D-\{1\}$, namely
the set of all nontrivial elements of $D$ (see the paragraph 3.8 below).
This is equivalent to requiring that the semidirect product
$D\rtimes E_{\mathfrak{hyp}}$ is a Frobenius group.
Now we can state the main result of this paper as follows.
\vskip 0.5cm

\noindent {\bf Theorem 1.1.} (see also Theorem \ref{MT})
\emph{Keep the notation as above.
Suppose that the hyperfocal subgroup $D$ is abelian and 
the block $b$ is with a Frobenius hyperfocal inertial quotient.
Then there is a stable equivalence of Morita type between 
the hyperfocal subalgebra $\mathbb{A}$ and the group algebra $\U(D\rtimes E_{\mathfrak{hyp}})$.}
\vskip 0.5cm

Throughout this paper, a block with an abelian hyperfocal subgroup and  
a Frobenius hyperfocal inertial quotient is called a \emph{hyperfocal abelian Frobenius block} 
for short.
By definition, such block
is the hyperfocal analogue of the block with an abelian defect group and 
a Frobenius inertial quotient,
which is called an \emph{abelian Frobenius block} here for simplicity.
Thus, the main theorem above can be viewed as a hyperfocal version 
of \cite[Theorem 10.5.1]{L'book 2},
which is due to Puig (see \cite[6.8]{P91}).
Just as abelian Frobenius blocks include
the blocks with Klein four defect groups or cyclic defect groups,
the hyperfocal abelian Frobenius blocks include
those with Klein four hyperfocal subgroups or cyclic hyperfocal subgroups
(see Remark \ref{examples of hyp frob} below).
Thus we can apply this main theorem to the blocks with 
Klein four hyperfocal subgroups or cyclic hyperfocal subgroups 
and obtain the structure of the corresponding hyperfocal subalgebras as follows,
which are similar to the structure of the blocks with 
Klein four defect groups or cyclic defect groups, respectively.
\vskip 0.5cm

\noindent{\bf Theorem 1.2.} (see also Theorem \ref{MT of Klein four} 
and Proposition \ref{Broue conj for Klein 4})
\emph{Assume that the hyperfocal subgroup $D$ is a Klein four group.
	Then the hyperfocal subalgebra $\mathbb{A}$ is Morita equivalent to either $\U\mathrm{A}_4$
	or the principal block algebra of $\U\mathrm{A}_5$.
In particular, the hyperfocal subalgebra $\mathbb{A}$ is Rickard equivalent to $\U\mathrm{A}_4$,  and then
	Brou$\acute{e}$'s abelian defect group conjecture is true for the blocks 
	with Klein four hyperfocal subgroups.
	Here, $\mathrm{A}_n$ denotes the alternating group on $n$ letters for any positive integer $n$.}
\vskip 0.5 cm

\noindent{\bf Theorem 1.3.}  (see also Theorem \ref{MT for cyclic case})
\emph{Assume that the hyperfocal subgroup $D$ is a nontrivial cyclic group.
Then the hyperfocal subalgebra $\mathbb{A}$ is Rickard equivalent to the group algebra
$\U(D\rtimes E_{\mathfrak{hyp}})$.}
\vskip 0.5cm

Theorem 1.3 is a generalization of \cite[Theorem 1.2]{HZZ24},
as we no longer require the defect group to be abelian, and thus provides a complete description of the hyperfocal subalgebra with a nontrivial cyclic hyperfocal subgroup. This result also answers a question proposed by Professor Linckelmann some years ago, 
asking whether such a hyperfocal subalgebra is a Brauer tree algebra.

Furthermore, by using the Clifford theoretic relationship between the representation theory of 
the source algebra and its hyperfocal subalgebra,
we can also obtain some information about the characters of the source algebra of a block with 
a Klein four hyperfocal subgroup or a cyclic hyperfocal subgroup.
We refer to Propositions \ref{structure of characters for klein 4}
and \ref{structure of characters for cyclic} for detailed descriptions of these structures.
A key step towards these descriptions is to determine the relationship between 
the dimenisons of simple $\mathcal{K}\mathop\otimes\limits_\mathcal{O}\mathbb{A}$-modules and 
the prime $p$.
This is closely related to a conjecture proposed by Kessar and Linckelmann and Navarro in \cite{KLN}, which we will refer to as the KLN conjecture.
We recall this conjecture in the following.

Denote by $\mathrm{Irr}_\mathcal{K}(\mathbb{A})$ a set of representatives of isomorphism classes 
of simple $\mathcal{K}\mathop\otimes\limits_\mathcal{O}\mathbb{A}$-modules and
by $\mathrm{Irr}_{\mathcal{K}, 0}(\mathbb{A})$ 
the subset corresponding to simple modules of dimensions coprime to $p$.
In \cite{KLN}, Kessar and Linckelmann and Navarro proposed the following conjecture.
\vskip 0.25cm

\noindent {\bf KLN Conjecture.} (\cite[Conjecture 1.2]{KLN})
\emph{With the notation above, assume that $\mathcal{K}$ is a splitting field of $\mathbb{A}$. 
Then $\mathrm{Irr}_\mathcal{K}(\mathbb{A})=\mathrm{Irr}_{\mathcal{K}, 0}(\mathbb{A})$ if and only if
the hyperfocal subgroup $D$ is abelian.}
\vskip 0.25cm

In \cite{KLN}, the KLN conjecture is viewed as a 
`hyperfocal height zero' version of Brauer's height zero conjecture.
In this paper, we can verify the forward direction of the KLN conjecture in the cases 
where the hyperfocal subgroup $D$ is a Klein four group or a nontrivial cyclic group.
\vskip 0.5cm

\noindent {\bf Proposition 1.4.} (see also Proposition \ref{KLN conj for klein 4})
When the hyperfocal subgroup $D$ is a Klein four group,
then $\mathrm{Irr}_\mathcal{K}(\mathbb{A})=\mathrm{Irr}_{\mathcal{K}, 0}(\mathbb{A})$.
\vskip 0.5cm

\noindent {\bf Proposition 1.5.} (see also Proposition \ref{KLN conj cyclic})
When the hyperfocal subgroup $D$ is nontrivial cyclic,
then $\mathrm{Irr}_\mathcal{K}(\mathbb{A})=\mathrm{Irr}_{\mathcal{K}, 0}(\mathbb{A})$.
\vskip 0.5cm

Now let us now briefly outline the construction of the stable equivalence of Morita type 
stated in  Theorem 1.1.
This construction  is analogous to that in \cite[Theorem 10.5.1]{L'book 2} 
for abelian Frobenius blocks.
The main ingredients there are the descriptions of local structures 
of an abelian Frobenius block and Puig's Gluing Theorem 
for indecomposable endopermutation modules of abelian $p$-groups.
Let us make it more explicit.
For an abelian Frobenius block with defect group $P$,
its Brauer correspondents in the centralizers of nontrivial subgroups of $P$ 
are nilpotent in the sense of \cite{BP80}.
By the structure theorem of the source algebras of nilpotent blocks, due to Puig \cite{P-88},
these local structures can yield a family of indecomposable endopermutation modules 
(see the paragraph 2.14 below)
of quotient groups of the abelian $p$-group $P$ satisfying certain compatibility conditions.
Then by Puig's Gluing Theorem, these endopermutation modules can be glued together to 
an indecomposable endopermutation module of $P$,
which plays a crucial role in the construction of the 
stable equivalence of Morita type for abelian Frobenius blocks. 

We adapt this approach to the hyperfocal subalgebra of a hyperfocal abelian Frobenius block.
Instead of analyzing the Brauer correspondents 
in the centralizers of nontrivial subgroups of the defect group $P$,
we need to understand the Brauer correspondents in the centralizers of nontrivial subgroups of the hyperfocal subgroup $D$,
which are shown to be also nilpotent by Proposition \ref{nilpotent block of D at local} below.
Naturally, the next step should be applying the structure theorem of 
the source algebras of nilpotent blocks to these blocks 
to yield a family of indecomposable endopermutation modules of quotient groups of $D$.
However, an obstruction arises when the defect group $P$ is nonabelian
because the Brauer quotients of the source algebra at nontrivial subgroups of $D$ 
may not be the source algebras of the blocks of the corresponding centralizers.
To circumvent this,
instead of considering the source algebra and the hyperfocal subalgebra,
we will focus on a so-called embedded algebra $\mathbb{A}_\delta$ of the pointed group 
$D_\delta$ of the hyperfocal subalgebra $\mathbb{A}$ (see the paragraph 2.9 below),
which can be viewed as a subalgebra (not unitary in general) 
of the hyperfocal subalgebra $\mathbb{A}$.
The embedded algebra $\mathbb{A}_\delta$ shares several properties with the source algebra,
such as the existence of a Morita equivalence between 
$\mathbb{A}_\delta$ and the hyperfocal subalgebra $\mathbb{A}$
(see Corollary \ref{separable and Morita equi} below) and 
an $\U(D\times D)$-module structure similar to the $\U(P\times P)$-module structure
of the source algebra (see Proposition \ref{general mod stru} below).
These properties are essential for describing the Brauer quotients of $\mathbb{A}_\delta$ at nontrivial subgroups of $D$
and eventually for constructing the desired stable equivalence of Morita type.

The paper is organized as follows.
In Section $2$, some necessary notation and definitions are collected.
In Section $3$, the definition of the  hyperfocal abelian Frobenius block is introduced
after we show that the quotient group $N_G(D_\delta)/DC_G(D)$ is $p$-nilpotent.
Then the local structures of such block associated with 
nontrivial subgroups of the hyperfocal subgroup are determined.
Also, some basic properties of the embedded algebra $\mathbb{A}_\delta$ are established
in this section.
Section $4$ is devoted to detailed study of the structures of 
the embedded algebra $\mathbb{A}_\delta$,
including its $\U(D\times D)$-module structure and 
the algebra structure of its Brauer quotient 
at every nontrivial subgroup of the hyperfocal subgroup $D$.
These results are crucial for the construction of a stable equivalence of Morita type   
in Section $5$.
In addition to this construction, 
we also investigate in Section $5$ an equivariant property of the bimodule 
inducing this stable equivalence,
which is necessary to describe the structure of the source algebra.
Finally, in Section $6$, we apply the main result of Section 5 to blocks with a Klein four hyperfocal subgroup or a cyclic hyperfocal subgroup, determining the structures of their hyperfocal subalgebras and obtaining some character-theoretic information of their source algebras.

		\vskip2cm
		\section{Preliminaries}
		
In this section, we collect some necessary notation and definitions.

		\begin{order}\rm 
Let $S$ be a finite set.
We denote by $|S|$ the cardinality of $S$.
Let $H$ be a finite group and $\mathfrak{b}$ a block of $H$ 
			over $\mathcal{O}$.
			Denote by $\mathrm{Irr}(H)$ and $\mathrm{Irr}(\mathfrak{b})$
			the set of ordinary irreducible characters of $H$ and 
			the set of ordinary irreducible characters of $\mathfrak{b}$,
			respectively.
			Similary, we denote by $\mathrm{IBr}(H)$ and $\mathrm{IBr}(\mathfrak{b})$
			the set of modular irreducible characters of $H$ and 
			the set of modular irreducible characters of $\mathfrak{b}$,
			respectively.
Let $H_1$ and $H_2$ be two subgroups of $H$.
For any $\xi\in\mathrm{Irr}(H_1)$,
we denote by $\mathrm{Ind}_{H_1}^H(\xi)$ 
			the induced character of $\xi$.
			We use the similar notation for the induced module.
			Analogously,
			we denote the restriction by $\mathrm{Res}_{H_1}^H$.
For any $\mathcal{O}H_1$-module $M$ 
and any $h\in H$,
we define the twisted $\mathcal{O}(hH_1h^{-1})$-module 
$_{h}M$ by setting $_{h}M=M$ as an $\mathcal{O}$-module,
with $hh_1h^{-1}\in hH_1h^{-1}$ acting on $m\in{_{h}M}$ 
as $h_1$ on $m\in M$.
Furthermore, 
if $M$ is an $\mathcal{O}H_1$-$\mathcal{O}H_2$-bimodule,
then by preserving the right $\mathcal{O}H_2$-module structure,
$_{h}M$ is an $\mathcal{O}(hH_1h^{-1})$-$\mathcal{O}H_2$-bimodule.
At the same time, $M$ can be viewed as an $\U(H_1\times H_2)$-module
with the action of $(h_1,h_2)\in H_1\times H_2$ on $m\in M$ defined as
$h_1mh_2^{-1}$ and vice versa.
		\end{order}

		\begin{order}\rm
			Throughout this paper,
			an $\U$-algebra always satisfies the properties that
			it is  free and has finite rank as an $\U$-module.
An $\U$-algebra $C$ is called an \emph{indecomposable} algebra if it is indecomposable as a $C$-$C$-bimodule,
which is equivalent to saying that its center $Z(C)$ is local. 
			Let $\tilde{C}$ be a subalgebra of $C$.
			We denote by $\mathrm{Res}_{\tilde{C}}(M)$ the 
			restriction of $M$ to $\tilde{C}$ for any left $C$-module $M$.
			Denote by $1_C$ (or 1 if no confusion arises), $J(C)$ and $C^\times$ 
			the unit element of $C$,
			the Jacobson radical of $C$, 
			the set of invertible elements of $C$, respectively.
			We denote by $C^\circ$ the opposite algebra of $C$.
			Let $C^\prime$ be another $\mathcal{O}$-algebra.
						Let $\varphi$ be a homomorphism from $C$ to $C^\prime$
			and $M^\prime$ a left $C^\prime$-module.
			We use ${_{\varphi}}M^\prime$ to denote the left $C$-module 
			with being equal to $M^\prime$ as $\U$-modules and the left action of $c\in C$ 
			on $_{\varphi}M^\prime$
			being equal to the left action of $\varphi(c)$ on $M^\prime$.
			For any $C$-$C^\prime$-bimodule $\mathbb{M}$,
			we can define 
			$(x\mathop\otimes\limits_\mathcal{O}x^\prime){\rm m}=x{\rm m}x^\prime$
			for any $x\in C$,
			any $x^\prime\in(C^\prime)^\circ$
			and any ${\rm m}\in\mathbb{M}$.
			In this way,
			it can be viewed as a $C\mathop\otimes\limits_\mathcal{O}(C^\prime)^\circ$-module
			and vice versa.
			Throughout this paper, 
			we identify $C$-$C^\prime$-bimodules with 
			$C\mathop\otimes\limits_\mathcal{O}(C^\prime)^\circ$-modules through this way
			and vice versa.
		\end{order}
		

\begin{order}\rm
	We denote the Heller operator of $C$ by $\Omega_C$ for an $\U$-algebra $C$.
	This operator can induce an equivalence on the 
	$\mathcal{O}$-\emph{stable category of} $\mathrm{mod}(C)$,
	of which the definition we recall as follows.
	Recall that a $C$-module $U$
	is called \emph{relative} $\mathcal{O}$-\emph{projective},
	if it is isomorphic to a direct summand of 
	$C\mathop\otimes\limits_\mathcal{O}V$ for some $\mathcal{O}$-module $V$
	(see \cite[Definition 2.6.11]{L'book 1}).
	Note that if the $C$-module $U$ is relative $\mathcal{O}$-projective
	and free of finite rank,
	then $U$ is a projective $C$-module.
	Denote by $\mathrm{mod}(C)$ the category of finitely generated left
	$C$-modules.
	The quotient category of $\mathrm{mod}(C)$ by 
	the subcategory of relatively $\mathcal{O}$-projective modules 
	is called the $\mathcal{O}$-\emph{stable category of} $\mathrm{mod}(C)$
	(see \cite[11.1]{KZ98}).
	We denote it by $\overline{\mathrm{mod}}(C)$.			
\end{order}

\begin{order}\rm
	Let us recall the definitions of the stable equivalence of Morita type
	and Rickard equivalence.
	Let $C$ and $C^\prime$ be two $\mathcal{O}$-algebras.
	A $C$-$C^\prime$-bimodule $M$ and a $C^\prime$-$C$-bimodule $M^\prime$
	are said to induce a \emph{stable equivalence of Morita type between}
	$C$ and $C^\prime$
	(\cite{B94}),
	if $M$ and $M^\prime$ are projective both as left and right modules,  
	and if
	$M\mathop\otimes\limits_{C^\prime}M^\prime\cong C\oplus X$ as $C$-$C$-bimodules
	and 
	$M^{\prime}\mathop\otimes\limits_C M\cong C^\prime\oplus X^\prime$ 
	as $C^\prime$-$C^\prime$-bimodules,
	where $X$ is a projective $C$-$C$-bimodule and
	$X^\prime$ is a projective $C^\prime$-$C^\prime$-bimodule.
	In this case,
	the functors $M\mathop\otimes\limits_{C^\prime}-$ 
	and $M^\prime\mathop\otimes\limits_C-$ 
	induce mutually inverse equivalences between 
	the $\mathcal{O}$-stable categories 
	$\overline{\mathrm{mod}}(C^\prime)$ and 
	$\overline{\mathrm{mod}}(C)$.
\end{order}

\begin{order}\rm
	Keep the notation above.
	For any $C$-module $M$, we denote $\mathrm{Hom}_\mathcal{O}(M,\mathcal{O})$ by $M^*$.
	It is the dual module of the $C$-module $M$.
	The $\mathcal{O}$-algebra $C$ is called \emph{symmetric over} $\mathcal{O}$ 
	if $C$ is a finitely generated projective $\mathcal{O}$-module and  
	$C\cong C^*$ as $C$-$C$-bimodules
	(see \cite[Definition 2.11.1]{L'book 1}).
	Take two symmetric $\mathcal{O}$-algebras $C$ and $C^\prime$. 
	Let $M_\cdot$ be a bounded complex of finitely generated 
	$C$-$C^\prime$-bimodules
	which are projective as left $C$-modules and right $C^\prime$-modules.
	Set $M_\cdot^*$ to be the dual $\mathrm{Hom}_\mathcal{O}(M_\cdot,\mathcal{O})$
	of the complex $M_\cdot$.
	We say that the complex $M_\cdot$ induces  a \emph{Rickard equivalence between}
	$C^\prime$ and $C$,
	if there is a $C$-$C^\prime$-bimodule $F$ 
	and a $C^\prime$-$C$-bimodule $F^\prime$
	such that $F$ and $F^\prime$ are homotopy equivalent to $0$ as complexes of bimodules,
	and if
	$M_\cdot\mathop\otimes\limits_{C^\prime}M_\cdot^*\cong C\oplus F$ 
	as complexes of $C$-$C$-bimodules
	and
	$M_\cdot^*\mathop\otimes\limits_CM_\cdot\cong C^\prime\oplus F^\prime$
	as complexes of $C^\prime$-$C^\prime$-bimodules
	(see \cite[Definitions 2.2.1]{Rou06}).
	The complex $M_\cdot$ is called a \emph{Rickard complex}.
	It is clear that if $C$ and $C^\prime$ are Rickard equivalent,
	then they are derived equivalent, namely
	their bounded derived categories $\mathcal{D}^b(C)$ and $\mathcal{D}^b(C^\prime)$
	are equivalent to each other as triangulated categories.
\end{order}
\vskip 0.25cm
 
In the following two paragraphs, we will collect some notation from \cite[\S2-\S3]{KL10}
which will be used later in the descriptions of characters of 
the source algebras and the hyperfocal subalgebras in Section $6$.
Throughout the following $2$ paragraphs,
we fix a symmetric $\U$-algebra $C$ such that $\mathcal{K}\mathop\otimes_\mathcal{O}C$ 
is a split semi-simple $\K$-algebra,
which means that  $\mathcal{K}\mathop\otimes_\mathcal{O}C$ is isomorphic to 
a direct product of matrix algebras over $\K$.

\begin{order}\rm
	Denote $\K\mathop\otimes\limits_{\mathcal{O}}C$ and $k\mathop\otimes\limits_{\mathcal{O}}C$ 
	by $\hat{C}$ and $\bar{C}$, respectively.
	We have two Grothendieck groups $R_\mathcal{K}(C)$ and $R_k(C)$ associated 
	with these two algebras $\hat{C}$ and $\bar{C}$, respectively.
	This means that $R_\mathcal{K}(C)$ and $R_k(C)$ are
	the Grothendieck groups of finite dimensional $\hat{C}$-modules and $\bar{C}$-modules, respectively.
	Furthermore, denote by $\mathrm{Pr}_{\mathcal{O}}(C)$ the subgroup of $R_\mathcal{K}(C)$
	generated by the images of modules of the form $\hat{C}\mathfrak{i}$ 
	for some idempotent $\mathfrak{i}$ of $C$.	
	For any finite dimensional module $U$ over $\hat{C}$ or $\bar{C}$,
	we denote by $[U]$ the image of $U$ in $R_\mathcal{K}(C)$ or $R_k(C)$, respectively.
	It is clear that the set of images of simple $\hat{C}$-modules, 
	denoted by $\mathrm{Irr}_{\mathcal{K}}(C)$, is a $\mathbb{Z}$-basis of $R_\mathcal{K}(C)$.
	When $C$ is a group algebra $\U H$ for some finite group $H$,
	we identify ${\rm Irr}_\mathcal{K}(C)$ with ${\rm Irr}(H)$ 
	if no confusion arises throughout this paper.
	Set $\mathrm{k}(C)=|\mathrm{Irr}_{\mathcal{K}}(C)|$.
	Similarly, the set of images of simple $\bar{C}$-modules,
	denoted by $\mathrm{Irr}_k(C)$, is a $\mathbb{Z}$-basis of 	$R_k(C)$.
	Set $l(C)=|\mathrm{Irr}_k(C)|$.
\end{order}		
	
	\begin{order}\rm
		Furthermore, for any finite dimensional $\hat{C}$-module $X$,
		there exists a finitely generated  $C$-module $Y$
		which is free of finite rank as an $\U$-module 
		such that $\mathcal{K}\mathop\otimes\limits_{\mathcal{O}}Y$ is isomorphic to $X$ 
		as $\hat{C}$-modules.
		This induces the so-called \emph{decomposition map} $d_C$
		from $R_\mathcal{K}(C)$ to $R_k(C)$, 
		sending $[X]$ to $[k\mathop\otimes\limits_{\mathcal{O}}Y]$.
		Its kernel is denoted by $L^0(C)$.
		The nonnegative integers occurring in the matrix of $d_C$
		with respect to the canonical bases 
		$\mathrm{Irr}_\mathcal{K}(C)$ and $\mathrm{Irr}_k(C)$
		are called \emph{decomposition numbers}.		
		It is well-known that there is a bilinear form $\langle~,~\rangle_C$ on $R_\mathcal{K}(C)$ such that $\langle[U],[V]\rangle_C=\mathrm{dim}_\mathcal{K}{\rm Hom}_{\hat{C}}(U,V)$
	 for any two finite dimensional $\hat{C}$-modules $U$ and $V$.
	Set $L^0(C)^\bot$ to be the subgroup of $R_\mathcal{K}(C)$
	consisting of elements orthogonal to every elements of $L^0(C)$
	with respect to this bilinear form.
\end{order}
\vskip 0.25cm

Next, we will recall some terminology and results about pointed groups which are due to Puig. 
We refer to \cite{P81,P86,P88} for details.
With these notation, we will give the definitions of hyperfocal subgroups and hyperfocal subalgebras
introduced by Puig.

		\begin{order}\rm
			We fix a finite group $H$ in the remaining paragraphs of this section.
	An $\mathcal{O}$-algebra $\mathbb{B}$ is called an
	$H$-\emph{algebra} if there is a group homomorphism
	$H\longrightarrow\mathrm{Aut}_{\mathcal{O}}(\mathbb{B})$.
	If this group homomorphism is induced by a group homomorphism
	$\theta:H\longrightarrow\mathbb{B}^\times$,
	the $H$-algebra $\mathbb{B}$ is called an	
	$H$-\emph{interior algebra}.
	Let $\mathbb{B}$ be an $H$-interior algebra.
	Clearly, $\mathbb{B}$ becomes an $\U H$-$\U H$-bimodule via this group homomorphism.
	We simply write $\theta(h^{-1})(a)\theta(h)$ as $a^h$ and $\theta(x)a\theta(y)$ as $xay$
	for any $h,x,y\in H$ and any $a\in\mathbb{B}$.	
	Let $\mathbb{B}^\prime$ be another $H$-interior algebra.
	An $\mathcal{O}$-algebra homomorphism 
	$f:\mathbb{B}\longrightarrow\mathbb{B}^\prime$ is called
	an $H$-\emph{interior} \emph{algebra homomorphism} 
	if $f$ preserves the $H$-interior algebra structures 
	on $\mathbb{B}$ and $\mathbb{B}^\prime$. 
	Here, $f$ needs not be unitary. 
	We say it is an \emph{embedding} if $\text{Ker}(f)=\{0\}$ and $\text{Im}(f)=f(1)\mathbb{B}^\prime f(1)$.
	For any subgroup $Z$ of $H$,
	denote by $\mathbb{B}^Z$ the subalgebra of all elements of $\mathbb{B}$ fixed by the $Z$-conjugation.
	The $H$-interior algebra $\mathbb{B}$ is called \emph{primitive} 
	if $\mathbb{B}^H$ is a local $\U$-algebra.
		\end{order}
		
		\begin{order}\rm		
	A \emph{point} $\alpha$ of $Z$ on $\mathbb{B}$ is a
	$(\mathbb{B}^Z)^\times$-conjugacy class of primitive idempotents in	$\mathbb{B}^Z$.
	The pair $(Z,\alpha)$, denoted by $Z_\alpha$,
	is called a \emph{pointed group} of $Z$ on $\mathbb{B}$.
	We call a $Z$-interior algebra $\mathbb{B}_\alpha$ an \emph{embedded algebra} of $Z_\alpha$
	if there is an embedding of $Z$-interior algebras
	$f_\alpha:\mathbb{B}_\alpha\longrightarrow\mathbb{B}$ such that
	$f_\alpha(1_{\mathbb{B}_\alpha})\in\alpha$.
	By \cite[1.6]{P86}, embedded algebras of $Z_\alpha$ are unique up to isomorphism.
	A typical choice of embedded algebras of $Z_\alpha$ is $a\mathbb{B}a$
	and then the embedding $f_\alpha: a\mathbb{B}a\longrightarrow\mathbb{B}$ 
	is just induced by inclusion.
	Here, $a$ is an element of $\alpha$ and
	$\mathbb{B}_\alpha$ is a $Z$-interior algebra with the group homomorphism 
	$Z\rightarrow (\mathbb{B}_\alpha)^\times, z\mapsto za$.
	We denote its image by $Za$.
			For two pointed groups
			$Z_\alpha$ and $Z^\prime_{\alpha^\prime}$ on $\mathbb{B}$,
			we say that
			$Z_\alpha$ is \emph{contained in} $Z^\prime_{\alpha^\prime}$
			if $Z\leq Z^\prime$ and
			there are $e\in\alpha$ and $e^\prime\in\alpha^\prime$
			such that $ee^\prime=e=e^\prime e$.
			It is clear that the subgroup $N_H(Z)$ acts on
			the set
			$\{Z_\alpha~|~\alpha~\mathrm{runs~over~all~points~of~}Z~
			\mathrm{on~}\mathbb{B}\}$
			by conjugation.
			Denote by $N_H(Z_\alpha)$ the stabilizer of the pointed group $Z_\alpha$
			under this action.
		\end{order}
		
		\begin{order}\rm For any subgroup $K$ of $Z$,
			we have the \emph{relative trace map}
			$\mathrm{Tr}_K^Z:\mathbb{B}^K\rightarrow\mathbb{B}^Z,
			a\mapsto\sum\limits_za^z$,
			where $z$ runs over a set of representatives of the right cosets of
			$K$ in $Z$.
			Denote by $\mathbb{B}_K^Z$ the image of the map $\mathrm{Tr}_K^Z$.
			The sum
			$\sum\limits_{K<Z}\mathbb{B}_K^Z+J(\mathcal{O})\mathbb{B}^Z$
			is an ideal of $\mathbb{B}^Z$.
			Denote by $\mathbb{B}(Z)$ the quotient of $\mathbb{B}^Z$
			by this ideal and by
			$\mathrm{Br}_Z^\mathbb{B}$
			the canonical homomorphism $\mathbb{B}^Z\longrightarrow\mathbb{B}(Z)$.
The quotient $\mathbb{B}(Z)$ is called the $\emph{Brauer quotient}$ of $\mathbb{B}$ at $Z$
and the associated homomorphism $\mathrm{Br}_Z^\mathbb{B}$ is called
			the \emph{Brauer homomorphism}. If no confusion arises,
			we denote $\mathrm{Br}_Z^\mathbb{B}$
			by $\mathrm{Br}_Z$.
			By \cite[Lemma (11.7)]{Thevenaz},
			$\mathbb{B}(Z)\neq 0$ only if $Z$ is a $p$-group.
			When $\mathbb{B}$ is the group algebra $\mathcal{O}L$ for some finite group $L$, the inclusion
			$\mathcal{O}C_L(Z)\subseteq(\mathcal{O}L)^Z$
			induces a $k$-algebra isomorphism
			$(\mathcal{O}L)(Z)\cong kC_L(Z)$.
			We identify these two $k$-algebras through the isomorphism.
	More generally,
				there is a similar construction for any $\mathcal{O}Z$-module $M$.
				Set $M^K=\{m\in M\mid x\cdot m=m,\forall x\in M\}$.
				We can define a \emph{relative trace map} $\mathrm{Tr}_K^Z:M^K\longrightarrow M^Z$
				by a similar way and denote its image by $M_K^Z$.
				Similarly, we set $M(Z)$ to be the quotient $\mathcal{O}$-module
				$M^Z/(\sum\limits_{K<Z}M_K^Z+J(\mathcal{O})M^Z)$ and
				also denote the canonical homomorphism $M^Z\rightarrow M(Z)$ by $\mathrm{Br}_Z^M$,
				or $\mathrm{Br}_Z$ for simplicity if no confusion arises.
		\end{order}

\begin{order}\rm
	Recall that a pointed group $R_\gamma$ on $\mathbb{B}$ is \emph{local},
			if $\mathrm{Br}_R(\gamma)\neq 0$.
For any local pointed group $R_\gamma$,
it is clear that there is a unique primitive idempotent $e$ of $Z(\mathbb{B}(R))$
such that $e\cdot\mathrm{Br}_R(\gamma)=\mathrm{Br}_R(\gamma)$.
We call this primitive idempotent $e$ is \emph{associated} with $R_\gamma$.
			A pointed group $R_\gamma$ on $\mathbb{B}$ is
			a \emph{defect pointed group} of a pointed group $Z_\alpha$,
			if $R_\gamma$ is a maximal local pointed group contained in
			$Z_\alpha$.
			Let $R_\gamma$ be a defect pointed group of 
			$K_\beta$ on $\mathbb B$. 
 Then the embedded algebra $\mathbb{B}_\gamma$ of $R_\gamma$ is called
a \emph{source algebra} of the embedded algebra $\mathbb{B}_\beta$ of $K_\beta$ (see \cite{P81}).
	Note that source algebras of the embedded algebra $\mathbb{B}_\beta$ of $K_\beta$
	are unique up to $K$-conjugations and isomorphisms (see \cite[\S 18]{Thevenaz}).			
			\end{order}
		
\begin{order}\rm
Let $R_\gamma$ be a local point on $\mathbb{B}$.
Take an element $i\in\gamma$ and set $\mathbb{B}_\gamma=i\mathbb{B}i$.
Denote the subgroup 
$\{a\in\mathbb{B}_\gamma^\times\mid {^a}(Ri)=Ri\}$ of $\mathbb{B}_\gamma^\times$
by $N_{\mathbb{B}_\gamma}(R)$.
It has three normal subgroups 
$Ri$ and $(\mathbb{B}_\gamma^R)^\times$ and $1+J(\mathbb{B}_\gamma^R)$.
Denote the two quotient groups 
$N_{\mathbb{B}_\gamma}(R)/Ri\cdot(1+J(\mathbb{B}_\gamma^R))$ 
and $N_{\mathbb{B}_\gamma}(R)/Ri\cdot(\mathbb{B}_\gamma^R)^\times$ 
by $\bar{F}_\mathbb{B}(R)$ and $F_\mathbb{B}(R)$, respectively.
It is well-known that $(\mathbb{B}_\gamma^R)^\times$ is isomorphic to $k^\times\times(1+J(\mathbb{B}_\gamma^R))$.
We can get the following short exact sequence
    \begin{equation*}
	    	\xymatrix{
	    		1\ar[r]& k^\times\ar[r] & \bar{F}_\mathbb{B}(R) \ar[r] & F_\mathbb{B}(R)\ar[r] & 1.
	    	}
	    \end{equation*}
When $\mathbb{B}$ is the group algebra $\U L$ for some finite group $L$,
by \cite[Theorem 3.1]{P86}, 
there is a group isomorphism $\mathfrak{f}_{L,R}$
between $N_L(R_\gamma)/RC_L(R)$ and $F_{\mathcal{O}L}(R)$,
which sends $xRC_L(R)\in N_L(R_\gamma)/RC_L(R)$ to $a_x(Ri\cdot(\mathbb{B}_\gamma^R)^\times)\in F_{\mathbb{B}}(R)$ such that
$a_x$ can be chosen to satisfy $^{a_x}(ri)=({^x}r)i$ for any $r\in R$.
\end{order}

		\begin{order}\rm 
			For any element $x$ in $H$ and
			any subgroup $K$ of $H$,
			we denote by $[x,K]$ the subgroup of $H$
			generated by  elements $xux^{-1}u^{-1}$,
			where $u$ runs over $K$.
			Let $\mathfrak{b}$ be a block of $H$.
			Then $H_{\{\mathfrak{b}\}}$ is a pointed group on 
			the $H$-interior algebra $\mathcal{O}H$.
			A defect pointed group of the pointed group $H_{\{\mathfrak{b}\}}$
			is also called a defect pointed group of the block $\mathfrak{b}$.
			Let $Q_\delta$ be a defect pointed group of the block $\mathfrak{b}$ and
			$A$ a source algebra of $\mathcal{O}H\mathfrak{b}$.
			Then $Q$ can act on $A$ through the conjugation.
			Recall that the \emph{hyperfocal subgroup} $\mathfrak{hyp}(Q_\delta)$ of
			$Q_\delta$ is the subgroup of $Q$ generated by the subgroups $[x,R]$ 
			where $R_\epsilon$ runs over the set of local pointed groups on
			$\mathcal{O}H$ such that $R_\epsilon\subseteq Q_\delta$,
			and $x$ over the set of $p^\prime$-elements of $N_H(R_\epsilon)$
			(see \cite[1.7.2]{P00}).
			Sometimes,
			we also call $\mathfrak{hyp}(Q_\delta)$ a hyperfocal subgroup of $Q$
			for simplicity.
			It is easy to see that $\mathfrak{hyp}(Q_\delta)$ is a normal subgroup of $Q$.
			Denote by $\mathcal{S}$ a set of representatives for $Q/\mathfrak{hyp}(Q_\delta)$ in $Q$.
			By \cite[Theorem 1.8]{P00},
			there is  a $Q$-stable $\mathcal{O}$-subalgebra $\mathbb{A}$
			of $A$, unique up to $(A^Q)^\times$-conjugation,
			containing the image of $\mathfrak{hyp}(Q_\delta)$  and fulfilling 
			$A=\mathop\bigoplus\limits_{v\in\mathcal{S}}\mathbb{A}v=
			\mathop\bigoplus\limits_{v\in\mathcal{S}}v\mathbb{A}$.
			It is clear that $\mathbb{A}$ is an $\mathfrak{hyp}(Q_\delta)$-interior algebra
			inherited from the $Q$-interior algebra structure of $A$.
			The $\mathfrak{hyp}(Q_\delta)$-interior algebra $\mathbb{A}$ is called
			a \emph{hyperfocal subalgebra} of $A$.
			Sometimes,
			we also call it a hyperfocal subalgebra of the block $\mathfrak{b}$.
		\end{order}
		\vskip 0.25cm
		
	In the following paragraph, 	
		we will collect some notation and definitions about endopermutation modules 
		over a $p$-group which is introduced by Dade \cite{D78a, D78b}.
		
		\begin{order}\rm
			Let $P$ be a $p$-group.	
			An $\mathcal{O}P$-module $V$ is called an \emph{endopermutation} 
			$\mathcal{O}P$-module if the endomorphism algebra $\mathrm{End}_\mathcal{O}(V)$ has a $P$-stable $\mathcal{O}$-basis under the $P$-conjugation action.
			Let $V$ be an indecomposable endopermutation $\U P$-module with vertex $P$,
			which means that $S(P)\neq 0$. 
			Here, $S=\mathrm{End}_\mathcal{O}(V)$,
			which is called a \emph{Dade} $P$-\emph{interior algebra}.
			Then by \cite[Proposition 7.3.7]{L'book 2},
			for any subgroup $Q$ of $P$, 
			there is up to isomorphism a unique endopermutation $k(N_P(Q)/Q)$-module $W_Q$
			such that $S(Q)\cong\mathrm{End}_k(W_Q)$ as $N_P(Q)/Q$-algebras.
			It is clear that $W_Q$ is an indecomposable endopermutation $k(N_P(Q)/Q)$-module.
			Moreover, let $R$ be a subgroup of $P$ such that $Q$ is normal in $R$.
			We denote by $\emph{Defres}_{R/Q}^P(V)$ the endopermutation $k(R/Q)$-module
			$\mathrm{Res}_{R/Q}^{N_P(Q)/Q}(W_Q)$.
			In particular, when $Q$ is normal in $P$,
			$\mathrm{Defres}_{P/Q}^P(V)$ is just the indecomposable endopermutation 
			$k(P/Q)$-module $W_Q$.
		\end{order}

	\begin{order}\rm
	Let us recall the definitions of twisted group algebra and
	relative separability.
	Consider the trivial action of $H$ on $k^\times$.
	We denote by $Z^2(H; k^\times)$ and $H^2(H; k^\times)$ 
	the set of all $2$-cocycles of $H$ with coefficients in $k^\times$ 
	and the second cohomology group of $H$ with coefficients in $k^\times$, respectively.
		Let $\alpha\in Z^2(H; k^\times)$ be a $2$-cocycle of $H$.
		Since $k$ is perfect, we have a canonical isomorphism $\U^\times\cong k^\times\times(1+J(\U))$ (see \cite{Serre's book}).
		So we can view $k^\times$ as a subgroup of $\U^\times$.
		The twisted group algebra of $H$ by $\alpha$, denoted by $\U_\alpha H$,
		is the $\U$-algebra
		with an $\U$-basis consisting of all elements of $H$ and the multiplication induced by 
		the equality $\lambda_x x\cdot\lambda_y y=(\lambda_x\lambda_y\alpha(x,y))xy$  
		for all $x,y\in H$ and all $\lambda_x,\lambda_y\in\U$.
		Let $\mathbb{B}$ and $\mathbb{B}^\prime$ be two $\U$-algebras.
		Let $\varphi$ be an $\U$-algebra homomorphism from $\mathbb{B}^\prime$ to $\mathbb{B}$.
		Through this homomorphism $\varphi$,
		we can view $\mathbb{B}$ as a  $\mathbb{B}^\prime$-$\mathbb{B}^\prime$-bimodule.
	The $\U$-algebra $\mathbb{B}$ is called 
		 \emph{relatively} $\mathbb{B}^\prime$-\emph{separable} if $\mathbb{B}$ 
		is isomorphic to a direct summand of $\mathbb{B}\mathop\otimes\limits_{\mathbb{B}^\prime}\mathbb{B}$ as $\mathbb{B}$-$\mathbb{B}$-bimodules (see \cite[\S 4.8]{L'book 1}).
		This is equivalent to requiring that the canonical 
		$\mathbb{B}$-$\mathbb{B}$-bimodule	homomorphism 
		$\mathbb{B}\mathop\otimes\limits_{\mathbb{B}^\prime}\mathbb{B}\longrightarrow\mathbb{B}$
		induced by multiplication in $\mathbb{B}$ splits by \cite[Theorem 2.6.10]{L'book 1}.
		In the case where $\mathbb{B}^\prime$ is a subalgebra of $\mathbb{B}$
	the homomorphism $\varphi$ is always taken to be the inclusion map unless otherwise specified.
	\end{order}
		
		\vskip2cm
		\section{Hyperfocal abelian Frobenius blocks}
		\quad\, 
		In this section, we will give the definition of the hyperfocal abelian Frobenius blocks
and investigate their local structures.
		
		\begin{order}\rm
			Let $G$ be a finite group and $b$ a block of $G$ over $\mathcal{O}$
			with  defect pointed group $P_\gamma$.
We denote the hyperfocal subgroup of $P_\gamma$ by $D$.
Then by \cite[Propositions 4.2 and 3.3]{P00},
there is a unique local point $\delta$ of $D$ on $\U Gb$ such that $D_\delta\leq P_\gamma$.
Take an element $i$ of $\gamma$ and set $A=i\U Gi$.
Then $A$ is a source algebra of $\U Gb$ and let  $\mathbb{A}$ be a hyperfocal subalgebra of $A$.
We take an element $j$ of $\delta$ such that $ij=j=ji$.
By \cite[Proposition 2.8]{P00}, we can assume that $j$ belongs to $\mathbb{A}$.
We can view $D_\delta$ as a local pointed group of $D$ on both $A$ and $\mathbb{A}$.
Set $A_\delta=jAj$ and $\mathbb{A}_\delta=j\mathbb{A}j$.
Clearly, $A_\delta$ and $\mathbb{A}_\delta$ are embedded algebras of
the local pointed group $D_\delta$ on $A$ and $\mathbb{A}$, respectively.
By the uniqueness of $D_\delta$,
there is an element $a_u$ in $(\mathbb{A}^D)^\times$ such that
$a_uja_u^{-1}=uju^{-1}$ for any $u\in P$.
Denote $a_u^{-1}uj$ by $a_{u,\delta}$ for any $u\in P$.
Then $a_{u,\delta}$ belongs to $A_\delta^\times$ fulfilling  
$a_{u,\delta}(vj)a_{u,\delta}^{-1}=(uvu^{-1})j$
for any $v\in D$ and $j\mathbb{A}uj=\mathbb{A}_\delta a_{u,\delta}$.
Moreover, we have
$A_\delta=\mathop\bigoplus\limits_{uD\in P/D}\mathbb{A}_\delta a_{u,\delta}$
as $\U$-modules.
\end{order}

\begin{order}\rm
It is well-known that the source algebra $A$ is closely related to the
block algebra $\U Gb$.
For example, they are both relatively $\U P$-separable $P$-interior algebras. 
There is a canonical Morita equivalence between 
the source algebra $A$ and the block algebra $\U Gb$ induced by the $A$-$\U Gb$-bimodule $i\U G$.
Next, we will show that these properties also hold for 
the hyperfocal subalgebra $\mathbb{A}$ and its embedded subalgebra $\mathbb{A}_\delta$ by the following general result.
\end{order}
		
	\begin{lem}\label{a general lemma}
Let $Q$ be a $p$-subgroup and $C$ an indecomposable relatively $\U Q$-separable $Q$-interior algebra.
Assume that $C$ has a unique local pointed group $Q_\epsilon$ and 
as $\U(Q\times Q)$-modules, every indecomposable direct summand of $C$ has the form
$\mathrm{Ind}_{\Delta_\varphi(Z)}^{Q\times Q}(\U)$ for some subgroup $Z$ of $Q$ and 
some injective homomorphism $\varphi: Z\longrightarrow Q$.
Here, $\Delta_\varphi(Z)=\{(u,\varphi(u))\mid u\in Z\}$ is a subgroup of $Q\times Q$. 
Moreover, suppose that
$\frac{\mathrm{rank}_\mathcal{O}(C)}{|Q|}$ is coprime to $p$.
Take an element $l$ of $\epsilon$.
Then the $Q$-interior algebra $lCl$ is a relatively  $\U Q$-separable $Q$-interior algebra and 
the $lCl$-$C$-bimodule $lC$ induces a Morita equivalence between $lCl$ and $C$.
		\end{lem}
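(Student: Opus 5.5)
The proof splits into two claims: relative $\U Q$-separability of $lCl$, and the Morita equivalence induced by $lC$. The first is formal. The second follows from showing that $C = ClC$, i.e. that $l$ is a full idempotent.

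\emph{Separability.} Since $C$ is relatively $\U Q$-separable, the multiplication map $C\otimes_{\U Q}C\to C$ splits as a map of bimodules. Equivalently, there is an element $\sum_t x_t\otimes y_t\in (C\otimes_{\U Q}C)^C$ with $\sum_t x_ty_t=1$. Because $l\in C^Q$ commutes with the image of $Q$, the map $lCl\otimes_{\U Q}lCl\to lCl$ is well defined. We would like to cut down the separability element by $l$, but multiplying by $l$ on both outer sides does not directly give $1_{lCl}=l$.

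So I would use the standard criterion instead: an algebra $B$ over $\U Q$ is relatively $\U Q$-separable if and only if $B$ is a direct summand of $B\otimes_{\U Q}B$. Once $lC\otimes_C Cl\cong lCl$ is available and $C$ is Morita equivalent to $lCl$, the statement for $lCl$ follows by applying the Morita functor $lC\otimes_C-\otimes_C Cl$ to $C\mid C\otimes_{\U Q}C$:
\[
lCl \mid lC\otimes_{\U Q} Cl.
\]
Then we need $lC\otimes_{\U Q}Cl$ to be a direct summand of $lCl\otimes_{\U Q}lCl$. This holds once $C\mid Cl\otimes_{lCl}lC$, which is exactly fullness (then $C\cong Cl\otimes_{lCl}lC$). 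Hence both claims reduce to the fullness of $l$.

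\emph{Fullness.} We need $ClC=C$, equivalently that every primitive idempotent of $C$ appears in $l$ up to conjugacy, or that $l$ is not contained in any proper two-sided ideal. I would use the relative separability of $C$ over $\U Q$: it gives $1\in \mathrm{Tr}$-type expressions, namely $1=\sum_t x_t y_t$ with the element $\sum_t x_t\otimes y_t$ centralizing $C$. The standard consequence, as in the theory of source algebras, is $1_C\in \mathrm{Tr}_Q^{Q\times Q}$-relative projectivity: $C$ is projective relative to $\Delta Q$ as a $C$-$C$-bimodule. From this, Higman's criterion shows that $1$ lies in the ideal generated by the local points of $Q$ on $C$. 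More precisely, decompose $1=\sum_\beta$ primitive idempotents in $C^Q$. The Brauer quotient argument (Rosenberg's lemma combined with relative projectivity of $C$ as a $C\otimes C^\circ$-module with respect to $\Delta Q$) shows that the ideal $C\epsilon C$ generated by local points contains $1$ modulo $J$-contributions. Since $Q_\epsilon$ is the unique local pointed group, we get $ClC=C$.

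\emph{Main obstacle.} I expect the key difficulty to be justifying that $1$ lies in the ideal generated by local points, i.e. excluding non-local points contributing something not in $ClC$. Here I would use the two extra hypotheses. The direct summands of $C$ are $\mathrm{Ind}_{\Delta_\varphi(Z)}^{Q\times Q}(\U)$, and $\mathrm{rank}_\U(C)/|Q|$ is prime to $p$. Together they force some summand with $Z=Q$, so $C(\Delta Q)$-type Brauer quotients are nonzero, $\mathrm{Br}_Q(1)\neq 0$, and the image of $C$ in the Brauer quotient detects the local point. Indecomposability of $C$ then gives that $C/ClC$, which is a $C$-bimodule quotient of $C$, must vanish. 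Otherwise $C$ would have a proper ideal containing no local point, and passing to $\bar C=k\otimes C$ one gets a nonzero block ideal on which $1$ lies in $\sum_{R<Q}C^Q_R$, contradicting the $p'$-rank count via a trace argument. I would first try to make this last step precise by computing the rank of $lC$ modulo $p$.
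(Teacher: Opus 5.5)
Your proposal has a genuine gap: neither of its two reductions goes through.

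\textbf{The reduction of separability to fullness fails.} Your step $lCl\mid lC\otimes_{\mathcal{O}Q}Cl$ is fine; it does not even need fullness. The next claim, that fullness makes $lC\otimes_{\mathcal{O}Q}Cl$ a summand of (copies of) $lCl\otimes_{\mathcal{O}Q}lCl$, is false. Decompose $1_C=\sum_i e_i$ into primitive idempotents of $C^Q$. Then $lC\otimes_{\mathcal{O}Q}Cl=\bigoplus_{i,j}lCe_i\otimes_{\mathcal{O}Q}e_jCl$. When $l$ is full, $lCe_i\cong lCl$ as $lCl$-$\mathcal{O}Q$-bimodules if and only if $Ce_i\cong Cl$, that is, if and only if $e_i\in\epsilon$. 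Fullness says nothing about the non-local points of $Q$ on $C$, and these exist in general.

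Here is a concrete counterexample. Let $Q_0<Q$ and take $C=\mathrm{End}_{\mathcal{O}}(\mathcal{O}\oplus\mathcal{O}[Q/Q_0])\otimes_{\mathcal{O}}\mathcal{O}Q$ with diagonal interior structure.
\begin{itemize}
\item All hypotheses of the lemma hold; in particular $\mathrm{rank}_{\mathcal{O}}(C)/|Q|=(1+|Q:Q_0|)^2$ is prime to $p$.
\item $l$ is full and $lCl\cong\mathcal{O}Q$.
\item For the non-local idempotent $e_2$ (projection onto $\mathcal{O}[Q/Q_0]$) one gets $lCe_2\otimes_{\mathcal{O}Q}e_2Cl\cong\mathcal{O}Q\otimes_{\mathcal{O}Q_0}\mathcal{O}Q\otimes_{\mathcal{O}Q_0}\mathcal{O}Q$. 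None of its indecomposable summands has vertex $\Delta Q$.
\end{itemize}
So $lC\otimes_{\mathcal{O}Q}Cl$ is not a summand of copies of $lCl\otimes_{\mathcal{O}Q}lCl\cong\mathcal{O}Q$. What you actually need is that the indecomposable summand $C$ of $C\otimes_{\mathcal{O}Q}C$ lies in a term $Ce\otimes_{\mathcal{O}Q}fC$ with $e,f$ \emph{local}. That is the entire content of the lemma, and it cannot be obtained from fullness.

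\textbf{Fullness itself is never established, and the heuristics you give are false in general.}
\begin{itemize}
\item $\mathrm{End}_{\mathcal{O}}(\mathcal{O}[Q/Q_0])$ is separable and indecomposable but has no local point of $Q$ at all. So relative projectivity plus Higman's criterion cannot put $1$ into the ideal generated by local points.
\item Indecomposability of $C$ does not rule out proper ideals such as $ClC$.
\item If $ClC\neq C$, then some simple $W$ has $lW=0$. A $p$-divisibility count for such $W$ alone contradicts nothing, since other simple modules may have $p'$-dimension.
\end{itemize}

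\textbf{The missing idea (the paper's argument).}
\begin{itemize}
\item Use that $C$ is indecomposable as a $C$-$C$-bimodule. Krull--Schmidt applied to $C\mid\bigoplus_{i,j}Ce_i\otimes_{\mathcal{O}Q}e_jC$ picks out \emph{one} pair of primitive $e,f\in C^Q$ with $C\mid Ce\otimes_{\mathcal{O}Q}fC$.
\item Suppose $\mathrm{Br}_Q(e)=0$. Write $e=\mathrm{Tr}_{Q_0}^Q(e_0)$ with pairwise orthogonal conjugates. Then $Ce\cong Ce_0\otimes_{\mathcal{O}Q_0}\mathcal{O}Q$, so every simple $W$ is a summand of some $\bar C\otimes_{kQ_0}U$.
\item By Mackey and the hypothesis on the $\mathcal{O}(Q\times Q)$-summands, $\bar C\otimes_{kQ_0}U$ is a sum of $kQ$-modules induced from proper subgroups. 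Green's indecomposability theorem then gives $p\mid\dim_kW$ for \emph{every} simple $W$ simultaneously.
\item Hence $p\,|Q|$ divides $\dim_k\bar C=\sum_W\dim_kP_W\dim_kW$, contradicting the rank hypothesis.
\item So $e$, and likewise $f$, is local, hence lies in $\epsilon$. This gives $C\mid Cl\otimes_{\mathcal{O}Q}lC$.
\end{itemize}
From this single statement both conclusions follow. Cutting by $l$ on both sides gives $lCl\mid lCl\otimes_{\mathcal{O}Q}lCl$. Applying $-\otimes_CW$ gives $lW\neq0$ for all simple $W$, which yields the Morita equivalence.
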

		
		\begin{proof}
Since every indecomposable direct summand of  the $\U(Q\times Q)$-module $C$ has the form
$\mathrm{Ind}_{\Delta_\varphi(Z)}^{Q\times Q}(\U)$,
$C$ is a direct sum of some copies of $\U Q$ as left $\U Q$ and right $\U Q$-modules.
In particular, $C$ is free as left and right $\U Q$-modules.
Hence, the structural map $Q\longrightarrow C^\times$ is injective and 
we can view $\U Q$ as a subalgebra of $C$.
Obviously, every finitely generated projective $C$-module has $\U$-rank divided by $|Q|$.

By the definition of relatively $\U Q$-separable $Q$-interior algebras,
$C$ is a direct summand of $C\mathop\otimes\limits_{\mathcal{O}Q}C$ as $C$-$C$-bimodules.
Since $C$ is an indecomposable algebra, 
there are some primitive idempotents $e, f$ of $C^Q$ such that 
$C$ is a direct summand of $Ce\mathop\otimes\limits_{\mathcal{O}Q}fC$ as $C$-$C$-bimodules.
Suppose that $\mathrm{Br}_Q(e)=0$.
By \cite[Theorem (23.1)]{Thevenaz},
there is a proper subgroup $Q_0$ of $Q$ and a primitive idempotent $e_0$ of $C^{Q_0}$ such that
$e=\mathrm{Tr}_{Q_0}^Q(e_0)$ and ${^u}e_0\cdot e_0=0$ for any $u\in Q-Q_0$.
Then the map sending $c\in Ce$ to $\sum\limits_{uQ_0\in Q/Q_0}cue_0\mathop\otimes\limits_{\mathcal{O}Q_0}u^{-1}$
induces a $C$-$\U Q$-bimodule isomorphism from $Ce$ to 
$Ce_0\mathop\otimes\limits_{\mathcal{O}Q_0}\U Q$.
Therefore, $C$ is a direct summand of $Ce_0\mathop\otimes\limits_{\mathcal{O}Q_0}fC$ as $C$-$C$-bimodules.
Denote by $\bar{C}$ the $k$-algebra $C/J(\U)C$.
Then for any simple $C$-module $W$, 
we have $W$ isomorphic to a direct summand of $\bar{C}\mathop\otimes\limits_{kQ_0}U$ 
for some indecomposable $kQ_0$-module $U$.
By the assumption and Mackey's formula, 
$\mathrm{Res}_{Q\times Q_0}^{Q\times Q}(\bar{C})$ is a direct sum of submodules isomorphic to
${\rm Ind}_{\Delta_\psi(Z)}^{Q\times Q_0}(k)$ for some proper subgroup $Z$ of $Q$ and 
injective homomorphism $\psi: Z\longrightarrow Q_0$.
By Green's indecomposability theorem,
this implies that the dimension of every simple $C$-module has dimension divided by $p$.
Let $P_W$ be the projective cover of the simple $C$-module $W$.
We have  
$\mathrm{dim}(\bar{C})=\sum\limits_W\mathrm{dim}(P_W)\mathrm{dim}(W)$,
which is divided by $p|Q|$.
This contradicts the assumption that $\frac{\mathrm{rank}_\mathcal{O}(C)}{|Q|}$ is coprime to $p$.

We conclude that $\mathrm{Br}_Q(e)\neq 0$.
Similarly, we can also get $\mathrm{Br}_Q(f)\neq 0$.
Since $\epsilon$ is the unique local point of $Q$ on $C$,
we have $e$ and $f$ both belonging to $\epsilon$.
In particular, we have $Ce$ is isomorphic to $Cl$ as $C$-$\U Q$-bimodules 
and $fC$ is isomorphic to $lC$ as $\U Q$-$C$-bimodules.
So $C$ is a direct summand of $Cl\mathop\otimes\limits_{\mathcal{O}Q}lC$ as $C$-$C$-bimodules and
then $lCl$ is a direct summand of $lCl\mathop\otimes\limits_{\mathcal{O}Q}lCl$ as $lCl$-$lCl$-bimodules.
This means that $lCl$ is a relatively $\U Q$-separable $Q$-interior algebra.

Since $C$ is a direct summand of $Cl\mathop\otimes\limits_{\mathcal{O}Q}lC$,
$W$ is a direct summand of $Cl\mathop\otimes\limits_{\mathcal{O}Q}l\cdot W$ for any simple $C$-module $W$.
In particular, $l\cdot W$ is not equal to $0$ for any simple $C$-module $W$.
By \cite[Theorem 2.8.7]{L'book 1},
the $lCl$-$C$-bimodule $lC$ induces a Morita equivalence between $lCl$ and  $C$.
		\end{proof}	

\begin{cor}\label{separable and Morita equi}
The embedded subalgebra $\mathbb{A}_\delta$ of $\mathbb{A}$ is a 
relatively $\U D$-separable $D$-interior algebra and
the $\mathbb{A}_\delta$-$\mathbb{A}$-bimodule $j\mathbb{A}$ induces 
a Morita equivalence between $\mathbb{A}_\delta$ and $\mathbb{A}$.
\end{cor}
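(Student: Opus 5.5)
The plan is to apply Lemma \ref{a general lemma} with $Q=D$, $C=\mathbb{A}$, $\epsilon=\delta$ and $l=j$. The corollary then follows immediately once the hypotheses of the lemma are verified for the hyperfocal subalgebra. The checks, in order, are:
\begin{itemize}
\item $\mathbb{A}$ is indecomposable;
\item $\mathbb{A}$ is relatively $\U D$-separable;
\item $\delta$ is the unique local point of $D$ on $\mathbb{A}$;
\item every indecomposable $\U(D\times D)$-summand of $\mathbb{A}$ has the form $\mathrm{Ind}_{\Delta_\varphi(Z)}^{D\times D}(\U)$;
\item $\mathrm{rank}_\U(\mathbb{A})/|D|$ is prime to $p$.
\end{itemize}

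First, the bimodule structure and the rank. Recall that $A=\bigoplus_{v\in\mathcal{S}}\mathbb{A}v$. As an $\U(P\times P)$-module, $A$ is a direct sum of modules $\mathrm{Ind}_{\Delta_\varphi(Z)}^{P\times P}(\U)$ with $Z\le P$ and $\varphi\colon Z\to P$ injective; this is Puig's standard description of source algebras. Hence $\mathrm{Res}^{P\times P}_{D\times D}A$ is, by Mackey's formula, a sum of modules $\mathrm{Ind}_{\Delta_\psi(Z')}^{D\times D}(\U)$. Since $\mathbb{A}$ is a $D\times D$-stable direct summand of $A$ (the $\mathbb{A}v$ are $D\times D$-stable, because $D$ is normal in $P$), Krull--Schmidt gives the required form for the summands of $\mathbb{A}$.

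For the rank, we have $\mathrm{rank}(A)=|P:D|\,\mathrm{rank}(\mathbb{A})$. It is classical that $\mathrm{rank}(A)/|P|$ is prime to $p$: the source algebra has a summand $\U P$ and all other summands are induced from proper twisted diagonals $\Delta_\varphi(Z)$ with $Z<P$, giving ranks divisible by $p|P|$. Dividing, $\mathrm{rank}(\mathbb{A})/|D|=\mathrm{rank}(A)/|P|$ is prime to $p$.

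Second, relative separability and indecomposability. By Puig's results in \cite{P00}, $\mathbb{A}$ is a relatively $\U D$-separable $D$-interior algebra. Concretely, $A\cong \mathbb{A}\otimes_{\U D}\U P$ is a crossed-product-like extension, and a splitting of $A\otimes_{\U P}A\to A$ can be averaged over $\mathcal{S}$ to split $\mathbb{A}\otimes_{\U D}\mathbb{A}\to\mathbb{A}$, using that $|P:D|$ is a power of $p$ together with the unit trace. If I cannot do this directly, I would cite \cite{P00} for separability. Indecomposability of $\mathbb{A}$ also comes from \cite{P00}: $\mathbb{A}^D$, and indeed $Z(\mathbb{A})$, is local, since the center of $A$ is local and $\mathbb{A}$ is $P$-stable.

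Third, uniqueness of the local point. Local points of $D$ on $\mathbb{A}$ are local points on $A$ contained in $P_\gamma$, as $\mathbb{A}^D\subseteq A^D$ and Brauer quotients are compatible with this inclusion. The uniqueness from \cite[Propositions 4.2 and 3.3]{P00}, quoted in 3.1, then gives that $\delta$ is the only one.

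With all hypotheses verified, Lemma \ref{a general lemma} yields the relative separability of $j\mathbb{A}j=\mathbb{A}_\delta$ and the Morita equivalence induced by $j\mathbb{A}$. I expect the main obstacle to be relative $\U D$-separability of $\mathbb{A}$, along with the precise identification of local points on $\mathbb{A}$ versus on $A$; both rely on results from \cite{P00} that I would invoke rather than reprove.
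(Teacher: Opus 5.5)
Your plan is the paper's proof: apply Lemma~\ref{a general lemma} with $C=\mathbb{A}$, $Q=D$, $l=j$ after checking its hypotheses. The paper does this by citation, using \cite[Theorem 1]{W09} for relative $\mathcal{O}D$-separability, \cite[Lemma 3.2]{HZZ24} for indecomposability, and \cite[Corollary (44.8)]{Thevenaz} for the rank. The problems are in your own justifications of three of these hypotheses.

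\textbf{Separability.} Your sketch fails as written. Averaging over $\mathcal{S}$ means dividing by $|\mathcal{S}|=|P:D|$, a power of $p$, which is not invertible in $\mathcal{O}$. Projecting instead of averaging does work:
\begin{itemize}
\item Since $A\cong\mathbb{A}\otimes_{\mathcal{O}D}\mathcal{O}P$, one has $A\otimes_{\mathcal{O}P}A\cong\mathbb{A}\otimes_{\mathcal{O}D}A=\bigoplus_{v\in\mathcal{S}}\mathbb{A}\otimes_{\mathcal{O}D}\mathbb{A}v$.
\item This is a decomposition of $\mathbb{A}$-$\mathbb{A}$-bimodules, since each $\mathbb{A}v$ is a sub-bimodule of $A$ because $\mathbb{A}$ is $P$-stable. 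It is compatible with multiplication into $A=\bigoplus_v\mathbb{A}v$.
\item Take $1\in\mathcal{S}$ and let $\omega$ split $A\otimes_{\mathcal{O}P}A\to A$. The component of $\omega$ indexed by $1$ lies in $\mathbb{A}\otimes_{\mathcal{O}D}\mathbb{A}$, commutes with $\mathbb{A}$, and multiplies to $1_{\mathbb{A}}$.
\end{itemize}

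\textbf{Indecomposability.} Your claim that $\mathbb{A}^D$ is local is false in general. If it held, then $j=1_{\mathbb{A}}$ and the corollary would be trivial. For $D=1$, i.e.\ a nilpotent block, $\mathbb{A}^D=\mathbb{A}\cong\mathrm{End}_{\mathcal{O}}(V)$ is a matrix algebra, which is not local once $\mathrm{rank}_{\mathcal{O}}(V)>1$.

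Only $Z(\mathbb{A})$ is local, and locality of $Z(A)$ does not give this directly, because $Z(\mathbb{A})\not\subseteq Z(A)$ in general. For example, with $p=2$, $\mathbb{A}=\mathcal{O}\mathrm{A}_4\subseteq A=\mathcal{O}\mathrm{S}_4$, and the two classes of $3$-cycles in $\mathrm{A}_4$ are swapped by $P$. Argue instead:
\begin{itemize}
\item The $P$-orbit sums of primitive idempotents of $Z(\mathbb{A})$ are idempotents of $A^P$, which is local, so there is a single orbit.
\item If the stabilizer $Q$ of an idempotent $e$ in that orbit were proper, then $1_A=\mathrm{Tr}_Q^P(e)$ would force $\mathrm{Br}_P(1_A)=0$, contradicting $A(P)\neq 0$.
\end{itemize}

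\textbf{Rank.} $\mathcal{O}P$ is not the only summand of $A$ with $Z=P$. The modules $\mathrm{Ind}_{\Delta_\varphi(P)}^{P\times P}(\mathcal{O})$ occur once for each $\varphi$ in the inertial quotient $E$; already $A=\mathcal{O}\mathrm{S}_3$ for $p=3$ has two such summands. The correct statement is $\mathrm{rank}_{\mathcal{O}}(A)/|P|\equiv|E|\pmod p$, which is still prime to $p$.

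With these repairs your argument coincides with the paper's.
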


\begin{proof}
By \cite[Theorem 1]{W09}, 
the hyperfocal subalgebra $\mathbb{A}$ is a relatively $\U D$-separable $D$-interior algebra.
By \cite[Lemma 3.2]{HZZ24},  $\mathbb{A}$ is an indecomposable $\U$-algebra.
From the paragraph 3.1, 
we can get that $D$ has a unique local point on $\mathbb{A}$.
Since $A=\mathop\bigoplus\limits_{uD\in P/D}\mathbb{A}u$,
we have 
$\frac{\mathrm{rank}_\mathcal{O}(\mathbb{A})}{|D|}=\frac{\mathrm{rank}_\mathcal{O}(A)}{|P|}$
which is coprime to $p$ by \cite[Corollary (44.8)]{Thevenaz}.
Since $\mathbb{A}$ is a direct summand of $A$ as $\U(D\times D)$-modules,
the statements in the corollary follow from 
\cite[Theorem 8.7.1]{L'book 2} and Lemma \ref{a general lemma} above.
\end{proof}

\begin{order}\rm
Let us consider the quotient group $N_G(D_\delta)/DC_G(D)$.
Generally, $N_G(D_\delta)/DC_G(D)$ may have a nontrivial Sylow $p$-subgroup $PC_G(D)/DC_G(D)$.
This property is different from the fact that the inertial quotient 
$N_G(P_\gamma)/PC_G(P)$ is a $p^\prime$-group.
However, we can show that it is a $p$-nilpotent group, 
which means that it has a normal $p$-complement.
We call this normal $p$-complement a \emph{hyperfocal inertial quotient} of the block $b$
and denote it by $E_\mathfrak{h}$ throughout this paper.
Hence, $E_\mathfrak{h}$ is a $p^\prime$-group and 
$N_G(D_\delta)/DC_G(D)=PC_G(D)/DC_G(D)\cdot E_\mathfrak{h}$.
To show this result, we need the following lemma  
about the structures of the Brauer quotients 
$A_\delta(D)$ and $\mathbb{A}_\delta(D)$ of $A_\delta$ and $\mathbb{A}_\delta$ at $D$, respectively.
\end{order}

\begin{lem}\label{Brauer quot in D}
We have $A_\delta(D)\cong kC_P(D)$ as $Z(D)$-interior algebras
such that its restriction to $\mathbb{A}_\delta(D)$ gives a $Z(D)$-interior algebra isomorphism
$\mathbb{A}_\delta(D)\cong kZ(D)$.
In particular, $(j\mathbb{A}uj)(D)\cong kZ(D)u$ when $u$ is in $C_P(D)$ and 
$(j\mathbb{A}uj)(D)=0$ when $u$ is not in $DC_P(D)$.
\end{lem}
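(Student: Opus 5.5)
We first identify $A_\delta(D)$ by working in the group algebra. Since $j\in\delta$ and $A_\delta=jAj=j\U Gj$, the Brauer quotient $A_\delta(D)$ is $\mathrm{Br}_D(j)\,kC_G(D)\,\mathrm{Br}_D(j)$. By the local pointed group theory (\cite[Theorem (37.?)]{Thevenaz}), $\mathrm{Br}_D(j)$ is a primitive idempotent of $kC_G(D)^{N_P(D)}$ lying in the block $e_D$ of $C_G(D)$ associated with $D_\delta$. The Brauer pair $(D,e_D)$ is contained in the maximal Brauer pair $(P,e_P)$. The defect group of $e_D$ as a block of $C_G(D)$ is $C_P(D)$: indeed $(D,e_D)\le(P,e_P)$ and $D$ is normal in $P$, hence fully normalized, so $C_P(D)$ is a defect group of $e_D$.

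The key input is that $e_D$ is a nilpotent block of $C_G(D)$ with defect group $C_P(D)$. The plan is to show that $p'$-elements of $N_{C_G(D)}(R_\epsilon)$ for local pointed groups contained in $C_P(D)$ act trivially. For this we would use the definition of $D=\mathfrak{hyp}(P_\gamma)$: such elements $x$ normalize $DR$, and $[x,DR]\le D$. Moreover $x$ centralizes $D$, so $x$ acts on $DR/D$ and on $D$ trivially. By coprime action, $x$ then acts trivially on $DR$, hence on $R$. Thus the fusion system of $e_D$ is trivial and $e_D$ is nilpotent. By Puig's structure theorem, the source algebra of $kC_G(D)e_D$ (relative to $C_P(D)$) is $\mathrm{End}_k(W)\otimes kC_P(D)$.

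Next we show that $\mathrm{Br}_D(j)$ is actually a source idempotent of $e_D$ with $W$ trivial, so that $A_\delta(D)\cong kC_P(D)$. Here we use that $i\in\gamma$ and $j$ satisfy $ij=j$, and we compare ranks. Since $\U Gb$ has source algebra $A$ and $\mathrm{Br}_P(i)$ is primitive, $\mathrm{Br}_D(i)A(D)$ contains $\mathrm{Br}_D(j)$. The Brauer quotient $A(D)$ of the source algebra is a $C_P(D)$-interior algebra. By \cite[Theorem (38.?)]{Thevenaz} or Linckelmann's result, $A(D)\cong$ a direct sum of summands of the form $kC_P(D)$-permutation bimodules. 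This comes from the $\U(P\times P)$ structure of $A$: its summands are $\mathrm{Ind}_{\Delta_\varphi(Z)}^{P\times P}\U$, and Brauer quotients at $\Delta D$ pick up only the terms with $D\le Z$.

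Multiplicity one, i.e. triviality of $W$ after cutting by $j$, is obtained as follows. $\mathrm{Br}_D(j)$ is primitive in $A(D)^D$, and the unique point argument shows that $\mathrm{Br}_D(j)$ is a primitive idempotent of $kC_G(D)e_D$ of the source type. For a nilpotent block the source algebra is $S\otimes kC_P(D)$, and a primitive local idempotent cuts it to exactly $kC_P(D)$ when restricted to the $Z(D)$-interior structure. More carefully, $A_\delta(D)$ is a $C_P(D)$-interior algebra isomorphic to $S\otimes kC_P(D)$ with $S$ a Dade algebra, and we must show $S=k$. We use that $\mathrm{rank}(A_\delta)$ relates to $|P|$ via the Morita statement. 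I expect this identification, namely that $S$ is trivial, to be the main obstacle. I would try to handle it by decomposing $A_\delta=\bigoplus_{u}\mathbb{A}_\delta a_{u,\delta}$ and using the fact that $\mathbb{A}_\delta(D)$ is local with $\mathbb{A}_\delta(D)^\times$ controlled by $Z(D)$.

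Finally, for the hyperfocal part, apply $\mathrm{Br}_D$ to the decomposition $A_\delta=\bigoplus_{uD}\mathbb{A}_\delta a_{u,\delta}$, in which each summand $j\mathbb{A}uj$ is $D$-stable under conjugation. We have $(j\mathbb{A}uj)(D)=\mathrm{Br}_D((j\mathbb{A}uj)^D)$, and these images give a grading of $A_\delta(D)\cong kC_P(D)$ by $P/D$. The identity summand $u=1$ is $\mathbb{A}_\delta(D)$. Comparing with the natural grading of $kC_P(D)$ by $C_P(D)D/D\cong C_P(D)/Z(D)$ yields the following. The component for $u\in C_P(D)$ is $kZ(D)u$. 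In particular $\mathbb{A}_\delta(D)\cong kZ(D)$ as $Z(D)$-interior algebras. Components for $u\notin DC_P(D)$ vanish, since every element of $kC_P(D)$ is already accounted for. To justify that the gradings match, we would check that the isomorphism sends $\mathrm{Br}_D(a_{u,\delta})$ to a unit multiple of $u$; this holds because both conjugate $Z(D)$-images and $D$ in the same way.
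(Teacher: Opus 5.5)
You have a genuine gap at the step you flag yourself, and the way you frame that step is what blocks you.

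Since $j$ is only $D$-fixed, $\mathrm{Br}_D(j)$ need not be $P$-stable. So it is neither primitive in $kC_G(D)^{N_P(D)}$ nor a source idempotent of $e_D$, and $A_\delta(D)$ is only a $Z(D)$-interior algebra, not a $C_P(D)$-interior one. What is true is stronger and simpler: $D$ acts trivially on $kC_G(D)$ and $\mathrm{Br}_D$ maps $(\U G)^D$ onto $kC_G(D)$, so $\mathrm{Br}_D(j)$ is a primitive idempotent of $kC_G(D)$ itself.

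There is then no need, and no reason, to show that the Dade algebra $S$ in the source algebra $S\otimes_k kC_P(D)$ of the nilpotent block $e_D$ is trivial. Instead, $\mathrm{Br}_D(j)$ is conjugate in $kC_G(D)$ to $f\otimes 1$ with $f$ primitive in $S$, and $fSf=k$. Moreover $Z(D)$ is central in $C_G(D)$ and in $C_P(D)$. Hence its image in $S$ is central, so a scalar of $p$-power order, so $1$. The resulting isomorphism $A_\delta(D)\cong kC_P(D)$ therefore respects $Z(D)$.

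The paper reaches the same point slightly differently:
\begin{itemize}
\item it uses that $\mathrm{Br}_D(i)$ lies in a local point of $P$ on $kC_G(D)$;
\item it views $\bar b_D$ as a nilpotent block of $PC_G(D)$ with defect group $P$;
\item it gets $A(D)\cong T_D\otimes_k kC_P(D)$ with $D$ acting trivially on $T_D$;
\item it sends $\mathrm{Br}_D(j)$ to $f\otimes 1$.
\end{itemize}
Your suggested remedy, using that $\mathbb{A}_\delta(D)^\times$ is controlled by $Z(D)$, is circular, since $\mathbb{A}_\delta(D)\cong kZ(D)$ is part of what is to be proved.

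The last step also fails as justified. The decomposition $A_\delta(D)=\bigoplus_{uD\in P/D}(j\mathbb{A}uj)(D)$ is a grading, but an abstract isomorphism $A_\delta(D)\cong kC_P(D)$ need not carry it to the group grading. Your reason for matching the two gradings gives no constraint: conjugation by anything fixes $Z(D)$ on both sides, and elements of $D\setminus Z(D)$ have no image in $A_\delta(D)$.

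The missing idea is the paper's dimension count:
\begin{itemize}
\item Because the isomorphism respects $Z(D)$, $\mathbb{A}_\delta(D)$ contains an injective image of $kZ(D)$, so $\dim_k\mathbb{A}_\delta(D)\ge|Z(D)|$.
\item For $u\in C_P(D)$, $a_{u,\delta}$ is a unit of $A_\delta^D$, so $(\mathbb{A}_\delta a_{u,\delta})(D)=\mathbb{A}_\delta(D)\mathrm{Br}_D(a_{u,\delta})$ has the same dimension as $\mathbb{A}_\delta(D)$.
\item Summing over $uZ(D)\in C_P(D)/Z(D)$ gives $|C_P(D)|=\dim_kA_\delta(D)\ge|C_P(D):Z(D)|\cdot\dim_k\mathbb{A}_\delta(D)\ge|C_P(D)|$.
\end{itemize}
Equality throughout forces $\mathbb{A}_\delta(D)\cong kZ(D)$ and $(j\mathbb{A}uj)(D)=0$ for $u\notin DC_P(D)$.

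Your coprime-action proof that $e_D$ is nilpotent is fine; the paper simply cites Puig for this. Your identification of $C_P(D)$ as the defect group of $e_D$ is also fine.
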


\begin{proof}
Let $\bar{b}_D$ be the block of  $C_G(D)$ over $k$ associated with the local pointed group $D_\delta$.
It is clear that $\bar{b}_D$ is $P$-stable and nilpotent by \cite[Proposition 3.3]{P00}.
Then $\bar{b}_D$ can be viewed as a block of $PC_G(D)$
and we  denote it by $\bar{c}_D$ when it is viewed as a block of $PC_G(D)$.
So the block $\bar{c}_D$ is also nilpotent by \cite[Proposition 6.5]{KP90}.
It is easy to check that the block $\bar{c}_D$ has a defect group $P$ and 
then $C_P(D)$ is a defect group of the block $\bar{b}_D$.

By \cite[Lemma (40.2)]{Thevenaz},
we have $\mathrm{Br}_D(i)$ belongs to a local point of $P$ on $kC_G(D)$.
In particular,
$\mathrm{Br}_D(i)kPC_G(D)\mathrm{Br}_D(i)$ is a source algebra of the block $\bar{c}_D$.
Note that $A(D)=\mathrm{Br}_D(i)kC_G(D)\mathrm{Br}_D(i)$ is a primitive $P$-algebra.
Then by \cite[Theorem 8.12.3]{L'book 2},
there is a primitive Dade $P$-interior algebra $T_D$ over $k$ such that
we have an isomorphism of $P$-interior algebras
\begin{align*}
\mathrm{Br}_D(i)kPC_G(D)\mathrm{Br}_D(i)\cong T_D\mathop\otimes\limits_kkP,
\end{align*}
which restricts to a $C_P(D)$-interior algebra isomorphism
\begin{align}\label{A local  at D}
A(D)\cong T_D\mathop\otimes\limits_kkC_P(D).
\end{align}
Since $D$ is normal in $PC_G(D)$,
we have $v\cdot 1_{T_D}=1_{T_D}$ for any $v\in D$.
By the isomorphism (\ref{A local  at D}),
we have a $Z(D)$-interior algebra isomorphism $A_\delta(D)\cong kC_P(D)$.
Obviously, the image of $\mathbb{A}_\delta(D)$ under this isomorphism contains $kZ(D)$.
In particular, the dimension ${\rm dim}_k(\mathbb{A}_\delta(D))$ of the $k$-algebra $\mathbb{A}_\delta(D)$ is greater than or equal to $|Z(D)|$.

Borrowing the notation from the paragraph 3.1, for any $u\in P$,
there is an element $a_{u,\delta}$ in $A_\delta^\times$ 
such that 
${^{a_{u,\delta}}}(v\cdot j)=({^u}v)\cdot j$ for any $v\in D$
and $A_\delta=\mathop\bigoplus\limits_{uD\in P/D}\mathbb{A}_\delta a_{u,\delta}$.
It is clear that $\mathbb{A}_\delta a_{u,\delta}$ is a $D$-stable $\U$-module
for any $u\in P$
and then $A_\delta(D)=\mathop\bigoplus\limits_{uD\in P/D}(\mathbb{A}_\delta a_{u,\delta})(D)$.
When $u$ is in $C_P(D)$,
$a_{u,\delta}$ is in $(A_\delta^D)^\times$ and then
$(\mathbb{A}_\delta a_{u,\delta})(D)=\mathbb{A}_\delta(D)\mathrm{Br}_D(a_{u,\delta})$.
In particular, they have the same dimension.
Therefore, the $k$-algebra $A_\delta(D)=\mathop\bigoplus\limits_{uZ(D)\in C_P(D)/Z(D)}(\mathbb{A}_\delta a_{u,\delta})(D)$
has dimension greater than or equal to $|C_P(D)|$.
Then we are done.
\end{proof}
		
Recall from the paragraph 2.12 that
there is a group isomorphism $\mathfrak{f}_{G,D}$ 
from $N_G(D_\delta)/DC_G(D)$ and $F_{\mathcal{O}G}(D)$.
It is clear that $j\U Gj=jAj=A_\delta$.
Then $F_{\mathcal{O}G}(D)=F_{A}(D)$ and
$\mathfrak{f}_{G,D}$ is a group isomorphism from $N_G(D_\delta)/DC_G(D)$ to $F_A(D)$.
Now we can show that the quotient group $N_G(D_\delta)/DC_G(D)$ is a $p$-nilpotent group.

\begin{prop}\label{p-nilpotent quot}
The quotient group $N_G(D_\delta)/DC_G(D)$ is a $p$-nilpotent group.
Moreover, the hyperfocal inertial quotient $E_\mathfrak{h}$ of the block $b$
is the preimage of the subgroup $N_{\mathbb{A}_\delta}(D)\cdot(A_\delta^D)^\times/D(A_\delta^D)^\times$
of $F_A(D)$ under the isomorphism $\mathfrak{f}_{G,D}$.
\end{prop}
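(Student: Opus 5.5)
We work inside $F_A(D)=N_{A_\delta}(D)/D j\cdot(A_\delta^D)^\times$, which the isomorphism $\mathfrak{f}_{G,D}$ identifies with $N_G(D_\delta)/DC_G(D)$. Set $N=N_{A_\delta}(D)$, $\mathbb{N}=N_{\mathbb{A}_\delta}(D)=N\cap\mathbb{A}_\delta$ and $H=\mathbb{N}(A_\delta^D)^\times/Dj(A_\delta^D)^\times$. The plan is to show that $H$ is a normal subgroup of $F_A(D)$ of order coprime to $p$, and that its index is a power of $p$. It then follows that $H$ is a normal $p$-complement, so $N_G(D_\delta)/DC_G(D)$ is $p$-nilpotent and $E_\mathfrak{h}=\mathfrak{f}_{G,D}^{-1}(H)$.

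\textbf{Step 1: the index of $H$.} Take $a\in N$. Since $a$ normalizes $Dj$, the elements $a$ and $a^{-1}$ act by conjugation on $A_\delta$ and stabilize the decomposition
\[
A_\delta=\bigoplus_{uD\in P/D}\mathbb{A}_\delta a_{u,\delta}.
\]
This uses the uniqueness in \cite[Theorem 1.8]{P00}: the graded pieces are determined by the $D\times D$-action together with $\mathbb{A}$, possibly after adjusting by $(A^D_\delta)^\times$.

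It is cleaner to argue at the level of Brauer quotients. Lemma~\ref{Brauer quot in D} gives $A_\delta(D)\cong kC_P(D)$ with $\mathbb{A}_\delta(D)\cong kZ(D)$, and $\mathrm{Br}_D(A_\delta^D)^\times$ maps onto the units of $kC_P(D)$. Write $a=\sum_u x_u a_{u,\delta}$ with $x_u\in\mathbb{A}_\delta$. Conjugation by $a$ induces an automorphism $\sigma$ of $D$, and $a$ lies in the sum of those pieces $j\mathbb{A}uj$ on which $D$ acts through the twist by $\sigma$.

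Pick a piece $x_u a_{u,\delta}$ that is a unit modulo the radical; such a piece exists by a locality argument in the local ring $(A_\delta^D)$-twisted. Replacing $a$ by $a\,a_{u,\delta}^{-1}$ then gives an element of $\mathbb{N}(A_\delta^D)^\times$. Consequently $F_A(D)=H\cdot\langle\text{images of }a_{u,\delta}: u\in P\rangle$. The images of the $a_{u,\delta}$ form a quotient of $P$, so $|F_A(D):H|$ is a power of $p$.

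\textbf{Step 2: normality.} For $u\in P$, conjugation by $a_{u,\delta}=a_u^{-1}uj$ preserves $\mathbb{A}_\delta$, because $\mathbb{A}$ is $P$-stable and $a_u\in\mathbb{A}^D$. Hence it normalizes $\mathbb{N}$, and $H$ is normal in $F_A(D)$.

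\textbf{Step 3: $|H|$ is prime to $p$ (main obstacle).} The natural approach is to compare $H$ with $F_{\mathbb{A}}(D)=\mathbb{N}/Dj(\mathbb{A}_\delta^D)^\times$. Since $\mathbb{A}_\delta(D)\cong kZ(D)$ by Lemma~\ref{Brauer quot in D}, the map $H\to F_A(D)$ should have kernel controlled by $(A_\delta^D)^\times\cap\mathbb{A}_\delta$, which lies in $(\mathbb{A}_\delta^D)^\times$. So $H$ is a quotient of $F_{\mathbb{A}}(D)$.

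It remains to show that $F_{\mathbb{A}}(D)$ is a $p'$-group. Suppose an element $x\in\mathbb{N}$ has $p$-power order modulo $Dj(\mathbb{A}_\delta^D)^\times$. Then $x$ induces a $p$-automorphism of $D$. By the hyperfocal-subalgebra analogue of Puig's theorem that $F_{\mathcal{O}G}(P)$ is $p'$ for a defect group, such an $x$ must be inner, up to units.

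Concretely, the argument should go as follows. If $x$ normalizes $Dj$, it generates together with $Dj$ a $p$-group $Q$ acting on $\mathbb{A}_\delta$, and $\mathrm{Br}$ remains nonzero on it. By Corollary~\ref{separable and Morita equi}, $\mathbb{A}_\delta$ is relatively $\mathcal{O}D$-separable, so every local pointed group of $\mathbb{A}_\delta$ is contained in $D_\delta$. Maximality of $D$ then forces $x\in Dj(\mathbb{A}_\delta^D)^\times$.

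Making the last argument rigorous is where I expect the work to lie. The approach I would try first is a trace argument: relative $\mathcal{O}D$-separability gives $1\in\mathrm{Tr}_D^{Q}$ of something, which contradicts the nonvanishing of the Brauer quotient at $Q>D$.
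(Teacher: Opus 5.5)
Your Steps 1 and 2 are sound in substance. The paper does not prove them: it takes the factorization $F_A(D)=H\cdot\bar P$ (with $\bar P$ the image of the $a_{u,\delta}$) and the normality of $H$ directly from \cite[Proposition 2.8]{P00}.

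Drop the opening claim of Step 1. A general $a\in N_{A_\delta}(D)$ does not stabilize $\mathbb{A}_\delta$, or the pieces $\mathbb{A}_\delta a_{u,\delta}$, under conjugation; a unit of $A_\delta^D$ need not normalize $\mathbb{A}_\delta$. What is true, and suffices, is the following:
\begin{itemize}
\item The pieces $\mathbb{A}_\delta a_{u,\delta}=j\mathbb{A}uj$ are $\mathcal{O}(D\times D)$-submodules, so each component $x_ua_{u,\delta}$ of $a$ is fixed by the twisted diagonal $\{(\sigma(w),w)\}$.
\item Hence the elements $x_ua_{u,\delta}a^{-1}$ lie in the local ring $A_\delta^D$ and sum to $j$, so one of them is a unit. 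This gives your factorization.
\end{itemize}

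The genuine gap is Step 3, which carries the whole content of the proposition, and the route you sketch does not go through. Appealing to a ``hyperfocal analogue of Puig's theorem'' is essentially assuming the statement. The maximality argument has three concrete problems:
\begin{itemize}
\item An element $x\in N_{\mathbb{A}_\delta}(D)$ whose class has $p$-power order modulo $Dj(\mathbb{A}_\delta^D)^\times$ does not yield a finite $p$-group $Q\le\mathbb{A}_\delta^\times$ containing $Dj$. The kernel contains the pro-$p$ group $j+J(\mathbb{A}_\delta^D)$, and the splitting lemma used elsewhere in the paper, \cite[Lemma (45.6)]{Thevenaz}, needs a $p'$-quotient.
\item Even granting such a $Q$, nothing you cite controls $\mathbb{A}_\delta(Q)$. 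Corollary~\ref{separable and Morita equi} and the uniqueness of $D_\delta$ concern pointed groups on subgroups of $D$.
\item The asserted nonvanishing of $\mathrm{Br}_Q(j)$ is unsupported. It would require, for instance, a $p$-permutation structure of $\mathbb{A}_\delta$ with respect to $Q$, which you do not have.
\end{itemize}

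The missing ingredient is the Brauer quotient at $D$, Lemma~\ref{Brauer quot in D}: $(j\mathbb{A}uj)(D)=0$ for $u\notin DC_P(D)$. This is where the nilpotency of $\bar b_D$ enters, and your Step 3 never uses it. The fix runs as follows.
\begin{enumerate}
\item $\bar P=\mathfrak{f}_{G,D}(PC_G(D)/DC_G(D))$ is a Sylow $p$-subgroup of $F_A(D)$, and $H$ is normal. So it suffices to show $H\cap\bar P=1$.
\item Suppose $\mathfrak{a}\in N_{\mathbb{A}_\delta}(D)$ has the same class as $a_{u,\delta}$. Write $\mathfrak{a}^{-1}a_{u,\delta}=(wj)c$ with $w\in D$ and $c\in(A_\delta^D)^\times$.
\item Then $c=(w^{-1}\mathfrak{a}^{-1})a_{u,\delta}$ lies in $\mathbb{A}_\delta a_{u,\delta}=j\mathbb{A}uj$. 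So $c$ is a unit of $A_\delta^D$ lying in $(j\mathbb{A}uj)^D$.
\item Since $A_\delta(D)=\bigoplus_{uD}(j\mathbb{A}uj)(D)$ and $A_\delta(D)\neq 0$, we get $\mathrm{Br}_D(c)\neq 0$, hence $(j\mathbb{A}uj)(D)\neq 0$.
\item By Lemma~\ref{Brauer quot in D}, $u\in DC_P(D)$, so the class of $\mathfrak{a}$ is trivial.
\end{enumerate}
This is exactly the paper's argument.
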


\begin{proof}
Set $P(A_\delta^D)^\times=(A_\delta^D)^\times\cdot\{a_{u,\delta}\mid u\in P\}$.
It is easy to check that
 $P(A_\delta^D)^\times/D(A_\delta^D)^\times$ is just the image of $PC_G(D)/DC_G(D)$,
which is a Sylow $p$-subgroup of $N_G(D_\delta)/DC_G(D)$, under the isomorphism $\mathfrak{f}_{G,D}$.
Hence, $P(A_\delta^D)^\times/D(A_\delta^D)^\times$ is a Sylow $p$-subgroup of $F_A(D)$.

	On the other hand, by \cite[Proposition 2.8]{P00}, 
	we have 
	$$F_A(D)=((P(A_\delta^D)^\times)\cdot N_{\mathbb{A}_\delta}(D))/D(A_\delta^D)^\times$$
	and 
	$N_{\mathbb{A}_\delta}(D)(A_\delta^D)^\times/D(A_\delta^D)^\times$ 
	is normal in $F_A(D)$.
	Take an element $\mathfrak{a}_\delta$ of 
	$P(A_\delta^D)^\times\cap N_{\mathbb{A}_\delta}(D)$.
	So there is some $u\in P$ such that ${^{\mathfrak{a}_\delta}}(vj)=({^u}v)j$ for any $v\in D$.
	Then $\mathfrak{a}_\delta^{-1}u$ is in $(j\mathbb{A}ju)^D$	
	and ${\rm Br}_D(\mathfrak{a}_\delta^{-1}u)\neq 0$
	since $\mathfrak{a}_\delta^{-1}u\cdot u^{-1}\mathfrak{a}_\delta=j$.
	By the paragraph 3.1, 
	there is some $a_u\in(\mathbb{A}^D)^\times$ such that
	$a_u^{-1}ja_u=u^{-1}ju$.
	Therefore, 
	$j\mathbb{A}ju=j\mathbb{A}ua_u^{-1}ja_u=j\mathbb{A}uja_u$.
	Combining these facts together, 
	we have $(j\mathbb{A}uj)(D)\neq 0$.
	By Lemma \ref{Brauer quot in D}, this forces $u\in DC_P(D)$.
	Then the element $\mathfrak{a}_\delta$ has to belong to $D(\mathbb{A}_\delta^D)^\times$.
	Hence,
	$(P(A_\delta^D)^\times/D(A_\delta^D)^\times)\cap (N_{\mathbb{A}_\delta}(D)(A_\delta^D)^\times/D(A_\delta^D)^\times)=1$.
	In particular, $F_A(D)$ is a $p$-nilpotent group.
	So is $N_G(D_\delta)/DC_G(D)$.
	The last statement can be easily obtained.
	 	\end{proof}
	
\begin{order}\rm
Now we can define the hyperfocal abelian Frobenius block.
Keep the notation above.
By the Schur-Zassenhaus Theorem, we can identify $E_\mathfrak{h}$ with 
a subgroup of $\mathrm{Aut}(D)$ and then it can act on $D$ by the conjugation action.
Then the block $b$ is called with \emph{Frobenius hyperfocal inertial quotient}
if $D$ and $E_\mathfrak{h}$ are both nontrivial and 
$E_\mathfrak{h}$ acts freely on $D-\{1\}$,
equivalently, $D$ is nontrivial and the corresponding semidirect product 
$D\rtimes E_\mathfrak{h}$ is a Frobenius group.
Moreover, a block $b$ with Frobenius hyperfocal inertial quotient is called a 
\emph{hyperfocal abelian Frobenius block} if a hyperfocal subgroup of the block $b$ is abelian.
\end{order}

\begin{rmk}\label{examples of hyp frob}\rm
There are some examples of hyperfocal abelian Frobenius blocks.
The first well-known example is the block with a cyclic hyperfocal subgroup.
The second example is the block with a Klein four hyperfocal subgroup.
In this case, by \cite[Proposition 2.3]{HZ19},
$N_G(D_\delta)/C_G(D)$ is isomorphic to a cyclic group of order $3$, 
or the symmetric group $\mathrm{S}_3$  of degree 3,
depending on $C_P(D)=P$ or not.
In both cases, they are  both hyperfocal abelian Frobenius blocks. 
\end{rmk}
\vskip 0.25cm

In the following, we investigate the local structures
of hyperfocal abelian Frobenius blocks.

\begin{order}\rm
For any element $x$ of $\U G$,
we use $\bar{x}$ to denote the image of $x$ in $kG$ 
under the natural surjective homomorphism.
There is an analogous notation for the subset of $\U G$.
First we fix some notation about $b$-Brauer pairs.
We refer to \cite[\S 40]{Thevenaz} for details.
Denote by $b_P$ the block of $C_G(P)$ over $\U$ such that 
the block $\bar{b}_P$ of $kC_G(P)$ is associated with the local pointed group $P_\gamma$.
Then the pair $(P, \bar{b}_P)$ is a maximal $b$-Brauer pair.
For any subgroup $R$ of $P$,
let  $b_R$ be the unique block of $C_G(R)$ over $\U$ such that $(R,\bar{b}_R)\leq(P,\bar{b}_P)$.
Denote by $\mathcal{B}_G(b)_{\leq(P,\bar{b}_P)}$ the Brauer category of the block $b$
whose objects consist of all $b$-Brauer pairs contained in $(P,\bar{b}_P)$ 
and morphisms from $(Q,\bar{b}_Q)$ to $(R,\bar{b}_R)$ are group homomorphisms
from $Q$ to $R$ induced by the conjugation action of an element $g$ in $G$
such that ${^g}(Q,\bar{b}_Q)\leq(R,\bar{b}_R)$.
Let $H$ be a subgroup of $G$.
$H$ is said to \emph{control fusion} in $\mathcal{B}_G(b)_{\leq(P,\bar{b}_P)}$,
or \emph{control fusion} of the block $b$ for simplicity, 
if the morphisms of the Brauer category $\mathcal{B}_G(b)_{\leq(P,\bar{b}_P)}$
can be induced by elements in $H$ (see \cite[\S 49]{Thevenaz}).
\end{order}

\begin{order}\rm
Let us recall the definitions of some special local pointed groups (see \cite[1.6]{P00}).
A \emph{self-centralizing} pointed group $Q_\epsilon$ on $\U G$ 
is a local pointed group on $\U G$ such that $Z(Q)$ is a defect group of 
the block $\bar{b}(\epsilon)$ of $C_G(Q)$ associated with $Q_\epsilon$.
Furthermore, a self-centralizing pointed group $Q_\epsilon$ is called an \emph{essential} pointed group
if the quotient $N_G(Q_\epsilon)/QC_G(Q)$ contains a proper subgroup $M$ such that
$p$ divides $|M|$ but does not divide 
$|M\cap M^x|$ for any $x\in N_G(Q_\epsilon)/QC_G(Q)-M$.
In particular, $N_G(Q_\epsilon)/QC_G(Q)$ has no nontrivial normal $p$-subgroup.
For an essential pointed group $Q_\epsilon$,
it is clear that $N_G(Q_\epsilon)=N_G(Q,\bar{b}(\epsilon))$ and
we also call the corresponding $b$-Brauer pair $(Q,\bar{b}(\epsilon))$ an
essential $b$-Brauer pair.
\end{order}

\begin{prop}\label{nilpotent block of D at local}
	Suppose that the block $b$ is a hyperfocal abelian Frobenius block.
	Then for any nontrivial subgroup $Q$ of $D$,
	the block $b_Q$ of $C_G(Q)$ over $\U$ is nilpotent.
\end{prop}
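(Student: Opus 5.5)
The plan is to use Puig's characterization of nilpotency through hyperfocal subgroups: a block is nilpotent if and only if its hyperfocal subgroup is trivial. So, for a nontrivial subgroup $Q$ of $D$, I will show that $\mathfrak{hyp}$ of the block $b_Q$ of $C_G(Q)$ is trivial.

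First, replacing $(Q,\bar b_Q)$ by a $G$-conjugate inside $(P,\bar b_P)$, I may assume that $C_P(Q)$ is a defect group of $b_Q$. This conjugation is harmless because $D$ is strongly closed in the Brauer category. Then $(C_P(Q),\bar b_{C_P(Q)})$ is a maximal $b_Q$-Brauer pair. By \cite[1.7.2]{P00}, the hyperfocal subgroup of $b_Q$ is generated by the subgroups $[x,R]$. Here $R\le C_P(Q)$, and $x$ runs over the $p'$-elements of $C_G(Q)$ stabilizing the $b_Q$-Brauer pair $(R,\bar e_R)$ contained in the maximal one. Enlarging $R$ to $QR$ does not change $[x,R]$, since $x$ centralizes $Q$. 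Moreover, $(QR,\bar b_{QR})$ is a $b$-Brauer pair contained in $(P,\bar b_P)$ and stabilized by $x$, because Brauer pairs of $b_Q$ containing $Q$ correspond to Brauer pairs of $b$. Hence $[x,R]=[x,QR]\le D$, so $\mathfrak{hyp}(b_Q)\le D\cap C_P(Q)$.

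Next, by coprime action, $[x,R]=[x,[x,R]]$. So it suffices to show that $x$ centralizes the subgroup $T=Q[x,R]$ of $D$. Note that $T$ is abelian (since $D$ is abelian) and $x$-stable. The automorphism $c_x$ of $T$ is a $p'$-element of the automizer of the Brauer pair $(T,\bar b_T)$, and it fixes $Q\neq 1$ pointwise.

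The key step, which I expect to be the main obstacle, is to show that every $p'$-element of $\mathrm{Aut}$ of a subgroup $T\le D$ induced by the Brauer category extends to an automorphism of $D$ coming from a $p'$-element of $N_G(D_\delta)$, i.e.\ from an element of $E_{\mathfrak h}$. My first attempt will use Alperin's fusion theorem with essential pointed groups as in paragraph 3.11. I will write $c_x$ as a product of automorphisms of essential subgroups $S\le P$ and of $P$, and then project onto the action on $D$. Since $D$ is normal in $P$ and strongly closed, each factor restricts to an automorphism of $S\cap D$. Passing to $N_G(D_\delta)/DC_G(D)=PC_G(D)/DC_G(D)\cdot E_{\mathfrak h}$, which is $p$-nilpotent by Proposition \ref{p-nilpotent quot}, I will take $p'$-parts. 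This step uses the abelianness of $D$, so that automorphisms induced on subgroups of $D$ by $D$ itself are trivial. Alternatively, I will argue inside the hyperfocal subalgebra: via the Morita equivalence of Corollary \ref{separable and Morita equi}, the local pointed groups of $D$ on $\mathbb{A}_\delta$ have $p'$-automizers realized in $N_{\mathbb{A}_\delta}(D)$. By Proposition \ref{p-nilpotent quot}, these correspond to $E_{\mathfrak h}$.

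Granting this, $c_x|_T$ is the restriction of some $\sigma\in E_{\mathfrak h}$, and $\sigma$ fixes every element of $Q\setminus\{1\}\neq\emptyset$. Since $E_{\mathfrak h}$ acts freely on $D-\{1\}$, this forces $\sigma=1$. Hence $x$ centralizes $T$, so $[x,R]=1$. Therefore $\mathfrak{hyp}(b_Q)=1$, and $b_Q$ is nilpotent.
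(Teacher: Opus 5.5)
Your reduction steps are sound: conjugating via strong closure of $D$ so that $C_P(Q)$ is a defect group, showing each generator satisfies $[x,R]=[x,QR]\le D$, using $[x,R]=[x,[x,R]]$, and finishing with the fixed-point-free action of $E_{\mathfrak h}$ on $T=Q[x,R]$ all work. However, the argument rests entirely on your ``key step'', and neither sketch proves it. That step says every $p'$-automorphism $c_x|_T$ of a subgroup $T\le D$ in the block's fusion is the restriction of an element of $E_{\mathfrak h}$.

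In the Alperin sketch, the factors are automorphisms $\alpha_i$ of essential subgroups $S_i$. They do restrict to automorphisms of $S_i\cap D$. But there is no map from $\mathrm{Aut}(S_i)$ to $N_G(D_\delta)/DC_G(D)$ unless $D\le S_i$, or unless $\alpha_i|_{S_i\cap D}$ is already known to extend to $D$. So you cannot pass to that quotient and take $p'$-parts there. Showing that the essential subgroups contain $D$, i.e.\ that $N_G(D,\bar b_D)$ controls fusion, is exactly the nontrivial content. The second sketch only restates the claim. Corollary~\ref{separable and Morita equi} and Proposition~\ref{p-nilpotent quot} describe only $F_A(D)$, i.e.\ automorphisms of $D$ itself. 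They give no way to extend automizers of local pointed groups $T_\epsilon$ with $T<D$ to $D$.

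The missing ingredient is what the paper imports: \cite[Theorem 2]{W14}, which for abelian $D$ gives $N_G(T,\bar b_T)=(N_G(D,\bar b_D)\cap N_G(T,\bar b_T))C_G(T)$. Alternatively, use $O^p$ of the block fusion system. It is saturated on the abelian group $D$ and contains all $p'$-automorphisms of all $T\le D$. Hence it is controlled by its automizer of $D$, which is a $p'$-subgroup of $N_G(D_\delta)/C_G(D)$ and so lies in $E_{\mathfrak h}$.

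With either version, $c_x|_T=c_g|_T$ for some $g\in N_G(D_\delta)$ normalizing $T$. Restriction from the stabilizer of $T$ in $N_G(D_\delta)/C_G(D)$ to $\mathrm{Aut}(T)$ is a homomorphism, and $c_x|_T$ has $p'$-order. So $c_x|_T$ is also the restriction of the $p'$-part of $gC_G(D)$, which lies in the normal $p$-complement $E_{\mathfrak h}$.

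Once this is supplied, your argument is a valid alternative to the paper's. The paper uses the same theorem to write $N_{C_G(Q)}(R,\bar b_{QR})$ as $C_G(QR)$ times elements of $N_G(D,\bar b_D)\cap C_G(Q)=C_P(Q)C_G(D)$. It then shows that essential and maximal $b_Q$-Brauer pairs contain $D$, and invokes nilpotency of $b_D$. Your commutator and coprime-action argument applies the Frobenius condition directly to the generators of the hyperfocal subgroup of $b_Q$. It avoids both the analysis of essential pairs and the nilpotency of $b_D$.
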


\begin{proof}
Let $Q$ be a nontrivial subgroup of $D$.
Suppose that $C_P(Q)$ is a defect group of the block $b_Q$ of $C_G(Q)$.
We denote $C_P(Q)$ by $Q_c$.
We first prove this proposition for this special case.
For any subgroup $R$ of $C_P(Q)$,
$(R,\bar{b}_{QR})$ is a  $b_Q$-Brauer pair.
In particular $(Q_c,\bar{b}_{Q_c})$ is a maximal $b_Q$-Brauer pair.
In order to show that the block $b_Q$ is nilpotent,
it suffices to prove that the hyperfocal subgroup of the block $b_Q$ is trivial.
By \cite[1.7.1]{P00}, it reduces to show that 
$N_{C_G(Q)}(R,\bar{b}_{QR})/C_G(QR)$ is a $p$-group
when $(R,\bar{b}_{QR})$ is either an essential $b_Q$-Brauer pair or
$(R,\bar{b}_{QR})=(Q_c,\bar{b}_{Q_c})$.

It is clear that $N_{C_G(Q)}(R,\bar{b}_{QR})=N_G(QR,\bar{b}_{QR})\cap C_G(Q)$.
By \cite[Theorem 2]{W14},
$N_G(QR,\bar{b}_{QR})=(N_G(D,\bar{b}_D)\cap N_G(QR,\bar{b}_{QR}))C_G(QR)$.
Therefore,
$$N_{C_G(Q)}(R,\bar{b}_{QR})=C_G(QR)
(N_G(D,\bar{b}_D)\cap N_G(QR,\bar{b}_{QR})\cap C_G(Q))$$
and then
$$N_{C_G(Q)}(R,\bar{b}_{QR})/C_G(QR)\cong
(N_G(D,\bar{b}_D)\cap N_G(QR,\bar{b}_{QR})\cap C_G(Q))
/(N_G(D,\bar{b}_D)\cap C_G(QR)).$$
Obviously, $N_G(D,\bar{b}_D)/C_G(D)$ is isomorphic to $N_G(D_\delta)/C_G(D)$.
Since $E_\mathfrak{h}$ acts freely on $D-\{1\}$ and $D$ is abelian,
we have $N_G(D,\bar{b}_D)\cap C_G(Q)=Q_cC_G(D)$.

Suppose that $D\leq R$. 
Then 
$$C_G(QR)=C_G(R)~\text{and}~N_{C_G(Q)}(R,\bar{b}_R)=N_{Q_cC_G(D)}(R,\bar{b}_R)~\text{and}~
N_G(D,\bar{b}_D)\cap C_G(R)=C_G(R).$$
In this case, the pair $(R,\bar{b}_R)$ is also a $b_D$-Brauer pair 
when $b_D$ is viewed as a block of $Q_cC_G(D)$ over $\U$.
Since by \cite[Proposition 3.3]{P00} the block $b_D$ of $Q_cC_G(D)$ is 
nilpotent with defect group $Q_c$,
we have 
$$N_{C_G(Q)}(R,\bar{b}_R)=N_{Q_cC_G(D)}(R,\bar{b}_R)=N_{Q_c}(R)C_G(R).$$
Hence,
$$N_{C_G(Q)}(R,\bar{b}_R)/C_G(R)\cong
N_{Q_c}(R)C_G(R)/C_G(R)$$
is a $p$-group.

Now let $(R,\bar{b}_{QR})$ be an essential $b_Q$-Brauer pair.
Clearly, $(R,\bar{b}_R)\leq(QR,\bar{b}_{QR})$ and $R\unlhd QR$.
We have $\mathrm{Br}_{QR}(\bar{b}_R)\bar{b}_{QR}=\bar{b}_{QR}$.
Since $Z(R)$ is the defect group of the block $b_R$ of $C_G(R)$,
$R$ is the defect group of the block $b_R$ viewed as a block of $RC_G(R)$.
Note that $QR\leq RC_G(R)$. 
We have $QR\leq R$. 
In particular, $Q\leq R$ and then $C_G(QR)=C_G(R)$.
On the other hand, by the argument above,
we have $N_D(R)C_G(R)/C_G(R)$ is a normal $p$-subgroup of 
$N_G(R,\bar{b}_R)/C_G(R)$.
Therefore, $N_D(R)$ is contained in $RC_G(R)$.
Set $T=RN_D(R)$ and so $R\unlhd T$.
We have $\mathrm{Br}_T(\bar{b}_R)\bar{b}_T=\bar{b}_T$.
Similarly, we can get $T\leq R$ which forces $D\leq R$.

In conclusion, when $(R,\bar{b}_{QR})$ is either essential or equal to $(Q_c,\bar{b}_{Q_c})$,
we both have $D\leq R$.
We are done by the fourth paragraph under the assumption that
$C_P(Q)$ is a defect group of the block $b_Q$.

In general, it is well-known that there is some element $x$ of $G$
such that $(Q,\bar{b}_Q)\leq(P,\bar{b}_P)^x$ and 
$C_{P^x}(Q)$ is a defect group of the block $b_Q$.	
Since the hyperfocal subgroup $D$ is abelian,	
we can assume that $x$ is in $N_G(D,\bar{b}_D)$	by \cite[Theorem 2]{W14}.
Therefore, $C_P({^x}Q)$ is a defect group of the block ${^x}b_Q$ of $C_G({^x}Q)$.
Since $x$ belongs to $N_G(D,\bar{b}_D)$,
${^x}Q$ is still a nontrivial subgroup of $D$ and 
$({^x}Q,{^x}\bar{b}_Q)$ is contained in $(D,\bar{b}_D)$.
By the uniquness of $\bar{b}_{{^x}Q}$,
we get ${^x}b_Q=b_{{^x}Q}$.
On the other hand,
by the argument above for the special case,
the block $b_{{^x}Q}$ of $C_G({^x}Q)={^x}C_G(Q)$ is nilpotent.
Obviously, the conjugation action of $x$ induces an isomorphism
between the Brauer categories of the block $b_Q$ and $b_{{^x}Q}$.
Hence, the block $b_Q$ of $C_G(Q)$ is also nilpotent.
\end{proof}
	\vskip2cm

\section{The bimodule structure and  the Brauer quotients of $\mathbb{A}_\delta$}
		\quad\, 
Keep the notation in the last section.	
In this sectoin, we will first describe the $\U(D\times D)$-module structure of $\mathbb{A}_\delta$
without assuming that the block $b$ is a hyperfocal abelian Frobenius block.
This is analogous to the $\U(P\times P)$-module structure of the source algebra $A$ 
(see \cite[Theorem (44.3)]{Thevenaz}).
Let us collect some basic results about the $\U(D\times D)$-module structure of $\mathbb{A}_\delta$.



\begin{lem}\label{basic result of mod stru}
Every indecomposable direct summand of the $\U(D\times D)$-module $\mathbb{A}_\delta$  
has the form $\U DgD$ for some $g\in G$, 
which is isomorphic to $\mathrm{Ind}_{\Delta_g(D)}^{D\times D}(\U)$.
Here, $\Delta_g(D)=\{(u,g^{-1}ug)\in D\times D\mid u\in D\cap{^g}D\}$ is a subgroup of $D\times D$.
Moreover, setting  $Q=D\cap{^g}D$,
then $(Q,\bar{b}_Q)^g\subseteq(D,\bar{b}_D)$.
\end{lem}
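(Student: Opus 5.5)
The plan is to reduce to the well-known $\U(P\times P)$-module structure of the source algebra and then use the description of Brauer pairs. First, $\mathbb{A}_\delta=j\mathbb{A}j$ is a direct summand of $j\U Gj$ as an $\U(D\times D)$-module. To see this, note that $\mathbb{A}$ is a direct summand of $A$ as $\U(D\times D)$-modules, by the decomposition $A=\bigoplus_{v\in\mathcal{S}}\mathbb{A}v$ of the paragraph 2.13. Multiplying on both sides by $j\in\mathbb{A}^D$ preserves this, so $\mathbb{A}_\delta$ is a direct summand of $jAj=j\U Gj$. Next, $j\U Gj$ is a direct summand of $\U G=\bigoplus_{DgD}\U[DgD]$ as a $\U(D\times D)$-module. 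Each double coset module $\U[DgD]$ is a transitive permutation module isomorphic to $\mathrm{Ind}_{\Delta_g(D)}^{D\times D}(\U)$, since the stabilizer of $g$ under $(u,v)\cdot g=ugv^{-1}$ is exactly $\{(u,g^{-1}ug)\mid u\in D\cap{^g}D\}$. These modules are indecomposable by Green's indecomposability theorem, because $D\times D$ is a $p$-group. By Krull--Schmidt, every indecomposable summand of $\mathbb{A}_\delta$ is therefore isomorphic to some $\U DgD\cong\mathrm{Ind}_{\Delta_g(D)}^{D\times D}(\U)$, which proves the first statement.

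For the second statement, fix an indecomposable summand $M\cong\mathrm{Ind}_{\Delta_g(D)}^{D\times D}(\U)$ of $\mathbb{A}_\delta$ and set $Q=D\cap{^g}D$. The idea is to apply the Brauer quotient at the twisted diagonal $\Delta_g(D)$, viewed as a subgroup $\{(u,u^g)\}\cong Q$. Since $M(\Delta_g(D))\neq 0$, we get $\mathbb{A}_\delta(\Delta_g(D))\neq 0$, and hence $(j\U Gj)(\Delta_g(D))\neq 0$ because $\mathbb{A}_\delta$ is a summand. Identifying $\U G$ as a $\U(D\times D)$-module, this Brauer quotient is $j$ times $k C_G(Q)g$ times $j$, where $j$ is taken via $\mathrm{Br}_Q$ on the left and $\mathrm{Br}_{Q^g}$ on the right. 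Concretely,
\[
\mathrm{Br}_Q(j)\,kC_G(Q)g\,\mathrm{Br}_{Q^g}(j)\neq 0.
\]
Now $\mathrm{Br}_Q(j)$ lies in $\bar{b}_Q\,kC_G(Q)$, because $Q_\epsilon\le D_\delta$ for the local points of $Q$ under $j$, and by uniqueness of the Brauer pair contained in $(D,\bar b_D)$. Similarly $\mathrm{Br}_{Q^g}(j)$ lies in $\bar b_{Q^g}kC_G(Q^g)$. Writing $kC_G(Q)g=g\,kC_G(Q^g)$, the nonvanishing becomes $\bar b_Q\,g\,\bar b_{Q^g}\neq 0$, that is ${^{g^{-1}}}\bar b_Q\cdot\bar b_{Q^g}\neq 0$ in $Z(kC_G(Q^g))$. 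Both are blocks of $C_G(Q^g)$, so they are equal. Hence $(Q,\bar b_Q)^g=(Q^g,\bar b_{Q^g})\le(D,\bar b_D)$.

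I expect the main obstacle to be the middle step: justifying that $\mathrm{Br}_Q(j)$ really lies in $\bar b_Q kC_G(Q)$, that is, that $j$ is compatible with the Brauer pair $(Q,\bar b_Q)$. I will try to handle this by a standard argument. Because $j\in\delta$ and $D_\delta$ corresponds to $(D,\bar b_D)$, every local point $Q_\epsilon\le D_\delta$ corresponds to a Brauer pair $(Q,e)\le(D,\bar b_D)$ (the paragraph 2.11 together with \cite[\S 40]{Thevenaz}). Uniqueness of such pairs then forces $e=\bar b_Q$. Applying this to every primitive decomposition of $j$ in $(\U G)^Q$ gives the claim. The same argument applies to $Q^g\le D$.
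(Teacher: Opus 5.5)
Your proof is correct and follows the paper's route. The paper likewise notes that $\mathbb{A}_\delta$ is an $\mathcal{O}(D\times D)$-direct summand of the source algebra $A$ (hence of $j\mathcal{O}Gj\subseteq\mathcal{O}G$) and then simply cites Theorem 8.7.1 of Linckelmann's book, whereas you reprove that content directly for $D_\delta$: you use the double coset decomposition with Krull--Schmidt, and then the Brauer quotient at the twisted diagonal $\Delta_g(D)$ together with $\mathrm{Br}_Q(j)=\bar{b}_Q\mathrm{Br}_Q(j)$. Your unpacked version has a small bonus: it gives $(Q,\bar{b}_Q)^g\le(D,\bar{b}_D)$ for every $g$ with $\mathbb{A}_\delta(\Delta_g(D))\neq 0$, i.e.\ for every $g$ realizing the isomorphism type of the summand rather than just some such $g$, which is the form used later (e.g.\ in Proposition~\ref{general mod stru}).
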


\begin{proof}
Obviously, as $\U(D\times D)$-modules,
$\mathbb{A}$ is a direct summand of $A$ and $\mathbb{A}_\delta$ is a direct summand of $\mathbb{A}$.
Therefore, $\mathbb{A}_\delta$ is isomorphic to a direct summand of $A$ as $\U(D\times D)$-modules.
The statements in this lemma can follow from \cite[Theorem 8.7.1]{L'book 2}.
\end{proof}

\begin{lem}\label{basis contain 1}
There exists a $(D\times D)$-invariant $\U$-basis $\mathcal{B}$ of $\mathbb{A}_\delta$ containing 
the unit element $j$ of $\mathbb{A}_\delta$.
\end{lem}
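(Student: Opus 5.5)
Since $\mathbb{A}_\delta$ is a direct summand of $\U Gj$-type permutation modules, the plan is to reduce the statement to producing a summand isomorphic to $\U D$ (the module $\mathrm{Ind}_{\Delta(D)}^{D\times D}(\U)$) containing $j$ as a generator. By Lemma \ref{basic result of mod stru}, $\mathbb{A}_\delta=\bigoplus_{k} M_k$ with each $M_k\cong\mathrm{Ind}_{\Delta_{g_k}(D)}^{D\times D}(\U)$. Each such permutation module has a $(D\times D)$-invariant basis, namely the transitive $(D\times D)$-set $(D\times D)/\Delta_{g_k}(D)$. So the basis $\mathcal{B}$ is obtained as the union of such bases once $j$ sits in one of them. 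It therefore suffices to show that there is a decomposition $\mathbb{A}_\delta=M\oplus M'$ of $\U(D\times D)$-modules with $M\cong \U D$ (the diagonal one, $g=1$) and with $j$ corresponding to $1\in\U D$. Then $\{uj\mid u\in D\}$ is an invariant basis of $M$ containing $j$.

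To get $M$, I will consider the $\U(D\times D)$-homomorphism $\iota:\U D\to\mathbb{A}_\delta$, $u\mapsto uj$, and show it is split injective. The natural splitting comes from $\mathrm{Br}$ at $\Delta(D)$: the $(D\times D)$-fixed points of $\mathbb{A}_\delta$ under the diagonal subgroup are $\mathbb{A}_\delta^D$, and $\mathrm{Br}_{\Delta(D)}(j)=\mathrm{Br}_D(j)\neq 0$ since $D_\delta$ is local. By Lemma \ref{Brauer quot in D}, $\mathbb{A}_\delta(D)\cong kZ(D)$ as $Z(D)$-interior algebras, with $\mathrm{Br}_D(j)$ mapping to $1$.

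The key step, which I expect to be the main obstacle, is converting $\mathrm{Br}_D(j)\ne0$ into the splitting. I would argue as follows. Decompose $j=\sum_k j_k$ according to $\mathbb{A}_\delta=\bigoplus M_k$, with $j_k\in M_k^{\Delta(D)}$. Since $\mathrm{Br}_{\Delta(D)}(j)\neq0$, some $j_k$ has nonzero Brauer image. For a permutation module $\mathrm{Ind}_{\Delta_g(D)}^{D\times D}(\U)$, the Brauer quotient at $\Delta(D)$ is spanned by the $\Delta(D)$-fixed cosets, which forces $\Delta(D)$ to be conjugate into $\Delta_{g_k}(D)$. Hence $D\cap{}^{g_k}D=D$ and $M_k\cong\mathrm{Ind}_{\Delta_\varphi(D)}^{D\times D}(\U)$ for an automorphism $\varphi$; the image of $j_k$ is a nonzero combination of $\Delta(D)$-fixed basis elements. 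Composing $\iota$ with the projection to $M_k$ gives a map $\U D\to M_k$ between modules of equal rank. Reducing modulo $J(\U)$ and using that the image of a generator of the $\Delta(D)$-fixed coset is a generator of the indecomposable module $M_k$ (projective cover/Nakayama argument, as the trivial coset is the only $\Delta(D)$-fixed point class in the transitive set, and its coefficient is a unit because $\mathrm{Br}$ of $j_k$ is nonzero modulo $J(\U)$), this map is surjective, hence an isomorphism. Therefore $\iota$ is split injective with complement $\bigoplus_{l\neq k}M_l$.

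Finally, I take $\mathcal{B}=\{uj\mid u\in D\}\cup\mathcal{B}'$, where $\mathcal{B}'$ is the union of the permutation bases of the $M_l$ with $l\neq k$. This is a $(D\times D)$-invariant $\U$-basis of $\mathbb{A}_\delta$ containing $j$.
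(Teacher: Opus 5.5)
\textbf{There is a genuine gap.} Your reduction is fine. If $\pi_k\circ\iota\colon\mathcal{O}D\to M_k$ is an isomorphism, then $\mathbb{A}_\delta=\iota(\mathcal{O}D)\oplus\bigoplus_{l\neq k}M_l$, and $\{uj\}\cup\mathcal{B}'$ is the required basis. The gap is in the step that is supposed to produce such a $k$. You pick $k$ only so that $\mathrm{Br}_{\Delta(D)}(j_k)\neq 0$. You then assert that the trivial coset is the only $\Delta(D)$-fixed coset, so a nonzero Brauer image gives a unit coefficient and hence, by Nakayama, a generator. Both parts fail.

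Once $\Delta(D)$ is conjugate to $\Delta_\varphi(D)$, $\varphi$ is inner and $M_k\cong\mathcal{O}D$ with $D\times D$ acting by left and right multiplication. The $\Delta(D)$-fixed basis elements are then all $z\in Z(D)$. So $M_k(\Delta(D))\cong kZ(D)$ has dimension $|Z(D)|>1$, and it is all of $kD$ when $D$ is abelian. An element $x\in\mathcal{O}D$ generates $\mathcal{O}D$ as a $(D\times D)$-module if and only if its augmentation is a unit. Take $x=1-z$ with $1\neq z\in Z(D)$: its Brauer image $1-\bar z$ is nonzero and its coefficient at the trivial coset is $1$. Yet it lies in the augmentation ideal, so it generates a proper submodule. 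Hence ``$\mathrm{Br}(j_k)\neq 0$'' does not give surjectivity of $\pi_k\circ\iota$.

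\textbf{The missing idea} is to use the algebra structure of the Brauer quotient, not just its nonvanishing. The element $\mathrm{Br}_D(j)=\sum_k\mathrm{Br}_D(j_k)$ is the identity of $\mathbb{A}_\delta(D)\cong kZ(D)$, which is a local algebra. Since the non-units form an ideal, some $\mathrm{Br}_D(j_k)$ is invertible. Because $\mathbb{A}_\delta^D$ is local and $\mathrm{Br}_D(j)\neq0$, the kernel of $\mathrm{Br}_D$ on $\mathbb{A}_\delta^D$ lies in $J(\mathbb{A}_\delta^D)$, so $j_k\in(\mathbb{A}_\delta^D)^\times$.

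Now $x\mapsto xj_k$ is injective after reduction modulo $J(\mathcal{O})$. It is right multiplication by a unit, and $kD\to k\otimes_{\mathcal{O}}\mathbb{A}_\delta$ is injective because the target is a nonzero free left $kD$-module. So the map is bijective there by dimension count, and it is an isomorphism $\mathcal{O}D\cong M_k$ by Nakayama and equality of ranks. With this your argument goes through.

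The repaired argument is essentially the paper's proof, done with basis elements instead of summands. Write $j=\sum_x\lambda_x x$ in a $(D\times D)$-invariant basis $\mathcal{B}$. Locality of $kZ(D)$ gives some $x_0\in\mathcal{B}^D$ with $\mathrm{Br}_D(x_0)$ invertible, hence $x_0\in(\mathbb{A}_\delta^D)^\times$. Then $\mathcal{B}x_0^{-1}$ is still $(D\times D)$-invariant, because $x_0^{-1}$ commutes with $D$, and it contains $j$.
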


\begin{proof}
The existence of a $(D\times D)$-invariant $\U$-basis $\mathcal{B}$ follows from 
Lemma \ref{basic result of mod stru}.
In order to show that the $\U$-basis $\mathcal{B}$ can be chosen to contain $j$,
it suffices to demonstrate that this $\mathcal{B}$ contains an invertible element in $\mathbb{A}_\delta^D$.
So we can assume that $\U=k$.
Set $j=\sum\limits_{x\in\mathcal{B}}\lambda_xx$ with $\lambda_x\in k$.
Then $\mathrm{Br}_D(j)=\sum\limits_{x\in\mathcal{B}^D}\lambda_x\mathrm{Br}_D(x)$ is the unit element of $\mathbb{A}_\delta(D)$ which is isomorphic to the local algebra $kZ(D)$ 
by Lemma \ref{Brauer quot in D}.
In particular, there is at least one element $x_0\in\mathcal{B}^D$ with 
$\lambda_{x_0}\neq 0$ and $\mathrm{Br}_D(x_0)$ being invertible in $\mathbb{A}_\delta(D)$.
Therefore, $x_0$ is invertible in $\mathbb{A}_\delta^D$ by \cite[Theorem (3.2)]{Thevenaz}.
\end{proof}

\begin{lem}\label{OD mult 1}
The multiplicity of $\U D$ in any decomposition of $\mathbb{A}_\delta$ 
as a direct sum of indecomposable $\U(D\times D)$-modules is $1$.
\end{lem}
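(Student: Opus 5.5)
We will count the indecomposable summands isomorphic to $\U D$ by passing to the Brauer quotient at the diagonal subgroup $\Delta D=\{(u,u)\mid u\in D\}$ of $D\times D$. By Lemma \ref{basic result of mod stru}, we can write
$$\mathbb{A}_\delta\cong\bigoplus_{g}\mathrm{Ind}_{\Delta_g(D)}^{D\times D}(\U).$$
A summand $\mathrm{Ind}_{\Delta_g(D)}^{D\times D}(\U)$ is isomorphic to $\U D$ exactly when $D\cap{^g}D=D$ and conjugation by $g$ induces an inner automorphism of $D$. In that case $\Delta_g(D)$ is $D\times D$-conjugate to $\Delta D$. More generally, $\Delta D$ fixes a point of the permutation basis $(D\times D)/\Delta_g(D)$ if and only if $\Delta D$ is contained in a conjugate of $\Delta_g(D)$, that is, if and only if ${}^g$ induces on $D$ the inner automorphism given by some element of $D$.

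The plan is to compute $\mathbb{A}_\delta(\Delta D)$, which is just $\mathbb{A}_\delta(D)$ for the conjugation action of $D$, in two ways.

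\textbf{First computation.} By Lemma \ref{Brauer quot in D}, $\mathbb{A}_\delta(D)\cong kZ(D)$, so it has dimension $|Z(D)|$.

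\textbf{Second computation.} For a transitive permutation module $\U[(D\times D)/\Delta_g(D)]$, the Brauer quotient at $\Delta D$ has a $k$-basis given by the $\Delta D$-fixed cosets. For a summand isomorphic to $\U D$ these are the cosets corresponding to elements $z\in Z(D)$, so each such summand contributes exactly $|Z(D)|$ to the dimension. Every other summand either has no $\Delta D$-fixed cosets, or is $\mathrm{Ind}_{\Delta_\varphi(D)}^{D\times D}(\U)$ for an automorphism $\varphi$ of $D$. In the latter case $\Delta D$ has fixed points only if $\varphi$ is inner, and then the summand is again isomorphic to $\U D$.

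Hence $\dim_k\mathbb{A}_\delta(D)$ equals $|Z(D)|$ times the multiplicity $m$ of $\U D$. Comparing with the first computation gives $m=1$. Lemma \ref{basis contain 1} shows independently that $m\geq 1$, since $\mathrm{Br}_D(j)\neq0$ and $j$ lies in a $\Delta D$-fixed element of a $D\times D$-stable basis.

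\textbf{Main obstacle.} The delicate step is to justify that summands $\mathrm{Ind}_{\Delta_g(D)}^{D\times D}(\U)$ with $g$ normalizing $D$ but acting by a non-inner automorphism contribute nothing at $\Delta D$. This needs the fixed-point computation: a coset $(x,y)\Delta_g(D)$ is $\Delta D$-fixed if and only if, for every $u\in D$, $(x^{-1}ux,\,y^{-1}uy)\in\Delta_g(D)$. That condition forces $c_g=c_{x^{-1}y}$ on $D$, which means the action of $g$ is inner. Once this is checked, the dimension count above finishes the proof.
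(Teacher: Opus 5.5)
Your proof is correct and follows the paper's own argument. Both proofs compute the Brauer quotient of $\mathbb{A}_\delta$ at the diagonal subgroup $\Delta(D)$. Only the summands $\U DgD$ with $g\in DC_G(D)$, which are exactly those isomorphic to $\U D$, contribute, and each contributes $kZ(D)$. Comparing with $\mathbb{A}_\delta(D)\cong kZ(D)$ from Lemma \ref{Brauer quot in D} then forces multiplicity $1$. The paper phrases the vanishing of the other summands through Mackey's formula for $\mathrm{Res}^{D\times D}_{\Delta(D)}$, whereas you count $\Delta(D)$-fixed cosets directly; this is the same computation.
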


\begin{proof}
Denote by $\Delta(D)$ the diagonal group of $D\times D$.
Then by Mackey's formula,
the direct summand of $\mathrm{Res}^{D\times D}_{\Delta(D)}(\U DgD)$ has the form
$\mathrm{Ind}_{\Delta(D)\cap{^{(u,v)}}\Delta_g(D)}^{\Delta(D)}(\U)$
with $(u,v)\in D\times D$.
It is easy to check that $\Delta(D)\cap{^{(u,v)}}\Delta_g(D)=\Delta(D)$
for some $(u,v)\in D\times D$ if and only if 
$g\in DC_G(D)$ and when $g\in DC_G(D)$,
$\U DgD=\U Dg$ is isomorphic to $\U D$ as $\U(D\times D)$-modules.
Hence, $(\U DgD)(D)$ is isomorphic to $kZ(D)$ when $g$ belongs to $DC_G(D)$.
Then by Lemma \ref{Brauer quot in D},
there is only one indecomposable direct summand of 
the $\U(D\times D)$-module $\mathbb{A}_\delta$ isomorphic to $\U D$.
\end{proof}

\begin{order}\rm
Recall that $N_G(D_\delta)/DC_G(D)$ is a $p$-nilpotent group 
with normal $p$-complement $E_\mathfrak{h}$.
By Proposition \ref{p-nilpotent quot},
the isomorphism $\mathfrak{f}_{G,D}$ from $N_G(D_\delta)/DC_G(D)$ to $F_A(D)$ 
maps $E_\mathfrak{h}$ onto $N_{\mathbb{A}_\delta}(D)(A_\delta^D)^\times/D(A_\delta^D)^\times$,
which is isomorphic to $N_{\mathbb{A}_\delta}(D)/D(\mathbb{A}_\delta^D)^\times$.
For any $g\in N_G(D_\delta)$ with $gDC_G(D)\in E_\mathfrak{h}$,
we denote by $\mathfrak{a}_g$ an element in $\mathbb{A}_\delta^\times$ such that
$\mathfrak{a}_g(uj)\mathfrak{a}_g^{-1}=(gug^{-1})j$ for any $u\in D$.
\end{order}

\begin{lem}\label{ODg mult 1}
As $\U(D\times D)$-modules,
$\U D\mathfrak{a}_g$ is isomorphic to $\U Dg$ and 
it is a direct summand of $\mathbb{A}_\delta$ with multiplicity $1$.
\end{lem}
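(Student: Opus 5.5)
Our plan is to transport the statement of Lemma \ref{OD mult 1} along the invertible element $\mathfrak{a}_g$, in two steps: first identify $\U D\mathfrak{a}_g$, then read off its multiplicity from Lemma \ref{OD mult 1}.

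\textbf{Step 1: $\U D\mathfrak{a}_g\cong\U Dg$.} Since $\mathfrak{a}_g\in\mathbb{A}_\delta^\times$ satisfies $\mathfrak{a}_g(uj)\mathfrak{a}_g^{-1}=(gug^{-1})j$ for $u\in D$, we get
\[
\mathfrak{a}_g\,u=({}^{g}u)\,\mathfrak{a}_g \quad (u\in D).
\]
Hence the $\U$-submodule $\U D\mathfrak{a}_g=\sum_{u\in D}\U\, uj\mathfrak{a}_g$ of $\mathbb{A}_\delta$ is stable under left and right multiplication by $D$, and the right action of $v$ is the left action of ${}^{g}v$. The $D\times D$-set $\{uj\mathfrak{a}_g\mid u\in D\}$ is transitive with stabilizer of $\mathfrak{a}_g$ equal to $\{({}^{g}v,v)\mid v\in D\}=\Delta_g(D)$, provided these $|D|$ elements are $\U$-linearly independent. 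This independence holds because $\mathfrak{a}_g$ is invertible and $\U D\to\mathbb{A}_\delta$, $u\mapsto uj$, is injective. That injectivity holds because $\mathbb{A}_\delta$ is free as a left $\U D$-module, by Lemma \ref{basic result of mod stru} and Mackey's formula. We conclude that $\U D\mathfrak{a}_g\cong\mathrm{Ind}_{\Delta_g(D)}^{D\times D}(\U)\cong\U Dg$.

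\textbf{Step 2: twisting.} Consider the $\U$-linear automorphism $\tau\colon x\mapsto x\mathfrak{a}_g$ of $\mathbb{A}_\delta$. It is left $D$-linear, and for the right action it satisfies $\tau(x)v=x\,({}^{g}v)\mathfrak{a}_g=\tau(x\,{}^{g}v)$. Therefore $\tau$ gives an isomorphism of $\U(D\times D)$-modules between $\mathbb{A}_\delta$ twisted on the right by $\varphi\colon v\mapsto {}^{g}v$, an automorphism of $D$ since $g\in N_G(D)$, and $\mathbb{A}_\delta$ itself. This $\tau$ carries $\U Dj$ onto $\U D\mathfrak{a}_g$.

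\textbf{Step 3: summand and multiplicity.} By Lemma \ref{basis contain 1} and the argument of Lemma \ref{OD mult 1}, $\U Dj\cong\U D$ is a direct summand of $\mathbb{A}_\delta$ of multiplicity one. Concretely, the summand containing $j$ is $\U D$-isomorphic, and Lemma \ref{OD mult 1} gives uniqueness. Fix a decomposition $\mathbb{A}_\delta=\U Dj\oplus M$. Applying $\tau$ gives $\mathbb{A}_\delta=\U D\mathfrak{a}_g\oplus M\mathfrak{a}_g$ as $\U(D\times D)$-modules, so $\U D\mathfrak{a}_g$ is a direct summand.

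Twisting by the automorphism $1\times\varphi$ of $D\times D$ induces a bijection on isomorphism classes of indecomposable modules and preserves multiplicities. It sends the class of $\U D$ to the class of $\U Dg$, since the stabilizer $\Delta(D)$ is carried to $\Delta_g(D)$. Hence the multiplicity of $\U Dg$ in $\mathbb{A}_\delta$ equals the multiplicity of $\U D$ in $\mathbb{A}_\delta$, which is $1$ by Lemma \ref{OD mult 1}.

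The main obstacle is Step 3: making sure the twist is set up correctly, so that $\mathbb{A}_\delta$ twisted by $1\times\varphi$ really is isomorphic to $\mathbb{A}_\delta$ via $\tau$, and that the twist carries $\U D$ to $\U Dg$ and not to $\U Dg^{-1}$. If the convention turns out reversed, the fix is to replace right multiplication by $\mathfrak{a}_g$ with right multiplication by $\mathfrak{a}_g^{-1}$.
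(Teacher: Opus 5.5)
Your proof is correct and follows the paper's route. You identify $\U D\mathfrak{a}_g\cong\U Dg$, split off $\U Dj$ using the basis from Lemma \ref{basis contain 1}, and right-multiply that decomposition by $\mathfrak{a}_g$. You then get multiplicity one from Lemma \ref{OD mult 1}; your twist by $1\times\varphi$ only spells out the transport step the paper leaves implicit, and the direction you worried about is right, since $\U D$ with right action twisted by $v\mapsto {}^g v$ has stabilizer $\Delta_g(D)$.
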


\begin{proof}
It is easy to check that mapping $u\mathfrak{a}_g$ to $ug$ gives an $\U(D\times D)$-module isomorphism
from $\U D\mathfrak{a}_g$ to $\U Dg$.
By Lemma \ref{basis contain 1},
we can write $\mathbb{A}_\delta=\U Dj\oplus Y$ as $\U(D\times D)$-modules
for some $\U(D\times D)$-submodule $Y$.
Then we have 
$\mathbb{A}_\delta=\mathbb{A}_\delta\mathfrak{a}_g=\U D\mathfrak{a}_g\oplus Y\mathfrak{a}_g$
which is still a decomposition of $\mathbb{A}_\delta$ as $\U(D\times D)$-modules.
Hence, $\U D\mathfrak{a}_g$ is a direct summand of $\mathbb{A}_\delta$.
By Lemma \ref{OD mult 1},
we can get that the multiplicity of $\U D\mathfrak{a}_g$ is $1$.
\end{proof}

\begin{prop}\label{general mod stru}
There is a decomposition of $\mathbb{A}_\delta$ as an $\U(D\times D)$-module
\begin{align}\label{general dec as bimod}
\mathbb{A}_\delta=(\mathop\bigoplus\limits_{gDC_G(D)\in E_\mathfrak{h}}\U D\mathfrak{a}_g)
\oplus X,
\end{align}
where $X$ is isomorphic to a direct sum of $\U(D\times D)$-modules 
of the form $\U DhD$ for some $h\in G-N_G(D)$.
The $\U(D\times D)$-module $\U D\mathfrak{a}_g$ is independent of 
the choice of $g$ up to isomorphism and
has multiplicity $1$ for any $gDC_G(D)\in E_\mathfrak{h}$.
Moreover, there is a unitary subalgebra $\mathbb{B}$ of $\mathbb{A}_\delta$ 
isomorphic to the twisted group algebra $\U_\alpha(D\rtimes E_\mathfrak{h})$
for some $\alpha$ of $Z^2(E_\mathfrak{h};k^\times)$
such that it contains $\U Dj$ and 
$\mathbb{A}_\delta=\mathbb{B}\oplus Y$ as $\mathbb{B}$-$\mathbb{B}$-bimodules.
In particular, $\mathbb{B}$ and $Y$ are isomorphic to 
$\mathop\bigoplus\limits_{gDC_G(D)\in E_\mathfrak{h}}\U D\mathfrak{a}_g$ and 
$X$ as $\U(D\times D)$-modules, respectively.
\end{prop}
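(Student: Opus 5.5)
We first pin down the summands of $\mathbb{A}_\delta$ of the form $\U DhD$ with $h\in N_G(D)$, and then construct $\mathbb{B}$ from the elements $\mathfrak{a}_g$. By Lemma \ref{basic result of mod stru}, every indecomposable summand is $\U DhD\cong\mathrm{Ind}_{\Delta_h(D)}^{D\times D}(\U)$. When $h\in N_G(D)$ we have $\U DhD=\U Dh$, a twisted copy of $\U D$, and $\Delta_h(D)$ is a twisted diagonal of $D\times D$.

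To determine which $h\in N_G(D)$ occur, we apply the Brauer quotient at $\Delta_h(D)$, or equivalently at $D$ after twisting by $h$. Such a summand contributes nonzero Brauer quotient to $(\mathbb{A}_\delta\cdot\mathfrak{a})(D)$ for a suitable unit $\mathfrak{a}$ realizing conjugation by $h$ on $Dj$. The argument of Proposition \ref{p-nilpotent quot} together with Lemma \ref{Brauer quot in D} then forces $h\in N_G(D_\delta)$ with $hDC_G(D)\in E_\mathfrak{h}$, since the $p$-part is killed by $(j\mathbb{A}uj)(D)=0$ for $u\notin DC_P(D)$. Concretely, we multiply the summand on the right by $\mathfrak{a}_g^{-1}$ and count summands isomorphic to $\U D$, as in Lemma \ref{ODg mult 1}. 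Lemma \ref{ODg mult 1} shows that each $\U D\mathfrak{a}_g$ occurs with multiplicity exactly $1$, and its isomorphism class depends only on the coset $gDC_G(D)$: changing $g$ by $c\in DC_G(D)$ changes $\U Dg$ by an inner twist by an element of $D$, which is trivial up to isomorphism. The sum $\sum_g\U D\mathfrak{a}_g$ is direct because the twists are pairwise nonisomorphic. The remaining summands have $h\notin N_G(D)$, and they form $X$. This gives the decomposition (\ref{general dec as bimod}).

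For $\mathbb{B}$ we pass to $\mathbb{A}_\delta(D)\cong kZ(D)$ and to $N_{\mathbb{A}_\delta}(D)/D(\mathbb{A}_\delta^D)^\times\cong E_\mathfrak{h}$. Since $(\mathbb{A}_\delta^D)^\times\cong k^\times\times(1+J(\mathbb{A}_\delta^D))$ and $E_\mathfrak{h}$ is a $p'$-group, a standard Schur--Zassenhaus-type argument (lifting through the pro-$p$ group $1+J$) lets us choose the $\mathfrak{a}_g$, indexed by $E_\mathfrak{h}$, so that $\mathfrak{a}_g\mathfrak{a}_{g'}\in k^\times\mathfrak{a}_{gg'}\cdot Dj$. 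This requires modifying by elements of $1+J(\mathbb{A}_\delta^D)$ and using the vanishing of $H^2$ of the $p'$-group $E_\mathfrak{h}$ with coefficients in the $p$-group filtration quotients. The $\U$-span $\mathbb{B}$ of the $u\mathfrak{a}_g$ is then a subalgebra isomorphic to $\U_\alpha(D\rtimes E_\mathfrak{h})$ for a cocycle $\alpha\in Z^2(E_\mathfrak{h};k^\times)$. It is unitary because $\mathfrak{a}_1=j$, and it is $\U$-free of rank $|D||E_\mathfrak{h}|$ by the direct-sum statement above.

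The main obstacle is finding a complement $Y$ that is a $\mathbb{B}$-$\mathbb{B}$-subbimodule rather than merely a $D\times D$-subbimodule. We plan to use that $\mathbb{B}$ is relatively $\U D$-separable, since $|E_\mathfrak{h}|$ is invertible in $\U$. The $D\times D$-projection $\pi\colon\mathbb{A}_\delta\to\mathbb{B}$ is averaged to
$$\pi'(a)=|E_\mathfrak{h}|^{-2}\sum_{g,g'}\mathfrak{a}_g^{-1}\pi(\mathfrak{a}_ga\mathfrak{a}_{g'})\mathfrak{a}_{g'}^{-1},$$
which is $\mathbb{B}$-bilinear and restricts to the identity on $\mathbb{B}$. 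Setting $Y=\ker\pi'$ gives $\mathbb{A}_\delta=\mathbb{B}\oplus Y$ as $\mathbb{B}$-$\mathbb{B}$-bimodules. By Krull--Schmidt applied to the $\U(D\times D)$-structure, $Y\cong X$ as $\U(D\times D)$-modules.
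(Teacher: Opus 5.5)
Your proposal is correct. For the decomposition (\ref{general dec as bimod}) and the construction of $\mathbb{B}$ you follow essentially the paper's route. The paper also writes $h=vx$, multiplies a putative extra summand by $\mathfrak{a}_x^{-1}$ and then by $a_{v,\delta}^{-1}$ to land in $j\mathbb{A}v^{-1}j$, and concludes with Lemma \ref{Brauer quot in D}. It likewise obtains $\mathbb{M}$ from a complement of $j+J(\mathbb{A}_\delta^D)$ via \cite[Lemma (45.6)]{Thevenaz}.

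Three small precision points:
\begin{itemize}
\item The fact $h\in N_G(D_\delta)$, which you need before a unit realizing $h$ exists, comes from the Brauer pair inclusion in Lemma \ref{basic result of mod stru}, not from the argument of Proposition \ref{p-nilpotent quot}.
\item Your projection $\pi$ needs $\bigoplus_g\mathcal{O}D\mathfrak{a}_g$ to be a direct summand, not merely a direct sum; this is what \cite[Lemma (44.7)]{Thevenaz} supplies.
\item With your normalization $\mathfrak{a}_g\mathfrak{a}_{g'}\in k^\times\mathfrak{a}_{gg'}Dj$, one more Schur--Zassenhaus step is needed to get a cocycle on $E_\mathfrak{h}$ itself. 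The paper avoids this by viewing $E_\mathfrak{h}$ as a $p'$-subgroup of $N_G(D_\delta)/C_G(D)$ from the start.
\end{itemize}

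The genuine difference is the $\mathbb{B}$-$\mathbb{B}$-bimodule splitting. The paper uses \cite[Lemma 5.5 and Proposition 5.15]{P88} to realize $\mathbb{B}$ as $\mathcal{O}(Dj\rtimes\mathbb{M}^\prime)\mathfrak{b}^\prime$ for a finite subgroup $\mathbb{M}^\prime$. It checks via $\mathrm{Br}_D(\mathbb{B}^D)\cong kZ(D)$ that $\mathfrak{b}^\prime$ is a block, and then quotes \cite[Proposition 6.7.15]{L'book 2}.

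Your two-sided averaging of a $D\times D$-equivariant projection over $E_\mathfrak{h}$ does work. Each term $\mathfrak{a}_g^{-1}\pi(\mathfrak{a}_g a\mathfrak{a}_{g'})\mathfrak{a}_{g'}^{-1}$ is unchanged when $\mathfrak{a}_g$ or $\mathfrak{a}_{g'}$ is multiplied by a scalar or an element of $Dj$. Hence left and right $\mathbb{B}$-linearity follow by reindexing, and $|E_\mathfrak{h}|\in\mathcal{O}^\times$.

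Your route is more elementary and self-contained. It needs neither the passage to a finite central extension nor a block-theoretic splitting theorem, only $p\nmid|E_\mathfrak{h}|$ and the $D\times D$-summand property you already established. The paper's route instead places $\mathbb{B}$ in the standard framework of blocks with a normal defect group. This matches how the analogous splittings are obtained later via \cite[Proposition 6.15.2]{L'book 2}.
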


\begin{proof}
By Lemma \ref{ODg mult 1} and \cite[Lemma (44.7)]{Thevenaz},
we have 
$\mathbb{A}_\delta=
(\mathop\bigoplus\limits_{gDC_G(D)\in E_\mathfrak{h}}\U D\mathfrak{a}_g)\oplus X$
such that $\mathop\bigoplus\limits_{gDC_G(D)\in E_\mathfrak{h}}\U D\mathfrak{a}_g$ 
and $X$ have no common direct summand up to isomorphism.
Suppose that $X$ has a direct summand isomorphic to $\U DhD$ with $h\in N_G(D)$.
This implies that there is an element $\mathfrak{a}_h$ in $\mathbb{A}_\delta$
satisfying that $u\mathfrak{a}_hw^{-1}=\mathfrak{a}_h$ if and only if 
$w=h^{-1}uh$ for any $(u,w)\in D\times D$ and 
the $\U(D\times D)$-module $\U D\mathfrak{a}_h$ is a direct summand of $\mathbb{A}_\delta$.

On the other hand, 
by Lemma \ref{basic result of mod stru},
the element $h$ has to belong to $N_G(D_\delta)$.
Then $h=vx$ for some $v\in P$ and $xDC_G(D)\in E_\mathfrak{h}$.
Since there is an element $\mathfrak{a}_x$ in $\mathbb{A}_\delta^\times$ 
such that $\mathfrak{a}_x(uj)\mathfrak{a}_x^{-1}=(xux^{-1}j)$ for any $u\in D$,
$\U D\mathfrak{a}_h\mathfrak{a}_x^{-1}$ is also a direct summand of $\mathbb{A}_\delta$
which is isomorphic to $\U Dv$ as $\U(D\times D)$-modules.
Recall from the proof of Lemma \ref{Brauer quot in D},
for $v\in P$, there is an element $a_{v,\delta}$ of $A_\delta^\times\cap j\mathbb{A}vj$ such that 
$a_{v,\delta}(uj)a_{v,\delta}^{-1}=(vuv^{-1})j$ for any $u\in D$.
Therefore, $\mathbb{A}_\delta a_{v,\delta}^{-1}$ is contained in $j\mathbb{A}v^{-1}j$ and
has a direct summand isomorphic to $\U D$.
In particular, it implies that $(j\mathbb{A}v^{-1}j)(D)\neq 0$.
By Lemma \ref{Brauer quot in D},
we have $v$ is in $DC_P(D)$.
So we can get that $X$ has a direct summand isomorphic to 
$\U D\mathfrak{a}_x$, which is impossible.
So the direct summand of $X$ has the form $\U DhD$ with $h\in G-N_G(D)$.
The second statement follows from Lemma \ref{ODg mult 1}.

Let us consider the remaining statements of this proposition.
Since $E_\mathfrak{h}$ is a $p^\prime$-group,
we can identify it with a subgroup of $N_G(D_\delta)/C_G(D)$.
For any $g\in N_G(D_\delta)$,
we denote $gC_G(D)\in N_G(D_\delta)/C_G(D)$ by $\tilde{g}$.
Recall from the paragraph 4.4 that
for any $\tilde{g}\in E_\mathfrak{h}$,
there is an element $\mathfrak{a}_g$ in $\mathbb{A}_\delta^\times$
such that $\mathfrak{a}_g(uj)\mathfrak{a}_g^{-1}=(gug^{-1})j$ for any $u\in D$.
Denote the subset $\{\mathfrak{a}_g\mathfrak{a}_\delta\mid\tilde{g}\in E_\mathfrak{h},\mathfrak{a}_\delta\in(\mathbb{A}_\delta^D)^\times\}$ by $\hat{E}_\mathfrak{h}$.
Clearly, $\hat{E}_\mathfrak{h}$ is a subgroup of $N_{\mathbb{A}_\delta}(D)$
containing $(\mathbb{A}_\delta^D)^\times$ such that 
$\hat{E}_\mathfrak{h}/(\mathbb{A}_\delta^D)^\times$ is isomorphic to $E_\mathfrak{h}$.
Since $(\mathbb{A}_\delta^D)^\times$ is isomorphic to 
$(j+J(\mathbb{A}_\delta^D))\times k^\times$,
we can get the following short exact sequence
\begin{equation*}
	\xymatrix{
		1\ar[r]& j+J(\mathbb{A}_\delta^D)\ar[r] & \hat{E}_\mathfrak{h}/k^\times \ar[r] & 
		E_\mathfrak{h}\ar[r] & 1.
	}
\end{equation*}
By \cite[Lemma (45.6)]{Thevenaz}, this sequence  splits since $E_\mathfrak{h}$ is a $p^\prime$-group.
Now let $\mathbb{M}$ be a subgroup  of $\hat{E}_\mathfrak{h}$ containing $k^\times$
such that $\mathbb{M}/k^\times$ is a complement of $j+J(\mathbb{A}_\delta^D)$ in $\hat{E}_\mathfrak{h}/k^\times$.
This yields another short exact sequence
\begin{equation*}
	\xymatrix{
		1\ar[r]& k^\times\ar[r] &\mathbb{M} \ar[r] & 
		E_\mathfrak{h}\ar[r] & 1,
	}
\end{equation*}
which determines an element $\alpha$ in $Z^2(E_\mathfrak{h};k^\times)$.
Set $\mathbb{B}$ to be the $\U$-submodule of $\mathbb{A}_\delta$ 
generated by $Dj\rtimes\mathbb{M}$.
It is routine to check that $\mathbb{B}$ is a unitary subalgebra containing $\U Dj$ of $\mathbb{A}_\delta$ which is 
isomorphic to $\U_\alpha(D\rtimes E_\mathfrak{h})$ as $D$-interior algebras and
isomorphic to $\mathop\bigoplus\limits_{gDC_G(D)\in E_\mathfrak{h}}\U D\mathfrak{a}_g$
as $\U(D\times D)$-modules.
Therefore, $\mathrm{Br}_D(\mathbb{B}^D)$ is isomorphic to $kZ(D)$.
By \cite[Lemma 5.5 and Proposition 5.15]{P88},
there is a finite subgroup $\mathbb{M}^\prime$ of $\mathbb{M}$ such that
$\mathbb{B}$ is isomorphic to $\U(Dj\rtimes\mathbb{M}^\prime)\mathfrak{b}^\prime$ 
for some central idempotent $\mathfrak{b}^\prime$ of $\U(Dj\rtimes\mathbb{M}^\prime)$
as $D$-interior algebras.
Then $\mathrm{Br}_D((\U(Dj\rtimes\mathbb{M}^\prime)\mathfrak{b}^\prime)^D)$ is isomorphic to
the local $k$-algebra $kZ(D)$ which implies
$\mathfrak{b}^\prime$ is a block idempotent since $Dj$ is normal in $Dj\rtimes\mathbb{M}^\prime$.
Hence, by \cite[Proposition 6.7.15]{L'book 2},
the inclusion map $\mathbb{B}$ to $\mathbb{A}_\delta$ is split injective as 
a homomorphism of $\mathbb{B}$-$\mathbb{B}$-bimodules.
We are done.
\end{proof}

Before giving a refinement of this $\U(D\times D)$-module structure for the hyperfocal subalgebra
of a hyperfocal abelian Frobenius block,
we state three consequences of Proposition \ref{general mod stru}.
First, we can get the information about the rank of $\mathbb{A}_\delta$ and 
the dimension of simple modules as follows.

\begin{cor}\label{dim of bbA and simple mod}
	We have 
	$\frac{\mathrm{rank}_\mathcal{O}(\mathbb{A}_\delta)}{|D|}\equiv |E_\mathfrak{h}|
	 (\mathrm{mod} ~|D|)$.
	In particular,
	there is at least one simple $k\mathop\otimes\limits_\mathcal{O}\mathbb{A}_\delta$-module
	of dimension coprime to $p$.
\end{cor}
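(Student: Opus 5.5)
The plan is to read off the rank of $\mathbb{A}_\delta$ from the $\U(D\times D)$-decomposition (\ref{general dec as bimod}) of Proposition \ref{general mod stru}. Then I deduce the statement about simple modules from the resulting congruence, using that projective $\mathbb{A}_\delta$-modules are free over $\U D$.

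\emph{Step 1: the first summand.} For each $gDC_G(D)\in E_\mathfrak{h}$, Lemma \ref{ODg mult 1} gives $\U D\mathfrak{a}_g\cong\U Dg$, which has $\U$-rank $|D|$. So $\mathop\bigoplus_{gDC_G(D)\in E_\mathfrak{h}}\U D\mathfrak{a}_g$ has rank $|E_\mathfrak{h}|\,|D|$. Hence
$$\frac{\mathrm{rank}_\U(\mathbb{A}_\delta)}{|D|}=|E_\mathfrak{h}|+\frac{\mathrm{rank}_\U(X)}{|D|},$$
and it remains to show that $\mathrm{rank}_\U(X)/|D|\equiv 0 \pmod{|D|}$.

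\emph{Step 2: the summands of $X$.} By Lemma \ref{basic result of mod stru}, each indecomposable summand of $X$ is $\U DhD\cong\mathrm{Ind}_{\Delta_h(D)}^{D\times D}(\U)$ with $h\notin N_G(D)$. Its rank is $|D|^2/|Q_h|$, where $Q_h=D\cap{}^hD$ is a proper subgroup of $D$. Each summand therefore contributes the nontrivial $p$-power $[D:Q_h]$ to $\mathrm{rank}_\U(X)/|D|$.

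\emph{Step 3: the main obstacle.} Step 2 gives divisibility by $p$ at once. Divisibility by the full $|D|$ needs the indices $[D:Q_h]$ to add up correctly. I would try to organise the summands of $X$ into families permuted by the invertible elements $\mathfrak{a}_g$, for $gDC_G(D)\in E_\mathfrak{h}$, and by the elements $a_{u,\delta}$ of paragraph 3.1. These act on the set of $D\times D$-summands by $Y\mapsto \mathfrak{a}_gY\mathfrak{a}_{g'}$ (as in the proof of Lemma \ref{ODg mult 1}), which replaces $h$ by $ghg'^{-1}$. I would then compare the contributions of $X$ over each orbit, using the control of fusion by $N_G(D,\bar b_D)$ from \cite[Theorem 2]{W14}. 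This is the step I am least sure of. If it cannot be carried through, only the congruence modulo $p$ survives from Steps 1--2, and that weaker congruence is all Step 4 uses.

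\emph{Step 4: simple modules.} Put $\bar{\mathbb{A}}_\delta=k\otimes_\U\mathbb{A}_\delta$. Then
$$\dim_k\bar{\mathbb{A}}_\delta=\sum_W \dim_k(P_W)\,\dim_k(W),$$
where $W$ runs over the simple $\bar{\mathbb{A}}_\delta$-modules and $P_W$ is the projective cover of $W$. By Proposition \ref{general mod stru}, $\mathbb{A}_\delta$ is free as a left $\U D$-module, so every $P_W$ is free over $kD$. Hence $|D|$ divides $\dim_k(P_W)$, and
$$\frac{\dim_k\bar{\mathbb{A}}_\delta}{|D|}=\sum_W\frac{\dim_k P_W}{|D|}\dim_k W .$$
By Steps 1--3, the left-hand side is congruent to $|E_\mathfrak{h}|$, which is prime to $p$. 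So the right-hand side is not divisible by $p$, and some simple module $W$ has $\dim_k W$ coprime to $p$.
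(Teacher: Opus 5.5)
\textbf{Verdict.} Steps 1, 2 and 4 are correct. The paper states the corollary without a written proof, as an immediate consequence of the decomposition (\ref{general dec as bimod}) in Proposition \ref{general mod stru}, so your reading-off is the intended argument. Step 4 is the same count $\dim_k\bar{\mathbb{A}}_\delta=\sum_W\dim_k(P_W)\dim_k(W)$, with $|D|$ dividing $\dim_k(P_W)$, that the paper uses in the proof of Lemma \ref{a general lemma}. The gap is Step 3. It cannot be closed by orbit counting or any other argument, because the congruence modulo $|D|$ is false in general.

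\textbf{Counterexample.} Take $p=3$ and $G=Q_8\rtimes D_{18}$, where $D_{18}=C_9\rtimes C_2$ acts on $Q_8$ through its quotient $D_{18}/C_3\cong\mathrm{S}_3$, and $Q_8\rtimes\mathrm{S}_3\cong GL_2(3)$.
\begin{itemize}
\item Let $b$ be the block covering the $2$-dimensional character of $Q_8$, afforded by a module $W$ that extends to $G$. Then $\mathcal{O}Gb\cong\mathrm{End}_\mathcal{O}(W)\otimes_\mathcal{O}\mathcal{O}D_{18}$, of rank $72$.
\item Here $P=D=C_9$, and $E_\mathfrak{h}\cong C_2$ acts by inversion. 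So $b$ is even a hyperfocal abelian Frobenius block with cyclic $D$.
\item The two simple modules of the block are $2$-dimensional. Each restricts to $P$ as the indecomposable inflation of $\Omega_{C_3}(k)$.
\item For a source idempotent $i$, $iS$ is a nonzero $kP$-direct summand of each simple $S$, so $i$ acts as the identity on both simples. Hence $i=b$ and $A=\mathcal{O}Gb$; since $D=P$, also $\mathbb{A}=\mathbb{A}_\delta=A$.
\end{itemize}
Therefore $\mathrm{rank}_\mathcal{O}(\mathbb{A}_\delta)/|D|=72/9=8$, while $|E_\mathfrak{h}|=2$, and $8\not\equiv 2\pmod 9$.

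In (\ref{general dec as bimod}) for this example, $X\cong\bigoplus_{\tilde g\in E_\mathfrak{h}}\mathrm{Ind}_{\Delta_{\tilde g}(Q)}^{D\times D}(\mathcal{O})$ with $|Q|=3$. It contributes $2\cdot 3=6$ to $\mathrm{rank}_\mathcal{O}(X)/|D|$, which is divisible by $3$ but not by $9$. More generally, whenever $\mathbb{A}_\delta\cong\mathrm{End}_\mathcal{O}(V)\otimes_\mathcal{O}\mathcal{O}(D\rtimes E_\mathfrak{h})$, the quotient equals $(\mathrm{rank}_\mathcal{O}V)^2|E_\mathfrak{h}|$. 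The factor $(\mathrm{rank}_\mathcal{O}V)^2$ is congruent to $1$ only modulo $p$, not modulo $|D|$.

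\textbf{What survives.} The provable statement is the congruence modulo $p$, equivalently $\mathrm{rank}_\mathcal{O}(\mathbb{A}_\delta)\equiv|E_\mathfrak{h}|\,|D|\pmod{p|D|}$. Your Steps 1--2 prove exactly this, so your fallback is the right call. Delete Step 3 and the proof is complete for that corrected statement, including the ``in particular'' part, which needs only the congruence modulo $p$. The weaker form is also all the paper uses later:
\begin{itemize}
\item the coprimality to $p$ of $\mathrm{rank}_\mathcal{O}(S^\circ\otimes_\mathcal{O}\mathbb{A}_\delta)/|D|$ in Proposition \ref{property of bbA'};
\item the existence of an odd-dimensional simple $\bar{\mathbb{A}}_\delta$-module in Theorem \ref{MT of Klein four}.
\end{itemize}
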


As the second consequence of Proposition \ref{general mod stru},
we can show that a special automorphism of $\mathbb{A}_\delta$ is an inner automorphism.
This result is similar to \cite[Proposition 14.9]{P88} and may be of independent interest.

\begin{cor}\label{auto is inner}
With the notation above, let $f$ be an $\U$-algebra automorphism of $\mathbb{A}_\delta$
such that its restriction to the subalgebra $\mathbb{B}$ is trivial.
Then $f$ is an inner automorphism induced by an element in 
$C_{\mathbb{A}_\delta}(\mathbb{B})^\times$.
Here, $C_{\mathbb{A}_\delta}(\mathbb{B})^\times$ denotes the centralizer of $\mathbb{B}$ in $\mathbb{A}_\delta^\times$.
\end{cor}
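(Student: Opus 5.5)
The plan is to show that the $\mathbb{A}_\delta$-$\mathbb{A}_\delta$-bimodule $(\mathbb{A}_\delta)_f$ (equal to $\mathbb{A}_\delta$ with the right action twisted by $f$) is isomorphic to $\mathbb{A}_\delta$ as a bimodule. The element realizing this isomorphism will then be the required unit.

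Suppose first that $\Phi:\mathbb{A}_\delta\to(\mathbb{A}_\delta)_f$ is a bimodule isomorphism, and set $c=\Phi(j)$. Then $\Phi(a)=ac$ and also $\Phi(a)=cf(a)$ for all $a\in\mathbb{A}_\delta$. Surjectivity gives some $d$ with $dc=j$. Applying the same argument to $\Phi^{-1}$ gives a right inverse as well, so $c\in\mathbb{A}_\delta^\times$ and $f(a)=c^{-1}ac$. Since $f$ is the identity on $\mathbb{B}$, we get $c\in C_{\mathbb{A}_\delta}(\mathbb{B})^\times$. So everything reduces to the bimodule isomorphism.

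To produce it, I use relative separability.

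\begin{enumerate}
\item By Corollary \ref{separable and Morita equi}, $\mathbb{A}_\delta$ is a direct summand of $\mathbb{A}_\delta\mathop\otimes\limits_{\U D}\mathbb{A}_\delta$. Since $\U Dj\subseteq\mathbb{B}$, it is then also a direct summand of $\mathbb{A}_\delta\mathop\otimes\limits_{\mathbb{B}}\mathbb{A}_\delta$. Tensoring with $(\mathbb{A}_\delta)_f$ shows that $(\mathbb{A}_\delta)_f$ is a direct summand of $\mathbb{A}_\delta\mathop\otimes\limits_{\mathbb{B}}(\mathbb{A}_\delta)_f$.
\item Because $f|_{\mathbb{B}}=\mathrm{id}$, the map $a\otimes a'\mapsto a\otimes f(a')$ is well defined over $\mathbb{B}$. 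It gives a bimodule isomorphism $\mathbb{A}_\delta\mathop\otimes\limits_{\mathbb{B}}\mathbb{A}_\delta\cong\mathbb{A}_\delta\mathop\otimes\limits_{\mathbb{B}}(\mathbb{A}_\delta)_f$.
\item Hence $\mathbb{A}_\delta$ and $(\mathbb{A}_\delta)_f$ are both direct summands of $M:=\mathbb{A}_\delta\mathop\otimes\limits_{\mathbb{B}}\mathbb{A}_\delta$. Both are indecomposable bimodules, since $\mathbb{A}_\delta$ is an indecomposable algebra, being Morita equivalent to $\mathbb{A}$.
\end{enumerate}

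It remains to show that $M$ has only one indecomposable summand of this kind, and for that I count copies of $\U D$ after restricting to $\U(D\times D)$.

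\begin{enumerate}
\item Since $f$ fixes $Dj$, the restriction of $(\mathbb{A}_\delta)_f$ to $\U(D\times D)$ is isomorphic to that of $\mathbb{A}_\delta$. By Lemma \ref{OD mult 1}, each of them contains $\U D$ exactly once.
\item By Proposition \ref{general mod stru}, $\mathbb{A}_\delta=\mathbb{B}\oplus Y$ as $\mathbb{B}$-$\mathbb{B}$-bimodules. Therefore
$$M\cong\mathbb{B}\oplus Y\oplus Y\oplus Y\mathop\otimes\limits_{\mathbb{B}}Y$$
as $\mathbb{B}$-$\mathbb{B}$-bimodules.
\item Restricted to $D\times D$, $\mathbb{B}\cong\bigoplus\U D\mathfrak{a}_g$ contains $\U D$ exactly once, namely for $g\in DC_G(D)$. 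The summand $Y\cong X$ contains no copy, because its summands are $\U DhD$ with $h\notin N_G(D)$.
\item For $Y\mathop\otimes\limits_{\mathbb{B}}Y$, note that $\mathbb{B}\cong\U_\alpha(D\rtimes E_\mathfrak{h})$ with $E_\mathfrak{h}$ a $p'$-group, so $\mathbb{B}$ is relatively $\U D$-separable. Hence $Y\mathop\otimes\limits_{\mathbb{B}}Y$ is a summand of $Y\mathop\otimes\limits_{\U D}Y$.
\item By Mackey's formula, $\U DhD\mathop\otimes\limits_{\U D}\U Dh'D$ is a sum of modules induced from twisted diagonal subgroups $\Delta_\varphi(Z)$ with $|Z|\le|D\cap{}^hD|<|D|$. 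So none of them is isomorphic to $\U D=\mathrm{Ind}_{\Delta(D)}^{D\times D}(\U)$.
\end{enumerate}

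Thus $\U D$ occurs exactly once in $\mathrm{Res}_{D\times D}(M)$. By Krull–Schmidt, exactly one indecomposable bimodule summand of $M$ has $\U D$ in its restriction. Both $\mathbb{A}_\delta$ and $(\mathbb{A}_\delta)_f$ are such summands, so they are isomorphic, which finishes the argument.

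The main obstacle is the multiplicity count in $Y\mathop\otimes\limits_{\mathbb{B}}Y$. One must check carefully that passing from $\otimes_{\mathbb{B}}$ to $\otimes_{\U D}$ is legitimate, and that the Mackey decomposition never yields the full diagonal $\Delta(D)$ when $h\notin N_G(D)$.
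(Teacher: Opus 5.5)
Your proposal is correct and follows the paper's proof: both realize $\mathbb{A}_\delta$ and $(\mathbb{A}_\delta)_f$ as indecomposable summands of $\mathbb{A}_\delta\otimes_{\mathbb{B}}\mathbb{A}_\delta\cong\mathbb{A}_\delta\oplus Y\oplus(Y\otimes_{\mathbb{B}}Y)$, show that only one such summand is visible at $D$, and take the image of $j$ under the resulting bimodule isomorphism. The differences are only in detail. You single out that summand by the multiplicity of $\mathcal{O}D$ in the restriction to $\mathcal{O}(D\times D)$, while the paper uses a nonzero Brauer quotient at $D$; for these modules the two criteria are equivalent. You also verify by Mackey's formula the vanishing for $Y\otimes_{\mathcal{O}D}Y$ that the paper cites from \cite[Lemma 3.3]{HZ25}.
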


\begin{proof}
Since $\mathbb{A}_\delta=\mathbb{B}\oplus Y$ as $\mathbb{B}$-$\mathbb{B}$-bimodules,
we have $\mathbb{A}_\delta\mathop\otimes\limits_{\mathbb{B}}\mathbb{A}_\delta$ being isomorphic to
$\mathbb{A}_\delta\oplus Y\oplus(Y\mathop\otimes\limits_{\mathbb{B}}Y)$ as $\mathbb{B}$-$\mathbb{B}$-bimodules.	
Since $Y$ is isomorphic to $X$ as $\U(D\times D)$-modules,
we have $\mathrm{Br}_D(Y)\cong\mathrm{Br}_D(X)=0$.
For $Y\mathop\otimes\limits_{\mathbb{B}}Y$,
it is clear that it is isomorphic to a direct summand of $Y\mathop\otimes\limits_{\mathcal{O}D}Y$ as 
$\mathbb{B}$-$\mathbb{B}$-bimodules and then
it is isomorphic to $X\mathop\otimes\limits_{\mathcal{O}D}X$ as $\U(D\times D)$-modules.
By the same argument in the proof of \cite[Lemma 3.3]{HZ25},
we have $(X\mathop\otimes\limits_{\mathcal{O}D}X)(D)=0$.
So is $(Y\mathop\otimes\limits_{\mathbb{B}}Y)(D)$.
Since $\mathbb{B}$ contains $\U D$ and $\mathbb{A}_\delta$ is relatively $\U D$-separable,
it is easy to check that $\mathbb{A}_\delta$ is also relatively $\mathbb{B}$-separable.
In particular,
$\mathbb{A}_\delta$ is isomorphic to a direct summand of 
$\mathbb{A}_\delta\mathop\otimes\limits_{\mathbb{B}}\mathbb{A}_\delta$ as 
$\mathbb{A}_\delta$-$\mathbb{A}_\delta$-bimodules.
Then $\mathbb{A}_\delta$ is the unique direct summand, up to isomorphism, of the $\mathbb{A}_\delta$-$\mathbb{A}_\delta$-bimodule
$\mathbb{A}_\delta\mathop\otimes\limits_{\mathbb{B}}\mathbb{A}_\delta$,
 with respect to the Brauer quotients at $D$ being nonzero.

On the other hand, since $\mathbb{A}_\delta$ is relatively $\mathbb{B}$-separable,
then the multiplication map 
$\mathbb{A}_\delta\mathop\otimes\limits_{\mathbb{B}}\mathbb{A}_\delta\longrightarrow
\mathbb{A}_\delta$ splits.
This means that there is an element 
$\sum\limits_l\mathfrak{a}_l\mathop\otimes\limits_{\mathbb{B}}
\mathfrak{a}^\prime_l$ in $\mathbb{A}_\delta\mathop\otimes\limits_{\mathbb{B}}\mathbb{A}_\delta$
such that $\sum\limits_l\mathfrak{a}_l\mathfrak{a}^\prime_l=j$ and
$\sum\limits_l\mathfrak{a}\mathfrak{a}_l\mathop\otimes\limits_{\mathbb{B}}
\mathfrak{a}^\prime_l=
\sum\limits_l\mathfrak{a}_l\mathop\otimes\limits_{\mathbb{B}}
\mathfrak{a}^\prime_l\mathfrak{a}$
for any $\mathfrak{a}\in\mathbb{A}_\delta$. 
Let $(\mathbb{A}_\delta)_f$ be the $\mathbb{A}_\delta$-$\mathbb{A}_\delta$-bimodule
such that it equals $\mathbb{A}_\delta$ as left $\mathbb{A}_\delta$-modules
with $f(\mathfrak{a})$ acting by right multiplication on 
$\mathfrak{a}^\prime\in(\mathbb{A}_\delta)_f$ 
as $\mathfrak{a}$ on $\mathfrak{a}^\prime\in\mathbb{A}_\delta$.
With the same arguments in the proof of \cite[Proposition 3.11]{HZ25},
$(\mathbb{A}_\delta)_f$ is isomorphic to a direct summand of 
$(\mathbb{A}_\delta)_f\mathop\otimes\limits_{\mathbb{B}}\mathbb{A}_\delta$
as $\mathbb{A}_\delta$-$\mathbb{A}_\delta$-bimodules.
Since the restriction of $f$ to $\mathbb{B}$ is trivial,
it is easy to check that $(\mathbb{A}_\delta)_f\mathop\otimes\limits_{\mathbb{B}}\mathbb{A}_\delta$
is isomorphic to $\mathbb{A}_\delta\mathop\otimes\limits_{\mathbb{B}}\mathbb{A}_\delta$
as $\mathbb{A}_\delta$-$\mathbb{A}_\delta$-bimodules.
Obviously, $(\mathbb{A}_\delta)_f$ is isomorphic to $\mathbb{A}_\delta$ 
as $\U(D\times D)$-modules.
By the uniqueness of $\mathbb{A}_\delta$,
$(\mathbb{A}_\delta)_f$ is also isomorphic to $\mathbb{A}_\delta$ 
as $\mathbb{A}_\delta$-$\mathbb{A}_\delta$-bimodules.
The image of $j$ under this isomorphism is the element we want.
\end{proof}

The third consequence is about the equivariant property of $\mathbb{A}$-modules.
To state this property, we need some notation for twisted modules.

\begin{order}\rm
Let $M$ be an $\mathbb{A}$-module.
Fix an element $u$ in $P$.
We can define the twisted $\mathbb{A}$-module ${_u}M$	
by setting ${_u}M=M$ as an $\U$-module,
with $u\mathfrak{a}u^{-1}$ acting on $m\in{_u}M$ as $\mathfrak{a}$ on $m\in M$
for any $\mathfrak{a}\in\mathbb{A}$ and any $m\in M$. 
Furthermore, recall from the paragraph 3.1 that
there is an element $a_{u,\delta}$ in $A_\delta^\times$ with
$\mathbb{A}_\delta^{a_{u,\delta}}=\mathbb{A}_\delta$.
Then we can similarly define the twisted $\mathbb{A}_\delta$-module ${_u}M_\delta$
for any $\mathbb{A}_\delta$-module $M_\delta$
through the conjugation action of the element $a_{u,\delta}$.
Since the choice of the element $a_{u,\delta}$ is unique up to a multiplication by 
an element in $(\mathbb{A}_\delta^D)^\times$,
so the twisted module ${_u}M_\delta$ is independent of the choice of the element $a_{u,\delta}$
up to isomorphism.
At the same time, letting $C$ be a $P$-algebra,
we have the analogous notation ${_{(u,u)}}{\bf M}$ and ${_{(u,u)}}{\bf M}^\prime$
for $\mathbb{A}$-$C$-bimodule ${\bf M}$
and $\mathbb{A}_\delta$-$C$-bimodule ${\bf M}^\prime$, respectively.
\end{order}

\begin{cor}\label{equivariant for bbA-module}
Let $u$ be an element in $C_P(D)$.
Then for any $\mathbb{A}$-module $M$,
it is isomorphic to its twisted module ${_u}M$ as $\mathbb{A}$-modules.
In particular, if $D$ is in the center $Z(P)$ of $P$,
every $\mathbb{A}$-module $M$ is isomorphic to its twisted module ${_v}M$ as $\mathbb{A}$-modules
for any $v\in P$.
\end{cor}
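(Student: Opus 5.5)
Since $j\mathbb{A}$ induces a Morita equivalence between $\mathbb{A}_\delta$ and $\mathbb{A}$ (Corollary \ref{separable and Morita equi}), I would first reduce the statement to $\mathbb{A}_\delta$-modules. The reduction needs a compatibility check: for $u\in P$, the bimodule $j\mathbb{A}$ twisted by $(u,u)$, i.e. via conjugation by $a_{u,\delta}$ on the left and by $u$ on the right, is isomorphic to $j\mathbb{A}$ as an $\mathbb{A}_\delta$-$\mathbb{A}$-bimodule. This follows from $a_{u,\delta}=a_u^{-1}uj$ with $a_u\in(\mathbb{A}^D)^\times$: left multiplication by $a_u^{-1}$ composed with conjugation by $u$ gives the isomorphism. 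Hence $j\cdot{_u}M\cong {_u}(jM)$, and it suffices to show ${_u}M_\delta\cong M_\delta$ for every $\mathbb{A}_\delta$-module $M_\delta$ and every $u\in C_P(D)$.

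The key step is to show that conjugation by $a_{u,\delta}$, for $u\in C_P(D)$, is an inner automorphism of $\mathbb{A}_\delta$. Let $f$ be this automorphism $\mathfrak{a}\mapsto a_{u,\delta}\mathfrak{a}a_{u,\delta}^{-1}$; it preserves $\mathbb{A}_\delta$ by the paragraph 3.1. Since $u$ centralizes $D$, $f$ fixes $Dj$. It also normalizes $\hat{E}_\mathfrak{h}$ and induces on $E_\mathfrak{h}$ the action of $u$. This action is trivial, because $E_\mathfrak{h}$ is the normal $p$-complement of $N_G(D_\delta)/DC_G(D)$ and $u\in C_P(D)\subseteq DC_G(D)$, so $uDC_G(D)$ is trivial in that quotient. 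Thus $f(\mathfrak{a}_g)\in\mathfrak{a}_g(\mathbb{A}_\delta^D)^\times$.

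I would then adjust $f$ by an inner automorphism from $(\mathbb{A}_\delta^D)^\times$ so that it fixes $\mathbb{B}$ pointwise. Since $E_\mathfrak{h}$ is a $p'$-group, the two complements $\mathbb{M}/k^\times$ and $f(\mathbb{M})/k^\times$ of $j+J(\mathbb{A}_\delta^D)$ in $\hat{E}_\mathfrak{h}/k^\times$ are conjugate by an element of $j+J(\mathbb{A}_\delta^D)$. This follows by the Schur–Zassenhaus-type argument of \cite[Lemma (45.6)]{Thevenaz}, i.e. the vanishing of $H^1$ on the filtered pro-$p$ group. After this conjugation, $f$ stabilizes $\mathbb{M}$ and induces the identity modulo $k^\times$, so it acts on $\mathbb{M}$ through a homomorphism $E_\mathfrak{h}\to k^\times$. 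Such a character is killed by a further conjugation if it has the form $g\mapsto {}^g\lambda/\lambda$ with $\lambda$ in $(\mathbb{A}_\delta^D)^\times$. I expect this to be the main obstacle. I would first try to show the character is trivial directly, using that $f$ has $p$-power order modulo inner automorphisms (since $u^{p^n}\in D$ and $a_{u,\delta}^{p^n}$ lies in $Dj(\mathbb{A}_\delta^D)^\times$), whereas a nontrivial character of $E_\mathfrak{h}$ into $k^\times$ has $p'$ order, so the two must be compatible only trivially. Once $f$ restricts to the identity on $\mathbb{B}$, Corollary \ref{auto is inner} makes $f$ inner, say given by $c\in\mathbb{A}_\delta^\times$.

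The conclusion is then formal: multiplication by $c^{-1}$ (or $c$, depending on conventions) gives an isomorphism ${_u}M_\delta\to M_\delta$. For the final sentence of the statement, if $D\le Z(P)$ then $C_P(D)=P$, so the first part applies to every $v\in P$.
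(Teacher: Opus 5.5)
Your outline matches the paper's proof step for step: the reduction via $j({_u}M)\cong{_u}(jM)$ and Corollary~\ref{separable and Morita equi}, the fact that $a_{u,\delta}$ normalizes $\hat{E}_\mathfrak{h}$, the use of \cite[Lemma (45.6)]{Thevenaz} to arrange ${^{a_{u,\delta}}}\mathbb{M}=\mathbb{M}$, the character $g\mapsto\mu_g$ with ${^{a_{u,\delta}}}\mathfrak{a}_g=\mu_g\mathfrak{a}_g$, and the final appeal to Corollary~\ref{auto is inner}. However, the step you flag as the main obstacle is left open, and your sketched justification does not close it. Knowing only that $f$ has $p$-power order modulo inner automorphisms says nothing about $\mu$: a priori an inner automorphism stabilizing $\mathbb{M}$ could rescale the $\mathfrak{a}_g$ by a nontrivial character. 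So the claim that ``the two must be compatible only trivially'' needs an actual reason. Your fallback, killing $\mu$ by a further conjugation with $\lambda\in(\mathbb{A}_\delta^D)^\times$, is a dead end: as shown below, ${^{\mathfrak{a}_g}}\lambda\cdot\lambda^{-1}$ always lies in $j+J(\mathbb{A}_\delta^D)$, so only the trivial character can be removed that way. (A minor point: the correct reason for $f(\mathfrak{a}_g)\in\mathfrak{a}_g(\mathbb{A}_\delta^D)^\times$ is that $u\in C_G(D)$, so $ugu^{-1}$ and $g$ induce the same automorphism of $D$. Triviality modulo $DC_G(D)$ alone would only give $\mathfrak{a}_g\,Dj\,(\mathbb{A}_\delta^D)^\times$.)

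The missing idea is that conjugation by an element of $(\mathbb{A}_\delta^D)^\times$ cannot produce nontrivial scalars. Take $n$ with $u^{p^n}=1$. Since $a_{u,\delta}\in j\mathbb{A}uj$ and $\mathbb{A}u\mathbb{A}v=\mathbb{A}uv$, the element $y=a_{u,\delta}^{p^n}$ lies in $j\mathbb{A}j=\mathbb{A}_\delta$. It also centralizes $Dj$ because $u\in C_P(D)$. Hence $y\in(\mathbb{A}_\delta^D)^\times=k^\times\times(j+J(\mathbb{A}_\delta^D))$; write $y=\lambda y_1$ with $\lambda\in k^\times$ and $y_1\in j+J(\mathbb{A}_\delta^D)$. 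Iterating ${^{a_{u,\delta}}}\mathfrak{a}_g=\mu_g\mathfrak{a}_g$ gives
\[
\mu_g^{p^n}j={^{y_1}}\mathfrak{a}_g\cdot\mathfrak{a}_g^{-1}=y_1\cdot{^{\mathfrak{a}_g}}(y_1^{-1}).
\]
The right-hand side lies in $j+J(\mathbb{A}_\delta^D)$: since $\mathfrak{a}_g$ normalizes $Dj$, conjugation by $\mathfrak{a}_g$ preserves the local algebra $\mathbb{A}_\delta^D$ and hence its radical. Because $k^\times\cap(j+J(\mathbb{A}_\delta^D))=\{j\}$, you get $\mu_g^{p^n}=1$. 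As $k$ has characteristic $p$, this forces $\mu_g=1$. This is exactly how the paper kills the character, and with this inserted your argument is complete.
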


\begin{proof}
We adopt the notation in the paragraph above and 
the proof of Proposition \ref{general mod stru}.	
For the sake of clarity, we denote the element $m$ in $M$ by ${_u}m$
when we view it as an element in the twisted module ${_u}M$.
We use the analogous notation for $\mathbb{A}_\delta$-modules.

First, we show that $j({_v}M)$ is isomorphic to ${_v}(jM)$ as $\mathbb{A}_\delta$-modules
for any $v\in P$.
Recall that $a_{v,\delta}=a_v^{-1}vj$ for some $a_v\in(\mathbb{A}^D)^\times$
with $a_vja_v^{-1}=vjv^{-1}$.
We define a map ${\bf f}$ from $j({_v}M)$ to ${_v}(jM)$ by sending 
$j\cdot({_v}m)\in j({_v}M)$ to ${_v}(ja_v\cdot m)\in{_v}(jM)$.
Suppose that there is an element $m^\prime$ in $M$ such that
$j\cdot({_v}m)=j\cdot({_v}m^\prime)$.
Then we have the equation $v^{-1}jv\cdot m=v^{-1}jv\cdot m^\prime$ in $M$, by which
we can get the equation $ja_v\cdot m=ja_v\cdot m^\prime$ in $M$
since $a_v$ is invertible in $\mathbb{A}^D$.
So the map ${\bf f}$ defined above from $j({_v}M)$ to ${_v}(jM)$ is well-defined and
obviously bijective.
Now for any $\mathfrak{a}_\delta\in\mathbb{A}_\delta$, we have
\begin{align*}
{\bf f}(\mathfrak{a}_\delta\cdot({_v}m))
	=~&{\bf f}(j\cdot({_v}(v^{-1}\mathfrak{a}_\delta v\cdot m)))\\
	=~&{_v}(ja_vv^{-1}\mathfrak{a}_\delta v\cdot m)\\
	=~&{_v}(ja_vv^{-1}\mathfrak{a}_\delta va_v^{-1}\cdot(a_v\cdot m))\\
	=~&\mathfrak{a}_\delta\cdot{_v}(ja_v\cdot m)\\
	=~&\mathfrak{a}_\delta\cdot{\bf f}(j\cdot({_v}m)),
\end{align*}
which means that ${\bf f}$ is an isomorphism of $\mathbb{A}_\delta$-modules.

Next, we show that the element $a_{u,\delta}$ can be chosen such that 
it acts trivially on the subalgebra $\mathbb{B}$ through the conjugation action.
Recall that
 $\hat{E}_\mathfrak{h}=\{\mathfrak{a}_g\mathfrak{a}_\delta\mid\tilde{g}\in E_\mathfrak{h},\mathfrak{a}_\delta\in(\mathbb{A}_\delta^D)^\times\}$ is a subgroup of $N_{\mathbb{A}_\delta}(D)$ containing $(\mathbb{A}_\delta^D)^\times$.
Since $u$ belongs to $C_P(D)$, we have
$\mathfrak{a}_g^{-1}{^{a_{u,\delta}}}\mathfrak{a}_g$ belonging to $(\mathbb{A}_\delta^D)^\times$.
In particular, we have ${^{a_{u,\delta}}}\hat{E}_\mathfrak{h}=\hat{E}_\mathfrak{h}$.
Therefore, ${^{a_{u,\delta}}}\mathbb{M}/k^\times$ is also a complement of 
$j+J(\mathbb{A}_\delta^D)$ in $\hat{E}_\mathfrak{h}/k^\times$.
Then by \cite[Lemma (45.6)]{Thevenaz},
${^{a_{u,\delta}}}\mathbb{M}$ is conjugate to $\mathbb{M}$ 
by an element in $j+J(\mathbb{A}_\delta^D)$.
We can adjust the element $a_{u,\delta}$ by multiplying this element in $j+J(\mathbb{A}_\delta^D)$
such that ${^{a_{u,\delta}}}\mathbb{M}=\mathbb{M}$.
Without loss of generality,
we can set $\mathbb{M}=\{\mathfrak{a}_g\mu\mid
\tilde{g}\in E_\mathfrak{h},\mu\in k^\times\}$.
So for any $\tilde{g}\in E_\mathfrak{h}$,
${^{a_{u,\delta}}}\mathfrak{a}_g=\mu_g\mathfrak{a}_g$ for some $\mu_g\in k^\times$.
Since $u$ is a $p$-element,
there is some positive integer $a$ such that 
$a_{u,\delta}^{p^a}$ belongs to $(\mathbb{A}_\delta^D)^\times$.
Then $a_{u,\delta}^{p^a}=\mathfrak{a}_\delta\lambda$ for 
a unique element $\mathfrak{a}_\delta$ in $j+J(\mathbb{A}_\delta^D)$ and a unique element
$\lambda$ in $k^\times$.
Hence,
$${^{\mathfrak{a}_\delta}}\mathfrak{a}_g={^{a_{u,\delta}^{p^a}}}\mathfrak{a}_g=
\mu_g^{p^a}\mathfrak{a}_g.$$
Multiplying by $\mathfrak{a}_g^{-1}$ on the right yields an equation
${^{\mathfrak{a}_\delta}}\mathfrak{a}_g\mathfrak{a}_g^{-1}=\mu_g^{p^a}$.
Note that the element in the left side belongs to $j+J(\mathbb{A}_\delta^D)$
and $\mu_g^{p^a}$ is an element in $k^\times$.
So $\mu_g$ has to be $1$.
This completes the proof of the argument in this paragraph.

Moreover, by Corollary \ref{auto is inner},
there is an element $z_{u,\delta}$ in $C_{\mathbb{A}_\delta}(\mathbb{B})^\times$ such that
${^{a_{u,\delta}}}\mathfrak{c}_\delta={^{z_{u,\delta}}}\mathfrak{c}_\delta$
for any $\mathfrak{c}\in\mathbb{A}_\delta$.
Then it is easy to check that for any $\mathbb{A}_\delta$-module $M_\delta$,
it is isomorphic to its twisted module ${_u}M_\delta$.
Applying this fact to the $\mathbb{A}$-module $jM$,
we can get the following isomorphisms of $\mathbb{A}_\delta$-modules
$$j({_u}M)\cong{_u}(jM)\cong jM.$$
By Corollary \ref{separable and Morita equi},
the $\mathbb{A}$-$\mathbb{A}_\delta$-bimodule $\mathbb{A}j$ induces a Morita equivalence 
between $\mathbb{A}$ and $\mathbb{A}_\delta$.
Hence, we have ${_u}M$ isomorphic to $M$ as $\mathbb{A}$-modules.
\end{proof}

\begin{order}\rm
For the remainder of this section we assume that 
the block $b$ is a hyperfocal abelian Frobenius block.
We will give a more explicit description of 
the $\U(D\times D)$-module structure of $X$ in the decomposition
(\ref{general dec as bimod})
under this assumption.
To do this, we need to investigate the Brauer quotients of $\mathbb{A}_\delta$  
at all nontrivial subgroups of $D$.
We start with a special case.
\end{order}

\begin{lem}\label{local structure at Q not 1 special}
Keep the assumption as above.
Let $Q$ be a nontrivial subgroup of $D$.
Assume that $C_P(Q)$ is a defect group of the block $b_Q$ of $C_G(Q)$.
Then there is an indecomposable endopermutation $k(D/Q)$-module $V_Q$ with vertex $D/Q$
such that setting $\bar{S}_Q=\mathrm{End}_k(V_Q)$,
we have the following $D$-interior algebra isomorphism
\begin{align}\label{A Brauer quot at Q not 1 special}
	A_\delta(Q)\cong\bar{S}_Q\mathop\otimes\limits_kkC_P(Q)
\end{align}
with its restriction to $\mathbb{A}_\delta(Q)$ giving the following $D$-interior algebra isomorphism
\begin{align}\label{bbA Brauer quot at Q not 1 special}
\mathbb{A}_\delta(Q)\cong\bar{S}_Q\mathop\otimes\limits_kkD.
\end{align}
In particular,
$(j\mathbb{A}vj)(Q)\neq 0$ if and only if $v$ is in $C_P(Q)$.
\end{lem}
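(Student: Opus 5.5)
The plan is to copy the proof of Lemma \ref{Brauer quot in D}, with $D$ replaced by the nontrivial subgroup $Q$. Nilpotency of $b_Q$ is supplied by Proposition \ref{nilpotent block of D at local}.

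\emph{Step 1: source algebra of $\bar{c}_Q$.} Since $C_P(Q)$ is a defect group of $b_Q$ and $(Q,\bar b_Q)\le(P,\bar b_P)$, the block $\bar b_Q$ is stable under $N_P(Q)$. Since $D$ is abelian, $D\le C_P(Q)$. Proposition \ref{nilpotent block of D at local} says $b_Q$ is nilpotent. View $\bar b_Q$ as a block $\bar c_Q$ of $C_P(Q)C_G(Q)=C_G(Q)$, which is the same group. By \cite[Lemma (40.2)]{Thevenaz}, $\mathrm{Br}_Q(j)$ lies in a local point of $D$ on $kC_G(Q)$. Moreover $A_\delta(Q)=\mathrm{Br}_Q(j)kC_G(Q)\mathrm{Br}_Q(j)$ is primitive as a $D$-algebra, because $j$ is primitive in $A^D$ and $\mathrm{Br}_Q$ is surjective from $A_\delta^D$ onto $A_\delta(Q)^D$ up to nilpotents.

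\emph{Step 2: Puig's structure theorem.} Apply the structure theorem for nilpotent blocks in its "primitive $D$-algebra" form \cite[Theorem 8.12.3]{L'book 2}. This gives a primitive Dade $D$-interior algebra $\bar S$ and an isomorphism of $D$-interior algebras
\[A_\delta(Q)\cong \bar S\otimes_k kC_P(Q).\]
Here I use that $A_\delta(Q)$ is a direct summand of a source algebra of $b_Q$, restricted to $D$. Since $Q$ is central in $C_G(Q)$ and $Q$ acts trivially on $\bar S$, the algebra $\bar S$ is a Dade $D/Q$-algebra, $\bar S\cong\mathrm{End}_k(V_Q)$. It is primitive with nonzero Brauer quotient at $D/Q$, so $V_Q$ is indecomposable with vertex $D/Q$. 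This gives (\ref{A Brauer quot at Q not 1 special}).

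\emph{Step 3: identifying $\mathbb{A}_\delta(Q)$.} Decompose $A_\delta=\bigoplus_{uD}\mathbb{A}_\delta a_{u,\delta}$. Each summand is $Q$-stable, so
\[A_\delta(Q)=\bigoplus_{uD\in P/D}(\mathbb{A}_\delta a_{u,\delta})(Q).\]
\begin{itemize}
\item For $u\in C_P(Q)$, $a_{u,\delta}$ is $Q$-fixed and invertible, so $(\mathbb{A}_\delta a_{u,\delta})(Q)=\mathbb{A}_\delta(Q)\mathrm{Br}_Q(a_{u,\delta})$. Hence $\dim_k A_\delta(Q)\ge |C_P(Q)/D|\cdot\dim_k\mathbb{A}_\delta(Q)$.
\item The image of $\mathbb{A}_\delta(Q)$ contains $\bar S\otimes kD$: it contains $kD$, and it contains $\bar S$, since $\mathbb{A}_\delta(Q)$ is a $D$-interior algebra whose centralizer data match.
\end{itemize}
Comparing dimensions with $\dim\bar S\cdot|C_P(Q)|$ forces three things at once: $\mathbb{A}_\delta(Q)=\bar S\otimes kD$, the summands for $u\notin C_P(Q)$ vanish, and $(j\mathbb{A}vj)(Q)\ne0$ if and only if $v\in C_P(Q)$.

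\emph{Main obstacle.} The hard step is showing that $\bar S\otimes 1$ lies inside $\mathbb{A}_\delta(Q)$. To get it, I would use the decomposition of Proposition \ref{general mod stru} together with Lemma \ref{basic result of mod stru}. These show that $\mathbb{A}_\delta(Q)$ is spanned by Brauer images of $\mathcal{O}DhD$ with $Q\le D\cap{}^hD$. I would then argue that the $k(D\times D)$-structure of $\mathbb{A}_\delta(Q)$ has a $C_P(Q)/D$-translate covering all of $A_\delta(Q)$, which gives the reverse inequality and hence equality. Alternatively, I would take $\bar S$ as the centralizer of $kD$ in $A_\delta(Q)$ restricted to the $u=1$ component; then $\bar S\otimes kD$ lies in the $u=1$ summand.
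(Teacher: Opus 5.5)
Your Steps 1 and 2 are essentially fine. The paper handles two details you gloss over: primitivity of $\bar S_Q\otimes_k kC_P(Q)$ as a $D$-interior algebra, and uniqueness of the local point of $D$, which together let $\mathrm{Br}_Q(j)$ be sent to $f\otimes 1$.

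The genuine gap is Step 3, exactly where you flag it. The dimension squeeze closes only if you already know $\dim_k\mathbb{A}_\delta(Q)\ge |D|\dim_k\bar S_Q$, i.e.\ that the image of $\mathbb{A}_\delta(Q)$ contains $\bar S_Q\otimes 1$. That is the real content of the lemma, and ``centralizer data match'' does not prove it. In Lemma \ref{Brauer quot in D} this containment was free, because the Dade algebra collapses to $k$ after cutting by $\mathrm{Br}_D(j)$. For $1\neq Q<D$ the algebra $\bar S_Q$ is genuinely nontrivial. Nothing ties Puig's abstract isomorphism $A_\delta(Q)\cong\bar S_Q\otimes_k kC_P(Q)$ to the $P/D$-grading $A_\delta(Q)=\bigoplus_{uD}(j\mathbb{A}uj)(Q)$.

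Your second fallback fails. The centralizer of the image of $D$ is the fixed-point algebra $(\bar S_Q\otimes_k kC_P(Q))^D$, and $s\otimes 1$ commutes with the image $w_{\bar S_Q}\otimes w$ of $w\in D$ only when $s\in\bar S_Q^D$. So $\mathbb{A}_\delta(Q)^D$ is not a copy of $\bar S_Q$.

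Your first fallback fails too. A nonzero component $(j\mathbb{A}uj)(Q)$ with $u\notin C_P(Q)$ comes from a summand $\mathcal{O}DhD$ of $\mathbb{A}_\delta$ with $Q\le D\cap{}^hD$ and $h^{-1}wh=uwu^{-1}$ for all $w\in Q$. Lemma \ref{basic result of mod stru} and Proposition \ref{general mod stru} do not exclude such summands. In the $\mathrm{S}_4$ example of the remark after Proposition \ref{local structure at Q not 1 general}, the offending summand is even one of the $\mathcal{O}D\mathfrak{a}_g$. The refinement that does exclude them, Proposition \ref{special mod stru}, is proved \emph{using} the present lemma. So the hypothesis that $C_P(Q)$ is a defect group must be turned into information about $\mathbb{A}$ itself, not merely about $\dim_k A_\delta(Q)$.

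The paper supplies this from Puig's theory of hyperfocal subalgebras. It takes $l\in\mathbb{A}$ in the unique local point of $N_P(Q)$ below $P_\gamma$ with $lj=j=jl$. Then $\mathrm{Br}_Q(l)\,kN_P(Q)C_G(Q)\,\mathrm{Br}_Q(l)$ is a source algebra of the nilpotent block $\bar c_Q$ of $N_P(Q)C_G(Q)$, hence isomorphic to $\bar T_Q\otimes_k kN_P(Q)$. The paper then invokes the proof of Proposition 4.3 of [P00] to see that the restricted isomorphism $(lAl)(Q)\cong\bar T_Q\otimes_k kC_P(Q)$ carries $(l\mathbb{A}l)(Q)$ onto $\bar T_Q\otimes_k kD$. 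This is the missing ingredient. Only afterwards does it cut down by the local point of $D$ on $\bar T_Q$ to obtain both isomorphisms. The dimension count you describe is then used only for the final ``in particular''.
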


\begin{proof}
Denote $N_P(Q)$ by $P_Q$.
It is clear that $b_Q$ is still a block of $P_QC_G(Q)$ over $\U$.
We denote it by $c_Q$ when the block $b_Q$ is viewed as a block of $P_QC_G(Q)$.
By the assumption,
$P_Q$ is a defect group of the block $c_Q$.
Since the block $b_Q$ is nilpotent by Proposition \ref{nilpotent block of D at local},
the block $c_Q$ of $P_QC_G(Q)$ is also nilpotent.
Since $P_Q$ contains $D$,
there is a unique local pointe $\varepsilon$ of $P_Q$ on $\U Gb$
such that $(P_Q)_\varepsilon\leq P_\gamma$ 
by \cite[Proposition 4.2]{P00}.
Now we can take an element $l$ of $\varepsilon$ such that
$l$ is in $\mathbb{A}$ by \cite[Proposition 2.8]{P00} and $lj=jl=j$.

By \cite[Lemma (40.2)]{Thevenaz},
$\mathrm{Br}_Q(l)$ belongs to a local point of $P_Q$ on $kC_G(Q)$.
Since $P_Q$ is a defect group of the block $c_Q$,
$\mathrm{Br}_Q(l)kP_QC_G(Q)\mathrm{Br}_Q(l)$ is a source algebra of the block $\bar{c}_Q$ over $k$.
Since the block $c_Q$ is nilpotent, by \cite[Theorem 8.12.3]{L'book 2},
there is a an indecomposable endopermutation $kP_Q$-module $V_{P_Q}$ with vertex $P_Q$
such that, setting $\bar{T}_Q=\mathrm{End}_k(V_{P_Q})$,
we have an isomorphism of $P_Q$-interior algebras
\begin{align*}
\mathrm{Br}_Q(l)kP_QC_G(Q)\mathrm{Br}_Q(l)\cong\bar{T}_Q\mathop\otimes\limits_kkP_Q,
\end{align*}
which restricts to a $C_P(Q)$-interior algebra isomorphism
\begin{align}\label{source alg of C_G(Q)}
\mathrm{Br}_Q(l)kC_G(Q)\mathrm{Br}_Q(l)\cong\bar{T}_Q\mathop\otimes\limits_kkC_P(Q).
\end{align}
Since $Q$ is normal in $P_QC_G(Q)$,
it is clear that the action of $Q$ on $V_{P_Q}$ is trivial.
Then $V_{P_Q}$ can be viewed as a $k(P_Q/Q)$-module.
Note that $\mathrm{Br}_Q(l)kC_G(Q)\mathrm{Br}_Q(l)=(lAl)(Q)$ and
$(l\mathbb{A}l)(Q)$ is a unitary subalgebra of $(lAl)(Q)$.
By the proof of  \cite[Proposition 4.3]{P00}, 
the restriction of the isomorphism (\ref{source alg of C_G(Q)})  to $(l\mathbb{A}l)(Q)$ gives
a $D$-interior algebra isomorphism
\begin{align}\label{hyperfocal alg of C_G(Q)}
(l\mathbb{A}l)(Q)\cong\bar{T}_Q\mathop\otimes\limits_kkD.
\end{align}

It is well-known that there is a unique local point, denoted by $\tilde{\delta}$, 
of $D$ on $\bar{T}_Q$.
Take an element $f$ in $\tilde{\delta}$ and set $V_Q=fV_{P_Q}$.
Then $V_Q$ is an indecomposable endopermutation $kD$-module with vertex $D$.
Clearly, the action of $Q$ on $V_Q$ is also trivial.
So $V_Q$ can be viewed as a $k(D/Q)$-module with vertex $D/Q$.
Set $\bar{S}_Q=\mathrm{End}_k(V_Q)\cong f\bar{T}_Qf$.
By \cite[Proposition 7.4.4]{L'book 2},
the $D$-interior algebras $\bar{S}_Q\mathop\otimes\limits_k kC_P(Q)$ and $\bar{S}_Q\mathop\otimes\limits_kkD$ 
are both primitive. 
Note that $\mathrm{Br}_Q:\mathbb{A}^D\rightarrow\mathbb{A}(Q)^{D/Q}$ is surjective
and $\mathrm{Br}_{D/Q}\circ\mathrm{Br}_Q$ is just the Brauer homomorphism
$\mathrm{Br}_D$ from $\mathbb{A}^D$ to $\mathbb{A}(D)$.
Then $\mathrm{Br}_Q(j)$ belongs to a local point of $D$ on $((l\mathbb{A}l)(Q))^D$.
Comparing the isomorphism (\ref{source alg of C_G(Q)}) 
with the isomorphism ({\ref{hyperfocal alg of C_G(Q)}}),
we get 
$(lAl)(Q)=\mathop\bigoplus\limits_{uD\in C_P(Q)/D}(l\mathbb{A}l)(Q)\cdot u$.
Therefore, by \cite[Proposition 2.8]{P00},
$\mathrm{Br}_Q(j)$ also belongs to a local point of $D$ on $((lAl)(Q))^D$.
Note that by \cite[5.3]{P-88}, local points of $D$ on $(lAl)(Q)$ and $(l\mathbb{A}l)(Q)$ are both unique.
Therefore, we can assume that the isomorphism (\ref{source alg of C_G(Q)}) 
maps $\mathrm{Br}_Q(j)$ to $f\mathop\otimes\limits_k1_{C_P(Q)}$.
Then the isomorphisms (\ref{A Brauer quot at Q not 1 special}) and 
(\ref{bbA Brauer quot at Q not 1 special}) are obtained.

Borrowing the notation from the paragraph 3.1,
we have 
$A_\delta=\mathop\bigoplus\limits_{uD\in P/D}\mathbb{A}_\delta a_{u,\delta}=
\mathop\bigoplus\limits_{uD\in P/D}j\mathbb{A}uj$
with $a_{u,\delta}\in A_\delta^\times\cap\mathbb{A}u$ satisfying
$a_{u,\delta}(wj)a_{u,\delta}^{-1}=(uwu^{-1})j$ for any $w\in D$.
So $A_\delta(Q)=\mathop\bigoplus\limits_{uD\in P/D}(j\mathbb{A}uj)(Q)$
and 
$(j\mathbb{A}uj)(Q)=\mathbb{A}_\delta(Q)\cdot\mathrm{Br}_Q(a_{u,\delta})$
when $u$ is in $C_P(Q)$.
In particular, $\mathrm{dim}_k(\mathbb{A}_\delta(Q))=\mathrm{dim}_k((j\mathbb{A}uj)(Q))$
when $u$ is in $C_P(Q)$.
By isomorphisms (\ref{A Brauer quot at Q not 1 special}) and 
(\ref{bbA Brauer quot at Q not 1 special}),
we have $\mathrm{dim}_k(A_\delta(Q))=\frac{|C_P(Q)|}{|D|}\mathrm{dim}_k(\mathbb{A}_\delta(Q))$.
We are done.
\end{proof}

By using Lemma \ref{local structure at Q not 1 special},
we can obtain the strucutre of $\mathbb{A}_\delta(Q)$ for arbitrary nontrivial subgroup $Q$ of $D$.
And this general structure will help us construct a stable equivalence of Morita type
between $\mathbb{A}_\delta$ and $\U(D\rtimes E_\mathfrak{h})$ in the next section.

\begin{prop}\label{local structure at Q not 1 general}
For any nontrivial subgroup $Q$	of $D$,
there is an indecomposable endopermutation $k(D/Q)$-module $V_Q$ with vertex $D/Q$
such that, setting $\bar{S}_Q=\mathrm{End}_k(V_Q)$,
we have a $D$-interior algebra isomorphism
\begin{align}\label{bbA Brauer quot at Q not 1 general}
	\mathbb{A}_\delta(Q)\cong\bar{S}_Q\mathop\otimes\limits_kkD.
\end{align}
Moreover,
if $V_Q^\prime$ is another indecomposable endopermutation $k(D/Q)$-module with vertex $D/Q$
such that  setting $\bar{S}_Q^\prime=\mathrm{End}_k(V_Q^\prime)$,
$\mathbb{A}_\delta(Q)\cong\bar{S}_Q^\prime\mathop\otimes\limits_kkD$ as $D$-interior algebras,
then $V_Q$ is isomorphic to $V_Q^\prime$ as $k(D/Q)$-modules.
\end{prop}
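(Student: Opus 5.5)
The plan is to reduce the general case to Lemma \ref{local structure at Q not 1 special} by conjugation, following the last paragraph of the proof of Proposition \ref{nilpotent block of D at local}. Then I will prove uniqueness separately, by recovering $\bar{S}_Q$ intrinsically from $\mathbb{A}_\delta(Q)$.

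\emph{Existence.} Let $Q$ be a nontrivial subgroup of $D$. Since $D$ is abelian, \cite[Theorem 2]{W14} provides an element $x\in N_G(D,\bar{b}_D)$ such that $C_P({^x}Q)$ is a defect group of $b_{{^x}Q}={^x}b_Q$. Here ${^x}Q\le D$ is nontrivial. Lemma \ref{local structure at Q not 1 special} applied to ${^x}Q$ gives an indecomposable endopermutation $k(D/{^x}Q)$-module $V_{{^x}Q}$ with vertex $D/{^x}Q$ and an isomorphism $\mathbb{A}_\delta({^x}Q)\cong\bar{S}_{{^x}Q}\otimes_k kD$ of $D$-interior algebras.

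To transport this back to $Q$, note that $x\in N_G(D_\delta)$. Write $xDC_G(D)=v\cdot e$ with $v\in PC_G(D)$ and $e\in E_\mathfrak{h}$; this is possible by Proposition \ref{p-nilpotent quot}. The element $\mathfrak{a}_x:=a_{v,\delta}\mathfrak{a}_g$ is invertible in $A_\delta$ and satisfies $\mathfrak{a}_x(uj)\mathfrak{a}_x^{-1}=({^x}u)j$ for $u\in D$. It normalizes $\mathbb{A}_\delta$, since $\mathbb{A}_\delta^{a_{v,\delta}}=\mathbb{A}_\delta$ (paragraph 4.9) and $\mathfrak{a}_g\in\mathbb{A}_\delta^\times$. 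Conjugation by $\mathfrak{a}_x$ is therefore an algebra automorphism of $\mathbb{A}_\delta$ twisting the $D$-interior structure by $c_x\in\mathrm{Aut}(D)$. It maps $\mathbb{A}_\delta^Q$ onto $\mathbb{A}_\delta^{{^x}Q}$ and the traps ideals to trace ideals, so it induces an isomorphism $\mathbb{A}_\delta(Q)\cong\mathrm{Res}_{c_x}\mathbb{A}_\delta({^x}Q)$ that is compatible with the $D$-interior structures twisted by $c_x$.

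Composing, we get $\mathbb{A}_\delta(Q)\cong\mathrm{Res}_{c_x}(\bar{S}_{{^x}Q})\otimes_k kD$, using that $kD$ twisted by $c_x$ is again $kD$. Setting $V_Q:={}_{x^{-1}}V_{{^x}Q}$, viewed as a $k(D/Q)$-module via $c_x$, gives the required module; it is still indecomposable endopermutation with vertex $D/Q$.

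\emph{Uniqueness.} Suppose $\bar{S}_Q\otimes_k kD\cong\bar{S}_Q'\otimes_k kD$ as $D$-interior algebras. Taking the unique local point of $D$, both sides are primitive by \cite[Proposition 7.4.4]{L'book 2}. From this I will extract the Dade algebra. The Brauer quotient at $D$, together with the structural map, identifies $\bar{S}_Q$ as the centralizer of the image of $kD$ modulo the appropriate radical. Alternatively, I can use the standard fact, essentially \cite[Proposition 7.3.7]{L'book 2} and the uniqueness part of Puig's structure theorem for nilpotent blocks, that $S\otimes kD\cong S'\otimes kD$ for Dade $D$-algebras $S,S'$ with vertex $D$ forces $S\cong S'$ as $D$-interior algebras.

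Concretely, consider $V_Q\otimes_k kD$ and $V_Q'\otimes_k kD$ as the unique simple-modules-up-to-isomorphism realization. Restricting along the diagonal-type embedding $D\to(\bar{S}_Q\otimes kD)^\times$, we obtain $\bar{S}_Q\otimes kD\cong\mathrm{End}_{k}(V_Q)\otimes kD$. The unique simple module of this algebra restricts to $D$ as $V_Q$. The isomorphism transports it to $V_Q'$, so $V_Q\cong V_Q'$ as $kD$-modules, hence as $k(D/Q)$-modules.

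The main obstacle is the existence step: checking carefully that the conjugating element lives in $N_{A_\delta}(\mathbb{A}_\delta)$ and that the twisted Brauer quotients match as $D$-interior algebras. The hardest part is ensuring that $v\in PC_G(D)$ can be handled via $a_{v,\delta}$ with $v\in P$, since the $C_G(D)$ part acts trivially on $D$ and can be absorbed into $(\mathbb{A}_\delta^D)^\times$. The uniqueness step is comparatively standard.
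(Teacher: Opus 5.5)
Your proposal is correct. The existence step is essentially the paper's argument; the uniqueness step takes a genuinely different and more elementary route.

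\textbf{Existence.} Both you and the paper reduce to Lemma~\ref{local structure at Q not 1 special} via an element $x\in N_G(D_\delta)$. Both then transport Brauer quotients along conjugation by a unit that normalizes $\mathbb{A}_\delta$ and induces $c_x$ on $Dj$. The bookkeeping differs:
\begin{itemize}
\item The paper first replaces $x$ by its $E_{\mathfrak{h}}$-component, so that $\mathfrak{a}_x\in\mathbb{A}_\delta^\times$. This is legitimate because the condition that $C_P(R)$ is a defect group of $b_R$ is preserved under $P$-conjugation of $R$. It then applies the lemma at $Q$ inside the conjugate source algebra $A^x$, with hyperfocal subalgebra $\mathbb{A}^x$.
\item You apply the lemma at ${}^xQ$ in the original setting. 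You then pull back along conjugation by $a_{v,\delta}\mathfrak{a}_g$, which does normalize $\mathbb{A}_\delta$.
\end{itemize}
The same normalization would let you drop $a_{v,\delta}$ altogether. So the step you flag as the main obstacle is not really one.

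\textbf{Uniqueness.} The paper tensors with $\bar{S}_Q^\circ$ and uses uniqueness of local points together with $e(\bar{S}_Q^\circ\otimes_k\bar{S}_Q\otimes_k kD)e\cong kD$. From this it shows that $e^\prime(\bar{S}_Q^\circ\otimes_k\bar{S}_Q^\prime)e^\prime$ is trivial, and it concludes in the Dade group.

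Your final (``concretely'') argument avoids all of this. Since $J(\bar{S}_Q\otimes_k kD)=\bar{S}_Q\otimes_k J(kD)$, the quotient by the radical is $\bar{S}_Q$ as a $D$-interior algebra: the structural element $u1_{\bar{S}_Q}\otimes u$ maps to $u1_{\bar{S}_Q}$. Equivalently, the unique simple module $V_Q\otimes_k k$ restricts along the structural map to $V_Q$. A $D$-interior isomorphism preserves radicals and structural maps. Hence $V_Q\cong V_Q^\prime$ as $kD$-modules, and so as $k(D/Q)$-modules.

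Your route needs neither the endopermutation nor the vertex hypothesis. The paper's route stays inside the Dade-group formalism it needs anyway for the gluing in Section~5, but for this statement it buys nothing extra.

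You should delete the tentative alternatives that precede your final argument. The ``standard fact'' you cite is just the claim restated. The Brauer-quotient/centralizer remark fails as written, since $(\bar{S}_Q\otimes_k kD)(D)\cong\bar{S}_Q(D)\otimes_k kD\cong kD$ retains no information about $\bar{S}_Q$. It is the radical quotient, not the Brauer quotient, that recovers $\bar{S}_Q$.
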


\begin{proof}
It is well-known that there is some $x\in G$ such that
${^x}(Q,\bar{b}_Q)\leq(P,\bar{b}_P)$ and $C_P({^x}Q)$ is a defect group of 
the block ${^x}b_Q$ of $C_G({^x}Q)$ over $\U$.
Equivalently, $C_{P^x}(Q)$ is a defect group of the block $b_Q$.
Since $D$ is abelian, by \cite[Theomre 2]{W14}, we can assume that $x$ is in $N_G(D_\delta)$.
Furthermore, since $N_G(D_\delta)/C_G(D)=PC_G(D)/C_G(D)\cdot E_\mathfrak{h}$,
we can assume that $xC_G(D)$ is in $E_\mathfrak{h}$.
Then we have $\mathfrak{a}_x\in\mathbb{A}_\delta^\times$ such that
$\mathfrak{a}_x(wj)\mathfrak{a}_x^{-1}=(xwx^{-1})j$ for any $w\in D$.	

Clearly, $A^x$ is also a source algebra of the block $b$
determined by the defect pointed group $(P_\gamma)^x$.
Since $x$ is in $N_G(D_\delta)$,
we have $\mathbb{A}^x$ is a hyperfocal subalgebra of $A^x$
with hyperfocal subgroup $D$.
The conjugation action of $x$ induces an $\U$-algebra isomorphism
$\mathbb{A}_\delta^{{^x}Q}\longrightarrow((\mathbb{A}^x)_{\delta^x})^Q$.
At the same time, the conjugation action of $\mathfrak{a}_x$ induces 
an $\U$-algebra isomorphism $\mathbb{A}_\delta^Q\longrightarrow\mathbb{A}_\delta^{{^x}Q}$.
Composing these two isomorphisms together,
we get an $\U$-algebra isomorphism 
$\mathbb{A}_\delta^Q\longrightarrow(\mathbb{A}^x)_{\delta^x}^Q$,
which is in fact a $D$-interior algebra isomorphism.
By Lemma \ref{local structure at Q not 1 special}, 
we get this general structure of $\mathbb{A}_\delta(Q)$.	
	
Let $V_Q^\prime$ be another indecomposable endopermutation 
$k(D/Q)$-module with vertex $D/Q$ fulfilling the property in this lemma.
	Then we have $\bar{S}_Q\mathop\otimes\limits_kkD\cong\bar{S}_Q^\prime\mathop\otimes\limits_kkD$ 
	as $D$-interior algebras.
	Tensoring with $\bar{S}_Q^{\circ}\mathop\otimes\limits_k-$ on the both sides of this isomorphism 
	yields a $D$-interior algebra isomorphism
	$\bar{S}_Q^\circ\mathop\otimes\limits_k\bar{S}_Q\mathop\otimes\limits_kkD\cong
	\bar{S}_Q^\circ\mathop\otimes\limits_k\bar{S}_Q^\prime\mathop\otimes\limits_kkD$.
	By \cite[5.3]{P-88},
	local points of $D$ on $\bar{S}_Q^\circ\mathop\otimes\limits_k\bar{S}_Q^\prime$ and 
	$\bar{S}_Q^\circ\mathop\otimes\limits_k\bar{S}_Q\mathop\otimes\limits_kkD$ are both unique.
	We take two elements $e^\prime$ and $e$ belonging to these two unqiue local points, respectively.
	Then by \cite[Theorem 7.4.2]{L'book 2},
	$e(\bar{S}_Q^\circ\mathop\otimes\limits_k\bar{S}_Q\mathop\otimes\limits_kkD)e$ is isomorphic to $kD$ 
	as $D$-interior algebras.
	It is clear that $e^\prime(\bar{S}_Q^\circ\mathop\otimes\limits_k\bar{S}_Q^\prime)e^\prime$
	is also a primitive Dade $D$-interior algebra.
	By \cite[Proposition 7.4.4]{L'book 2},
	we have $e^\prime(\bar{S}_Q^\circ\mathop\otimes\limits_k\bar{S}_Q^\prime)e^\prime\mathop\otimes\limits_kkD$
	is a primitive $D$-interior algebra.
	Hence, we have 
	$e^\prime(\bar{S}_Q^\circ\mathop\otimes\limits_k\bar{S}_Q^\prime)e^\prime\mathop\otimes\limits_kkD\cong kD$
	as $D$-interior algebras.
	This implies that $e^\prime(\bar{S}_Q^\circ\mathop\otimes\limits_k\bar{S}_Q^\prime)e^\prime$
	is a trivial $D$-interior algebra.
	Then the elements in the Dade group (see \cite[Definition 7.3.11]{L'book 2}) 
	of $D$ corresponding to $V_Q$ and $V_Q^\prime$ are inverse to each other.
	In particular, $V_Q$ is isomorphic to $V_Q^\prime$ as $k(D/Q)$-modules.
\end{proof}

\begin{rmk}\rm
With the notation of Proposition \ref{local structure at Q not 1 general}, 
$A_\delta(Q)$ may not be isomorphic to $\bar{S}_Q\mathop\otimes\limits_kkC_P(Q)$ in general.
It behaves different from the special case in Lemma \ref{local structure at Q not 1 special}.
For example, let $G$ be the symmetric group ${\rm S}_4$ on $4$ letters
and $p$ equal to $2$.
Let $b$ be the principal block of $G$ over $\U$.
In fact, $b$ is just the unit element $1_{\mathcal{O}G}$ and 
the source algebra $A$ is equal to the group algebra $\U G$.
In this case, the hyperfocal subgroup $D$ is a Klein four group and
the hyperfocal subalgebra $\mathbb{A}$ is just the group algebra $\U{\rm A}_4$,
where ${\rm A}_4$ is the alternating group.
Set $P$ to be $D\rtimes(12)$.
Let $Q$ be the subgroup $\{(1),(13)(24)\}$ of $D$.
Then $C_P(Q)$ is just equal to $D$ and $C_G(Q)$ is equal to another Sylow $2$-subgroup
$P^\prime=D\rtimes(13)$.
In this case, $A_\delta$ is isomorphic to $kC_G(Q)$, which is $kP^\prime$.

Due to this, in general $(j\mathbb{A}vj)(Q)$ may be not zero when $v$ does not belong to $C_P(Q)$
since in the concrete example above $(j\mathbb{A}vj)(Q)$ is equal to 
$(kD)(13)$ 
when $v$ equals $(12)$ not in $C_P(Q)$.
\end{rmk}

\begin{order}\rm
	Let $Q$ be a subgroup of $D$ and $\varphi$ a homomorphism from $Q$ to $D$.
	We denote the subgroup $\{(u,\varphi(u))\mid u\in Q\}$ of $D\times D$ by $\Delta_\varphi(Q)$.	
	For any $g\in G$ with $Q^g\leq D$, we denote by $\varphi_g$ 
	the homomorphism from $Q$ to $D$ induced by the conjugation action of $g$. 
	Now we can refine the $\U(D\times D)$-structure of $\mathbb{A}_\delta$
	in the case where the block $b$ is a hyperfocal abelian Frobenius block as follows.
\end{order}

\begin{prop}\label{special mod stru}
Assume that the block $b$ is a hyperfocal abelian Frobenius block.
Then every direct summand of the $\U(D\times D)$-module $X$ 
appearing in the decomposition (\ref{general dec as bimod}) has the form
$\mathrm{Ind}_{\Delta_\varphi(Q)}^{D\times D}(\U)$
with $Q$ being a proper subgroup of $D$ and 
$\varphi:Q\longrightarrow D$ induced by an element in $E_\mathfrak{h}$.
Moreover, there is an injective group homomorphism 
from $E_\mathfrak{h}$ to $N_{\mathbb{A}_\delta}(D)$ sending
$\tilde{g}=gC_G(D)$ to $\mathfrak{e}_{\tilde{g}}$ satisfying that  $(gwg^{-1})j=\mathfrak{e}_{\tilde{g}}(wj)\mathfrak{e}_{\tilde{g}}^{-1}$ for any $w\in D$.
Identifying $E_\mathfrak{h}$ with a subgroup of $N_{\mathbb{A}_\delta}(D)$ 
through this homomorphism,
then $\U(D\rtimes E_\mathfrak{h})$ is a unitary subalgebra of $\mathbb{A}_\delta$
such that $\mathbb{A}_\delta=\U(D\rtimes E_\mathfrak{h})\oplus Y$ as 
$\U(D\rtimes E_\mathfrak{h})$-$\U(D\rtimes E_\mathfrak{h})$-bimodules
with $Y$ being isomorphic to $X$ as $\U D$-$\U D$-bimodules.
\end{prop}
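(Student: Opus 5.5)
Three things need proving: the shape of the summands of $X$, that the cocycle $\alpha$ of Proposition \ref{general mod stru} can be taken trivial, and the bimodule splitting. The splitting will come essentially for free once the second point is settled.

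\textbf{Summands of $X$.} By Lemma \ref{basic result of mod stru} and Proposition \ref{general mod stru}, an indecomposable summand of $X$ has the form $\U DhD\cong\mathrm{Ind}_{\Delta_h(D)}^{D\times D}(\U)$ with $h\in G-N_G(D)$. Put $Q=D\cap{^h}D$; then $Q$ is a proper subgroup of $D$, and $(Q,\bar b_Q)^h\subseteq(D,\bar b_D)$. Since $D$ is abelian, \cite[Theorem 2]{W14} gives $h=cy$ with $c\in C_G(Q)$ and $y\in N_G(D,\bar b_D)=N_G(D_\delta)$. Hence $\varphi_h=\varphi_y|_Q$.

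Next write $yC_G(D)=vxC_G(D)$ with $v\in P$ and $xC_G(D)\in E_\mathfrak{h}$. I would like to discard the $p$-part $v$, so that $\varphi$ is induced by $E_\mathfrak{h}$. If $Q=1$ there is nothing to do. If $Q\neq 1$, the summand has $\mathrm{Br}_{\Delta_\varphi(Q)}\neq 0$. I would then use Lemma \ref{local structure at Q not 1 special}, transported to $Q$ as in Proposition \ref{local structure at Q not 1 general} via $\mathfrak{a}_x$, which gives $\mathbb{A}_\delta(Q)\cong\bar S_Q\otimes_k kD$. In this algebra the only ``twists'' occurring as Brauer quotients at twisted diagonals $\Delta_\psi(Q)$ come from $kD$, i.e.\ from $\psi$ that are trivial modulo $E_\mathfrak{h}$. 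This should force $v$ to act on $Q$ as an element of $D$, hence trivially since $D$ is abelian. I expect this to be the delicate point, because the Remark after Proposition \ref{local structure at Q not 1 general} warns that $p$-elements outside $C_P(Q)$ can still contribute.

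\textbf{Trivializing $\alpha$.} Take $\mathbb{M}$, $\hat E_\mathfrak{h}$ and $\alpha\in Z^2(E_\mathfrak{h};k^\times)$ as in the proof of Proposition \ref{general mod stru}. The goal is a splitting $E_\mathfrak{h}\to\mathbb{M}$. Since $E_\mathfrak{h}$ acts freely on $D-\{1\}$, every Sylow subgroup of $E_\mathfrak{h}$ is cyclic or generalized quaternion. So it suffices to split the extension over each Sylow subgroup, which is a cyclic or quaternion $\ell$-group for some $\ell\neq p$.

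I would do this with determinants, as in Puig's argument for abelian Frobenius blocks \cite[6.8]{P91}. Choose $1\neq w\in D$ with $E_\mathfrak{h}$ acting freely on its orbit. Then $\mathrm{Br}_{\langle w\rangle}$ composed with the isomorphism $\mathbb{A}_\delta(\langle w\rangle)\cong \bar S\otimes kD$ of Proposition \ref{local structure at Q not 1 general} lands in a matrix algebra, and the determinant there yields a homomorphism $\mathbb{M}\to k^\times$. Its restriction to $k^\times$ is $\lambda\mapsto\lambda^{\dim V_Q}$, and $\dim V_Q$ is prime to $p$.

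This step is the main obstacle, since it is not clear that the determinant restriction is prime to $\ell$. If it fails, I would instead compare with Proposition \ref{local structure at Q not 1 general} directly. Uniqueness of $V_Q$ makes $V_Q$ $E_\mathfrak{h}$-stable, and Dade modules of abelian $p$-groups extend to $D\rtimes E$ with trivial obstruction (Puig's Gluing/extension results). This would show that the induced class on the normalizer of $Q$ is trivial, and hence that $\alpha$ is a coboundary.

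\textbf{Splitting.} With $\alpha$ trivial, $\mathbb{M}=k^\times\times\{\mathfrak{e}_{\tilde g}\}$, which gives the injective homomorphism $\tilde g\mapsto\mathfrak{e}_{\tilde g}$ and $\mathbb{B}=\U(D\rtimes E_\mathfrak{h})$. The bimodule decomposition $\mathbb{A}_\delta=\mathbb{B}\oplus Y$, with $Y\cong X$ as $\U(D\times D)$-modules, is then exactly the final statement of Proposition \ref{general mod stru}.
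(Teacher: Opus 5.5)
\textbf{Gap in the first part.} The mechanism you propose for removing the $p$-part $v$ fails at exactly the step you flag. A summand $\mathrm{Ind}_{\Delta_\psi(Q)}^{D\times D}(\U)$ with $\psi\neq\mathrm{id}_Q$ is invisible in $\mathbb{A}_\delta(Q)$. Since $D$ is abelian, Mackey's formula shows that its restriction to $\Delta(Q)$ only has vertices $\Delta(Q)\cap\Delta_\psi(Q)<\Delta(Q)$, so its Brauer quotient at $Q$ is $0$. Hence no property of $\mathbb{A}_\delta(Q)\cong\bar S_Q\otimes_k kD$ can exclude the twist by $v$.

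The missing idea is to leave $\mathbb{A}_\delta$ and work in $A_\delta$. First strip off $x$ by right multiplication with $\mathfrak{a}_x^{-1}$. This gives a generator $\mathfrak{a}_v\in\mathbb{A}_\delta$ with stabilizer $\Delta_{\varphi_v}(Q)$. Then $\mathfrak{a}_v a_{v,\delta}^{-1}$, with $a_{v,\delta}$ as in paragraph 3.1, lies in $j\mathbb{A}v^{-1}j$ and has the untwisted stabilizer $\Delta(Q)$. So $(j\mathbb{A}v^{-1}j)(Q)\neq 0$, and the last assertion of Lemma \ref{local structure at Q not 1 special} forces $v\in C_P(Q)$.

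That lemma, however, needs $C_P(Q)$ to be a defect group of $b_Q$; this is the real content of the Remark you cite. The paper reduces to that case inside $\mathbb{A}_\delta$. It picks $yC_G(D)\in E_\mathfrak{h}$ with $C_P({^y}Q)$ a defect group of $b_{{^y}Q}$. It then uses the summand generated by $\mathfrak{a}_y\mathfrak{a}_v\mathfrak{a}_{v^{-1}y^{-1}v}$, which has vertex $\Delta_{\varphi_v}({^y}Q)$ with the same $v$. The element $\mathfrak{a}_{v^{-1}y^{-1}v}$ exists because $E_\mathfrak{h}$ is normal in $N_G(D_\delta)/C_G(D)$. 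Your proposal contains neither the untwisting through $A_\delta$ nor this reduction; the single transport ``via $\mathfrak{a}_x$'' conflates the two roles.

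\textbf{The cocycle is easier than you make it.} Your Sylow reduction already finishes the argument. For cyclic $C$ one has $H^2(C;k^\times)=k^\times/(k^\times)^{|C|}=1$ because $k$ is algebraically closed. Generalized quaternion groups have trivial Schur multiplier. Restriction to a Sylow $\ell$-subgroup is injective on the $\ell$-part of $H^2(E_\mathfrak{h};k^\times)$. Hence $H^2(E_\mathfrak{h};k^\times)=1$ for the Frobenius complement $E_\mathfrak{h}$, which is exactly what the paper uses.

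Drop the determinant detour. It is not even well defined: since $E_\mathfrak{h}$ acts freely on $D-\{1\}$, elements of $\mathbb{M}$ over $\tilde g\neq 1$ do not centralize $w$, so they lie outside the domain of $\mathrm{Br}_{\langle w\rangle}$. The fallback via extending $V_Q$ concerns a different obstruction class.

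\textbf{The final step is fine.} Reading off $\mathbb{B}=\U(D\rtimes E_\mathfrak{h})$ and the bimodule splitting from Proposition \ref{general mod stru} once $\alpha$ is trivial is equivalent to what the paper does.
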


\begin{proof}
Let $X^\prime$ be an indecomposable direct summand of $X$.	
By Proposition \ref{general mod stru},
we can assume that $X^\prime$ is isomorphic to $\U DhD$ for some $h\in G-N_G(D)$
as $\U(D\times D)$-modules.
Set $Q={^h}D\cap D$. 
So $Q$ is a proper subgroup of $D$.
Moreover, we can assume that $Q$ is nontrivial.
For in this case, we can set $\varphi$ to be $\mathrm{id}_D$.
As $\U(D\times D)$-modules,
$X^\prime$ is isomorphism to $\mathrm{Ind}_{\Delta_{\varphi_h}(Q)}^{D\times D}(\U)$,
which is isomorphic to $\U D\mathop\otimes\limits_{\mathcal{O}Q}{_{\varphi_h}}\U D$.
By \cite[Theorem 8.7.1]{L'book 2},
we have $(Q,\bar{b}_Q)^h\leq(D,\bar{b}_D)$.
Since $D$ is abelian, by \cite[Theorem 2]{W14},
we can assume that $h=zg$ for some $g\in N_G(D_\delta)=N_G(D,b_D)$ and some $z\in C_G(Q)$.
Therefore, $\varphi_h=\varphi_g: Q\longrightarrow D,~~u\mapsto g^{-1}ug$.
Furthermore, since $N_G(D_\delta)/C_G(D)=E_\mathfrak{h}\rtimes(PC_G(D)/C_G(D))$,
we can write $g$ as $vx$ for some $v\in P$ and 
some $x\in N_G(D_\delta)$ with $xC_G(D)\in E_\mathfrak{h}$.
Recall from the paragraph 4.4 that there is an element $\mathfrak{a}_x$ in $\mathbb{A}_\delta^\times$
such that $\mathfrak{a}_x(uj)\mathfrak{a}_x^{-1}=(xux^{-1})j$ for any $u\in D$.
Then $X^\prime\mathfrak{a}_x^{-1}$ is still an indecomposable direct summand of $\mathbb{A}_\delta$
as $\U(D\times D)$-modules.
It is routine to check that $X^\prime\mathfrak{a}_x^{-1}$ is isomorphic to 
${\rm Ind}_{\Delta_{\varphi_v}(Q)}^{D\times D}(\U)$.
Equivalently, there is an element $\mathfrak{a}_v$ in $\mathbb{A}_\delta$ such that
for any $(w,w^\prime)\in D\times D$,
$w\mathfrak{a}_v(w^\prime)^{-1}=\mathfrak{a}_v$ if and only if 
$w^\prime=v^{-1}wv$ and $w\in Q$.
At the same time,
by the proof of Proposition \ref{local structure at Q not 1 general},
there is an element $y$ in $N_G(D_\delta)$ such that $yC_G(D)$ is in $E_\mathfrak{h}$
and $C_P({^y}Q)$ is a defect group of the block $b_{{^y}Q}$ of $C_G({^y}Q)$.
For this element $y$,
there is an element $\mathfrak{a}_y\in\mathbb{A}_\delta^\times$ 
such that $\mathfrak{a}_y(wj)\mathfrak{a}_y^{-1}=(ywy^{-1})j$ for any $w\in D$.
Since $E_\mathfrak{h}$ is normal in $N_G(D_\delta)/C_G(D)$,
$(v^{-1}y^{-1}v)C_G(D)$ still belongs to $E_\mathfrak{h}$.
Hence, we have an element $\mathfrak{a}_{v^{-1}y^{-1}v}\in\mathbb{A}_\delta^\times$
such that 
$\mathfrak{a}_{v^{-1}y^{-1}v}(wj)\mathfrak{a}_{v^{-1}y^{-1}v}^{-1}=
(v^{-1}y^{-1}vwv^{-1}yv)j$ for any $w\in D$.
Then we can check that the $\U(D\times D)$-module generated by 
$\mathfrak{a}_y\mathfrak{a}_v\mathfrak{a}_{v^{-1}y^{-1}v}$ is isomorphic to 
$\mathrm{Ind}_{\Delta_{\varphi_v}({^y}Q)}^{D\times D}(\U)$.
This implies that the $\U(D\times D)$-module $\mathbb{A}_\delta$ has a direct summand 
isomorphic to $\mathrm{Ind}_{\Delta_{\varphi_v}({^y}Q)}^{D\times D}(\U)$.
Without loss of generality, we can assume that $C_P(Q)$ is a defect group of 
the block $b_Q$ of $C_G(Q)$.

Borrowing the notation from the paragraph 3.1,
there is an element $a_{v,\delta}$ in $A_\delta^\times\cap\mathbb{A}v$ such that
$a_{v,\delta}(uj)a_{v,\delta}^{-1}=(vuv^{-1})\cdot j$.
Then $\mathfrak{a}_va_{v,\delta}^{-1}$ is in $j\mathbb{A}v^{-1}j$ such that
$w\mathfrak{a}_va_{v,\delta}^{-1}(w^\prime)^{-1}=\mathfrak{a}_va_{v,\delta}^{-1}$
if and only if $w^\prime=w$ and $w\in Q$ for any $(w,w^\prime)\in D\times D$.
In particular, there is an $\U(D\times D)$-submodule of $j\mathbb{A}v^{-1}j$ isomorphic to 
$\mathrm{Ind}_{\Delta(Q)}^{D\times D}(\U)$.
This implies that $(j\mathbb{A}v^{-1}j)(Q)\neq 0$.
By Lemma \ref{local structure at Q not 1 special},
we have $v$ being in $C_P(Q)$.
Therefore, the subgroup $\Delta_{\varphi_h}(Q)$ is equal to
the subgroup $\Delta_{\varphi_x}(Q)$ with $xC_G(D)\in E_\mathfrak{h}$.
This completes the proof of the first statement.

By Proposition \ref{p-nilpotent quot},
$N_{\mathbb{A}_\delta}(D)/(\mathbb{A}_\delta^D)^\times$ is isomorphic to $E_\mathfrak{h}$.
We identify these two groups with each other.
At the same time, we have the following short exact sequence
    \begin{equation*}
	\xymatrix{
		1\ar[r]& k^\times\ar[r] & N_{\mathbb{A}_\delta}(D)/(j+J(\mathbb{A}_\delta^D)) \ar[r] & 
E_\mathfrak{h}\ar[r] & 1.
	}
\end{equation*}
Since $D\rtimes E_\mathfrak{h}$ is a Frobenius group,
we have $H^2(E_\mathfrak{h}; k^\times)=1$.
In particular, the short exact sequence above splits.
So there is a subgroup $\hat{E}_\mathfrak{h}$ of $N_{\mathbb{A}_\delta}(D)$ 
containing $j+J(\mathbb{A}_\delta^D)$ such that
$k^\times\cap\hat{E}_\mathfrak{h}=\{j\}$ and 
$\hat{E}_\mathfrak{h}/(j+J(\mathbb{A}_\delta)^D)\cong E_\mathfrak{h}$.
In particular, we get another short exact sequence
   \begin{equation*}
	\xymatrix{
		1\ar[r]& j+J(\mathbb{A}_\delta^D)\ar[r] &\hat{E}_\mathfrak{h} \ar[r] & 
		E_\mathfrak{h}\ar[r] & 1.
	}
\end{equation*}
Since $E_\mathfrak{h}$ is a $p^\prime$-group,
by \cite[Lemma (45.6)]{Thevenaz},
this sequence also splits.
Therefore, 
we can view $E_\mathfrak{h}$ as a subgroup 
$\{\mathfrak{e}_{\tilde{g}}\mid\tilde{g}\in E_\mathfrak{h}\}$
of $N_{\mathbb{A}_\delta}(D)$ intersecting with 
$(\mathbb{A}_\delta^D)^\times$ trivially such that it and $E_\mathfrak{h}$ act in the same way on $D$. 

As $\U(D\times D)$-modules, by Lemma \ref{ODg mult 1},
$\mathbb{A}_\delta=(\mathop\bigoplus\limits_{\tilde{g}\in E_\mathfrak{h}}
\U D\mathfrak{e}_{\tilde{g}})\oplus X$ for some $\U(D\times D)$-submodule $X$.
Clearly, $\mathop\bigoplus\limits_{\tilde{g}\in E_\mathfrak{h}}\U D\mathfrak{e}_{\tilde{g}}$
is a unitary subalgebra of $\mathbb{A}_\delta$ isomorphic to $\U(D\rtimes E_\mathfrak{h})$.
Then by \cite[Proposition 6.15.2]{L'book 2},
the inclusion map $\U(D\rtimes E_\mathfrak{h})\hookrightarrow\mathbb{A}_\delta$ 
splits as a homomorphism of 
$\U(D\rtimes E_\mathfrak{h})$-$\U(D\rtimes E_\mathfrak{h})$-bimodules.
Therefore, $\mathbb{A}_\delta=\U(D\rtimes E_\mathfrak{h})\oplus Y$ 
for some $\U(D\rtimes E_\mathfrak{h})$-$\U(D\rtimes E_\mathfrak{h})$-bimodule $Y$.
By  the Krull-Schmidt Theorem, 
$Y$ is isomorphic to $X$ as $\U D$-$\U D$-bimodules.
\end{proof}

\vskip2cm

\section{A stable equivalence of Morita type}
\quad\, 
We still use the notation in the last two sections.
Throughout this section, 
we assume that the block is a hyperfocal abelian Frobenius block.
The main result in this section is the construction of 
 a stable equivalence of Morita type between 
$\mathbb{A}_\delta$ and $\U(D\rtimes E_\mathfrak{h})$ under this assumption.
To accomplish this, we need to glue the indecomposable endopermutation $k(D/Q)$-modules $V_Q$ for any $1\neq Q\leq D$ appearing in Proposition \ref{local structure at Q not 1 general} together.
The main technique is the result due to Puig \cite[Proposition 3.6]{P91} 
(see also \cite[Theorem 7.8.2]{L'book 2}).
For convenience, we present this result here.

\begin{lem}{\rm(\cite[Proposition 3.6]{P91})}\label{Puig's result}
Let $Z$ be a finite abelian $p$-group and $E$ a subgroup of $\mathrm{Aut}(Z)$.
Let $\mathcal{C}$ be a nonempty upwardly closed $E$-stable set of subgroups of $Z$,
which means that if $Q\leq R\leq Z$ with $Q$ in $\mathcal{C}$,
then $R$ and $y(Q)$ belong to $\mathcal{C}$ for all $y\in E$.
For any subgroup $Q$ in $\mathcal{C}$,
let $V_Q$ be an indecomposable endopermutation
$k(Z/Q)$-module with vertex $Z/Q$.
Suppose that if $Q,R$ belong to $\mathcal{C}$ such that $Q\leq R$, 
then $\mathrm{Defres}_{Z/R}^{Z/Q}(V_Q)\cong V_R$,
where $Z/R$ is identified with $(Z/Q)/(R/Q)$.
Then there is an indecomposable endopermutation $kZ$-module $V$ with vertex $Z$
such that $\mathrm{Defres}_{Z/Q}^Z(V)\cong V_Q$ for any subgroup $Q$ in $\mathcal{C}$.
In addition, if ${_y}(V_Q)\cong V_{y(Q)}$ as $k(Z/y(Q))$-modules
for all $y\in E$ and all $Q$ in $\mathcal{C}$,
then ${_y}V\cong V$ as $kZ$-modules for all $y\in E$.
\end{lem}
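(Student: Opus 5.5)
This is Puig's gluing result \cite[Proposition 3.6]{P91}, restated as \cite[Theorem 7.8.2]{L'book 2}. The paper cites it rather than proving it, so here we only outline the argument we would follow.

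The plan is to work in the Dade group $D_k(Z)$ and glue elements rather than modules. For each $Q\in\mathcal{C}$, let $[V_Q]\in D_k(Z/Q)$ be the class of $V_Q$, and inflate it to $\mathrm{Inf}_{Z/Q}^{Z}[V_Q]\in D_k(Z)$. For $Z$ abelian, the structure of the Dade group is known (Dade's theorem). Every element of $D_k(Z)$ is a sum of classes of relative syzygies $\Omega_{Z/R}$ inflated from quotients, together with torsion classes. Moreover, the deflation–restriction maps $\mathrm{Defres}_{Z/Q}^Z$ are computed explicitly on these generators via the Burnside ring / Bouc's description: $\mathrm{Defres}$ of $\Omega_{Z/R}$ to $Z/Q$ is $\Omega_{(Z/Q)/(RQ/Q)}$ or trivial, according to the containment $Q\le R$.

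We would proceed by downward induction on $\mathcal{C}$ ordered by size, and define
\[
v=\sum_{Q\in\mathcal{C}}\mu_{\mathcal{C}}(Q)\,\mathrm{Inf}_{Z/Q}^{Z}[V_Q],
\]
with Möbius-type coefficients $\mu_{\mathcal{C}}$ taken on the poset $\mathcal{C}$. A cleaner formulation avoids the explicit formula and uses induction. Suppose the classes $[V_R]$ have already been glued to an element $v'$ for the larger $R$. Given $Q$ minimal, correct $v'$ by the difference $[V_Q]-\mathrm{Defres}_{Z/Q}^{Z}v'$. By the compatibility hypothesis $\mathrm{Defres}_{Z/R}^{Z/Q}(V_Q)\cong V_R$, this difference lies in the kernel of all deflations $\mathrm{Defres}^{Z/Q}_{Z/R}$ for $R>Q$. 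Such "faithful" elements of $D_k(Z/Q)$ can be inflated to $Z$, and then deflated to any $Z/Q'$ with $Q'\not\le Q$ they become trivial, because $Q'Q/Q$ is nontrivial in $Z/Q$. So adding the inflation does not disturb the previously achieved conditions. We would also check that inflation followed by $\mathrm{Defres}$ to an unrelated $Q'\in\mathcal{C}$ vanishes, using upward closure of $\mathcal{C}$: the subgroup $QQ'$ lies in $\mathcal{C}$.

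Finally, we take $V$ to be the cap (the unique indecomposable summand with vertex $Z$) of an endopermutation module representing $v$. Then $\mathrm{Defres}_{Z/Q}^Z(V)$ is indecomposable with vertex $Z/Q$ and has class $[V_Q]$, hence is isomorphic to $V_Q$.

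For equivariance, $E$ acts on $D_k(Z)$ and commutes with inflation and $\mathrm{Defres}$, sending $Z/Q$-data to $Z/y(Q)$-data. If the family is $E$-stable, then ${_y}v$ satisfies the same gluing conditions as $v$. So we need uniqueness: an element of $D_k(Z)$ all of whose deflations to $Z/Q$, $Q\in\mathcal{C}$, are prescribed is determined modulo the elements whose deflations to all $Z/Q$ with $Q\in\mathcal{C}$ are trivial. Since these need not vanish, we would instead average over $E$. Because $|E|$ is coprime to $p$ in our application, we replace $v$ by the $E$-average, first carrying out the construction in $D_k(Z)\otimes\mathbb{Z}[1/|E|]$ and then lifting back. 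Alternatively, and more robustly, we would build $v$ canonically from the Möbius formula above. That formula is manifestly $E$-invariant when the data are, so ${_y}V\cong V$ follows.

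The main obstacle is the claim that the correction terms are inflated from faithful elements and do not interfere with the other conditions. This requires the explicit structure of $D_k$ of abelian groups, namely that it is generated by relative syzygies and torsion, and the detailed behaviour of $\mathrm{Defres}$ on it.
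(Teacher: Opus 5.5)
The paper does not prove this lemma; it quotes Puig \cite[Proposition 3.6]{P91} (see also \cite[Theorem 7.8.2]{L'book 2}), so your outline has to stand on its own. The existence half does. Induct on $|\mathcal{C}|$, remove a minimal $Q$, and add $\mathrm{Inf}^Z_{Z/Q}\bigl([V_Q]-\mathrm{Defres}^Z_{Z/Q}(v')\bigr)$; this correctly produces a gluing, and the final step of taking the cap is also correct. However, what you call the main obstacle needs no structure theory of $D_k(Z)$ (generation by relative syzygies, torsion, and so on). The only input is the identity $\mathrm{Defres}^Z_{Z/R}\circ\mathrm{Inf}^Z_{Z/Q}=\mathrm{Inf}^{Z/R}_{Z/RQ}\circ\mathrm{Defres}^{Z/Q}_{Z/RQ}$. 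It holds because, on a $Z/Q$-stable basis $X$ of the Dade algebra $S$, the groups $R$ and $RQ/Q$ have the same fixed points and the same orbits, so $S(R)=S(RQ/Q)$ as $Z/R$-algebras.

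The genuine gap is in the equivariance part, for two reasons.
\begin{itemize}
\item \emph{Averaging does not work.} The lemma does not assume $E$ is a $p'$-group. In Section 5 it is applied with $E=\tilde N=N_G(D_\delta)/C_G(D)$, which contains $PC_G(D)/C_G(D)$; for a Klein four $D$ with $C_P(D)<P$ it is $\mathrm{S}_3$. Moreover, being prime to $p$ is not the relevant condition anyway. $D_k(Z)$ can have $2$-torsion and is not a $\mathbb{Z}[1/|E|]$-module, so an average formed in $D_k(Z)\otimes\mathbb{Z}[1/|E|]$ need not come from an $E$-invariant gluing in $D_k(Z)$.
\item \emph{The M\"obius alternative is never made precise or checked}; you set it aside in favour of the induction.
\end{itemize}

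The M\"obius formula can be made to work. Take $c_Q=\sum_{T\in\mathcal{C},\,T\le Q}\mu(T,Q)$, so that $\sum_{Q\in\mathcal{C},\,Q\le S}c_Q=1$ for every $S\in\mathcal{C}$. By the identity above and the compatibility hypothesis,
$\mathrm{Defres}^Z_{Z/R}\bigl(\sum_Q c_Q\,\mathrm{Inf}^Z_{Z/Q}[V_Q]\bigr)=\sum_{S\ge R}\bigl(\sum_{Q\in\mathcal{C}:\,RQ=S}c_Q\bigr)\mathrm{Inf}^{Z/R}_{Z/S}[V_S]$.
M\"obius inversion on the interval $[R,Z]$ then makes the inner coefficient equal to $\delta_{R,S}$.

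A simpler fix is to run your induction removing a whole $E$-orbit $\mathcal{O}$ of minimal elements of $\mathcal{C}$ at once. Then $\mathcal{C}\setminus\mathcal{O}$ is still upward closed and $E$-stable. Distinct members of $\mathcal{O}$ are incomparable, so the corrections $\mathrm{Inf}^Z_{Z/Q}(x_Q)$, $Q\in\mathcal{O}$, do not interfere with each other. Finally, ${_y}x_Q=x_{y(Q)}$ by the inductive $E$-invariance of $v'$ and of the data, so $v$ stays $E$-invariant.

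Note also that the last sentence of the statement can only mean ``$V$ can be chosen $E$-stable''. Gluings are not unique: for $\mathcal{C}=\{Z\}$, every indecomposable endopermutation $kZ$-module with vertex $Z$ glues.
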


\begin{order}\rm
	In our setting, we can identify $N_G(D_\delta)/C_G(D)$, denoted by $\tilde{N}$, with 
	a subgroup of $\mathrm{Aut}(D)$.
For any $g\in N_G(D_\delta)$,
we denote its image in $\tilde{N}$ by $\tilde{g}$.
	Let $\mathcal{C}$ be the set of subgroups of $D$ consisting of all nontrivial subgroups of $D$.
	Obviously, $\mathcal{C}$ is a nonempty upwardly $\tilde{N}$-stable set.
	Take two subgroups $Q$ and $R$ of $D$ with $Q\leq R$.
	We denote $\mathrm{Defres}_{D/R}^{D/Q}(V_Q)$ by $W_{Q,R}$.
	By definition, $W_{Q,R}$ is up to isomorphism the unique 
	indecomposable endopermutation $k(D/R)$-module 
	with vertex $D/R$ such that $\bar{S}_Q(R)$ is isomorphic to $\mathrm{End}_k(W_{Q,R})$
	as $D/R$-algebras. 
	Here, $\bar{S}_Q$ is $\mathrm{End}_k(V_Q)$.
\end{order}

\begin{lem}\label{compatible of endo mod}
	Keep the notation as above.
	We have $W_{Q,R}\cong V_{R}$ as $k(D/R)$-modules.
	Moreover, for all $\tilde{g}\in\tilde{N}$,
	we have ${_g}V_Q\cong V_{{^g}Q}$ as $kD$-modules. 
\end{lem}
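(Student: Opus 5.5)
The plan is to obtain both statements from Proposition \ref{local structure at Q not 1 general}, via transitivity of Brauer quotients and transport of structure along the elements $\mathfrak{a}_g$. In each case the uniqueness clause of that proposition identifies the endopermutation modules.

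\emph{Compatibility.} Let $1\neq Q\leq R\leq D$. Since $D$ is abelian, $R/Q$ is normal in $D/Q$, and there is a canonical isomorphism of $D/R$-algebras (indeed $D$-interior algebras)
\[
(\mathbb{A}_\delta(Q))(R/Q)\cong\mathbb{A}_\delta(R).
\]
This holds because $\mathrm{Br}_Q\colon\mathbb{A}_\delta^R\to\mathbb{A}_\delta(Q)^{R/Q}$ is surjective and $\mathrm{Br}_{R/Q}\circ\mathrm{Br}_Q=\mathrm{Br}_R$. The surjectivity uses the fact, from Lemma \ref{basic result of mod stru}, that $\mathbb{A}_\delta$ has a $D\times D$-stable basis, hence a $\Delta(D)$-stable basis; the same argument appears in the proof of Lemma \ref{local structure at Q not 1 special}.

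Next apply $(-)(R/Q)$ to the isomorphism $\mathbb{A}_\delta(Q)\cong\bar{S}_Q\otimes_k kD$. For the Brauer quotient of a tensor product with a permutation algebra, $R/Q$ acts on $kD$ by conjugation, which is trivial because $D$ is abelian. We therefore expect
\[
(\bar{S}_Q\otimes_k kD)(R/Q)\cong\bar{S}_Q(R/Q)\otimes_k kD .
\]
This is the step to check carefully: one uses a $D$-stable basis of $\bar{S}_Q$ together with the basis $D$ of $kD$, which is fixed pointwise under conjugation by $R$. By definition (paragraph 2.14), $\bar{S}_Q(R/Q)\cong\mathrm{End}_k(W_{Q,R})$ as $D/R$-algebras. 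Hence
\[
\mathbb{A}_\delta(R)\cong\mathrm{End}_k(W_{Q,R})\otimes_k kD
\]
as $D$-interior algebras, where $W_{Q,R}$ is indecomposable endopermutation with vertex $D/R$. The uniqueness part of Proposition \ref{local structure at Q not 1 general} then gives $W_{Q,R}\cong V_R$.

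\emph{Equivariance.} Let $\tilde g\in\tilde N$. By Proposition \ref{p-nilpotent quot} we can write $g=vx$ modulo $C_G(D)$ with $v\in P$ and $\tilde x\in E_\mathfrak{h}$. Both kinds of elements admit units of $A_\delta$ normalizing $\mathbb{A}_\delta$ and inducing conjugation by $g$ on $Dj$: these are $a_{v,\delta}$ (paragraph 3.1) and $\mathfrak{a}_x$ (paragraph 4.4). Let $c=a_{v,\delta}\mathfrak{a}_x$. Conjugation by $c$ maps $\mathbb{A}_\delta^Q$ onto $\mathbb{A}_\delta^{{}^gQ}$, carries $\sum_{K<Q}\mathrm{Tr}_K^Q$ onto the corresponding ideal, and intertwines the $D$-interior structures through the automorphism $w\mapsto gwg^{-1}$. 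It therefore induces an algebra isomorphism $\mathbb{A}_\delta(Q)\cong\mathbb{A}_\delta({}^gQ)$ that is semilinear for this automorphism of $D$. Equivalently, ${}_g(\mathbb{A}_\delta(Q))\cong\mathbb{A}_\delta({}^gQ)$ as $D$-interior algebras, and
\[
{}_g(\bar{S}_Q\otimes_k kD)\cong\mathrm{End}_k({}_gV_Q)\otimes_k kD ,
\]
since twisting $kD$ by an automorphism of $D$ gives $kD$ again. Here ${}_gV_Q$ is an indecomposable endopermutation $k(D/{}^gQ)$-module with vertex $D/{}^gQ$. Uniqueness in Proposition \ref{local structure at Q not 1 general} yields ${}_gV_Q\cong V_{{}^gQ}$.

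The main obstacle is the Brauer-quotient-of-tensor-product step in the first part, together with checking that the twisted isomorphisms are genuinely $D$-interior rather than merely $D$-algebra isomorphisms. For the $v$-part one must verify that conjugation by $a_{v,\delta}$ preserves $\mathbb{A}_\delta$; this follows from $\mathbb{A}_\delta^{a_{v,\delta}}=\mathbb{A}_\delta$, noted in paragraph 4.8.
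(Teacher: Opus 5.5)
Your proposal is correct and follows essentially the same route as the paper. For the first part you use the transitivity $(\mathbb{A}_\delta(Q))(R)\cong\mathbb{A}_\delta(R)$, the identification $(\bar S_Q\otimes_k kD)(R)\cong\bar S_Q(R)\otimes_k kD$, and the uniqueness clause of Proposition~\ref{local structure at Q not 1 general}. For the second part you transport structure along $a_{v,\delta}$ times a unit of $\mathbb{A}_\delta$ inducing $x$ on $Dj$, then apply the same uniqueness clause. The paper takes $\mathfrak{e}_x$ from Proposition~\ref{special mod stru} where you take $\mathfrak{a}_x$; either choice works, and your stable-basis justifications of the two Brauer-quotient identifications fill in steps the paper only asserts.
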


\begin{proof}
	By Proposition \ref{local structure at Q not 1 general},
	we have $\mathbb{A}_\delta(Q)\cong\bar{S}_Q\mathop\otimes\limits_kkD$ and
	$\mathbb{A}_\delta(R)\cong\bar{S}_R\mathop\otimes\limits_kkD$ as $D$-interior algebras.
	Since $D$ is abelian and $Q\leq R$,
	we have $\mathbb{A}_\delta(R)\cong(\mathbb{A}_\delta(Q))(R)$ as $D$-interior algebras.
	It is clear that 
	$(\bar{S}_Q\mathop\otimes\limits_kkD)(R)\cong\bar{S}_Q(R)\mathop\otimes\limits_k(kD)(R)
	\cong\bar{S}_Q(R)\mathop\otimes\limits_kkD$.
	Then we have 
	$\bar{S}_Q(R)\mathop\otimes\limits_kkD\cong\bar{S}_R\mathop\otimes\limits_kkD$ as $D$-interior algebras.
	By the uniqueness of $V_R$ shown in Proposition \ref{local structure at Q not 1 general},
	we have $W_{Q,R}\cong V_R$ as $k(D/R)$-modules 
	since $\bar{S}_Q(R)\cong\mathrm{End}_k(W_{Q,R})$.
	
Take an element $\tilde{g}$ of $\tilde{N}$.
Clearly, $\mathrm{End}_k(_{g}V_Q)\cong{_g}\bar{S}_Q$.
Here, ${_g}\bar{S}_Q$ is the $D/{^g}Q$-interior algebra with 
being equal to $\bar{S}_Q$ as $k$-algebras
and the group homomorphism from $D/{^g}Q$ to $({_g}\bar{S}_Q)^\times$ being just the composition 
of the group homomorphism from $D/{^g}Q$ to $D/Q$ induced by the conjugation action 
of $g$ 
and the structural homomorphism from $D/Q$ to $\bar{S}_Q^\times$.
We use the similar notation for any $D$-interior algebra.
Since $\mathbb{A}_\delta(Q)\cong\bar{S}_Q\mathop\otimes\limits_kkD$ as $D$-interior algebras,
we have 
\begin{align}\label{endoperm g conj 1}
{_g}(\mathbb{A}_\delta(Q))\cong{_g}\bar{S}_Q\mathop\otimes\limits_k{_g}kD
\end{align}
as $D$-interior algebras.
On the other hand, obviously,
we have
 \begin{align}\label{endoperm g conj 2}
{_g}(\mathbb{A}_\delta(Q))\cong({_g}\mathbb{A}_\delta)({^g}Q)
\end{align} 
as $D$-interior algebras.
For $\tilde{g}\in\tilde{N}$,
it can be written as $\tilde{u}\tilde{x}$ for some $u\in P$ and 
some $x\in N_G(D_\delta)$ with $\tilde{x}\in E_\mathfrak{h}$.
Borrowing the notation from the paragraph 3.1 and 
Proposition \ref{special mod stru},
there are elements $a_{u,\delta}\in A_\delta^\times\cap j\mathbb{A}uj$ and
$\mathfrak{e}_x\in N_{\mathbb{A}_\delta}(D)$ such that
$a_{u,\delta}(wj)a_{u,\delta}^{-1}=(uwu^{-1})j$ and 
$\mathfrak{e}_x(wj)\mathfrak{e}_x^{-1}=(xwx^{-1})j$ for any $w\in D$.
Set $a_{g,\delta}=a_{u,\delta}\mathfrak{e}_x$.
Then $a_{g,\delta}$ is in $N_{A_\delta}(D)\cap j\mathbb{A}uj$ and
$a_{g,\delta}(wj)a_{g,\delta}^{-1}=(gwg^{-1})j$ for any $w\in D$.
In particular, the conjugation action of $a_{g,\delta}^{-1}$ on $\mathbb{A}_\delta$ induces
a $D$-interior algebra isomorphism $\mathbb{A}_\delta\cong{_g}\mathbb{A}_\delta$.
Therefore, 
\begin{align}\label{endoperm g conj 3}
({_g}\mathbb{A}_\delta)({^g}Q)\cong\mathbb{A}_\delta({^g}Q)\cong\bar{S}_{{^g}Q}\mathop\otimes\limits_kkD
\end{align}
as $D$-interior algebras.
By the isomorphisms (\ref{endoperm g conj 1}) and (\ref{endoperm g conj 2}) 
and (\ref{endoperm g conj 3}), we can get
\begin{align}\label{endoperm g conj 4}
\bar{S}_{{^g}Q}\mathop\otimes\limits_kkD\cong{_g}\bar{S}_Q\mathop\otimes\limits_k{_g}kD
\end{align}
as $D$-interior algebras.
Clearly, the conjugation action of $g^{-1}$ induces a $D$-interior algebra isomorphism $kD\cong{_g}kD$.
In conclusion, we obtain a $D$-interior algebra isomorphism
$\bar{S}_{{^g}Q}\mathop\otimes\limits_kkD\cong{_g}\bar{S}_Q\mathop\otimes\limits_kkD$.
By the uniqueness of $V_{{^g}Q}$ and the isomorphism $\mathrm{End}_k({_g}V_Q)\cong{_g}\bar{S}_Q$,
we can get ${_g}V_Q\cong V_{{^g}Q}$ as $kD$-modules.
\end{proof}

\begin{order}\rm
By Lemmas \ref{Puig's result} and \ref{compatible of endo mod},
we can get an indecomposable endopermutation $kD$-module $\bar{V}$ with vertex $D$ such that
$(\mathrm{End}_k(\bar{V}))(Q)\cong\bar{S}_Q$ as $D$-interior algebras
for any nontrivial subgroup $Q$ of $D$ and ${_g}\bar{V}\cong\bar{V}$ as $kD$-modules 
for any $\tilde{g}\in\tilde{N}$.
Furthermore, by \cite[Theorem 14.2]{Thevenaz07}, 
there is an indecomposable endopermutation $\U D$-module $V$ with vertex $D$ such that 
$k\mathop\otimes\limits_\mathcal{O}V\cong\bar{V}$.
Setting $S=\mathrm{End}_\mathcal{O}(V)$, 
then $S(Q)\cong\bar{S}_Q$ as $D$-interior algebras for any nontrivial subgroup $Q$ of $D$.
In the following, we will show that $V$ can be chose to be $\tilde{N}$-stable, namely,
${_g}V\cong V$ as $\U D$-modules for any $\tilde{g}\in\tilde{N}$.
\end{order}

\begin{lem}\label{N-stable of V}
Keep the notation as above.
Then the indecomposable endopermutation $\U D$-module $V$ can be chosen such that
$\mathrm{det}(w\cdot 1_S)=1$ for any $w\in D$.
Here, $\mathrm{det}$ means the determinant map from $S$ to $\U$.
Moreover, for this module $V$,
we have ${_g}V\cong V$ as $\U D$-modules for any $\tilde{g}\in\tilde{N}$.
In particular, ${_u}V\cong V$ as $kD$-modules for any $u\in P$.
\end{lem}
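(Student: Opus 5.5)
The plan rests on the standard fact about lifts of endopermutation modules: any two indecomposable endopermutation $\U D$-lifts of $\bar V$ differ by a linear character. More precisely, if $V$ and $V'$ are indecomposable endopermutation $\U D$-modules with vertex $D$ and $k\mathop\otimes\limits_\mathcal{O}V\cong k\mathop\otimes\limits_\mathcal{O}V'$, then $V'\cong V\mathop\otimes\limits_\mathcal{O}\U_\omega$ for some group homomorphism $\omega: D\to\U^\times$ (see \cite[Theorem 14.2]{Thevenaz07} or the corresponding statement in \cite[\S 7]{L'book 2}). Twisting by $\omega$ multiplies $\det(w\cdot 1_S)$ by $\omega(w)^{n}$, where $n=\mathrm{rank}_\mathcal{O}(V)$.

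\textbf{Normalizing the determinant.} The map $w\mapsto\det(w\cdot 1_S)$ is a homomorphism $D\to\U^\times$. Its image is a finite $p$-group, hence lies in the group of $p$-power roots of unity of $\U$. Since $V$ is indecomposable endopermutation with vertex $D$, the dimension $n$ is prime to $p$; this follows because $\mathrm{Br}_D(S)\neq 0$ and the $P$-stable basis forces $n\equiv$ (number of fixed basis elements) $\not\equiv 0 \pmod p$, more precisely $n\equiv \dim_k\bar S(D)^{1/2}$-type counting, and I would cite \cite[Corollary 7.3.4]{L'book 2}-type results. Hence the $n$-th power map is bijective on the $p$-group $\mathrm{Hom}(D,\U^\times)$. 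So there is a unique $\omega$ with $\omega^n=\det^{-1}$, and replacing $V$ by $V\mathop\otimes\limits_\mathcal{O}\U_\omega$ gives $\det(w\cdot 1_S)=1$. The reduction mod $J(\U)$ is unchanged, because $\omega$ takes $p$-power root-of-unity values and so is trivial mod $J(\U)$. In addition, $S$ itself is unchanged as a $D$-interior algebra up to conjugation, so $S(Q)\cong\bar S_Q$ still holds. This normalized lift is unique, since any other lift with trivial determinant differs by an $\omega$ with $\omega^n=1$, which forces $\omega=1$.

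\textbf{$\tilde N$-stability.} For $\tilde g\in\tilde N$, the twist ${_g}V$ is again an indecomposable endopermutation $\U D$-module with vertex $D$. Its reduction is ${_g}\bar V\cong\bar V$ by 5.3, and its determinant character is $w\mapsto\det(g^{-1}wg\cdot1_S)=1$. By the uniqueness just established, ${_g}V\cong V$. Elements $u\in P$ act on $D$ through $N_G(D_\delta)$ (since $P\le N_G(D_\delta)$), so their images lie in $\tilde N$, and the last assertion follows.

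The main obstacle is making rigorous that $\mathrm{rank}_\mathcal{O}(V)$ is coprime to $p$ and that the lift of $\bar V$ is unique up to a linear character. Both are standard facts in Dade's theory. I would cite them from \cite[\S 7]{L'book 2} or \cite{Thevenaz07} rather than reprove them.
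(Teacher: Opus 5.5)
Your proposal is correct and is essentially the paper's argument. The paper obtains $S\cong{_g}S$ as $D$-algebras from \cite[Proposition 7.3.12]{L'book 2}, uses Skolem--Noether to see that this isomorphism is inner and hence determinant-preserving, and then invokes \cite[Proposition (21.5)]{Thevenaz}, the uniqueness of the determinant-one interior structure when $\mathrm{rank}_\mathcal{O}(V)$ is prime to $p$. That is exactly your ``lifts differ by a linear character $\omega$, and the $n$-th power map on the $p$-group $\mathrm{Hom}(D,\mathcal{O}^\times)$ is bijective'' reasoning, written out by hand.

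One small correction: twisting by $\omega$ changes the interior structure of $S$ by scalars, not by conjugation. What is preserved is $S$ as a $D$-algebra and, since $\omega\equiv 1$ modulo $J(\mathcal{O})$, the reduction $k\otimes_\mathcal{O}V$. That is all you need for $S(Q)\cong\bar S_Q$ to persist.
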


\begin{proof}
Take an element $\tilde{g}$ in $\tilde{N}$.
Denote the structural homomorphism from $D$ to $S^\times$ by $\phi$.
Clearly, $\mathrm{End}_\mathcal{O}({_g}V)\cong{_g}S$ and
the the structural homomorphism from $D$ to $({_g}S)^\times$, denoted by ${_g}\phi$,
mapping $w\in D$ to $\phi(gwg^{-1})$.
Since $k\mathop\otimes\limits_\mathcal{O}{_g}V\cong{_g}\bar{V}$,
we get $k\mathop\otimes\limits_\mathcal{O}{_g}V\cong k\mathop\otimes\limits_\mathcal{O}V$ as $kD$-modules.
By \cite[Proposition 7.3.12]{L'book 2},
$S$ is isomorphic to ${_g}S$ as $D$-algebras.
Denote this isomorphism by $\psi$.
Obviously, $\psi$ is an $\U$-automorphism of $S$.
By the Skolem-Noether Theorem,
$\psi$ is in fact an inner automorphism. 
In particular, $\psi$ preserves the determinants.
Set $\mathrm{rank}_\mathcal{O}(V)=n$. 
By \cite[Proposition 7.3.10 (i)]{L'book 2}, $n$ is coprime to $p$.
Then the statements in this lemma follows from \cite[Proposition (21.5)]{Thevenaz}.
\end{proof}

\begin{order}\rm
Now we fix an indecomposable endopermutation $\U D$-module $V$ with vertex $D$ such that
setting $S=\mathrm{End}_\mathcal{O}(V)$, $S(Q)\cong\bar{S}_Q$ 
for any nontrivial subgroup $Q$ of $D$ and
$\mathrm{det}(w\cdot 1_S)=1$ for any $w\in D$.
Then by Lemma \ref{N-stable of V}, 
we have ${_g}V\cong V$ as $\U D$-modules for any $\tilde{g}\in\tilde{N}$.
In particular, $V$ is $E_\mathfrak{h}$-stable.
By \cite[Corollary 7.8.4]{L'book 2}, 
$V$ can be extended to an indecomposable $\U(D\rtimes E_\mathfrak{h})$-module $U$
such that $U$ has a vertex $D$ and a source $V$ 
and $\mathrm{Res}_D^{D\rtimes E_\mathfrak{h}}(U)\cong V$.
\end{order}

\begin{order}\rm
Set $\tilde{G}=D\rtimes\tilde{N}$ and $L=D\rtimes E_\mathfrak{h}$.
Then $L\unlhd\tilde{G}$ and $D$ is a Sylow $p$-subgroup of $L$.
Therefore, $P$ can act on $L$ by conjugation through the canonical map
$P\longrightarrow P/C_P(D)$.
In this setting, ${_v}U$ makes sense and is still an indecomposable $\U L$-module
for any $v\in P$.
In fact, the module $U$ can be chosen in such a way that $U$ is $P$-stable.
\end{order}

\begin{lem}\label{P-stable}
Keep the notation as above.
There is an indecomposable  $P$-stable $\U L$-module $W$
such that $\mathrm{Res}_D^L(W)\cong V$
and  has a vertex $D$ and a source $V$.
\end{lem}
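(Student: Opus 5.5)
We start from the extension $U$ of $V$ to $L=D\rtimes E_\mathfrak{h}$ fixed above, and we parametrize all extensions of $V$ to $L$. Since $E_\mathfrak{h}$ is a $p'$-group and $V$ is an indecomposable $E_\mathfrak{h}$-stable $\U D$-module with $\mathrm{Res}_D^L(U)\cong V$, the standard Clifford-theoretic argument (\cite[Corollary 7.8.4]{L'book 2} and its proof) should show the following: the $\U L$-modules $W$ with $\mathrm{Res}^L_D(W)\cong V$ are exactly the modules $U\mathop\otimes\limits_\mathcal{O}\lambda$, where $\lambda$ runs over the $\U$-valued linear characters of $E_\mathfrak{h}$, inflated to $L$. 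Any such $W$ automatically has vertex $D$ and source $V$. The source claim holds because $\mathrm{Res}_D(W)\cong V$ and $W$ is relatively $D$-projective, $|L:D|$ being prime to $p$. The vertex claim follows because $V$ has vertex $D$. The determinant normalisation $\mathrm{det}(w\cdot 1_S)=1$ of Lemma \ref{N-stable of V} may be used to make the extension unique once one also requires $\det$ on $E_\mathfrak{h}$ to be trivial. This gives a canonical candidate.

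\textbf{Transporting $U$ by $P$.} For $v\in P$, the module ${_v}U$ is again an $\U L$-module, since $P$ normalizes $L$ inside $\tilde G$. Its restriction to $D$ is ${_v}V$, which is isomorphic to $V$ by Lemma \ref{N-stable of V}. Hence ${_v}U\cong U\mathop\otimes\limits_\mathcal{O}\lambda_v$ for a linear character $\lambda_v$ of $E_\mathfrak{h}$, and the assignment $v\mapsto\lambda_v$ gives a $P$-action on the set of extensions. This set is a torsor under the group $\mathrm{Hom}(E_\mathfrak{h},\U^\times)$, whose order divides $|E_\mathfrak{h}|$ and is therefore prime to $p$, while $P$ is a $p$-group.

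\textbf{Finding a fixed point.} I would argue with orbit sizes. The action of $P$ factors through $P/C_P(D)$, and its orbits on the set of extensions have $p$-power length. The number of extensions is $|\mathrm{Hom}(E_\mathfrak{h},\U^\times)|$, which is prime to $p$. Some orbit therefore has length $1$, and that orbit gives a $P$-stable extension $W$. Alternatively, one can take $W$ to be the unique extension with $\det(W)|_{E_\mathfrak{h}}$ trivial. Twisting by $v$ preserves this determinant condition, because $\det$ on $D$ is trivial and conjugation by $v$ permutes $L$ and preserves $\det$. Uniqueness then forces ${}_vW\cong W$. I would use the determinant version, checking that $\det(U\otimes\lambda)=\det(U)\lambda^{n}$ with $n=\mathrm{rank}(V)$ prime to $p$ (by \cite[Proposition 7.3.10 (i)]{L'book 2}), so that exactly one $\lambda$ works once $\det$ is restricted to the $p'$-part. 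Here one has to be careful with roots of unity in $\U$, which I expect to be a technical point.

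\textbf{Main obstacle.} The expected hard step is the first one: showing rigorously that every extension of $V$ to $L$ differs from $U$ by a linear character of $E_\mathfrak{h}$. This needs $\mathrm{End}_{\U D}(V)$ to be local with $E_\mathfrak{h}$-fixed quotient $k$, and it needs lifting of a projective representation of the $p'$-group $E_\mathfrak{h}$ over $\U$. I would handle it by the argument of \cite[Corollary 7.8.4]{L'book 2}. That argument uses that $S=\mathrm{End}_\mathcal{O}(V)$ is an $E_\mathfrak{h}$-algebra with $S^D$ local, together with \cite[Lemma (45.6)]{Thevenaz}, which gives conjugacy of complements of $1+J(S^D)$. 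Uniqueness of $W$ then reduces to this conjugacy, up to scalars, which are the linear characters.
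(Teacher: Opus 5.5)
Your orbit-counting argument is a correct proof, and it takes a different route from the paper's. The determinant variant, which you say you would actually use, is wrong and should be discarded.

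\emph{The determinant variant.} You only know that $n=\mathrm{rank}_\U(V)$ is prime to $p$, not that it is prime to $|E_\mathfrak{h}|$. So $\lambda\mapsto\lambda^n$ need not be a bijection of $\mathrm{Hom}(E_\mathfrak{h},k^\times)$, and an extension with trivial determinant on $E_\mathfrak{h}$ can fail to be unique, or fail to exist. Here is an example.
Let $p$ be odd, $|D|=p$, and $E_\mathfrak{h}$ cyclic of order $p-1$ acting faithfully. Take $V=\Omega_{\U D}(\U)$, of rank $p-1$. This is an admissible choice: for $|D|=p$ the gluing conditions impose nothing beyond vertex $D$, and $\det$ is trivial on $D$.
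The extensions of $V$ to $L$ are the modules $I\otimes_\U\lambda$, where $I$ is the kernel of the augmentation $\mathrm{Ind}_{E_\mathfrak{h}}^L(\U)\to\U$. Since $\lambda^{p-1}=1$, we get $\det(I\otimes_\U\lambda)|_{E_\mathfrak{h}}=\det(I)|_{E_\mathfrak{h}}$. This is the sign of the permutation action on $L/E_\mathfrak{h}$.
A generator of $E_\mathfrak{h}$ acts on $L/E_\mathfrak{h}$ as a $(p-1)$-cycle, so $\det(I)|_{E_\mathfrak{h}}$ is the nontrivial quadratic character. Hence no extension has trivial determinant on $E_\mathfrak{h}$.
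The determinant trick in Lemma \ref{N-stable of V} works only because the acting group there is a $p$-group, whose order is prime to $n$.

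\emph{The orbit count.} This argument is sound. Transitivity of $\mathrm{Hom}(E_\mathfrak{h},k^\times)$ on the set $\mathcal{E}$ of isomorphism classes of extensions already suffices: then $|\mathcal{E}|$ divides $|E_\mathfrak{h}:[E_\mathfrak{h},E_\mathfrak{h}]|$, a $p'$-number.
Transitivity follows as you sketch:
\begin{itemize}
\item Two extensions give homomorphisms $L\to S^\times$ agreeing on $D$.
\item Their quotient on $E_\mathfrak{h}$ is a $1$-cocycle with values in $(S^D)^\times=k^\times\times(1+J(S^D))$.
\item Its $k^\times$-part is a linear character, and the rest is a coboundary by \cite[Lemma (45.6)]{Thevenaz}.
\end{itemize}
No lifting of projective representations and no roots-of-unity issue arises, since $\U^\times\cong k^\times\times(1+J(\U))$.
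One small correction: the $P$-action is $W\mapsto{_v}W$ on $\mathcal{E}$. The assignment $v\mapsto\lambda_v$ is only a crossed homomorphism, but this does not affect the count.

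\emph{Comparison with the paper.} The paper applies \cite[Theorem 7.8.3]{L'book 2} to the larger group $\tilde G=D\rtimes\tilde N$. This gives an indecomposable $\tilde U$ with $\mathrm{Res}^{\tilde G}_D(\tilde U)\cong V\oplus V'$, where every summand of $V'$ has a proper vertex.
It then shows, by Krull--Schmidt and relative $D$-projectivity, that $\mathrm{Res}^{\tilde G}_L(\tilde U)$ has exactly one summand $W$ with vertex $D$. Mackey's formula and multiplicity one give $\mathrm{Res}^L_D(W)\cong V$. $P$-stability then comes for free, because $\tilde G$-conjugation permutes the summands of $\mathrm{Res}^{\tilde G}_L(\tilde U)$.

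That route obtains $W$ as a distinguished summand of the restriction of a $\tilde G$-module. The cost is an extension theorem for a group in which $D$ is normal but not Sylow.
Your route needs only the extension $U$ to $L$ already provided by \cite[Corollary 7.8.4]{L'book 2}, an elementary $H^1$ computation, and the fixed-point lemma for $p$-groups. It yields exactly the existence statement of the lemma.
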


\begin{proof}
Since $V$ is $\tilde{N}$-stable,
by \cite[Theorem 7.8.3]{L'book 2},
there is an indecomposable $\mathcal{O}\tilde{G}$-module $\tilde{U}$
such that $\mathrm{Res}^{\tilde{G}}_D(\tilde{U})\cong V\oplus V^\prime$,
where every indecomposable direct summand of $V^\prime$ has a proper subgroup of $D$ as a vertex.
So there is at least one direct summand of $\mathrm{Res}^{\tilde{G}}_L(\tilde{U})$ 
of which $D$ is the vertex since $D$ is a normal Sylow $p$-subgroup of $L$.
Suppose that there are two direct summands $W_1$ and $W_2$ of 
${\rm Res}_L^{\tilde{G}}(\tilde{U})$ with vertex $D$.
Then ${\rm Res}_D^L(W_1)\oplus{\rm Res}_D^L(W_2)$ is a direct summand of 
${\rm Res}_D^{\tilde{G}}(\tilde{U})$.
Therefore, one of ${\rm Res}_D^L(W_1)$ and ${\rm Res}_D^L(W_2)$ has to be isomorphic 
to a direct summand of $V^\prime$.
We can assume that ${\rm Res}_D^L(W_1)$ is isomorphic to a direct summand of $V^\prime$.
Since $D$ is a Sylow $p$-subgroup of $L$,
$W_1$ is isomorphic to a direct summand of ${\rm Ind}_D^L{\rm Res}_D^L(W_1)$,
which implies that $W_1$ is isomorphic to a direct summand of ${\rm Ind}_D^L(V^\prime)$.
This is impossible since $W_1$ has a vertex $D$.
Hence, there is exactly one direct summand of ${\rm Res}_L^{\tilde{G}}(\tilde{U})$.
We denote this direct summand by $W$.
So $V$ is isomorphic to a direct summand of ${\rm Res}_D^L(W)$.
Since $W$ is isomorphic to a direct summand of ${\rm Ind}_D^L{\rm Res}_D^L(W)$,
we can get that $W$ is isomorphic to a direct summand of ${\rm Ind}_D^L(V)$.
Since $V$ is $\tilde{G}$-stable, by Mackey's formula,
$\mathrm{Res}_D^L(\mathrm{Ind}_D^L(V))$ is isomorphic to a direct sum of 
$|E_\mathfrak{h}|$-copies of $V$.
In conclusion, 
${\rm Res}_D^L(W)$ is isomorphic to a direct sum of some copies of $V$.
But the multiplicity of $V$ in the decomposition of ${\rm Res}_D^{\tilde{G}}(\tilde{U})$ 
is just one.
Hence,  ${\rm Res}_D^L(W)$ isomorphic to $V$.
The fact that $W$ is $P$-stable is due to the uniqueness of $W$.
\end{proof}

\begin{order}\rm
We still denote $W$ by $V$ whenever this does not introduce any confusion.
Now $S=\mathrm{End}_\mathcal{O}(V)$ is a $P$-stable primitive $L$-interior algebra.
This means that for any $u\in P$ there exists an element $s_u$ in $S^\times$ such that
$s_u(x\cdot\mathrm{v})=(uxu^{-1})\cdot s_u(\mathrm{v})$ for any $x\in L$ and any $\mathrm{v}\in V$.
In particular $s_u$ belongs to $S^{L}$ when $u$ is in $C_P(D)$.
Since taking the inverse gives a group isomorphism from $S^\times$ to $(S^\circ)^\times$,
the opposite algebra $S^\circ$ of $S$ is also a $P$-stable primitive $L$-algebra. 
\end{order}

\begin{order}\rm
Recall from Proposition \ref{special mod stru} that $\U L$ can be viewed as
a subalgebra of $\mathbb{A}_\delta$.
So we can consider the $L$-interior algebra $S^\circ\mathop\otimes\limits_\mathcal{O}\mathbb{A}_\delta$.
The following result gives the $\U(D\times D)$-module structure of 
$S^\circ\mathop\otimes\limits_\mathcal{O}\mathbb{A}_\delta$,
which is similar to the one of $\mathbb{A}_\delta$ in Proposition \ref{special mod stru}.
\end{order}

\begin{lem}\label{mod stru of S^oXbbA}
There is a decomposition of $S^\circ\mathop\otimes\limits_\mathcal{O}\mathbb{A}_\delta$ 
as an $\U(D\times D)$-module
\begin{align}\label{dec of S^oXbbA}
S^\circ\mathop\otimes\limits_\mathcal{O}\mathbb{A}_\delta\cong
(\mathop\bigoplus\limits_{\tilde{g}\in E_\mathfrak{h}}\U Dg)\oplus\mathbb{X},
\end{align}
where $\mathbb{X}$ is isomorphic to a direct sum of modules of the form 
$\mathrm{Ind}_{\Delta_{\tilde{h}}(Q)}^{D\times D}(\U)$ 
for some proper subgroup $Q$ of $D$ and some $\tilde{h}\in E_\mathfrak{h}$.
Here, we denote the subgroup $\{(u,g^{-1}ug)\mid u\in Q\}$ of $D\times D$ by $\Delta_{\tilde{g}}(Q)$
for any $\tilde{g}\in E_\mathfrak{h}$ and any subgroup $Q$ of $D$.
In particular, 
$S^\circ\mathop\otimes\limits_\mathcal{O}\mathbb{A}_\delta$ has a finite $D\times D$-stable $\U$-basis
such that $S^\circ\mathop\otimes\limits_\mathcal{O}\mathbb{A}_\delta$ is projective as a left $\U D$-module
and as a right $\U D$-module.
\end{lem}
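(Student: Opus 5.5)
The plan is to combine the $\mathcal{O}(D\times D)$-decomposition of $\mathbb{A}_\delta$ from Proposition \ref{special mod stru} with the structure of $S^\circ$ as an $\mathcal{O}(D\times D)$-module (via $D\to S^\times$, $w\mapsto w\cdot 1_S$ acting by left and right multiplication in $S^\circ$). We then analyse each tensor product summand-by-summand with Mackey's formula.

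First, $S^\circ$ as an $\mathcal{O}(D\times D)$-module is induced from the diagonal: $S^\circ\cong\mathrm{Ind}_{\Delta(D)}^{D\times D}(S^\circ)$, where $S^\circ$ is restricted to $\Delta(D)$ with the conjugation action. Since $V$ is an endopermutation module, $S$ has a $D$-stable $\mathcal{O}$-basis under conjugation. So $\mathrm{Res}_{\Delta(D)}(S^\circ)$ is a permutation module $\bigoplus_R\mathrm{Ind}_{\Delta(R)}^{\Delta(D)}(\mathcal{O})$. Because $V$ has vertex $D$ and $\mathrm{rank}_\mathcal{O}(V)$ is prime to $p$, the trivial summand $\mathcal{O}$ (with $R=D$) occurs exactly once; indeed $S(D)\cong k$ is one-dimensional. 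We use this dimension statement rather than Frobenius-type facts about $S(Q)$ for $Q\ne D$. Then $S^\circ\otimes_\mathcal{O}\mathbb{A}_\delta$, with the $D\times D$ action $(u,v)\cdot(s\otimes a)=usv^{-1}\otimes uav^{-1}$, is the tensor product over $\mathcal{O}(D\times D)$ of two modules. Each is a sum of permutation modules $\mathrm{Ind}_{\Delta_\varphi(Q)}^{D\times D}(\mathcal{O})$: for $\mathbb{A}_\delta$ we take $\varphi$ induced by $E_\mathfrak{h}$ by Proposition \ref{special mod stru}, and for $S^\circ$ we take $\varphi=\mathrm{id}$.

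Second, we apply Mackey's formula to $\mathrm{Ind}_{\Delta(R)}^{D\times D}(\mathcal{O})\otimes\mathrm{Ind}_{\Delta_\varphi(Q)}^{D\times D}(\mathcal{O})$. Its summands are $\mathrm{Ind}_{\Delta(R)\cap{}^{(u,v)}\Delta_\varphi(Q)}^{D\times D}(\mathcal{O})$. Since $D$ is abelian, the conjugates ${}^{(u,v)}\Delta_\varphi(Q)$ are twisted diagonals of the same type $\Delta_\varphi(Q)$. The intersection $\Delta(R)\cap\Delta_\varphi(Q)$ is $\{(w,w)\mid w\in R\cap Q,\ \varphi(w)=w\}$. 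For $\varphi$ given by $\tilde g\in E_\mathfrak{h}$, this is again of the form $\Delta_{\tilde g}(Q')$: when $\tilde g\neq1$, freeness of the $E_\mathfrak{h}$-action on $D-\{1\}$ forces $Q'=1$, and when $\tilde g=1$ we get $Q'=R\cap Q$. So every summand has the form $\mathrm{Ind}_{\Delta_{\tilde h}(Q')}^{D\times D}(\mathcal{O})$ with $\tilde h\in E_\mathfrak{h}$.

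Third, we identify which summands have full $Q'=D$. This happens only when $R=D$ and $Q=D$. The case $R=D$ is the unique trivial summand of $S^\circ$. The case $Q=D$ means the summand $\mathcal{O}D\mathfrak{e}_{\tilde g}\cong\mathcal{O}Dg$ of $\mathbb{A}_\delta$; the summands of $X$ have $Q$ proper by Proposition \ref{special mod stru}. Their tensor product is $\mathcal{O}Dg$ (the contribution $\mathcal{O}\otimes\mathcal{O}Dg$). This gives the summand $\bigoplus_{\tilde g\in E_\mathfrak{h}}\mathcal{O}Dg$, with everything else collected in $\mathbb{X}$. The main care point is bookkeeping the induced actions with $\Delta_{\tilde g}$ conventions, including the possibility $R=D$ with $Q$ proper and $R$ proper with $Q=D$. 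In both of these cases $Q'$ comes out proper, which is what the freeness and Mackey computation gives.

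Finally, every summand $\mathrm{Ind}_{\Delta_{\tilde h}(Q')}^{D\times D}(\mathcal{O})$ has a $D\times D$-stable basis, namely its cosets. Its restriction to $D\times1$ or $1\times D$ is free, because $\Delta_{\tilde h}(Q')$ meets $D\times1$ and $1\times D$ trivially. This gives the final assertion.
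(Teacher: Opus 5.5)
There is a genuine gap, already in your first step. The $\mathcal{O}(D\times D)$-module $S^\circ$, with $(u,v)\cdot s=usv^{-1}$, is \emph{not} induced from $\Delta(D)$. It is also not a direct sum of modules $\mathrm{Ind}_{\Delta(R)}^{D\times D}(\mathcal{O})$. Compare ranks: $S^\circ$ has $\mathcal{O}$-rank $n^2$ with $n=\mathrm{rank}_\mathcal{O}(V)$, whereas $\mathrm{Ind}_{\Delta(D)}^{D\times D}(S^\circ)$ has rank $|D|n^2$. For $V=\mathcal{O}$, $S^\circ$ is just the trivial module. What is induced from the diagonal is $S^\circ\otimes_\mathcal{O}\mathcal{O}D$, not $S^\circ$.

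So your Mackey computation of $\mathrm{Ind}_{\Delta(R)}^{D\times D}(\mathcal{O})\otimes_\mathcal{O}\mathrm{Ind}_{\Delta_\varphi(Q)}^{D\times D}(\mathcal{O})$ computes the wrong module. It is also inconsistent with your third step. By your own intersection formula, the case $R=Q=D$, $\varphi=\varphi_{\tilde g}$ with $\tilde g\neq 1$ gives $\Delta(D)\cap{}^{(u,v)}\Delta_{\tilde g}(D)=1$. That is a free summand $\mathrm{Ind}_1^{D\times D}(\mathcal{O})$, not $\mathcal{O}Dg$. To obtain $\mathcal{O}Dg$ you then silently treat the $R=D$ part as a trivial summand $\mathcal{O}$ of $S^\circ$. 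But $\mathcal{O}1_{S^\circ}$ is not $D\times D$-stable unless $D$ acts trivially on $V$.

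The right mechanism is the tensor identity
$S^\circ\otimes_\mathcal{O}\mathrm{Ind}_H^{D\times D}(\mathcal{O})\cong\mathrm{Ind}_H^{D\times D}(\mathrm{Res}_H^{D\times D}(S^\circ))$.
Apply it to each summand of $\mathbb{A}_\delta$ from Proposition \ref{special mod stru}, with $H=\Delta_{\tilde g}(D)$, or $H=\Delta_{\tilde h}(Q)$ for proper $Q$. This forces you to understand $S^\circ$ restricted to \emph{twisted} diagonals, and that is the missing idea.

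The endopermutation property of $V$ only covers the untwisted diagonal. It tells you that $\mathrm{Res}_{\Delta(D)}(S^\circ)$, the conjugation action, is a permutation module with $1_{S^\circ}$ as its unique fixed basis element. For $\tilde g\neq 1$ you need that $V$ is $E_\mathfrak{h}$-stable and in fact extends to $L$ (Lemmas \ref{N-stable of V} and \ref{P-stable}). Then $S$ is an $L$-interior algebra, with units $s^\circ_{\tilde g}\in(S^\circ)^\times$ realizing the action of $\tilde g$ on $D$.

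With this in hand, let $\Omega^\circ$ be a $D$-stable basis of $S^\circ$ with $1_{S^\circ}$ as unique $D$-fixed element. Then $\Omega^\circ\cdot(s^\circ_{\tilde g})^{-1}$ is a $\Delta_{\tilde g}(D)$-stable basis with $(s^\circ_{\tilde g})^{-1}$ as unique fixed element. This gives exactly one copy of $\mathrm{Ind}_{\Delta_{\tilde g}(D)}^{D\times D}(\mathcal{O})\cong\mathcal{O}Dg$ for each $\tilde g\in E_\mathfrak{h}$. It also shows that every other summand has the form $\mathrm{Ind}_{\Delta_{\tilde h}(Q')}^{D\times D}(\mathcal{O})$ with $Q'$ a proper subgroup of $D$. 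This includes all of $S^\circ\otimes_\mathcal{O}X$, by restricting the same bases to $\Delta_{\tilde h}(Q)$.

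Your final paragraph, on the $D\times D$-stable basis and one-sided projectivity, is fine once the decomposition is correctly established.
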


\begin{proof}
By Proposition \ref{special mod stru}, as $\U(D\times D)$-modules,
$$S^\circ\mathop\otimes\limits_\mathcal{O}\mathbb{A}_\delta\cong
(\mathop\bigoplus\limits_{\tilde{g}\in E_\mathfrak{h}}
\mathrm{Ind}_{\Delta_{\tilde{g}}(D)}^{D\times D}
(\mathrm{Res}_{\Delta_{\tilde{g}}(D)}^{D\times D}(S^\circ)))\oplus 
(S^\circ\mathop\otimes\limits_\mathcal{O}X),$$
where $S^\circ\mathop\otimes\limits_\mathcal{O}X$ is isomorphic to 
a direct sum of indecomposable modules of the form 
$\mathrm{Ind}_{\Delta_{\tilde{h}}(Q)}^{D\times D}
(\mathrm{Res}_{\Delta_{\tilde{h}}(Q)}^{D\times D}(S^\circ))$ 
for some proper subgroup $Q$ of $D$ and some $\tilde{h}\in E_\mathfrak{h}$.
For any $\tilde{g}\in E_\mathfrak{h}$,
we denote by $s^\circ_{\tilde{g}}$ its image in $(S^\circ)^\times$.
Let $\Omega^\circ$ be a $D$-stable $\U$-basis of $S^\circ$ 
containing $1_{S^\circ}$ as the unique $D$-invariant element.
Then $\Omega^\circ\cdot(s^\circ_{\tilde{g}})^{-1}$ is a 
$\Delta_{\tilde{g}}(D)$-stable $\U$-basis of $S^\circ$
containing $(s^\circ_{\tilde{g}})^{-1}$ as the unique $\Delta_{\tilde{g}}(D)$-invariant element.
Then the  statements in this lemma follow from the existence of this $\U$-basis
of $S^\circ$ and the decomposition above.
\end{proof}

\begin{order}\rm
Let $e$ be a primitive idempotent of $(S^\circ\mathop\otimes\limits_\mathcal{O}\mathbb{A}_\delta)^L$
with $\mathrm{Br}_D(e)\neq 0$.
Set $\mathbb{A}^\prime=e(S^\circ\mathop\otimes\limits_\mathcal{O}\mathbb{A}_\delta)e$.
So $\mathbb{A}^\prime$ is a primitive $L$-interior algebra.
In fact, $e$ is still primitive in $(\mathbb{A}^\prime)^D$.
Indeed, by \cite[Corollary 5.5.9]{L'book 1},
every point of $D$ on $\mathbb{A}^\prime$ is local.
Since $\mathbb{A}_\delta$ is a primitive $D$-interior algebra with $\mathbb{A}_\delta(D)\neq 0$,
by \cite[Theorem 7.4.1 (i)]{L'book 2},
there is a unique local point of $D$ on 
$S^\circ\mathop\otimes\limits_\mathcal{O}\mathbb{A}_\delta$ with multiplicity $1$.
Therefore, $e$ remains primitive in $(\mathbb{A}^\prime)^D$.
In the following, we collect some properties of the $L$-interior algebra $\mathbb{A}^\prime$.
\end{order}

\begin{prop}\label{property of bbA'}
For the algebra $\mathbb{A}^\prime$,
the following hold.

(i) $\mathbb{A}^\prime$ is a primitive relatively $\U D$-separable $D$-interior algebra.

(ii) The $\mathbb{A}^\prime$-$\mathbb{A}_\delta$-bimodule 
$e\cdot(V^*\mathop\otimes\limits_\mathcal{O}\mathbb{A}_\delta)$ induces a Morita equivalence 
between $\mathbb{A}^\prime$ and $\mathbb{A}_\delta$.
Here, the right $\mathbb{A}_\delta$-module structure of $e\cdot(V^*\mathop\otimes\limits_\mathcal{O}\mathbb{A}_\delta)$ is given by
the right multiplication in $\mathbb{A}_\delta$.

(iii) For any nontrivial subgroup $Q$ of $D$,
$\mathbb{A}^\prime(Q)\cong kD$ as $D$-interior algebras.
\end{prop}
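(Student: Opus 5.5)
The plan is to treat $S^\circ\mathop\otimes\limits_\mathcal{O}-$ as a Dade-algebra twist of $\mathbb{A}_\delta$ and to transport to $\mathbb{A}^\prime$ the properties already established for $\mathbb{A}_\delta$. Throughout, the standard identity $S^\circ\mathop\otimes\limits_\mathcal{O}S\cong\mathrm{End}_\mathcal{O}(V^*\mathop\otimes\limits_\mathcal{O}V)$ is used. Here $V^*\mathop\otimes\limits_\mathcal{O}V$ has a unique summand $\U$, since $\mathrm{rank}_\mathcal{O}(V)$ is coprime to $p$. Together with the uniqueness of the local point of $D$ on $S^\circ\mathop\otimes\limits_\mathcal{O}\mathbb{A}_\delta$ from 5.9, this identifies $\mathbb{A}^\prime$ as a "source" of $S^\circ\mathop\otimes\limits_\mathcal{O}\mathbb{A}_\delta$.

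For (ii), set $\mathbb{C}=S^\circ\mathop\otimes\limits_\mathcal{O}\mathbb{A}_\delta$. First, $\mathbb{C}$ is Morita equivalent to $\mathbb{A}_\delta$ via the $\mathbb{C}$-$\mathbb{A}_\delta$-bimodule $V^*\mathop\otimes\limits_\mathcal{O}\mathbb{A}_\delta$, because $S^\circ\cong\mathrm{End}_\mathcal{O}(V^*)$ is a matrix algebra. Second, I pass from $\mathbb{C}$ to $e\mathbb{C}e$ by checking that $e$ annihilates no simple $\mathbb{C}$-module. To do this I apply the argument of Lemma \ref{a general lemma} to $\mathbb{C}$, using the following facts:
\begin{itemize}
\item $\mathbb{C}$ is relatively $\U D$-separable, since $\mathbb{A}_\delta$ is by Corollary \ref{separable and Morita equi} and tensoring with the matrix algebra $S^\circ$ preserves this;
\item $\mathbb{C}$ is indecomposable, being Morita equivalent to $\mathbb{A}_\delta$;
\item its $\U(D\times D)$-summands have the required induced form by Lemma \ref{mod stru of S^oXbbA};
\item $\mathrm{rank}_\mathcal{O}(\mathbb{C})/|D|=\mathrm{rank}(V)^2\,\mathrm{rank}(\mathbb{A}_\delta)/|D|$ is prime to $p$ by Corollary \ref{dim of bbA and simple mod}.
\end{itemize}
Since the local point of $D$ on $\mathbb{C}$ is unique, Lemma \ref{a general lemma} shows that $e\mathbb{C}$ induces a Morita equivalence between $\mathbb{A}^\prime$ and $\mathbb{C}$, and that $\mathbb{A}^\prime$ is relatively $\U D$-separable. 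Composing the two equivalences yields the bimodule $e(V^*\mathop\otimes\limits_\mathcal{O}\mathbb{A}_\delta)$. The same lemma gives (i), with primitivity coming from 5.9.

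For (iii), fix $1\neq Q\leq D$. Because $S$ has a $D$-stable basis and $D$ is abelian, the Brauer quotient commutes with tensor products, so
$$\mathbb{C}(Q)\cong S^\circ(Q)\mathop\otimes\limits_k\mathbb{A}_\delta(Q)\cong\bar{S}_Q^\circ\mathop\otimes\limits_k\bar{S}_Q\mathop\otimes\limits_kkD.$$
This uses Proposition \ref{local structure at Q not 1 general} and the choice of $V$ in 5.5, namely $S(Q)\cong\bar{S}_Q$. Now $\mathrm{Br}_Q(e)$ lies in the unique local point of $D$ on $\mathbb{C}(Q)$, since $\mathrm{Br}_D(e)\neq0$ and $\mathrm{Br}_D$ factors through $\mathrm{Br}_Q$. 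By the argument already used in the proof of Proposition \ref{local structure at Q not 1 general} (\cite[Theorem 7.4.2]{L'book 2}), cutting $\bar{S}_Q^\circ\mathop\otimes\limits_k\bar{S}_Q\mathop\otimes\limits_kkD$ by an element of that local point gives $kD$. Hence $\mathbb{A}^\prime(Q)=\mathrm{Br}_Q(e)\,\mathbb{C}(Q)\,\mathrm{Br}_Q(e)\cong kD$.

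I expect the main obstacle to be the bookkeeping in (iii): showing that $\mathrm{Br}_Q(e)$ really lies in that local point and that the identification is one of $D$-interior algebras. For the first point I would note that $\mathrm{Br}_Q(e)$ is primitive in $\mathbb{C}(Q)^D$, lifting primitivity via surjectivity of $\mathrm{Br}_Q$ on $D$-fixed points and using that $e$ is primitive in $(\mathbb{A}^\prime)^D$. For the second point the key input is \cite[5.3]{P-88}, which gives uniqueness of the local point of $D$ on algebras of the form Dade$\otimes kD$.
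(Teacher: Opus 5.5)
Your proposal is correct and follows the paper's proof essentially step for step. Parts (i) and (ii) come from applying Lemma \ref{a general lemma} to $S^\circ\mathop\otimes\limits_\mathcal{O}\mathbb{A}_\delta$; you usefully make explicit the indecomposability of this algebra and the composition with the Morita equivalence given by $V^*\mathop\otimes\limits_\mathcal{O}\mathbb{A}_\delta$, both of which the paper leaves implicit. Part (iii) comes from $(S^\circ\mathop\otimes\limits_\mathcal{O}\mathbb{A}_\delta)(Q)\cong\bar{S}_Q^\circ\mathop\otimes\limits_k\bar{S}_Q\mathop\otimes\limits_kkD$ together with the fact that $\mathrm{Br}_Q(e)$ lies in the unique local point of $D$ there.

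The one step to tighten is your relative separability bullet. Tensoring with a $D$-interior matrix algebra does not preserve relative $\U D$-separability in general; for instance, $\mathrm{End}_\mathcal{O}(\U D)\mathop\otimes\limits_\mathcal{O}\U D$ is not relatively $\U D$-separable. It holds here precisely because $n=\mathrm{rank}_\mathcal{O}(V)$ is prime to $p$. Then $\frac{1}{n}$ times the Casimir element of $S^\circ$, combined with a separability element of $\mathbb{A}_\delta$, gives the required splitting, which is why the paper invokes the rank condition at that point.
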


\begin{proof}
Since the rank of $V$ is coprime to $p$,
by \cite[Proposition 5.1.18]{L'book 1} and Corollary \ref{separable and Morita equi},
$S^\circ\mathop\otimes\limits_\mathcal{O}\mathbb{A}_\delta$ is 
a relatively $\U D$-separable $D$-interior algebra.
Clearly, the $\U$-rank of $S^\circ\mathop\otimes\limits_\mathcal{O}\mathbb{A}_\delta$ equals
$\mathrm{rank}_\mathcal{O}(S^\circ)\cdot\mathrm{rank}_\mathcal{O}(\mathbb{A}_\delta)$.
By Corollary \ref{dim of bbA and simple mod},
$\frac{\mathrm{rank}_\mathcal{O}(S^\circ\mathop\otimes\limits_\mathcal{O}\mathbb{A}_\delta)}{|D|}$ 
is coprime to $p$.
By Lemma \ref{mod stru of S^oXbbA} and the argument in the paragraph 5.12,
applying Lemma \ref{a general lemma} to $S^\circ\mathop\otimes\limits_\mathcal{O}\mathbb{A}_\delta$,
we can get the statements (i) and (ii).

For any nontrivial subgroup $Q$ of $D$,
by \cite[Proposition 5.6]{P-88},
there is a $D$-interior algebra isomorphism
$(S^\circ\mathop\otimes\limits_\mathcal{O}\mathbb{A}_\delta)(Q)\cong S^\circ(Q)\mathop\otimes\limits_k\mathbb{A}_\delta(Q)$,
which is isomorphic to $(S(Q))^\circ\mathop\otimes\limits_k\mathbb{A}_\delta(Q)$.
By Proposition \ref{local structure at Q not 1 general},
$\mathbb{A}_\delta(Q)$ is isomorphic to $\bar{S}_Q\mathop\otimes\limits_kkD$ as $D$-interior algebras.
Note that $S(Q)\cong\bar{S}_Q$ as $D$-interior algebras.
Therefore, $(S^\circ\mathop\otimes\limits_\mathcal{O}\mathbb{A}_\delta)(Q)$
is isomorphic to $\bar{S}_Q^\circ\mathop\otimes\limits_k\bar{S}_Q\mathop\otimes\limits_kkD$ as $D$-interior algebras.
Since $S^\circ\mathop\otimes\limits_\mathcal{O}\mathbb{A}_\delta$ has a finite $D\times D$-stable $\U$-basis
by Lemma \ref{mod stru of S^oXbbA},
we have $((S^\circ\mathop\otimes\limits_\mathcal{O}\mathbb{A}_\delta)(Q))(D)\cong
(S^\circ\mathop\otimes\limits_\mathcal{O}\mathbb{A}_\delta)(D)$.
This implies that $\mathrm{Br}_Q(e)$ belongs to a local point of $D$ on
$(S^\circ\mathop\otimes\limits_\mathcal{O}\mathbb{A}_\delta)(Q)$,
which is isomorphic to $\bar{S}_Q^\circ\mathop\otimes\limits_k\bar{S}_Q\mathop\otimes\limits_kkD$.
Then the statement (iii) follows from \cite[Proposition 7.3.10 (iv)]{L'book 2}.
\end{proof}

Now we can state the main result in this section and give a proof of it.

\begin{thm}\label{MT}
Assume that the block $b$ is a hyperfocal abelian Frobenius block.
Then there is a stable equivalence of Morita type between 
$\mathbb{A}_\delta$ and $\U L$.
Consequently,  there is a stable equivalence of Morita type between 
$\mathbb{A}$ and $\U L$.
\end{thm}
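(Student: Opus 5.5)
The plan is to prove the statement first for the algebra $\mathbb{A}^\prime$ of Proposition \ref{property of bbA'}, and then transfer it. By Proposition \ref{property of bbA'} (ii), $\mathbb{A}^\prime$ is Morita equivalent to $\mathbb{A}_\delta$, and by Corollary \ref{separable and Morita equi}, $\mathbb{A}_\delta$ is Morita equivalent to $\mathbb{A}$. A Morita equivalence composed with a stable equivalence of Morita type is again a stable equivalence of Morita type, so it suffices to construct one between $\mathbb{A}^\prime$ and $\U L$.

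\textbf{The bimodule.} Since $e\in(S^\circ\mathop\otimes\limits_\mathcal{O}\mathbb{A}_\delta)^L$, the structural map $L\to(S^\circ\mathop\otimes\limits_\mathcal{O}\mathbb{A}_\delta)^\times$, $x\mapsto x\cdot e$, makes $\mathbb{A}^\prime$ an $L$-interior algebra, so $\mathbb{A}^\prime$ is an $\mathbb{A}^\prime$-$\U L$-bimodule. I take $M=\mathbb{A}^\prime$ as this bimodule and $M^\prime=\mathbb{A}^\prime$ as an $\U L$-$\mathbb{A}^\prime$-bimodule. The tensor product $\mathbb{A}^\prime\mathop\otimes\limits_\mathcal{O}\mathbb{A}_\delta$ has the $\U(D\times D)$-structure of Lemma \ref{mod stru of S^oXbbA}, and $\mathbb{A}^\prime$ is a direct summand of it. Hence $\mathbb{A}^\prime$ is projective as a left and as a right $\U D$-module, and therefore as a left and right $\U L$-module, because $|L:D|$ is invertible in $\U$. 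Moreover, every indecomposable summand of $\mathbb{A}^\prime$ as an $\U(L\times L)$-module has a twisted diagonal vertex $\Delta_\varphi(Q)$ with $Q\leq D$.

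\textbf{Reduction to local Morita equivalences.} I then apply Linckelmann's criterion \cite{L'book 2} (the result used in the proof of \cite[Theorem 10.5.1]{L'book 2}). For a bimodule of this type between a relatively $\U D$-separable $D$-interior algebra and $\U L$, where $D$ is a Sylow $p$-subgroup of $L$, the criterion says it induces a stable equivalence of Morita type as soon as, for every nontrivial subgroup $Q$ of $D$, the Brauer construction at $\Delta Q$ yields a Morita equivalence between $\mathbb{A}^\prime(Q)$ and $(\U L)(Q)=kC_L(Q)$. The hypotheses of the criterion are available: $\mathbb{A}^\prime$ is primitive and relatively $\U D$-separable by Proposition \ref{property of bbA'} (i), and $\mathbb{A}^\prime(D)\neq0$.

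\textbf{The local condition.} Since $L=D\rtimes E_\mathfrak{h}$ is a Frobenius group with abelian kernel $D$, we have $C_L(Q)=D$ for every $1\neq Q\leq D$, so $(\U L)(Q)=kD$. By Proposition \ref{property of bbA'} (iii), $\mathbb{A}^\prime(Q)\cong kD$ as $D$-interior algebras. Thus $\mathbb{A}^\prime(Q)$, viewed as a $kD$-$kD$-bimodule through the induced map $kC_L(Q)\to\mathbb{A}^\prime(Q)$, is the regular bimodule, which gives a Morita equivalence.

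To make this argument self-contained, I would decompose $\mathbb{A}^\prime=\U L\oplus Y^\prime$ as $\U L$-$\U L$-bimodules and show that $Y^\prime$ is projective. The splitting should follow as in Proposition \ref{special mod stru}, since $\mathbb{A}^\prime(D)\cong kD$. For projectivity, note that $\mathrm{Br}_{\Delta Q}$ of the inclusion $\U L\subseteq\mathbb{A}^\prime$ is an isomorphism $kD\cong\mathbb{A}^\prime(Q)$, which forces $Y^\prime(\Delta_\varphi Q)=0$ for all nontrivial $Q$. Since all vertices of $Y^\prime$ are twisted diagonal, $Y^\prime$ is projective. Then $\mathbb{A}^\prime\mathop\otimes\limits_{\U L}\mathbb{A}^\prime\cong\mathbb{A}^\prime\oplus(\text{projective})$ follows from relative $\U L$-separability together with the same Brauer quotient count, exactly as in the proof of Corollary \ref{auto is inner}.

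\textbf{Main obstacle.} The step I expect to be hardest is the compatibility just used: checking that the isomorphism $\mathbb{A}^\prime(Q)\cong kD$ of Proposition \ref{property of bbA'} (iii) is induced by the structural map $L\to\mathbb{A}^\prime$, and not merely an abstract isomorphism of $D$-interior algebras. My approach is to argue that a $D$-interior isomorphism with $kD$ forces the image of $kD=kC_L(Q)$ to be all of $\mathbb{A}^\prime(Q)$, by comparing dimensions. The $E_\mathfrak{h}$-elements lie outside $C_L(Q)$ and play no role at $Q$, so they should not interfere with this count.
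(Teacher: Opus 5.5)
Your self-contained argument is essentially the paper's proof. The paper splits $\mathbb{A}^\prime=\U L\oplus\mathbb{Y}$ as $\U L$-$\U L$-bimodules via \cite[Proposition 6.15.2]{L'book 2} and compares $\dim_k\mathbb{A}^\prime(Q)=|D|=\dim_k(\U L)(Q)$ to get $\mathbb{Y}(Q)=0$ for $Q\neq 1$. It then uses Lemma \ref{mod stru of S^oXbbA} together with right multiplication by elements of $E_\mathfrak{h}$ (which is what turns your $\Delta_\varphi Q$ into $\Delta Q$) to see that $\mathbb{Y}$ is a projective $\U L$-$\U L$-bimodule, and concludes by \cite[Proposition 4.14.12]{L'book 1} composed with the Morita equivalences of Proposition \ref{property of bbA'} (ii) and Corollary \ref{separable and Morita equi}.

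The appeal to a local Morita-equivalence criterion is therefore unnecessary. Your ``main obstacle'' also disappears: the dimension count in $\mathbb{A}^\prime(Q)=(\U L)(Q)\oplus\mathbb{Y}(Q)$ needs no compatibility between the isomorphism of Proposition \ref{property of bbA'} (iii) and the structural map.
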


\begin{proof}
By \cite[Proposition 6.15.2]{L'book 2},
the structural map $\U L\longrightarrow\mathbb{A}^\prime$
splits as $\U L$-$\U L$-bimodules.
We identify $L$ with a subgroup of $(\mathbb{A}^\prime)^\times$.
Therefore, as $\U L$-$\U L$-bimodules,
 $\mathbb{A}^\prime=\U L\oplus\mathbb{Y}$ for some 
 $\U L$-$\U L$-subbimodules $\mathbb{Y}$ of $\mathbb{A}^\prime$.
 In particular, $\mathbb{Y}$ is a direct summand of 
 $\mathbb{A}^\prime$ as $\U(D\times D)$-modules.
 It is easy to check that $C_L(Q)=D$ for any nontrivial subgroup $Q$ of $D$.
 Then $(\U L)(Q)\cong kD$ for any nontrivial subgroup $Q$ of $D$.
 By Proposition \ref{property of bbA'} (iii),
 we have $\mathbb{Y}(Q)=0$ for any nontrivial subgroup $Q$ of $D$.
 
 On the other hand, as $\U(D\times D)$-modules,
 $\mathbb{A}^\prime$ is a direct summand of $S^\circ\mathop\otimes\limits_{\mathcal{O}}\mathbb{A}_\delta$.
 Hence, $\mathbb{Y}$ is also a direct summand of $S^\circ\mathop\otimes\limits_{\mathcal{O}}\mathbb{A}_\delta$ as $\U(D\times D)$-modules.
 By the $\U(D\times D)$-module decomposition (\ref{dec of S^oXbbA}),
$\mathbb{Y}$ has to be isomorphic to a direct summand of $\mathbb{X}$
since $\U L$ is isomorphic to $\mathop\bigoplus\limits_{\tilde{g}\in E_\mathfrak{h}}\U Dg$
as $\U(D\times D)$-modules.
Therefore, every direct summand of $\mathbb{Y}$ has the form
$\mathrm{Ind}_{\Delta_{\tilde{h}}(Q)}^{D\times D}(\U)$ 
for some proper subgroup $Q$ of $D$ and some $\tilde{h}\in E_\mathfrak{h}$.
Suppose that $\mathbb{Y}$ has a direct summand $U$ isomorphic to 
$\mathrm{Ind}_{\Delta_{\tilde{h}}(Q)}^{D\times D}(\U)$ with a nontrivial proper subgroup $Q$ of $D$.
Then there is an element  $y^\prime_U$ in $\mathbb{Y}$ such that
for any $(w_1,w_2)\in D\times D$,
$w_1y^\prime_Uw_2^{-1}=y^\prime_U$ if and only if $w_1\in Q$ and $w_2=h^{-1}w_1h$ 
and $U=\U Dy^\prime_UD$.
Then we have for any $(w_1,w_2)\in D\times D$,
$w_1y^\prime_U\tilde{h}^{-1}w_2^{-1}=y^\prime_U\tilde{h}^{-1}$ 
if and only if $w_1\in Q$ and $w_1=w_2$ and
then $\U Dy^\prime_U\tilde{h}D$ is isomorphic to $\mathrm{Ind}_{\Delta(Q)}^{D\times D}(\U)$. 
Since $\mathbb{Y}$ is an $\U L$-$\U L$-subbimodule of $\mathbb{A}^\prime$,
$\U Dy^\prime_U\tilde{h}D$ is contained in $\mathbb{Y}$ and 
also a direct summand of $\mathbb{Y}$ as $\U(D\times D)$-modules.
But it is easy to check that $(\mathrm{Ind}_{\Delta(Q)}^{D\times D}(\U))(Q)\neq 0$.
This is impossible since $\mathbb{Y}(Q)=0$ when $Q$ is nontrivial.
In particular, as an $\U(D\times D)$-module, $\mathbb{Y}$ is projective.

Clearly, $\U L$ is a relatively $\U D$-separable $D$-interior algebra.
So $\U L$ is isomorphic to a direct summand of $\U L\mathop\otimes\limits_{\mathcal{O}D}\U L$
as $\U L$-$\U L$-bimodules.
Obviously, as  $\U L$-$\U L$-bimodules,
$\mathbb{Y}$ is isomorphic to $\U L\mathop\otimes\limits_{\mathcal{O}L}\mathbb{Y}\mathop\otimes\limits_{\mathcal{O}L}\U L$.
Therefore, as  $\U L$-$\U L$-bimodules,
$\mathbb{Y}$ is isomorphic to a direct summand of 
$\U L\mathop\otimes\limits_{\mathcal{O}D}\mathbb{Y}\mathop\otimes\limits_{\mathcal{O}D}\U L$.
Since $\mathbb{Y}$ is a projective $\U D$-$\U D$-bimodule,
$\mathbb{Y}$ is also a projective $\U L$-$\U L$-bimodule.
Denote the $\mathbb{A}^\prime$-$\U L$ bimodule $\mathbb{A}^\prime$ by $\mathbb{A}^\prime_{\mathcal{O}L}$ and
the $\U L$-$\mathbb{A}^\prime$-bimodule $\mathbb{A}^\prime$ by
${_{\mathcal{O}L}}\mathbb{A}^\prime$.
Then by \cite[Proposition 4.14.12]{L'book 1},
the bimodules $\mathbb{A}^\prime_{\mathcal{O}L}$ and ${_{\mathcal{O}L}}\mathbb{A}^\prime$
induce a stable equivalence of Mortia type between $\mathbb{A}^\prime$ and $\U L$.
By Proposition \ref{property of bbA'} (ii),
the $\mathbb{A}_\delta$-$\U L$-bimodule 
$(V\mathop\otimes\limits_\mathcal{O}\mathbb{A}_\delta)\cdot e\cong
((V\mathop\otimes\limits_\mathcal{O}\mathbb{A}_\delta)\cdot e) \mathop\otimes\limits_{\mathbb{A}^\prime}\mathbb{A}^\prime_{\mathcal{O}L}$
and the $\U L$-$\mathbb{A}_\delta$-bimodule $e\cdot(V^*\mathop\otimes\limits_\mathcal{O}\mathbb{A}_\delta)\cong
{_{\mathcal{O}L}}\mathbb{A}^\prime\mathop\otimes\limits_{\mathbb{A}^\prime}
(e\cdot(V^*\mathop\otimes\limits_\mathcal{O}\mathbb{A}_\delta))$
induce a stable equivalence of Mortia type between $\mathbb{A}_\delta$ and $\U L$.
Moreover, by Corollary \ref{separable and Morita equi},
the $\mathbb{A}$-$\U L$-bimodule
$\mathbb{A}j\mathop\otimes\limits_{\mathbb{A}_\delta}((V\mathop\otimes\limits_\mathcal{O}\mathbb{A}_\delta)\cdot e)$ and
the $\U L$-$\mathbb{A}$-bimodule
$e\cdot(V^*\mathop\otimes\limits_\mathcal{O}\mathbb{A}_\delta)\mathop\otimes\limits_{\mathbb{A}_\delta}j\mathbb{A}$
induce a stable equivalence of Mortia type between $\mathbb{A}$ and $\U L$.
\end{proof}

\begin{cor}\label{nonsingular of cartan matrix}
The Cartan matrix of the hyperfocal subalgebra $\mathbb{A}$ is nonsingular,
the absolute value of which determinant equals $|D|$
when the block $b$ is a hyperfocal abelian Frobenius block.
\end{cor}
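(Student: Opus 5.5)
The plan is to transport the Cartan invariant along the stable equivalence of Morita type from Theorem \ref{MT}. The endpoint is the group algebra $\U L$ with $L=D\rtimes E_\mathfrak{h}$, whose Cartan matrix is well understood.

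First, I will record the well-known fact that the Cartan matrix of a finite group algebra $kH$ has determinant a power of $p$. More precisely, its elementary divisors are the orders of the $p$-parts of the centralizers of representatives of the $p$-regular classes, the largest being $|H|_p$. For $H=L$ the group $D$ is a normal Sylow $p$-subgroup, so $\U L$ has a single block. Its Cartan matrix is $C_L=|D|\cdot\mathrm{Id}$ up to the standard description: the projective indecomposables are $\mathrm{Ind}_{E_\mathfrak{h}}^L(S)$ for simple $kE_\mathfrak{h}$-modules $S$. Because $D\rtimes E_\mathfrak{h}$ is Frobenius, $C_L(x)\cap D=1$ for every nontrivial $p'$-element $x$. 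Hence the elementary divisors are $|D|$ (for $x=1$) and $1$ for the remaining $l(\U L)-1$ classes, and so $|\det C_L|=|D|$.

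Second, I will use the standard fact that a stable equivalence of Morita type between $\U$-algebras $C,C'$ with separable semisimple quotients over the algebraically closed field $k$ preserves the isomorphism class of the cokernel of the Cartan map. That cokernel is the stable Grothendieck group $R_k(C)/\mathrm{Im}(c_C)$, and it is determined by the stable module category of $k\otimes_{\mathcal O}C$. The bimodules $M,M'$ of Theorem \ref{MT} are projective on each side, so $M\otimes-$ induces group homomorphisms between the Grothendieck groups $R_k$ and maps projectives to projectives. Since $M\otimes_{C'}M'\cong C\oplus X$ with $X$ projective, the induced maps on $R_k/\mathrm{Im}(c)$ are mutually inverse. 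Thus $\mathrm{coker}(c_{\mathbb{A}})\cong\mathrm{coker}(c_{\U L})\cong\mathbb{Z}/|D|\mathbb{Z}$, which is finite.

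Finiteness of the cokernel of the square integer matrix $c_{\mathbb{A}}$ forces $\det c_{\mathbb{A}}\neq 0$, so the Cartan matrix is nonsingular. Its order then equals $|\det c_{\mathbb{A}}|$, giving $|\det c_{\mathbb{A}}|=|D|$.

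The main obstacle is justifying that the image of projectives in $R_k$ is exactly $\mathrm{Im}(c)$. One subtlety is that the projective $C$-$C$-bimodule $X$ sends every module to a projective one, which needs $X$ to be a direct summand of $C\otimes_{\mathcal O}C$. Another is that the ranks $l(\mathbb{A})$ and $l(\U L)$ might differ, although the cokernel comparison does not require equal ranks. I would first try to cite the corresponding statement in \cite{L'book 1} on invariance of the stable Grothendieck group. Alternatively, I would pass through the Morita equivalence $\mathbb{A}\sim\mathbb{A}_\delta$ of Corollary \ref{separable and Morita equi}, which preserves Cartan matrices, and argue directly for $\mathbb{A}_\delta$.
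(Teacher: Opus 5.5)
Your proposal is correct and follows essentially the paper's route: the paper deduces the corollary from the stable equivalence of Morita type in Theorem \ref{MT}, the invariance of the Cartan determinant under such equivalences (\cite[Proposition 4.14.13]{L'book 1}, which is precisely your cokernel-of-the-Cartan-map argument), and the fact that $|\det C_L|=|D|$. One slip to fix: the Cartan matrix of $kL$ is not $|D|\cdot\mathrm{Id}$ (for $L=\mathrm{A}_4$ and $p=2$ it has $2$ on the diagonal and $1$ elsewhere), but the Smith form $\mathrm{diag}(|D|,1,\ldots,1)$ that you derive next from Brauer's elementary divisor theorem is correct, and it is all your argument actually uses.
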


\begin{proof}
This can be deduced easily from \cite[Proposition 4.14.13]{L'book 1} and
the fact that the Cartan matrix of $\U(D\rtimes E_{\mathfrak{h}})$ has determinant $|D|$.
\end{proof}

\begin{order}\rm
Keep the notation as above.
Denote the $\mathbb{A}$-$\U L$-bimodule 
$\mathbb{A}j\mathop\otimes\limits_{\mathbb{A}_\delta}((V\mathop\otimes\limits_{\mathcal{O}}\mathbb{A}_\delta)\cdot e)$
by $M_{\mathrm{st}}$.
Then the $\mathbb{A}$-$\U L$-bimodule $M_{\mathrm{st}}$ is indecomposable.
Indeed, it suffices to show that the $\mathbb{A}_\delta$-$\U L$-bimodule
$(V\mathop\otimes\limits_{\mathcal{O}}\mathbb{A}_\delta)\cdot e$ is indecomposable
since the indecomposabilitiy is preserved under a Mortia equivalence.
It is easy to check that the endomorphism algebra of the $\mathbb{A}_\delta$-$\U L$-bimodule
$(V\mathop\otimes\limits_{\mathcal{O}}\mathbb{A}_\delta)\cdot e$ is isomorphic to
$(\mathbb{A}^\prime)^L$, which is a local algebra.
Since $P$ can act on $E_\mathfrak{h}$ through the canonical homomorphism $P\longrightarrow P/C_P(D)$,
it makes sense to define the twisted $\mathbb{A}$-$\U L$-module ${_{(u,u)}}M_{{\rm st}}$ 
of $M_{{\rm st}}$ (see the paragraph 4.9) and then
consider the $\Delta(P)$-equivariant property of the bimodule $M_{\mathrm{st}}$
which is a key property to get the information about source algebras by Clifford Theory.
Here, $\Delta(P)=\{(u,u)\mid u\in P\}$ is the diagonal subgroup of $P\times P$
and the $\Delta(P)$-equivariant property means that
$M_{{\rm st}}$ is isomorphic to ${_{(u,u)}}M_{{\rm st}}$ as $\mathbb{A}$-$\U L$-bimodule
for any $(u,u)\in\Delta(P)$.
The following is an investigation of this issue.
\end{order}

\begin{order}\rm
	Borrowing the notation from Proposition \ref{special mod stru},
there is a unitary subalgebra of $\mathbb{A}_\delta$ isomorphic to $\U L$ as $D$-interior algebras.	
More explicitly, there is a subgroup 
$\{\mathfrak{e}_{\tilde{g}}\mid\tilde{g}\in E_\mathfrak{h}\}$ of $N_{\mathbb{A}_\delta}(D)$
such that it intersects with $(\mathbb{A}_\delta^D)^\times$ trivially and
sending $\mathfrak{e}_{\tilde{g}}$ to $\tilde{g}$ gives an isomorphism
between it and $E_\mathfrak{h}$, which induces the same conjugation actions on $D$.
We denote this subgroup of $N_{\mathbb{A}_\delta}(D)$ by $\mathfrak{E}_\mathfrak{h}$.
At the same time, there is an element $a_{u,\delta}$ in $N_{A_\delta}(D)$ such that
$a_{u,\delta}(vj)a_{u,\delta}^{-1}=(uvu^{-1})j$ 
and $\mathbb{A}_\delta^{a_{u,\delta}}=\mathbb{A}_\delta$ for any $u\in P$ and any $v\in D$.
Since the $L$-interior algebra structure of $\mathbb{A}_\delta$ is essentially given by
the group isomorphism between $\mathfrak{E}_\mathfrak{h}$ and $E_\mathfrak{h}$,
the element $a_{u,\delta}$ should be chosen carefully such that
$\mathfrak{E}_\mathfrak{h}$ is stabilized under its conjugation action on $\mathbb{A}_\delta$ 
and then the induced action on $\mathfrak{E}_\mathfrak{h}$ is compatible with 
the conjugation action of $u$ on $E_\mathfrak{h}$ for any $u\in P$.
Explicitly, we have the following lemma.
\end{order}

\begin{lem}\label{action P on E_h}
For any $u\in P$,
we can choose an element $a_{u,\delta}$ in $N_{A_\delta}(D)$ such that
${^{a_{u,\delta}}}\mathfrak{E}_\mathfrak{h}=\mathfrak{E}_\mathfrak{h}$.
Furthermore, for any $\tilde{g}\in E_\mathfrak{h}$,
${^{a_{u,\delta}}}\mathfrak{e}_{\tilde{g}}=\mathfrak{e}_{{^u}\tilde{g}}$.
In particular, when the conjugation action of $u$ on $E_\mathfrak{h}$ 
induced by the canonical map $P\longrightarrow P/C_P(D)$ is trivial,
we have ${^{a_{u,\delta}}}\mathfrak{e}_{\tilde{g}}=\mathfrak{e}_{\tilde{g}}$
for any $\tilde{g}\in E_\mathfrak{h}$.
\end{lem}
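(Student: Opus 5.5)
Fix $u\in P$ and start from any element $a_{u,\delta}\in A_\delta^\times\cap j\mathbb{A}uj$ as in paragraph 3.1, so that ${^{a_{u,\delta}}}(vj)=({^u}v)j$ for all $v\in D$ and $\mathbb{A}_\delta^{a_{u,\delta}}=\mathbb{A}_\delta$. The plan is to correct this element by a suitable element of $j+J(\mathbb{A}_\delta^D)$, using the same conjugacy-of-complements argument (via \cite[Lemma (45.6)]{Thevenaz}) already used in the proof of Corollary \ref{equivariant for bbA-module}.

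\emph{Step 1: the normalizer is stable.} Conjugation by $a_{u,\delta}$ preserves $\mathbb{A}_\delta$ and normalizes $Dj$. It therefore stabilizes $N_{\mathbb{A}_\delta}(D)$, $(\mathbb{A}_\delta^D)^\times$ and $j+J(\mathbb{A}_\delta^D)$. For $\tilde{g}\in E_\mathfrak{h}$, the element ${^{a_{u,\delta}}}\mathfrak{e}_{\tilde{g}}$ lies in $N_{\mathbb{A}_\delta}(D)$ and acts on $D$ as $ugu^{-1}$. Since $E_\mathfrak{h}$ is normal in $N_G(D_\delta)/C_G(D)$, we have ${^u}\tilde{g}\in E_\mathfrak{h}$, and hence ${^{a_{u,\delta}}}\mathfrak{e}_{\tilde{g}}\in\mathfrak{e}_{{^u}\tilde{g}}(\mathbb{A}_\delta^D)^\times$.

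\emph{Step 2: remove the scalars.} Recall from the proof of Proposition \ref{special mod stru} that $\hat{E}_\mathfrak{h}=(j+J(\mathbb{A}_\delta^D))\mathfrak{E}_\mathfrak{h}$ is a subgroup. The point is to show that ${^{a_{u,\delta}}}\mathfrak{E}_\mathfrak{h}\subseteq\hat{E}_\mathfrak{h}$, i.e.\ that the $k^\times$-components of the correcting factors are trivial.

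\begin{itemize}
\item The map $\mathfrak{e}_{\tilde{g}}\mapsto{^{a_{u,\delta}}}\mathfrak{e}_{\tilde{g}}$ composed with $N_{\mathbb{A}_\delta}(D)\to N_{\mathbb{A}_\delta}(D)/(j+J(\mathbb{A}_\delta^D))$ gives a group homomorphism $E_\mathfrak{h}\to k^\times\times E_\mathfrak{h}$ that lifts the automorphism $\tilde{g}\mapsto{^u}\tilde{g}$.
\item Write ${^{a_{u,\delta}}}\mathfrak{e}_{\tilde{g}}\equiv\lambda(\tilde{g})\,\mathfrak{e}_{{^u}\tilde{g}}$ modulo $j+J(\mathbb{A}_\delta^D)$. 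Then $\lambda$ is a $1$-cocycle, which is in fact a homomorphism twisted by an automorphism, so $\tilde{g}\mapsto\lambda(\tilde{g})$ is a linear character of $E_\mathfrak{h}$.
\item Choose $a$ with $a_{u,\delta}^{p^a}\in(\mathbb{A}_\delta^D)^\times$, and iterate the relation $p^a$ times. As in Corollary \ref{equivariant for bbA-module}, conjugation by an element of $(\mathbb{A}_\delta^D)^\times=k^\times(j+J(\mathbb{A}_\delta^D))$ fixes $\mathfrak{e}_{\tilde{g}}$ modulo $j+J(\mathbb{A}_\delta^D)$. This yields a relation of the form $\prod_{i<p^a}\lambda({^{u^i}}\tilde{g})=1$.
\item Since $u$ has $p$-power order modulo $C_P(D)$ and $\lambda$ takes $p'$-order values, the orbit structure then forces $\lambda=1$.
\end{itemize}

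This scalar bookkeeping is the step I expect to be the main obstacle. If it fails, the fallback is to use $H^1$-type vanishing for the $p$-group $\langle u\rangle$ acting on $\mathrm{Hom}(E_\mathfrak{h},k^\times)$, a $p'$-group, to adjust $a_{u,\delta}$ by a suitable element of $\mathfrak{E}_\mathfrak{h}$ or of $k^\times$.

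\emph{Step 3: conjugate the complement back and conclude.} Once ${^{a_{u,\delta}}}\mathfrak{E}_\mathfrak{h}\subseteq\hat{E}_\mathfrak{h}$, both $\mathfrak{E}_\mathfrak{h}$ and ${^{a_{u,\delta}}}\mathfrak{E}_\mathfrak{h}$ are complements to $j+J(\mathbb{A}_\delta^D)$ in $\hat{E}_\mathfrak{h}$. By \cite[Lemma (45.6)]{Thevenaz} they are conjugate by some $c\in j+J(\mathbb{A}_\delta^D)$. Replace $a_{u,\delta}$ by $c^{-1}a_{u,\delta}$; this element still lies in $N_{A_\delta}(D)$ and still induces $u$ on $Dj$. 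Now ${^{a_{u,\delta}}}\mathfrak{E}_\mathfrak{h}=\mathfrak{E}_\mathfrak{h}$, so ${^{a_{u,\delta}}}\mathfrak{e}_{\tilde{g}}$ is the unique element of $\mathfrak{E}_\mathfrak{h}$ inducing ${^u}\tilde{g}$ on $D$. Uniqueness holds because $\mathfrak{E}_\mathfrak{h}\cap(\mathbb{A}_\delta^D)^\times=1$. Hence ${^{a_{u,\delta}}}\mathfrak{e}_{\tilde{g}}=\mathfrak{e}_{{^u}\tilde{g}}$, and the final assertion of the lemma is the special case ${^u}\tilde{g}=\tilde{g}$.
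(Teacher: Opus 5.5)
Your Steps 1 and 3 are fine. Step 2, however, has a genuine gap: the claim that the scalar character $\lambda=\lambda_u$ is trivial is false in general.

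\emph{Why the iteration does not force $\lambda_u=1$.} Iterating $p^a$ times only yields $\prod_{i<p^a}\lambda_u({}^{u^i}\tilde g)=1$. Since the values have $p'$-order, this says $\prod_{i<m}\lambda_u({}^{u^i}\tilde g)=1$ over each $\langle u\rangle$-orbit of length $m$. That forces $\lambda_u(\tilde g)=1$ only for $u$-fixed $\tilde g$, which is why the same computation works in Corollary \ref{equivariant for bbA-module}, where $u\in C_P(D)$. It is vacuous when $p=2$ and $u$ inverts $E_\mathfrak{h}\cong C_3$, which is exactly the Klein four case with $C_P(D)<P$.

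\emph{No choice of $a_{u,\delta}$ can repair this.} Two admissible choices of $a_{u,\delta}$ differ by a factor in $(A_\delta^D)^\times=k^\times\bigl(j+J(A_\delta^D)\bigr)$, which leaves the $k^\times$-component unchanged, so $\lambda_u$ is independent of the choice. A concrete instance is $G=\mathrm{S}_4$, $p=2$, where $j=1$, $A_\delta=\U\mathrm{S}_4$ and $\mathbb{A}_\delta=\U\mathrm{A}_4$. Let $\omega\in k^\times$ be a primitive cube root of unity; then $\mathfrak{E}_\mathfrak{h}=\{1,\omega(123),\omega^2(132)\}$ is a legitimate output of Proposition \ref{special mod stru}. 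For $u=(12)$, every admissible $a_{u,\delta}$ equals $c\,(12)$ with $c\in(A_\delta^D)^\times$. Then ${}^{c(12)}(\omega(123))\in\omega(132)\bigl(1+J(A_\delta^D)\bigr)=\omega^2\mathfrak{e}_{(132)}\bigl(1+J(A_\delta^D)\bigr)$. This coset does not contain $\mathfrak{e}_{(132)}=\omega^2(132)$, because $\omega\notin 1+J(A_\delta^D)$. So for this $\mathfrak{E}_\mathfrak{h}$ no admissible $a_{u,\delta}$ satisfies the lemma. Your fallback does not help either. Multiplying $a_{u,\delta}$ by a scalar does not change the conjugation. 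Multiplying it by a nontrivial element of $\mathfrak{E}_\mathfrak{h}$ means it no longer induces $u$ on $Dj$.

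\emph{The missing idea.} $\mathfrak{E}_\mathfrak{h}$ itself must be re-chosen, once for all $u\in P$; this is what the paper does with ``adjusting by this group homomorphism $\lambda$''.

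\begin{enumerate}
\item Replacing $\mathfrak{e}_{\tilde g}$ by $\nu(\tilde g)\mathfrak{e}_{\tilde g}$, for a linear character $\nu$ of $E_\mathfrak{h}$, turns $\lambda_u(\tilde g)$ into $\lambda_u(\tilde g)\nu(\tilde g)\nu({}^u\tilde g)^{-1}$.
\item Since $a_{u,\delta}a_{v,\delta}\in(\mathbb{A}_\delta^D)^\times a_{uv,\delta}$, you get $\lambda_{uv}(\tilde g)=\lambda_v(\tilde g)\lambda_u({}^v\tilde g)$. So $u\mapsto\lambda_u$ is a $1$-cocycle of the $p$-group $P$ with values in the $p'$-group $\mathrm{Hom}(E_\mathfrak{h},k^\times)$.
\item Coprimality gives vanishing $H^1$, so $\lambda_u(\tilde g)=\nu({}^u\tilde g)\nu(\tilde g)^{-1}$ for a single $\nu$. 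Twisting $\mathfrak{E}_\mathfrak{h}$ by this $\nu$ makes every $\lambda_u$ trivial at once.
\end{enumerate}

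After this, your Step 3 goes through verbatim. It also has a small advantage: because you correct only by elements of $j+J(\mathbb{A}_\delta^D)$, the new $a_{u,\delta}$ still stabilizes $\mathbb{A}_\delta$, as used in Proposition \ref{equivariant of M_st}.

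The paper reaches the same normalization by a different device. It splits $1\to k^\times\to N_{A_\delta}(D)/(j+J(A_\delta^D))\to\tilde N\to 1$ using $H^2(\tilde N;k^\times)=1$. It then chooses all $a_{v,\delta}$ inside the splitting subgroup $\hat F$, so that they normalize the preimage $\hat F_\mathfrak{h}$ of $E_\mathfrak{h}$. Next it twists $\mathfrak{E}_\mathfrak{h}$ by a linear character into $\hat F_\mathfrak{h}$, and only then applies conjugacy of complements.
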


\begin{proof}
Recall that we denote $N_G(D_\delta)/C_G(D)$ by $\tilde{N}$.
By the paragraph 2.12,
we can get a short exact sequence
\begin{equation*}
	\xymatrix{
		1\ar[r]& k^\times \ar[r] &N_{A_\delta}(D)/(j+J(A_\delta^D))\ar[r] & 
		\tilde{N}\ar[r] & 1.
	}
\end{equation*}
since $D$ is abelian.
Denote by $\pi$ the sujective homomorphism of the above sequence.
Note that $\tilde{N}$ is a $p$-nilpotent group 
with Sylow $p$-subgroup $\tilde{P}$ and normal $p$-complement $E_\mathfrak{h}$.
Here, $\tilde{P}$ denotes $PC_G(D)/C_G(D)$ which is isomorphic to $P/C_P(D)$.
Obviously, the second cohomology group $H^2(\tilde{P};k^\times)$ of $\tilde{P}$ is trivial.
Since $D\rtimes E_\mathfrak{h}$ is a Frobenius group,
the second cohomology group $H^2(E_\mathfrak{h};k^\times)$ of $E_\mathfrak{h}$ is also trivial.
In conclusion, the second cohomology group $H^2(\tilde{N};k^\times)$ of $\tilde{N}$ is trivial.
This means that the short exact sequence above splits.
So there is a subgroup $\hat{F}$ of $N_{A_\delta}(D)$ such that
it contains $j+J(A_\delta^D)$ and intersects with $k^\times$ trivially
and $\hat{F}/(j+J(A_\delta^D))$ is isomorphic to $\tilde{N}$.
So for any $v\in P$,
the element $a_{v,\delta}$ can be chosen such that
the set $P(j+J(A_\delta^D))=\{a_{v,\delta}x\mid v\in P,x\in j+J(A_\delta^D)\}$ 
is a Sylow $p$-subgroup of $\hat{F}/(j+J(A_\delta^D))$.
Denote by $\hat{F}_\mathfrak{h}$ a normal subgroup of $\hat{F}$ containing $j+J(A_\delta^D)$
such that $\hat{F}_\mathfrak{h}/(j+J(A_\delta^D))$ is a normal $p$-complement of 
$\hat{F}/(j+J(A_\delta^D))$.
In particular, ${^{a_{v,\delta}}}\hat{F}_\mathfrak{h}=\hat{F}_\mathfrak{h}$ for any $v\in P$
and $\hat{F}_\mathfrak{h}/(j+J(A_\delta^D))$ is isomorphic to $E_\mathfrak{h}$.
Hence, this yields another short exact sequence
  \begin{equation*}
	\xymatrix{
		1\ar[r]& j+J(A_\delta^D)\ar[r] &\hat{F}_\mathfrak{h} \ar[r] & 
		E_\mathfrak{h}\ar[r] & 1,
	}
\end{equation*}
which also splits by \cite[Lemma (45.6)]{Thevenaz} since $E_\mathfrak{h}$ is a $p^\prime$-group.
So there is a subgroup 
$\mathfrak{F}_\mathfrak{h}=\{{\rm f}_{\tilde{g}}\mid\tilde{g}\in E_\mathfrak{h}\}$ 
of $\hat{F}_\mathfrak{h}$ such that
it is isomorphic to $E_\mathfrak{h}$ and intersects with $j+J(A_\delta^D)$ trivially.

At the same time, it is clear that
$\pi(\mathfrak{E}_\mathfrak{h}\cdot(j+J(A_\delta^D)))=E_\mathfrak{h}$.
Then for any $\tilde{g}\in E_\mathfrak{h}$, we have
$\mathfrak{e}_{\tilde{g}}=\lambda(\tilde{g}){\rm f}_{\tilde{g}}x_{\tilde{g}}$
for a unique element $\lambda(\tilde{g})$ in $k^\times$ and 
a unique element $x_{\tilde{g}}$ in $j+J(A_\delta^D)$.
This defines a group homomorphism $\lambda$ from $E_\mathfrak{h}$ to $k^\times$
by sending $\tilde{g}$ to $\lambda(\tilde{g})$.
Adjusting by this group homomorphism $\lambda$,
we can assume that $E_\mathfrak{h}$ is contained in $\hat{F}_\mathfrak{h}$.
Therefore, $\mathfrak{E}_\mathfrak{h}$ is a complement of $j+J(A_\delta^D)$ 
in $\hat{F}_\mathfrak{h}$. 
Now fix an element $u$ in $P$.
Since ${^{a_{u,\delta}}}\hat{F}_\mathfrak{h}=\hat{F}_\mathfrak{h}$,
${^{a_{u,\delta}}}\mathfrak{E}_\mathfrak{h}$ is another complement of $j+J(A_\delta^D)$
in $\hat{F}_\mathfrak{h}$.
Then by \cite[Lemma (45.6)]{Thevenaz} again,
$\mathfrak{E}_\mathfrak{h}$ and ${^{a_{u,\delta}}}\mathfrak{E}_\mathfrak{h}$ are conjugate 
in $\hat{F}_\mathfrak{h}$ by an element in $j+J(A_\delta^D)$.
So the element $a_{u,\delta}$ can be chosen such that it stabilizes $\mathfrak{E}_\mathfrak{h}$
by the conjugation action.
The second statement in this lemma follows from the equation
${^{a_{u,\delta}}}\mathfrak{e}_{\tilde{g}}(wj)({^{a_{u,\delta}}}\mathfrak{e}_{\tilde{g}})^{-1}=
({^u}gw(^{u}g)^{-1})j$ for any $\tilde{g}\in E_\mathfrak{h}$ and any $w\in D$.
\end{proof}

For the remainder of this paper,
we can assume that the element $a_{u,\delta}$ in $N_{A_\delta}(D)$ satisfies the properties
in Lemma \ref{action P on E_h} and $a_{u,\delta}(wj)a_{u,\delta}^{-1}=(uwu^{-1})j$
for any $u\in P$ and $w\in D$.
Now we can state the $\Delta(P)$-equivariant property of the bimodule $M_{{\rm st}}$.

\begin{prop}\label{equivariant of M_st}
	Keep the notation.
	Then the two modules $M_{\mathrm{st}}$ and ${_{(u,u)}}M_{\mathrm{st}}$ 
	are isomorphic to each other as $\mathbb{A}$-$\U L$-bimodules.
\end{prop}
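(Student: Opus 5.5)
We reduce to $\mathbb{A}_\delta$ and then build an explicit isomorphism out of the elements $a_{u,\delta}$ of Lemma \ref{action P on E_h} and an element $s_u\in S^\times$ as in paragraph 5.9. Fix $u\in P$.

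\emph{Reduction to $\mathbb{A}_\delta$.} As in the proof of Corollary \ref{equivariant for bbA-module}, the map sending $j\cdot({_u}m)$ to ${_u}(ja_u\cdot m)$ gives ${_{(u,u)}}(\mathbb{A}j)\cong \mathbb{A}j$ compatibly with the twist on $\mathbb{A}_\delta$ defined via $a_{u,\delta}$. Hence
\[
{_{(u,u)}}M_{\mathrm{st}}\cong \mathbb{A}j\mathop\otimes\limits_{\mathbb{A}_\delta}{_{(u,u)}}\bigl((V\mathop\otimes\limits_\mathcal{O}\mathbb{A}_\delta)e\bigr),
\]
where the twist on the right uses conjugation by $a_{u,\delta}$ on $\mathbb{A}_\delta$ and ${^u}(-)$ on $L$. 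It therefore suffices to show
\[
N:=(V\mathop\otimes\limits_\mathcal{O}\mathbb{A}_\delta)e\cong{_{(u,u)}}N
\]
as $\mathbb{A}_\delta$-$\U L$-bimodules.

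\emph{Constructing an automorphism.} Choose $s_u\in S^\times$ with $s_u(x\mathrm{v})=({^u}x)s_u(\mathrm{v})$ for $x\in L$, which is possible by Lemma \ref{P-stable}. Define
\[
\sigma_u: S^\circ\mathop\otimes\limits_\mathcal{O}\mathbb{A}_\delta\longrightarrow S^\circ\mathop\otimes\limits_\mathcal{O}\mathbb{A}_\delta,\qquad s\otimes a\longmapsto s_u s s_u^{-1}\otimes a_{u,\delta}aa_{u,\delta}^{-1}.
\]
This is an algebra automorphism. By Lemma \ref{action P on E_h}, conjugation by $a_{u,\delta}$ sends $wj$ to $({^u}w)j$ and $\mathfrak{e}_{\tilde g}$ to $\mathfrak{e}_{{^u}\tilde g}$, so $\sigma_u$ intertwines the diagonal $L$-interior structure with its ${^u}$-twist. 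Consequently $\sigma_u$ maps $(S^\circ\otimes\mathbb{A}_\delta)^L$ onto itself and permutes its primitive idempotents. It also commutes with $\mathrm{Br}_D$ up to the induced automorphism of the Brauer quotient, so it preserves the unique local point of $D$ of multiplicity one from paragraph 5.12. Hence $\sigma_u(e)$ is conjugate to $e$ by some $c\in((S^\circ\otimes\mathbb{A}_\delta)^L)^\times$. Replacing $\sigma_u$ by $\mathrm{Ad}(c)^{-1}\circ\sigma_u$, we may assume $\sigma_u(e)=e$. Note that $\sigma_u$ still twists $L$ by ${^u}$, because $c$ commutes with the image of $L$.

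\emph{The bimodule isomorphism.} Let $c=\sum_i t_i\otimes b_i$ with $t_i\in S^\circ$ and $b_i\in\mathbb{A}_\delta$, where $S^\circ$ acts on $V$ from the right. Define
\[
\Phi(\mathrm{v}\otimes a)=(s_u\mathrm{v}\otimes a_{u,\delta}aa_{u,\delta}^{-1})\cdot c^{-1},
\]
adjusted so that it is compatible with $e$. Using $\sigma_u(e)=e$ and the conjugation properties above, $\Phi$ maps $N$ to itself. Moreover $\Phi(\mathfrak{a}\,n\,x)=({^{a_{u,\delta}}}\mathfrak{a})\,\Phi(n)\,({^u}x)$ for $\mathfrak{a}\in\mathbb{A}_\delta$ and $x\in L$, which is exactly an isomorphism $N\cong{_{(u,u)}}N$.

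\emph{Main obstacle.} The hardest step is making the right $L$-action compatible with $\Phi$ after correcting by $c$. The right $L$-action on $N$ comes from $e(S^\circ\otimes\mathbb{A}_\delta)e$ through the diagonal embedding of $L$. Conjugation by $c$ commutes with $L$ only because $c$ lies in the $L$-fixed points. Hence one must produce $c$ in $((S^\circ\otimes\mathbb{A}_\delta)^L)^\times$, not merely in the $D$-fixed points. I would obtain this from the primitivity of $e$ in $(\mathbb{A}^\prime)^D$ established in paragraph 5.12, together with the fact that points of $L$ with nonzero Brauer quotient at $D$ correspond to points of $D$, since $E_\mathfrak{h}$ is a $p'$-group. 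As a fallback, one can compare $\sigma_u(e)$ and $e$ through the local algebra $(\mathbb{A}^\prime)^L$: indecomposability of $N$ (paragraph 5.16) together with Krull--Schmidt then yields the required isomorphism.
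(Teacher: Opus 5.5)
Your proposal is correct and follows essentially the same route as the paper. The paper writes down in one step the map $\tau\colon M_{\mathrm{st}}\to{_{(u,u)}}M_{\mathrm{st}}$, $\mathfrak{a}j\otimes_{\mathbb{A}_\delta}((\mathrm{v}\otimes\mathfrak{a}_\delta)e)\mapsto u^{-1}\mathfrak{a}a_{u,\delta}\otimes_{\mathbb{A}_\delta}((s_u^{-1}(\mathrm{v})\otimes a_{u,\delta}^{-1}\mathfrak{a}_\delta a_{u,\delta})e)$, built from exactly your ingredients: $s_u$, the elements $a_{u,\delta}$ of Lemma~\ref{action P on E_h}, and $u^{-1}(-)a_{u,\delta}$ on $\mathbb{A}j$.

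Your extra step of moving $\sigma_u(e)$ back to $e$ by a unit $c$ of $(S^\circ\otimes_{\mathcal{O}}\mathbb{A}_\delta)^L$ is a genuine improvement. Such a $c$ exists because the local point of $D$ has multiplicity one, so $e$ lies in the only point of $L$ whose image under $\mathrm{Br}_D$ is nonzero. This step makes explicit the well-definedness on $(V\otimes_{\mathcal{O}}\mathbb{A}_\delta)e$, which the paper only asserts is easy to check. One small fix: with $\sigma_u(e)=cec^{-1}$, the correcting factor in your $\Phi$ must be right multiplication by $c$, not by $c^{-1}$.
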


\begin{proof}
Recall that from the paragraph 5.9 that 
we have an invertible element $s_u$ of $S=\mathrm{End}_\mathcal{O}(V)$ such that
$s_u(x\cdot\mathrm{v})=(uxu^{-1})\cdot s_u(\mathrm{v})$ for any $x\in L$ and any $\mathrm{v}\in V$.
Note that $M_{\mathrm{st}}=
\mathbb{A}j\mathop\otimes\limits_{\mathbb{A}_\delta}((V\mathop\otimes\limits_{\mathcal{O}}\mathbb{A}_\delta)\cdot e)$.
We can define the following map from $M_{\mathrm{st}}$ to ${_{(u,u)}}M_{\mathrm{st}}$ 
$$\tau:M_{\mathrm{st}}\longrightarrow{_{(u,u)}}M_{\mathrm{st}},~~~~
\mathfrak{a}j\mathop\otimes\limits_{\mathbb{A}_\delta}
((\mathrm{v}\mathop\otimes\limits_\mathcal{O}\mathfrak{a}_\delta)\cdot e)\mapsto
u^{-1}\mathfrak{a}a_{u,\delta}\mathop\otimes\limits_{\mathbb{A}_\delta}
((s_u^{-1}(\mathrm{v})\mathop\otimes\limits_\mathcal{O}a_{u,\delta}^{-1}\mathfrak{a}_\delta a_{u,\delta})\cdot e),$$
where $\mathfrak{a}$ is in $\mathbb{A}$ and $\mathrm{v}$ is in $V$
and $\mathfrak{a}_\delta$ is in $\mathbb{A}_\delta$.
It is easy to check that $\tau$ is well-defined and a bijection.
Let $\mathfrak{a}^\prime$ and $w$ and $\tilde{g}$ be in
$\mathbb{A}$ and $D$ and $E_\mathfrak{h}$, respectively.

\begin{align*}
	&\tau(\mathfrak{a}^\prime\cdot(\mathfrak{a}j\mathop\otimes\limits_{\mathbb{A}_\delta}
	((\mathrm{v}\mathop\otimes\limits_\mathcal{O}\mathfrak{a}_\delta)\cdot e))\cdot(w\tilde{g}))\\
	=~&\tau(\mathfrak{a}^\prime\mathfrak{a}j\mathop\otimes\limits_{\mathbb{A}_\delta}
	(((w\tilde{g})^{-1}(\mathrm{v})\mathop\otimes\limits_\mathcal{O}\mathfrak{a}_\delta\cdot w\mathfrak{e}_{\tilde{g}})\cdot e))\\
	=~&u^{-1}\mathfrak{a}^\prime\mathfrak{a}a_{u,\delta}\mathop\otimes\limits_{\mathbb{A}_\delta}
	((s_u^{-1}((w\tilde{g})^{-1}(\mathrm{v}))\mathop\otimes\limits_\mathcal{O}
	a_{u,\delta}^{-1}\mathfrak{a}_\delta\cdot w\mathfrak{e}_{\tilde{g}}a_{u,\delta})\cdot e)\\
	=~&(u^{-1}\mathfrak{a}^\prime u)\cdot u^{-1}\mathfrak{a}a_{u,\delta}\mathop\otimes\limits_{\mathbb{A}_\delta}
	((u^{-1}w\tilde{g}u)^{-1}\cdot s_u^{-1}(\mathrm{v})\mathop\otimes\limits_\mathcal{O}
	a_{u,\delta}^{-1}\mathfrak{a}_\delta a_{u,\delta}\cdot(a_{u,\delta}^{-1}w\mathfrak{e}_{\tilde{g}}
	a_{u,\delta})\cdot e)\\
	=~&\mathfrak{a}^\prime\cdot\tau((\mathfrak{a}j\mathop\otimes\limits_{\mathbb{A}_\delta}
	((\mathrm{v}\mathop\otimes\limits_\mathcal{O}\mathfrak{a}_\delta)\cdot e)))\cdot(w\tilde{g}),
\end{align*}
where the multiplications in the last equation are taken in 
${_{(u,u)}}M_{\mathrm{st}}$.
We complete the proof of this proposition.
\end{proof}

\vskip2cm

\section{Applications}
\subsection{Klein four group case}
\quad\, 
Keep the notation from the last three sections.
In this subsection, we will assume that the prime  $p$ is equal to $2$ and
the hyperfocal subgroup $D$ is a Klein four group.
In this case the hyperfocal quotient inertial $E_\mathfrak{h}$ is a cyclic group of order $3$
and acts freely on $D-\{1\}$.
Then we can apply Theorem \ref{MT} to this case and get a stable equivalence of Morita type
between $\mathbb{A}$ and $\U L$. 
Here $L=D\rtimes E_\mathfrak{h}$ 
which is just $\mathrm{A}_4$ in this section.
Here, $\mathrm{A}_n$ denotes the alternating group on $n$ letters for any positive integer $n$.
In fact, we can classify the Morita equivalence classes and the derived equivalence classes of
the hyperfocal subalgebras in this case,
which is similar to the case where the defect group is a Klein four group.

First, we need to calculate the numbers of simple $\K\mathop\otimes\limits_{\mathcal{O}}\mathbb{A}$-modules
and simple $k\mathop\otimes\limits_{\mathcal{O}}\mathbb{A}$-modules.

\begin{order}\rm
Denote $\K\mathop\otimes\limits_{\mathcal{O}}\mathbb{A}$ and $k\mathop\otimes\limits_{\mathcal{O}}\mathbb{A}$ 
by $\hat{\mathbb{A}}$ and $\bar{\mathbb{A}}$, respectively.
We use the analogous notation for the source algebra $A$.
It is clear that $\hat{\mathbb{A}}$ is a semi-simple $\K$-algebra (see \cite{KLN}).
By \cite[(29.20)]{CR's book},
there is a finite extension $\K^\prime$ of $\K$ such that the semi-simple $\K^\prime$-algebra
$\K^\prime\mathop\otimes\limits_{\mathcal{K}}\hat{\mathbb{A}}$ is split.
Since $k$ is algebraically closed,
the field $\K^\prime$ determines a complete $p$-modular system $(\K^\prime,\U^\prime,k)$
such that $\U^\prime$ is an extension of $\U$ by \cite[Chapter 2, \S2, Proposition 3]{Serre's book}.
Replacing $\U$ by $\U^\prime$, 
we can assume that the $\K$-algebra $\hat{\mathbb{A}}$ is a split semi-simple $\K$-algebra.
Then we can borrow the notation from the paragraphs 2.6-2.7 for these three $\U$-algebras
$A$ and $\mathbb{A}$ and $\mathbb{A}_\delta$.
\end{order}

\begin{lem}\label{k(A) and l(A)}
We have $\mathrm{k}(\mathbb{A})=4$ and $l(\mathbb{A})=3$.
\end{lem}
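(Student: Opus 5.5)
The plan is to combine two ingredients. The first is the stable equivalence of Morita type between $\mathbb{A}$ and $\U L$ from Theorem \ref{MT}, where $L=\mathrm{A}_4$. The second is a direct count of simple $\bar{\mathbb{A}}$-modules using the relation $A=\mathop\bigoplus\limits_{uD\in P/D}\mathbb{A}u$ together with the numerical results of \cite{HZ19}. By Corollary \ref{separable and Morita equi} we may work with $\mathbb{A}_\delta$ instead of $\mathbb{A}$, since $\mathrm{k}$ and $l$ are Morita invariants.

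\textbf{Step 1: $\mathrm{k}(\mathbb{A})=l(\mathbb{A})+1$.} We have $\mathrm{k}(\U\mathrm{A}_4)=4$ and $l(\U\mathrm{A}_4)=3$ in characteristic $2$. For symmetric $\U$-algebras with split semisimple $\K$-algebras, a stable equivalence of Morita type preserves the difference $\mathrm{k}-l$. The reason is that it induces an isomorphism of stable centers $Z(C)/Z^{\mathrm{pr}}(C)$, and after reducing to $k$, the codimension of the projective center is $\mathrm{k}(C)-l(C)$ (Broué). Both $\mathbb{A}$ and $\U\mathrm{A}_4$ are symmetric. Applying this to the bimodules $M_{\mathrm{st}}$ and its dual gives $\mathrm{k}(\mathbb{A})-l(\mathbb{A})=1$. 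As a consistency check I would also use Corollary \ref{nonsingular of cartan matrix}: the Cartan matrix of $\mathbb{A}$ has determinant $4$, and its cokernel (the stable Grothendieck group) must be isomorphic to that of $\U\mathrm{A}_4$, namely $\mathbb{Z}/4$.

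\textbf{Step 2: $l(\mathbb{A})=3$.} This is the step I expect to be the main obstacle, because stable equivalences do not preserve $l$ in general. I would first try Clifford theory between $\bar{A}$ and $\bar{\mathbb{A}}$. Since $A=\mathop\bigoplus\limits_{uD\in P/D}\mathbb{A}u$ is a crossed product of $\mathbb{A}$ with the $p$-group $P/D$, every simple $\bar{A}$-module restricts to a sum of $P$-conjugates of a simple $\bar{\mathbb{A}}$-module. Conversely, each $P$-stable simple $\bar{\mathbb{A}}$-module extends uniquely to $\bar{A}$, because $P/D$ is a $p$-group and $k$ has characteristic $p$.
\begin{itemize}
\item By Remark \ref{examples of hyp frob}, either $C_P(D)=P$ or $P/C_P(D)$ has order $2$.
\item If $C_P(D)=P$, Corollary \ref{equivariant for bbA-module} makes every $\bar{\mathbb{A}}$-module $P$-stable, so $l(\mathbb{A})=l(A)=l(b)$. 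The number $l(b)$ equals $3$ by \cite{HZ19}.
\item If $|P/C_P(D)|=2$, I would use the $\Delta(P)$-equivariance of $M_{\mathrm{st}}$ (Proposition \ref{equivariant of M_st}). It makes the bijection on stable module categories commute with twisting by $u\in P$. Hence the $P$-orbits on simple $\bar{\mathbb{A}}$-modules can be compared with the $P$-orbits on the three simple $k\mathrm{A}_4$-modules, where $u$ swaps the two nontrivial ones.
\item Combining this orbit comparison with $l(b)=2$ from \cite{HZ19} gives $l(\mathbb{A})=3$.
\end{itemize}
A fallback for this step is to show that $M_{\mathrm{st}}$ maps the three simple $k\mathrm{A}_4$-modules to simple modules, in which case Linckelmann's criterion applies. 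That requires knowing that $\bar{V}$ is trivial or of a specific $\Omega$-type, which may be harder.

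\textbf{Step 3.} Step 2 gives $l(\mathbb{A})=3$, and Step 1 then yields $\mathrm{k}(\mathbb{A})=4$.
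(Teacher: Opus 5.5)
Your case $C_P(D)=P$ in Step 2 is exactly the paper's argument. Elsewhere there are two problems, and the second is a genuine gap.

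\textbf{Step 1.} The reason you give is false: the stable center does not have codimension $\mathrm{k}-l$. Over $\U$ the quotient $Z(C)/Z^{\mathrm{pr}}(C)$ is torsion, and over $k$ it can be larger than $\mathrm{k}-l$. For $k\mathrm{A}_4$ in characteristic $2$, the projective center is spanned by the elements $\sum_{g}gxg^{-1}$. Each of these equals $|C_{\mathrm{A}_4}(x)|$ times the class sum of $x$, which is nonzero only for the two classes of $3$-cycles. So the stable center is $2$-dimensional, while $\mathrm{k}-l=1$.

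What you need is the paper's route. The stable equivalence between the symmetric algebras $\mathbb{A}$ and $\U L$ induces an isometry $L^0(\mathbb{A})\cong L^0(\U L)$. The equality $\mathrm{rank}_{\mathbb{Z}}L^0(\mathbb{A})=\mathrm{k}(\mathbb{A})-l(\mathbb{A})$ holds precisely because the Cartan matrix of $\mathbb{A}$ is nonsingular (Corollary \ref{nonsingular of cartan matrix}), so that $d_{\mathbb{A}}$ has rank $l(\mathbb{A})$. That corollary is therefore an essential input, not a consistency check. The isometry also gives more than $\mathrm{k}-l=1$: $L^0(\mathbb{A})$ is generated by an element of norm $4$, which you need below.

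\textbf{Step 2, case $|P:C_P(D)|=2$.} Here there is no actual argument. A $\Delta(P)$-equivariant stable equivalence gives a $P$-equivariant bijection on non-projective indecomposable modules. However, it need not send simple modules to simple modules, so it gives no comparison between the $P$-orbits on $\mathrm{Irr}_k(\mathbb{A})$ and those on the simple $k\mathrm{A}_4$-modules.

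Your fallback cannot work in general either. If $M_{\mathrm{st}}$ sent simples to simples, it would induce a Morita equivalence with $\U\mathrm{A}_4$. But for the principal $2$-block of $\mathrm{A}_5$ we have $D=P$ Klein four and $\mathbb{A}=A$, so $\mathbb{A}$ is Morita equivalent to $B_0(\mathrm{A}_5)$. Its Cartan matrix has a diagonal entry $4$, unlike that of $\U\mathrm{A}_4$, so the two are not Morita equivalent.

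The missing idea is to bound the possibilities and then exclude all but one.
\begin{itemize}
\item Clifford theory with $l(A)=2$ and $P$-orbits of length $1$ or $2$ gives $l(\mathbb{A})\in\{2,3,4\}$.
\item If $l(\mathbb{A})=2$, then $\mathrm{k}(\mathbb{A})=3$. A norm-$4$ generator $\sum_a n_a[\hat U_a]$ of $L^0(\mathbb{A})$ with three coefficients must then be $\pm2[\hat U_a]$. This is impossible since $d_{\mathbb{A}}([\hat U_a])\neq 0$.
\item If $l(\mathbb{A})=4$, the simple $\bar{\mathbb{A}}$-modules form two orbits of length $2$. Then both simple $\bar A$-modules are induced with index $2$ from $\bar{\mathbb{A}}_{C_P(D)}$, so both have even dimension. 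This contradicts the existence of a simple $\bar A$-module of odd dimension.
\end{itemize}
Hence $l(\mathbb{A})=3$, and then $\mathrm{k}(\mathbb{A})=4$.
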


\begin{proof}
By Theorem \ref{MT}, 
there is a stable equivalence of Morita type between $\mathbb{A}$ and $\U L$.
By Corollary \ref{nonsingular of cartan matrix},
the Cartan matrix of $\bar{\mathbb{A}}$ is nonsingular.
Note that the hyperfocal subalgebra $\mathbb{A}$ is a symmetric $\U$-algebra
(see \cite[Theorem 1.1]{HZ25}).
Then by \cite[Proposition 3.1]{KL10}
the stable equivalence between $\mathbb{A}$ and $\U L$ induces an isometry
between $L^0(\mathbb{A})$ and $L^0(\U L)$.
It is clear that $L^0(\U L)$ is a free $\mathbb{Z}$-module of rank one.
Set $\zeta$ to be a generator.
It is well-known that $\langle\zeta,\zeta\rangle_{\mathcal{O}L}=4$.
Then $L^0(\mathbb{A})$ is also a free $\mathbb{Z}$-module of rank one
which implies that $\mathrm{k}(\mathbb{A})-l(\mathbb{A})=1$.
Suppose that $\mathrm{Irr}_\mathcal{K}(\mathbb{A})=\{\hat{U}_1,\hat{U}_2,\cdots,\hat{U}_{t+1}\}$
and $\mathrm{Irr}_k(\mathbb{A})=\{\bar{M}_1,\bar{M}_2,\cdots,\bar{M}_t\}$ 
for some positive integer $t$.

By \cite[Theorem 1.1]{HZ19}, $l(A)$ is equal to $2$ or $3$.
Suppose that $l(A)=3$. 
In this case, we have $P=C_P(D)$
by \cite[Theorem 1.1 (i) and Proposition 2.3 (a)]{HZ19}.
Then by Corollary \ref{equivariant for bbA-module},
every simple $\bar{\mathbb{A}}$-module is $P$-stable.
In particular, every simple $\bar{\mathbb{A}}$-module can be uniquely extended to 
a simple $\bar{A}$-module.
This implies that $\bar{A}$ and $\bar{\mathbb{A}}$ have the same number of isomorphism classes of simple modules.
In particular, we have $l(\mathbb{A})=3$ and then ${\rm k}(\mathbb{A})=4$.

Now suppose that $l(A)$ is equal to $2$.
In this case, the quotient group $P/C_P(D)$ is a group of order $2$ 
by \cite[Theorem 1.1 (ii) and Proposition 2.3 (b)]{HZ19}.
Then by Corollary \ref{equivariant for bbA-module} again,
the quotient group $P/C_P(D)$ can act on the set ${\rm Irr}_k(\mathbb{A})$.
By Clifford Theory, $l(\mathbb{A})$ has three possibilities: $2$, or $3$, or $4$. 
Assume that $l(\mathbb{A})$ equals $2$.
Then ${\rm k}(\mathbb{A})$ equals $3$.
Set $\sum\limits_{a=1}^{3}n_a[\hat{U}_a]$ is a generator of $L^0(\mathbb{A})$,
where $n_1$ and $n_2$ and $n_3$ are integers.
Since there is an isometry between $L^0(\mathbb{A})$ and $L^0(\U L)$,
we have $$\langle\sum\limits_{a=1}^{3}n_a[\hat{U}_a],\sum\limits_{a=1}^{3}n_a[\hat{U}_a]\rangle_{\mathbb{A}}
=\sum\limits_{a=1}^{3}n_a^2=4.$$
It is easy to see that all but one of the coefficients are zero and 
then $2[\hat{U}_a]$ belongs to $L^0(\mathbb{A})$ for some $1\leq a\leq 3$.
This is impossible.
Assume that $l(\mathbb{A})$ equals $4$.
In this subcase, without loss of generality,
we can assume that $\bar{M}_1$ and $\bar{M}_2$ are permuted by $P/C_P(D)$ 
and $\bar{M}_3$ and $\bar{M}_4$ are permuted by $P/C_P(D)$, respectively.
Set $\mathbb{A}_{C_P(D)}$ to be $\mathop\bigoplus\limits_{uD\in C_P(D)/D}\mathbb{A}u$.
Then for any $1\leq a\leq 4$,
the simple $\bar{\mathbb{A}}$-module $\bar{M}_a$ can be uniquely extended to
a simple $\bar{\mathbb{A}}_{C_P(D)}$-module, which we still denote by $\bar{M}_a$.
Here, $\bar{\mathbb{A}}_{C_P(D)}$ is the $k$-algebra 
$k\mathop\otimes\limits_\mathcal{O}\mathbb{A}_{C_P(D)}$.
Therefore, we can get that
${\rm Irr}_k(A)=\{\bar{A}\mathop\otimes\limits_{\bar{\mathbb{A}}_{C_P(D)}}\bar{M}_1,
\bar{A}\mathop\otimes\limits_{\bar{\mathbb{A}}_{C_P(D)}}\bar{M}_3\}$.
Note that ${\rm dim}_k(\bar{A}\mathop\otimes\limits_{\bar{\mathbb{A}}_{C_P(D)}}\bar{M}_1)
=2{\rm dim}_k(\bar{M}_1)$ and
${\rm dim}_k(\bar{A}\mathop\otimes\limits_{\bar{\mathbb{A}}_{C_P(D)}}\bar{M}_3)
=2{\rm dim}_k(\bar{M}_3)$.
This means that all simple $\bar{A}$-modules have even dimensions.
But by \cite[Proposition (44.9)]{Thevenaz},
there exists some simple $\bar{A}$-module of odd dimension, a contradiction.

In conclusion, we have $l(\mathbb{A})=3$ and then ${\rm k}(\mathbb{A})=4$.

\end{proof}

\begin{rmk}\rm
Another possible way to calculate ${\rm k}(\mathbb{A})$	is to analyse the coefficients
of a generator $\sum\limits_{a=1}^{t+1}n_a[\hat{U}_a]$ of $L^0(\mathbb{A})$
through the equations
$\langle\sum\limits_{a=1}^{t+1}n_a[\hat{U}_a],
\sum\limits_{a=1}^{t+1}n_a[\hat{U}_a]\rangle_{\mathbb{A}}=
\sum\limits_{a=1}^{t+1}n_a^2=4$.
This is exactly the approach taken when calculating $|{\rm Irr}(b)|$ for the block $b$
with a Klein four defect group (see \cite[Corollary 12.2.5]{L'book 2}).
This way is easy and efficient for the block 
due to a well-known fact of the block that 
an irreducible ordinary character is a $\mathbb{Z}$-linear combination of
some characters of projective modules if and only if it belongs to a block of a trivial defect group.
It is reasonable to speculate that 
a simple $\hat{\mathbb{A}}$-module $\hat{U}$ belongs to ${\rm Pr}_\mathcal{O}(\mathbb{A})$
if and only if the hyperfocal subgroup is trivial for the hyperfocal subalgebra $\mathbb{A}$
in general.
But this seems not easy to prove for us.
\end{rmk}

\begin{order}\rm
Let $\{T_1,T_2,T_3\}$ be a set of representatives of 
the isomorphism classes of simple $kL$-modules.	
Moreover, we can set $T_1$ to be the trivial simple $kL$-module $k$.
Denote by $\Omega=\Omega_{kL}$ the Heller operator of $kL$.
By \cite[Corollary 7.2.11]{L'book 2},
there are six isomorphism classes of indecomposable $kL$-modules of dimension $2$,
denoted by $T_\mathrm{i}^\mathrm{j}$ with distinct $\mathrm{i}$ and $\mathrm{j}$ in $\{1,2,3\}$.
Furthermore, the indecomposable $kL$-module $T_\mathrm{i}^\mathrm{j}$ has composition series 
$T_\mathrm{j}$, $T_\mathrm{i}$
and $\Omega(T_\mathrm{i}^\mathrm{j})=T_\mathrm{j}^\mathrm{l}$,
where $\mathrm{l}\in\{1,2,3\}-\{\mathrm{i,\mathrm{j}}\}$.
Since the two algebras $\mathbb{A}$ and $\mathbb{A}_\delta$ are Morita equivalent,
by Lemma \ref{k(A) and l(A)},
we can set $\{\bar{W}_1,\bar{W}_2,\bar{W}_3\}$ to be a set of representatives of 
the isomorphism classes of simple $\bar{\mathbb{A}}_\delta$-modules.
Now we can state the following main theorem in this subsection,
which  yields the Morita equivalence classes and the derived equivalence classes of the hyperfocal subalgebra $\mathbb{A}$ with a Klein four group.
\end{order}

\begin{thm}\label{MT of Klein four}
	Keep the notation as above.
Assume that the hyperfocal subgroup $D$ is a Klein four group.
Then the hyperfocal subalgebra $\mathbb{A}$ is Morita equivalent to either $\U\mathrm{A}_4$
or the principal block algebra of $\U\mathrm{A}_5$.
In particular, the hyperfocal subalgebra $\mathbb{A}$ is Rickard equivalent to $\U\mathrm{A}_4$.	
\end{thm}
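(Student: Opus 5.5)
We will follow the classical argument for blocks with Klein four defect groups (as in \cite[\S12]{L'book 2}), now applied to the stable equivalence of Morita type of Theorem \ref{MT}. By Corollary \ref{separable and Morita equi} it suffices to work with $\mathbb{A}_\delta$. Theorem \ref{MT} supplies the $\mathbb{A}_\delta$-$\U L$-bimodule $M=(V\mathop\otimes\limits_\mathcal{O}\mathbb{A}_\delta)\cdot e$, which is indecomposable by the paragraph after Corollary \ref{nonsingular of cartan matrix}. It induces a stable equivalence between $\mathbb{A}_\delta$ and $\U L$ with $L=\mathrm{A}_4$, and $l(\mathbb{A}_\delta)=l(L)=3$ by Lemma \ref{k(A) and l(A)}. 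The endopermutation $kD$-module $\bar V$ is indecomposable with vertex $D$, and $D$ is a Klein four group. The classification of such modules (the Dade group of $V_4$ is generated by $\Omega$ up to the trivial class) shows that $\bar V\cong\Omega^n_{kD}(k)$ for some integer $n$. Composing with the self stable equivalence $\Omega^{-n}_{\U L}$, which is of Morita type, we may assume that $\bar V$ is trivial. The resulting equivalence then sends $k\mathop\otimes M$ applied to simple modules to modules that are still indecomposable and of small dimension.

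The key step is to compute the images $M^*\otimes_{\mathbb{A}_\delta}\bar W_i$ of the three simple $\bar{\mathbb{A}}_\delta$-modules in $\overline{\mathrm{mod}}(kL)$. Since $M\otimes_{\U L}-$ and its adjoint are exact and send simples to indecomposable non-projective modules, Linckelmann's criterion \cite[Theorem 4.14.10]{L'book 1} applies: if all three images are simple, then after removing projective summands the bimodule induces a Morita equivalence, and $\mathbb{A}\sim_{\mathrm{Mor}}\U\mathrm{A}_4$. Otherwise we will use the list of indecomposable $kL$-modules of dimension at most $2$ from paragraph 6.4, the modules $T_i$ and $T_i^j$, together with the fact that stable equivalences preserve $\mathrm{Hom}$ and $\mathrm{Ext}^1$ modulo projectives. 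This forces the images to be, up to relabelling, $T_1$ together with two uniserial modules $T_i^j$. The configuration must reproduce the quiver of three simples with one simple sending arrows to and receiving arrows from the other two. One then compares this with the known stable equivalence of Morita type between $k\mathrm{A}_4$ and the principal block of $k\mathrm{A}_5$, which is given by induction/restriction through $\mathrm{A}_4\le\mathrm{A}_5$ and realizes exactly these images. Composing the two equivalences sends simples to simples, and Linckelmann's criterion again yields a Morita equivalence with $B_0(\U\mathrm{A}_5)$. Controlling the possible images requires a case analysis. That analysis rests on $\bar{\mathbb{A}}_\delta$ being symmetric (\cite[Theorem 1.1]{HZ25}) and on the fact, taken from Lemma \ref{k(A) and l(A)} and the character count there, that the decomposition matrix of $\mathbb{A}$ is of shape $4\times3$ with the single relation of norm $4$.

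Once the Morita class is known, the Rickard statement follows from Rickard's (or Okuyama's) explicit two-term tilting complex between $\U\mathrm{A}_4$ and $B_0(\U\mathrm{A}_5)$, and Rickard equivalence is preserved by composing with Morita equivalences. The main obstacle is the case analysis of the images of simples. It must use the vanishing of the Brauer quotients from Proposition \ref{property of bbA'}(iii) to exclude images of larger dimension, since the stable equivalence a priori allows $\Omega$-shifts. As a first attempt we will determine these images via the isometry of $L^0$ lattices from \cite[Proposition 3.1]{KL10}, which pins down the decomposition matrix up to signs and leaves only the $\U\mathrm{A}_4$ and $\U\mathrm{A}_5$ patterns.
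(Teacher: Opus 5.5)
Your skeleton matches the paper's: pass to $\mathbb{A}_\delta$, realize the stable equivalence of Theorem~\ref{MT} as restriction to $\U L$, determine the images of the three simple $\bar{\mathbb{A}}_\delta$-modules, Heller-translate the bimodule (through $B_0(\mathrm{A}_5)$ in the second case) and apply \cite[Theorem 4.14.10]{L'book 1}, then quote \cite[Theorem 12.4.1]{L'book 2} for the Rickard statement.

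Your detour through the Dade group is unnecessary. Since $|D/Q|\le 2$ for every $1\neq Q\leq D$, all $\bar S_Q$ equal $k$, so the trivial module already satisfies the requirements on $\bar V$. With $V=\U$ one gets $e=j$ and $\mathbb{A}'=\mathbb{A}_\delta$, and the equivalence is restriction.

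Normalizing $V$ does not, however, normalize the images of the simples. Even for restriction, the nonprojective part $U_i$ of $\mathrm{Res}^{\bar{\mathbb{A}}_\delta}_{kL}(\bar W_i)$ may a priori be $\Omega^n(T_r)$ with $n\neq 0$; the paper's case analysis allows an arbitrary $n$. So your claim that the images are ``of small dimension'' and can be read off from the list of modules of dimension at most $2$ is unfounded. Such shifts are not to be excluded by Brauer quotients. They are absorbed by replacing the bimodule by its Heller translate.

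The genuine gap is the key step itself. You leave it as an open case analysis, and the tools you propose cannot carry it out. The isometry $L^0(\mathbb{A})\cong L^0(\U L)$ of \cite[Proposition 3.1]{KL10} only constrains characters, at best the decomposition matrix. That determines neither the images of the simples in $\overline{\mathrm{mod}}(kL)$ nor the Morita class. Proposition~\ref{property of bbA'}(iii) is what produces the stable equivalence, and it gives no further control over simple modules.

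What is needed is module-theoretic; it is the argument of \cite[Proposition 12.2.9]{L'book 2}, which the paper runs:
\begin{itemize}
\item The $U_i$ are indecomposable with $\underline{\mathrm{End}}_{kL}(U_i)\cong k$ and $\underline{\mathrm{Hom}}_{kL}(U_i,U_j)=0$ for $i\neq j$.
\item By the structure of $kL$-modules via the Klein four group (\S7.2 of \cite{L'book 2}), each $U_i$ is either some $\Omega^m(T_r)$, of odd dimension, or some $\Omega^m(T_r^s)$, of even dimension.
\item Projective $kL$-modules have dimension divisible by $4$, so $\dim_k\bar W_i\equiv\dim_k U_i\pmod 2$. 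Hence the existence of a simple $\bar{\mathbb{A}}_\delta$-module of odd dimension anchors one image as some $\Omega^n(T_r)$. That existence is Corollary~\ref{dim of bbA and simple mod}, which your proposal never invokes.
\item Stable Hom-orthogonality to this anchor forces the same shift $n$ on all three images. It leaves, after relabelling, exactly the paper's two configurations (i) and (ii).
\end{itemize}
Without this classification-plus-anchor argument, or a genuine substitute for it, neither the Morita statement nor the dichotomy between $\U\mathrm{A}_4$ and $B_0(\U\mathrm{A}_5)$ is established.
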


\begin{proof}
For any nontrivial subgroup $Q$ of $D$, it is clear that 
the quotient group $D/Q$ is either trivial or a cyclic group of order $2$.
In each case, the primitive Dade $D$-interior algebra $S(Q)$ in the paragraph 5.6 is just $k$.
Then the primitive Dade $D$-interior algebra $S$ has to be the trivial $D$-interior algebra $\U$ 
and 
the primitive $L$-interior algebra $\mathbb{A}^\prime$ in the paragraph 5.12 
is just $\mathbb{A}_\delta$.
By applying Theorem \ref{MT},
we get that a stable equivalence of Morita type between $\mathbb{A}_\delta$ and $\U L$ 
which is induced by the restriction.
By Corollary \ref{dim of bbA and simple mod},
there is at least one of the simple $\bar{\mathbb{A}}_\delta$-modules of odd dimension.
Therefore, the proof of \cite[Proposition 12.2.9]{L'book 2} still applies to 
to our setting and then exactly one of the following statments holds.
\vspace{0.25cm}

\noindent (i) There is an integer $n$ such that for $1\leq\mathrm{i}\leq 3$ 
$$\mathrm{Res}_{kL}^{\bar{\mathbb{A}}_\delta}(\bar{W}_\mathrm{i})\cong\Omega^n(T_\mathrm{i}).$$

\noindent (ii) There is an integer $n$ such that
$$\mathrm{Res}_{kL}^{\bar{\mathbb{A}}_\delta}(\bar{W}_1)\cong\Omega^n(T_1),$$
$$\mathrm{Res}_{kL}^{\bar{\mathbb{A}}_\delta}(\bar{W}_2)\cong\Omega^n(T_3^2),$$
$$\mathrm{Res}_{kL}^{\bar{\mathbb{A}}_\delta}(\bar{W}_3)\cong\Omega^n(T_2^3).$$

In the case (i), 
we have $\bar{\mathbb{A}}_\delta\mathop\otimes\limits_{kL}T_\mathrm{i}\cong
\Omega^{-n}_{\bar{\mathbb{A}}_\delta}(\bar{W}_{\mathrm{i}})$ for any $1\leq\mathrm{i}\leq 3$.
Therefore by \cite[Theorem 4.14.10]{L'book 1},
the $\mathbb{A}_\delta$-$\U L$-bimodule 
$M=\Omega_{\mathbb{A}_\delta\mathop\otimes\limits_{\mathcal{O}}(\mathcal{O}L)^\circ}^n(\mathbb{A}_\delta)$
and its dual induce a Morita equivalence between $\mathbb{A}_\delta$ and $\U L$.
Hence, the $\mathbb{A}$-$\U L$-bimodule $\mathbb{A}j\mathop\otimes\limits_{\mathbb{A}_\delta}M$ and 
its dual $M^*\mathop\otimes\limits_{\mathbb{A}_\delta}j\mathbb{A}$ induce 
a Morita equivalence between $\mathbb{A}$ and $\U L$.

In the case (ii), we denote by $B_0(\mathrm{A}_5)$ the principal block algebra of $\U\mathrm{A}_5$.
Since $L$ is isomorphic to ${\rm A}_4$,
we can view $L$ as a subgroup of ${\rm A}_5$.
The same argument in the last paragraph of the proof of \cite[Theorem 12.1.2]{L'book 2}
shows that there is an indecomposable summand $N^\prime$ of 
the $\mathbb{A}_\delta$-$B_0(\mathrm{A}_5)$-bimodule $\mathbb{A}_\delta\mathop\otimes\limits_{\mathcal{O}L}B_0(\mathrm{A}_5)$ 
such that it and its dual induce a stable equivalence of Morita type between 
$\mathbb{A}_\delta$ and $B_0(\mathrm{A}_5)$ and 
the $\mathbb{A}_\delta$-$B_0(\mathrm{A}_5)$-bimodule 
$N=\Omega_{\mathbb{A}_\delta\mathop\otimes\limits_{\mathcal{O}}B_0(\mathrm{A}_5)^\circ}^n(N^\prime)$ and its dual
induce a Morita equivalence between $\mathbb{A}_\delta$ and $B_0(\mathrm{A}_5)$.
Hence, the $\mathbb{A}$-$B_0(\mathrm{A}_5)$-bimodule $\mathbb{A}j\mathop\otimes\limits_{\mathbb{A}_\delta}N$ and 
its dual $N^*\mathop\otimes\limits_{\mathbb{A}_\delta}j\mathbb{A}$ induce 
a Morita equivalence between $\mathbb{A}$ and $B_0(\mathrm{A}_5)$.

Finally, by \cite[Theorem 12.4.1]{L'book 2}, $\U L$ and $B_0(\mathrm{A}_5)$ are Rickard equivalent and 
then the hyperfocal subalgebra $\mathbb{A}$ is Rickard equivalent to $\U L$,
which is just $\U\mathrm{A}_4$.
\end{proof}

\begin{order}\rm
	In order to consider Brou$\acute{\mathrm{e}}$'s abelian defect group conjecture,
	we can assume that the defect group $P$ of the block $b$ is abelian
	and denote the Brauer correspondent of the block $b$ in $N_G(P)$ by $b_0$.
	Then it is well-known that the restriction of $N_G(P_\gamma)$ to $N_G(D_\delta)$ 
	induces an isomorphism from $N_G(P_\gamma)/C_G(P)$ to $E_\mathfrak{h}$.
	We identify $E_\mathfrak{h}$ with $N_G(P_\gamma)/C_G(P)$.
	Then $P=D\times R$ with $R=C_P(E_\mathfrak{h})$.
	As a corollary of Theorem \ref{MT of Klein four},
	we can get the Morita equivalence classes of the block algebra $\U Gb$ with abelian defect group $P$ and Klein four hyperfocal subgroup $D$ and then
	demonstrate that Brou$\acute{{\rm e}}$'s abelian defect group conjecture holds in this case.
\end{order}

\begin{prop}\label{Broue conj for Klein 4}
Keep the assumption as above.
The block algebra $\U Gb$ is Morita equivalent to either $\U(\mathrm{A}_4\times R)$ 
or the principal block algebra of $\U(\mathrm{A}_5\times R)$.
In particualr, Brou$\acute{e}$'s abelian defect group conjecture holds 
if the hyperfocal subgroup is a Klein four group.
\end{prop}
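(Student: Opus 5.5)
The plan is to reduce to the hyperfocal level through the source algebra, and then lift Theorem \ref{MT of Klein four} back to the block.

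\textbf{Step 1: a tensor decomposition of $A_\delta$.} Since $P$ is abelian, $P=D\times R$ with $R=C_P(E_\mathfrak{h})$, and we may take $R$ as the set of representatives $\mathcal{S}$ of $P/D$. By paragraph 3.1, $A_\delta=\bigoplus_{u\in R}\mathbb{A}_\delta a_{u,\delta}$. The aim is an isomorphism of $D$-interior algebras
\[
A_\delta\cong\mathbb{A}_\delta\otimes_\mathcal{O}\mathcal{O}R .
\]

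Since $R\leq C_P(D)$, the proof of Corollary \ref{equivariant for bbA-module} shows that $a_{u,\delta}$ can be chosen to centralize the subalgebra $\mathbb{B}=\mathcal{O}L$ of Proposition \ref{special mod stru}. Corollary \ref{auto is inner} then gives $z_{u,\delta}\in C_{\mathbb{A}_\delta}(\mathbb{B})^\times$ inducing the same automorphism of $\mathbb{A}_\delta$. Hence $c_u:=z_{u,\delta}^{-1}a_{u,\delta}$ centralizes $\mathbb{A}_\delta$, and $A_\delta=\bigoplus_{u\in R}\mathbb{A}_\delta c_u$.

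The map $u\mapsto c_u$ is multiplicative only up to a $2$-cocycle with values in $Z(\mathbb{A}_\delta)^\times=\mathcal{O}^\times\times(1+J(Z(\mathbb{A}_\delta)))$. Removing this cocycle is the main obstacle.

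\begin{itemize}
\item I would first use that $R$ is a $p$-group, so $H^2(R;k^\times)=0$; this kills the $k^\times$-part of the cocycle.
\item For the part in $1+J(Z(\mathbb{A}_\delta))$, I would normalize the $c_u$ using a $p$-power argument as in the proof of Corollary \ref{equivariant for bbA-module}, together with the determinant normalization of \cite[Proposition (21.5)]{Thevenaz}, in order to make $u\mapsto c_u$ a group homomorphism $R\to A_\delta^\times$.
\item If this fails, the fallback is to quote the known fact that, for abelian $P$, the source algebra satisfies $A\cong\mathbb{A}\otimes_\mathcal{O}\mathcal{O}R$ (cf. \cite{P00}, \cite{HZZ24}).
\end{itemize}

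\textbf{Step 2: Morita equivalence.} We have $\mathcal{O}Gb\sim_M A$ via $i\mathcal{O}G$. Next, $A\sim_M A_\delta$ via $jA$. To see that $jA$ is a progenerator, write $jA=j\mathbb{A}\otimes_\mathbb{A}A$ and use $A=\bigoplus_u\mathbb{A}u$. Restricting any simple $\bar A$-module to $\bar{\mathbb{A}}$ gives a nonzero module, so by Corollary \ref{separable and Morita equi} it is not annihilated by $j$; \cite[Theorem 2.8.7]{L'book 1} then applies.

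Combining with Step 1, $A_\delta\cong\mathbb{A}_\delta\otimes\mathcal{O}R$. By Theorem \ref{MT of Klein four} and Corollary \ref{separable and Morita equi}, $\mathbb{A}_\delta$ is Morita equivalent to $\mathcal{O}\mathrm{A}_4$ or to $B_0(\mathrm{A}_5)$. Tensoring the inducing bimodule with $\mathcal{O}R$ shows that $\mathcal{O}Gb$ is Morita equivalent to $\mathcal{O}(\mathrm{A}_4\times R)$ or to $B_0(\mathrm{A}_5)\otimes\mathcal{O}R$. The latter is the principal block of $\mathcal{O}(\mathrm{A}_5\times R)$.

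\textbf{Step 3: Broué's conjecture.} Tensoring the Rickard complex between $\mathbb{A}$ and $\mathcal{O}\mathrm{A}_4$ from Theorem \ref{MT of Klein four} with $\mathcal{O}R$ shows that $\mathcal{O}Gb$ is Rickard equivalent to $\mathcal{O}(\mathrm{A}_4\times R)=\mathcal{O}(P\rtimes E_\mathfrak{h})$, where $E_\mathfrak{h}$ has order $3$. On the other side, the Brauer correspondent $b_0$ has source algebra $\mathcal{O}_\alpha(P\rtimes E_\mathfrak{h})$ by Külshammer's theorem. Here $\alpha$ is trivial because $E_\mathfrak{h}$ is cyclic. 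Hence $\mathcal{O}N_G(P)b_0$ is Morita equivalent to $\mathcal{O}(P\rtimes E_\mathfrak{h})$. Composing the two equivalences gives a Rickard equivalence between $\mathcal{O}Gb$ and $\mathcal{O}N_G(P)b_0$, which is Broué's conjecture in this case.
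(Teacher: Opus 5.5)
The gap is in Step~1. Steps~2 and~3 agree with the paper's endgame. The paper also identifies the source algebra of $b_0$ with $\U(L\times R)=\U(\mathrm{A}_4\times R)$. Once $A\cong\mathbb{A}\otimes_{\mathcal{O}}\U R$ is available, tensoring the equivalences of Theorem~\ref{MT of Klein four} with $\U R$ gives both the Morita classification and Brou\'e's conjecture.

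\textbf{Why your direct argument for Step~1 does not close.} Your reduction is correct as far as it goes. The argument in the proof of Corollary~\ref{equivariant for bbA-module}, combined with Corollary~\ref{auto is inner}, does give units $c_u\in\mathbb{A}_\delta a_{u,\delta}$ centralizing $\mathbb{A}_\delta$. So $A_\delta$ is a twisted group algebra of $R$ over $\mathbb{A}_\delta$, with a $2$-cocycle $\beta$ (trivial action) taking values in $Z(\mathbb{A}_\delta)^\times=k^\times\times(1+J(Z(\mathbb{A}_\delta)))$. Note the factor is $k^\times$, not $\mathcal{O}^\times$; the latter does not give a direct product.

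The $k^\times$-part is harmless, since $H^2(R;k^\times)=0$. Nothing you propose removes the part in $M=1+J(Z(\mathbb{A}_\delta))$, and no general vanishing result covers it. For $R$ cyclic of order $p^r$ one has $H^2(R;M)\cong M/M^{p^r}$, and $M$ need not be $p$-divisible.

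Neither of your suggested tools applies:
\begin{itemize}
\item The $p$-power argument from Corollary~\ref{equivariant for bbA-module} only uses that $k^\times$ has no $p$-torsion. There it kills scalars attached to the $p'$-group $E_\mathfrak{h}$; it says nothing about a $(1+J)$-valued cocycle on a $p$-group.
\item The determinant normalization of \cite[Proposition (21.5)]{Thevenaz} needs a matrix algebra $\mathrm{End}_\mathcal{O}(V)$ of rank $n$ prime to $p$, so that scalars can be adjusted via $\det(\lambda\cdot 1)=\lambda^n$. Neither $\mathbb{A}_\delta$ nor $Z(\mathbb{A}_\delta)$ has this form, and $c_u$ does not even lie in $\mathbb{A}_\delta$.
\end{itemize}
So Step~1, as written, does not establish $A_\delta\cong\mathbb{A}_\delta\otimes_\mathcal{O}\U R$.

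\textbf{What the fallback is missing.} Your fallback is the paper's route, but the paper does not treat the decomposition as an unconditional fact about abelian $P$. It proceeds in three steps:
\begin{enumerate}
\item It obtains $l(b)=3=|E_\mathfrak{h}|$ from \cite[Theorem 1.1 (i)]{HZ19}.
\item It deduces from \cite[Theorem]{W05} that $b$ and $b_0$ are isotypic.
\item Only then does it apply \cite[Proposition 4.8]{HZZ24} to get the $\mathcal{O}$-algebra isomorphism $A\cong\mathbb{A}\otimes_\mathcal{O}\U R$.
\end{enumerate}
Your proposal is missing this block-theoretic input: the Brauer character count and the resulting isotypy. Once you supply it, the detour through $A_\delta$ in Step~2 becomes unnecessary. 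Theorem~\ref{MT of Klein four} applies to $\mathbb{A}$ directly, and the Morita equivalence between $\U Gb$ and $A\cong\mathbb{A}\otimes_\mathcal{O}\U R$ finishes the argument.
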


\begin{proof}
By the structure theory of source algebras with normal defect groups,
the source algebra of the block $b_0$ is isomorphic to $\U(L\times R)$ as $P$-interior algebras.
Denote $|\mathrm{IBr}(b)|$ by $l(b)$.
By \cite[Theorem 1.1 (i)]{HZ19},
we have $l(b)=3=|E_\mathfrak{h}|$.
Then by \cite[Theorem]{W05},
there is an isotypy between the blocks $b$ and $b_0$.
Recall that $A$ and $\mathbb{A}$ are a source algebra and a hyperfocal subalgebra of the block $b$, respectively.
Then by \cite[Proposition 4.8]{HZZ24},
there is an $\U$-algebra isomorphism $A\cong\mathbb{A}\mathop\otimes\limits_{\mathcal{O}}\U R$.
It is clear that as $\U$-algebras,
$\U(\mathrm{A}_4\times R)$ is isomorphic to $\U\mathrm{A}_4\mathop\otimes\limits_{\mathcal{O}}\U R$
and 
the principal block algebra of $\U(\mathrm{A}_5\times R)$ is isomorphic to 
$B_0(\mathrm{A}_5)\mathop\otimes\limits_{\mathcal{O}}\U R$.
Then the statements of the  proposition follow from Theorem \ref{MT of Klein four}.
\end{proof}

On the other hand, we can use Theorem \ref{MT of Klein four} to
obtain some information about 
$\mathrm{Irr}_\mathcal{K}(\mathbb{A})$ and $\mathrm{Irr}_\mathcal{K}(A)$.
We will first verify the forward direction of the KLN conjecture
for the hyperfocal subgroup $D$ being Klein four group.
We need the following two lemmas.

\begin{lem}\label{dim of irre A_delta module}
The dimension of every simple $\hat{\mathbb{A}}_\delta$-module is coprime to $2$.
\end{lem}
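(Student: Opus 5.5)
Since $\mathbb{A}$ and $\mathbb{A}_\delta$ are Morita equivalent, it is enough to find the dimensions of the four simple $\hat{\mathbb{A}}_\delta$-modules. I would use Theorem \ref{MT of Klein four}, but the dimensions have to be computed for $\mathbb{A}_\delta$ itself, because a Morita equivalence need not preserve dimensions.

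\emph{Case (i).} The bimodule $M=\Omega^n_{\mathbb{A}_\delta\otimes(\mathcal{O}L)^\circ}(\mathbb{A}_\delta)$ gives a Morita equivalence between $\mathbb{A}_\delta$ and $\U L$. The irreducible characters of $L=\mathrm{A}_4$ have degrees $1,1,1,3$, all odd. Each simple $\hat{\mathbb{A}}_\delta$-module is therefore of the form $\hat M\otimes_{\mathcal{K}L}X$ for some simple $\mathcal{K}L$-module $X$. Its dimension is $\mathrm{rank}_\mathcal{O}(M)\cdot\dim X/|L|$, because $M$ is free as a right $\U L$-module (it is projective over $\U L$, and $L$ has normal Sylow $p$-subgroup $D$ with $D\rtimes E_\mathfrak{h}$ Frobenius; more precisely, restricted to $D$ it is free). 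It therefore suffices to show two things: that $\mathrm{rank}_\mathcal{O}(M)/|D|$ is odd up to the factor $|E_\mathfrak{h}|=3$, and that the ratio gives odd integers.

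A cleaner route is to restrict to $\U D$. By Proposition \ref{special mod stru}, as a $\U D$-$\U D$-bimodule we have $\mathbb{A}_\delta=\U L\oplus Y$ with $Y$ projective. The Heller translate $\Omega^n$ of $\mathbb{A}_\delta$ then equals $\Omega^n(\U L)$ plus a projective. Restricted to the right $\U D$ structure, $\Omega^n_{\U L\otimes\U L^\circ}(\U L)$ has $\U$-rank $\equiv\pm|L|$ modulo $|D|\cdot|L|$ (the Heller operator of $\U D\otimes\U D^\circ$ changes rank by multiples of $|D|^2$ modulo $\pm$). Hence $\mathrm{rank}(M)/|L|$ is odd, and each simple $\hat{\mathbb{A}}_\delta$-module has odd dimension. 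For $n=0$, $M=\mathbb{A}_\delta$ and the factor is $\mathrm{rank}(\mathbb{A}_\delta)/|L|$, which is odd by Corollary \ref{dim of bbA and simple mod}. The Heller steps are the part that must be checked carefully.

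\emph{Case (ii).} The same argument is run with $B_0(\mathrm{A}_5)$ and the bimodule $N$. The principal $2$-block of $\mathrm{A}_5$ contains characters of degrees $1,3,3,5$, all odd, and $B_0(\mathrm{A}_5)$ is projective over $\U D$ with rank $|{\rm A}_5|\cdot$(odd) divided appropriately.

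\emph{Simplest fallback.} Alternatively, one can avoid the bimodule ranks. Each simple $\hat{\mathbb{A}}_\delta$-module $\hat U$ is an $\U$-lattice whose reduction mod $J(\U)$ has composition factors $\bar W_i$. By the structure in cases (i) and (ii), $\mathrm{Res}_{kL}(\bar W_i)\cong\Omega^n(\text{odd-dimensional or two-dimensional }kL\text{-module})$. Since $\dim\Omega_{kL}(X)\equiv\pm\dim X\pmod 4$ and $kL$ is projective of dimension divisible by $4$, one can control $\dim\hat U$ modulo $2$ by restricting $\hat U$ to $\mathcal{K}D$. This restriction is the sum of a multiple of the regular module and a character determined by the stable class.

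The main obstacle is the parity bookkeeping under the Heller shifts. I would first try the restriction-to-$D$ argument: a stable equivalence induced by restriction preserves the class in the stable category of $kD$, and $\dim\hat U\equiv\dim(\text{non-projective part})\pmod 4$, where that non-projective part is $\Omega^m(k)$ of odd dimension.
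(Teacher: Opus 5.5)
\textbf{There is a genuine gap.} Nothing in the proposal determines the dimension of each \emph{individual} simple $\hat{\mathbb{A}}_\delta$-module; it only controls an aggregate.

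Your formula $\dim(\hat M\otimes_{\mathcal{K}L}X)=\mathrm{rank}_{\mathcal{O}}(M)\dim X/|L|$ requires $M$ to be \emph{free} as a right $\mathcal{O}L$-module. It is only projective, and freeness over $\mathcal{O}D$ does not help. In general $\dim(\hat M\otimes_{\mathcal{K}L}X)$ is the multiplicity in $\hat M$ of the simple right module paired with $X$. This depends on the individual multiplicities $m_{\mathrm{i}}$ in $M_{\mathcal{O}L}\cong\bigoplus_{\mathrm{i}}(\varsigma_{\mathrm{i}}\mathcal{O}L)^{m_{\mathrm{i}}}$. By the decomposition matrix of $\mathcal{O}\mathrm{A}_4$, the four simple $\hat{\mathbb{A}}_\delta$-modules have dimensions $m_1,m_2,m_3,m_1+m_2+m_3$, while $\mathrm{rank}_{\mathcal{O}}(M)=4(m_1+m_2+m_3)$. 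So even a correct rank count only controls the module matched with the degree-$3$ character.

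The congruence is also off. $\mathrm{rank}(M)/|L|$ need not be an integer, and projective bimodules have rank divisible by $|D|^2=16$, not by $|D||L|$.

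In case (ii) the dimensions are $m_1,\ m_1+m_2,\ m_1+m_3,\ m_1+m_2+m_3$. You would therefore need $m_2$ and $m_3$ to be even separately, which no rank count can give.

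The fallback has the same hole in another form. The claim that the non-projective part of the restriction to $D$ is some $\Omega^m(k)$ is asserted, not proved, and it is exactly the missing information. In case (ii), where $\dim\bar W_2=\dim\bar W_3=2$, the parity of $\dim\hat U$ equals the multiplicity of $\bar W_1$ in a reduction of $\hat U$, which is a decomposition number.

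\textbf{How the paper closes the gap.} It uses the decomposition matrix. From $\mathrm{Res}(\bar W_{\mathrm{i}})\cong\Omega^n(T_{\mathrm{i}})$ in case (i), it reads off that every $\bar W_{\mathrm{i}}$ is odd-dimensional. From $\Omega^n(T_1),\Omega^n(T_3^2),\Omega^n(T_2^3)$ in case (ii), it reads off that $\bar W_1$ is odd-dimensional and $\dim\bar W_2=\dim\bar W_3=2$. Since decomposition matrices are Morita invariant, those of $\mathcal{O}\mathrm{A}_4$ and $B_0(\mathrm{A}_5)$ express each $\dim\hat U$ as an explicit sum of the $\dim\bar W_{\mathrm{i}}$. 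Every such sum is odd.

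\textbf{How your bimodule route could be repaired.} Track the class of $M$ in the Grothendieck group of projective right $\mathcal{O}L$-modules modulo $4$, rather than its rank.
\begin{itemize}
\item Projective $\mathbb{A}_\delta$-modules have $\mathcal{O}$-rank divisible by $|D|=4$. Hence every projective $\mathbb{A}_\delta$-$\mathcal{O}L$-bimodule is, on the right, a sum of copies of the $\varsigma\mathcal{O}L$ with multiplicities divisible by $4$.
\item The Heller sequences split as right $\mathcal{O}L$-modules.
\item $\mathbb{A}_\delta=\mathcal{O}L\oplus Y$ with $Y$ a projective bimodule.
\end{itemize}
It follows that $[M]\equiv\pm[\mathcal{O}L]$ modulo $4$. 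Hence $\dim(\hat M\otimes_{\mathcal{K}L}X)\equiv\pm\dim X\pmod 4$, which is odd because the degrees $1,1,1,3$ are odd.

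The same argument works for $N$ with the degrees $1,3,3,5$. You would additionally need to check that the complement of $N'$ in $\mathbb{A}_\delta\otimes_{\mathcal{O}L}B_0(\mathrm{A}_5)$ is a projective bimodule. This repair would avoid the decomposition matrices altogether, using only the oddness of the character degrees.
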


\begin{proof}
We set $\{\hat{U}_1,\hat{U}_2,\hat{U}_3,\hat{U}_4\}$ to be a set of representatives of 
the isomorphism classes of simple $\hat{\mathbb{A}}_\delta$-modules.

Suppose that $\mathbb{A}_\delta$ is Morita equivalent to $\U L$.
Then we are in the case (i) of Theorem \ref{MT of Klein four}.
In particular, the dimension of every simple $\bar{\mathbb{A}}_\delta$-module
$\bar{W}_\mathrm{i}$ is coprime to $2$ by \cite[Theorems 7.2.10 (ii) and 7.2.1 (ii)]{L'book 2}.
Since $\mathbb{A}_\delta$ is Morita equivalent to $\U L$,
we can assume that $d_{\mathbb{A}_\delta}([\hat{U}_4])=\sum\limits_{\mathrm{i}=1}^3[\bar{W}_\mathrm{i}]$ and
$d_{\mathbb{A}_\delta}([\hat{U}_\mathrm{i}])=[\bar{W}_\mathrm{i}]$
for any $\mathrm{i}\in\{1,2,3\}$.
By the definition of the decomposition map $d_{\mathbb{A}_\delta}$,
we have
$\mathrm{dim}_\mathcal{K}(\hat{U}_4)=
\sum\limits_{\mathrm{i}=1}^3\mathrm{dim}_k\bar{W}_\mathrm{i}$ and
$\mathrm{dim}_\mathcal{K}(\hat{U}_\mathrm{i})=\mathrm{dim}_k\bar{W}_\mathrm{i}$
for any $\mathrm{i}\in\{1,2,3\}$.
In particular, every simple $\hat{A}_\delta$-module has dimension coprime to $2$ in this case.

Suppose that $\mathbb{A}_\delta$ is Morita equivalent to 
the principal block $B_0(\mathrm{A}_5)$ of $\mathrm{A}_5$.
Then we are in the case (ii) of Theorem \ref{MT of Klein four}.
In particular, $\mathrm{dim}_k(\bar{W}_1)$ is coprime to $2$ and 
$\mathrm{dim}_k(\bar{W}_2)=\mathrm{dim}_k(\bar{W}_3)=2$ by \cite[Corollary 7.2.11]{L'book 2}.
Since $\mathbb{A}_\delta$ is Morita equivalent to 
the principal block $B_0(\mathrm{A}_5)$ of $\mathrm{A}_5$,
of which the decomposition matrix is well-known,
we can assume that
$d_{\mathbb{A}_\delta}([\hat{U}_1])=[\bar{W}_1]$ and 
$d_{\mathbb{A}_\delta}([\hat{U}_4])=[\bar{W}_1]+[\bar{W}_2]+[\bar{W}_3]$ and
$d_{\mathbb{A}_\delta}([\hat{U}_\mathrm{i}])=[\bar{W}_1]+[\bar{W}_\mathrm{i}]$
for any $\mathrm{i}\in\{2,3\}$.
Hence, we still obtain that every simple $\hat{A}_\delta$-module has dimension coprime to $2$ 
in this case.
We are done.
\end{proof}

\begin{lem}\label{dim of char of bbA and Adelta}
Let $U$ be a finitely generated $\mathbb{A}_\delta$-module
which is free as an $\U$-module.
Then the $\U$-rank of $U$ is coprime to $2$ if and only if
the $\U$-rank of $\mathbb{A}j\mathop\otimes\limits_{\mathbb{A}_\delta}U$ is coprime to $2$.
\end{lem}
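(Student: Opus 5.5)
The plan is to compute the $\U$-rank of $\mathbb{A}j\mathop\otimes\limits_{\mathbb{A}_\delta}U$ directly from the structure of $\mathbb{A}j$ as a right $\mathbb{A}_\delta$-module. By Corollary \ref{separable and Morita equi}, the bimodule $\mathbb{A}j$ induces a Morita equivalence, so $\mathbb{A}j$ is a finitely generated projective right $\mathbb{A}_\delta$-module. Hence $\mathbb{A}j\mathop\otimes\limits_{\mathbb{A}_\delta}U$ is $\U$-free and its rank is additive in $U$. Since $\mathbb{A}_\delta$ is a primitive $D$-interior algebra, $\mathbb{A}j\cong\bigoplus_{t} j_t\mathbb{A}_\delta$ as right $\mathbb{A}_\delta$-modules, for some idempotents $j_t$. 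It is more convenient, however, to work with $D$-module structures and the identity $1_{\mathbb{A}}=j+(1-j)$.

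The key step is to exploit that $\mathbb{A}j$ is a left $\U D$-module and compare with $j\mathbb{A}j=\mathbb{A}_\delta$. Write $1_\mathbb{A}$ as an orthogonal sum of primitive idempotents of $\mathbb{A}^D$. Exactly one of them, namely $j$, lies in the unique local point $\delta$ (paragraph 3.1 and the proof of Corollary \ref{separable and Morita equi}). The others lie in non-local points, so each has the form $\mathrm{Tr}_{Q}^D(f)$ with $Q<D$ and ${}^u f\cdot f=0$ for $u\notin Q$. For each such idempotent $i'=\mathrm{Tr}_Q^D(f)$, the argument in the proof of Lemma \ref{a general lemma} gives $i'\mathbb{A}j\cong\U D\mathop\otimes\limits_{\U Q}f\mathbb{A}j$. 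This is an isomorphism of $\U D$-$\mathbb{A}_\delta$-bimodules. Tensoring with $U$ then gives
$$\mathrm{rank}_\U(i'\mathbb{A}j\mathop\otimes\limits_{\mathbb{A}_\delta}U)=|D:Q|\cdot\mathrm{rank}_\U(f\mathbb{A}j\mathop\otimes\limits_{\mathbb{A}_\delta}U),$$
which is even because $Q<D$ and $D$ is a $2$-group.

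The component at $j$ contributes $j\mathbb{A}j\mathop\otimes\limits_{\mathbb{A}_\delta}U=U$. Summing over the decomposition yields
$$\mathrm{rank}_\U(\mathbb{A}j\mathop\otimes\limits_{\mathbb{A}_\delta}U)\equiv\mathrm{rank}_\U(U)\pmod 2,$$
which proves both directions at once.

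The main obstacle is justifying the decomposition of $1_\mathbb{A}$ with exactly one summand in $\delta$. This uses that $\delta$ is the unique local point of $D$ on $\mathbb{A}$, and that its multiplicity is $1$. I expect multiplicity one to follow from $\mathbb{A}(D)\cong\mathbb{A}_\delta(D)\cong kZ(D)$ being local (Lemma \ref{Brauer quot in D}, together with the analogous Brauer quotient computation for $\mathbb{A}$ via $A(D)\cong T_D\otimes kC_P(D)$). If that multiplicity argument becomes delicate, the fallback is weaker but sufficient: local summands $j'$ of $1_\mathbb{A}$ are all conjugate to $j$, each contributes $U$, and their number $m$ is odd, because $\mathrm{rank}(\mathbb{A})/|D|$ is odd while non-local summands contribute multiples of $2|D|$ to $\mathrm{rank}(\mathbb{A})$ via the same trace argument.
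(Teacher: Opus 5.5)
Your argument has the same skeleton as the paper's proof. You decompose $1_{\mathbb{A}}$ into orthogonal primitive idempotents of $\mathbb{A}^D$. Rosenberg's lemma and the isomorphism $\mathrm{Tr}_Q^D(f)\mathbb{A}j\cong\mathcal{O}D\otimes_{\mathcal{O}Q}f\mathbb{A}j$ show that every non-local summand contributes an even $\mathcal{O}$-rank. Each summand lying in $\delta$ contributes a copy of $U$. This gives $\mathrm{rank}_{\mathcal{O}}(\mathbb{A}j\otimes_{\mathbb{A}_\delta}U)\equiv m_\delta\,\mathrm{rank}_{\mathcal{O}}(U)\pmod 2$, where $m_\delta$ is the multiplicity of $\delta$ on $\mathbb{A}$.

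The one real error is your primary claim that $m_\delta=1$. It is false in general. Your justification conflates $\mathbb{A}(D)$ with $\mathbb{A}_\delta(D)=\mathrm{Br}_D(j)\,\mathbb{A}(D)\,\mathrm{Br}_D(j)$, and Lemma~\ref{Brauer quot in D} only says the latter is local. In the proof of that lemma, $A(D)\cong T_D\otimes_k kC_P(D)$ with $D$ acting trivially on $T_D\cong\mathrm{End}_k(W)$. So $1_{A(D)}$ splits into $\dim_k W$ orthogonal conjugates of the primitive idempotent $\mathrm{Br}_D(j)$, and $m_\delta=\dim_k W$. For example, tensoring the block $\mathcal{O}\mathrm{A}_4$ with a nilpotent $2$-block whose source has $\mathcal{O}$-rank $r>1$ gives $m_\delta=r$. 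This is why the paper only asserts that $m_\delta$ is odd, citing Puig [P00, Propositions 3.3 and 3.5].

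Your fallback is the correct repair, and it is a self-contained substitute for that citation. Write $1_{\mathbb{A}}=\sum_t j_t$, with each non-local $j_t=\mathrm{Tr}_{Q_t}^D(f_t)$. Then $\mathrm{rank}_{\mathcal{O}}(\mathbb{A})=m_\delta\,\mathrm{rank}_{\mathcal{O}}(j\mathbb{A})+\sum_{j_t\notin\delta}|D:Q_t|\,\mathrm{rank}_{\mathcal{O}}(f_t\mathbb{A})$. Both $j\mathbb{A}$ and $f_t\mathbb{A}$ are direct summands of $\mathbb{A}$ as right $\mathcal{O}D$-modules, and $\mathbb{A}$ is $\mathcal{O}D$-free, so every term is divisible by $|D|$. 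Since $\mathrm{rank}_{\mathcal{O}}(\mathbb{A})/|D|$ is odd (proof of Corollary~\ref{separable and Morita equi}), $m_\delta$ is odd. State this freeness step explicitly: the trace argument alone supplies only the factor $|D:Q_t|$, not the factor $|D|$ you need to divide through. With this replacing the multiplicity-one claim, your proof is complete.
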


\begin{proof}
	First, note that $\mathbb{A}j\mathop\otimes\limits_{\mathbb{A}_\delta}U$ is $\U$-free 
	since the $\mathbb{A}$-$\mathbb{A}_\delta$-bimodule $\mathbb{A}j$ 
	induces a Morita equivalence between $\mathbb{A}$ and $\mathbb{A}_\delta$.
	So the notation $\mathrm{rank}_\mathcal{O}(\mathbb{A}j\mathop\otimes\limits_{\mathbb{A}_\delta}U)$ makes sense.
	Denote by $m_\delta$ the multiplicity of $\mathbb{A}$ at $\delta$ 
	(see \cite[Definition 2.2]{P81}).
	Then by \cite[Propositions 3.3 and 3.5]{P00},
	we have $m_\delta$ coprime to $2$.
	Let $j^\prime$ be another element of $\delta$.
	Obviously, we have $j^\prime\mathbb{A}j$ isomorphic to $\mathbb{A}_\delta$
	as $\U D$-$\mathbb{A}_\delta$-bimodules.
	Since $D_\delta$ is the unique local pointed group of $D$ on $\mathbb{A}$,
	there is an $\U D$-module decomposition
	\begin{align}\label{dec of AjXWdelta}
		\mathbb{A}j\mathop\otimes\limits_{\mathbb{A}_\delta}U\cong
	U^{m_\delta}\oplus({\rm e}\mathbb{A}j\mathop\otimes\limits_{\mathbb{A}_\delta}U)	
	\end{align}
	for some idempotent ${\rm e}$ of $\mathbb{A}^D$ with $\mathrm{Br}_D({\rm e})=0$.
	Here, $U^{m_\delta}$ denotes a direct sum of $m_\delta$-copies of $U$.
	Suppose that ${\rm e}$ is primitive.
	By Rosenberg's lemma, we have ${\rm e}\in\mathrm{Tr}_Q^D(\mathbb{A}^Q)$ for some proper subgroup $Q$ of $D$.
	By \cite[Theorem (23.1)]{Thevenaz}, 
	there is a primitive idempotent ${\rm e}^\prime$ of $\mathbb{A}^Q$ such that 
	${\rm e}=\mathrm{Tr}_Q^D({\rm e}^\prime)$
	and ${\rm e}^\prime\cdot ({\rm e}^\prime)^v=({\rm e}^\prime)^v\cdot{\rm e}^\prime=0$ 
	for any $v\in D-Q$.
	For any $\mathfrak{a}\in {\rm e}\mathbb{A}j$ and any $x\in U$,
	the map sending $\mathfrak{a}\mathop\otimes\limits_{\mathbb{A}_\delta}x$ to
	$\sum\limits_{uQ\in D/Q}u\mathop\otimes\limits_{\mathcal{O}Q}
	({\rm e}^\prime u^{-1}\mathfrak{a}\mathop\otimes\limits_{\mathbb{A}_\delta}x)$
	determines an isomorphism of $\U D$-modules
	${\rm e}\mathbb{A}j\mathop\otimes\limits_{\mathbb{A}_\delta}U\cong
	\U D\mathop\otimes\limits_{\mathcal{O}Q}({\rm e}^\prime\mathbb{A}j
	\mathop\otimes\limits_{\mathbb{A}_\delta}U)$.
	Since ${\rm e}\mathbb{A}j\mathop\otimes\limits_{\mathbb{A}_\delta}U$ is isomorphic to
	a direct summand of $\mathbb{A}j\mathop\otimes\limits_{\mathbb{A}_\delta}U$
	as $\U D$-modules, ${\rm e}\mathbb{A}j\mathop\otimes\limits_{\mathbb{A}_\delta}U$ is also
	a free $\U$-module of finite rank.
	Furthermore, it is clear that 
	$\U D\mathop\otimes\limits_{\mathcal{O}Q}({\rm e}^\prime\mathbb{A}j
	\mathop\otimes\limits_{\mathbb{A}_\delta}U)$ is a direct sum of 
	$|D/Q|$-copies of ${\rm e}^\prime\mathbb{A}j\mathop\otimes\limits_{\mathbb{A}_\delta}U$
	as $\U$-modules.
	Hence, ${\rm e}^\prime\mathbb{A}j\mathop\otimes\limits_{\mathbb{A}_\delta}U$
	is a free $\U$-module of finite rank and 
	then $\mathrm{rank}_\mathcal{O}({\rm e}\mathbb{A}j
	\mathop\otimes\limits_{\mathbb{A}_\delta}U)=|D/Q|\cdot
	\mathrm{rank}_\mathcal{O}({\rm e}^\prime\mathbb{A}j\mathop\otimes\limits_{\mathbb{A}_\delta}U)$,
	which is divided by $2$ since $Q$ is a proper subgroup of $D$.
	
	In general, by decomposing ${\rm e}$ as a sum of pairwise orthogonal primitive idempotents of 
	$\mathbb{A}^D$,
	we still have 
	$\mathrm{rank}_\mathcal{O}({\rm e}\mathbb{A}j\mathop\otimes\limits_{\mathbb{A}_\delta}U)$
	divided by $2$.
	Hence, by the decomposition (\ref{dec of AjXWdelta}),
	we have $m_\delta\mathrm{rank}_\mathcal{O}(U)\equiv
	\mathrm{rank}_\mathcal{O}(\mathbb{A}j
	\mathop\otimes\limits_{\mathbb{A}_\delta}U)~(\mathrm{mod}~2)$.
	Then the lemma follows from $m_\delta$ being coprime to $2$.
\end{proof}

\begin{rmk}\label{general dim of char of bbA and Adelta}\rm
In fact, Lemma \ref{dim of char of bbA and Adelta} is true for any prime $p$ 
and any hyperfocal subgroup $D$
since the proof above does not depend on the assumption that
$D$ is a Klein four group.	
\end{rmk}

Combining Lemmas \ref{dim of irre A_delta module} and \ref{dim of char of bbA and Adelta}
with the Morita equivalence between $\mathbb{A}$ and $\mathbb{A}_\delta$,
we can get the following result about the dimensions of simple $\hat{\mathbb{A}}$-modules.

\begin{prop}\label{KLN conj for klein 4}
The dimension of every simple $\hat{\mathbb{A}}$-module is coprime to $2$.
In particular, the forward direction of the KLN conjecture is true in this case.
\end{prop}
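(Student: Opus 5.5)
The plan is to transport the conclusion of Lemma \ref{dim of irre A_delta module} from $\hat{\mathbb{A}}_\delta$ to $\hat{\mathbb{A}}$. We do this along the Morita equivalence of Corollary \ref{separable and Morita equi}, and use Lemma \ref{dim of char of bbA and Adelta} to control the parity of ranks.

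Let $\hat{X}$ be a simple $\hat{\mathbb{A}}$-module. The first step is to realize $\hat{X}$ as $\mathbb{A}j\mathop\otimes\limits_{\mathbb{A}_\delta}U$ for a suitable $\mathcal{O}$-free lattice $U$. By Corollary \ref{separable and Morita equi}, the bimodules $\mathbb{A}j$ and $j\mathbb{A}$ induce a Morita equivalence between $\mathbb{A}$ and $\mathbb{A}_\delta$. Tensoring with $\mathcal{K}$, they induce a Morita equivalence between $\hat{\mathbb{A}}$ and $\hat{\mathbb{A}}_\delta$, which matches simple modules. Hence $\hat{U}=j\hat{X}$ is a simple $\hat{\mathbb{A}}_\delta$-module and $\hat{X}\cong\hat{\mathbb{A}}j\mathop\otimes\limits_{\hat{\mathbb{A}}_\delta}\hat{U}$.

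Next choose an $\mathbb{A}_\delta$-lattice $U$ in $\hat{U}$, which exists by the paragraph 2.7. Then $\mathbb{A}j\mathop\otimes\limits_{\mathbb{A}_\delta}U$ is $\mathcal{O}$-free, because $\mathbb{A}j$ is projective as a right $\mathbb{A}_\delta$-module. Moreover $\mathcal{K}\mathop\otimes\limits_\mathcal{O}(\mathbb{A}j\mathop\otimes\limits_{\mathbb{A}_\delta}U)\cong\hat{X}$. It follows that
\[
\mathrm{dim}_\mathcal{K}\hat{X}=\mathrm{rank}_\mathcal{O}(\mathbb{A}j\mathop\otimes\limits_{\mathbb{A}_\delta}U)
\quad\text{and}\quad
\mathrm{rank}_\mathcal{O}U=\mathrm{dim}_\mathcal{K}\hat{U}.
\]

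By Lemma \ref{dim of irre A_delta module}, $\mathrm{dim}_\mathcal{K}\hat{U}$ is odd. Lemma \ref{dim of char of bbA and Adelta} then gives that $\mathrm{rank}_\mathcal{O}(\mathbb{A}j\mathop\otimes\limits_{\mathbb{A}_\delta}U)$ is odd, so $\hat{X}$ has odd dimension. Applying this to every simple $\hat{\mathbb{A}}$-module shows $\mathrm{Irr}_\mathcal{K}(\mathbb{A})=\mathrm{Irr}_{\mathcal{K},0}(\mathbb{A})$. This is the forward direction of the KLN conjecture, since $D$ is abelian here.

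The only point needing care is the reduction to a split $\mathcal{K}$ made in the paragraph 6.1. After extending scalars, the simple modules and their dimensions are those over a splitting field, which is exactly the setting of the KLN conjecture. No step is expected to be a genuine obstacle.
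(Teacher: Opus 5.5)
Your proposal is correct and matches the paper's argument, which likewise combines Lemma \ref{dim of irre A_delta module} and Lemma \ref{dim of char of bbA and Adelta} with the Morita equivalence of Corollary \ref{separable and Morita equi}, realizing each simple $\hat{\mathbb{A}}$-module as $\mathcal{K}\otimes_{\mathcal{O}}(\mathbb{A}j\otimes_{\mathbb{A}_\delta}U)$ for an $\mathcal{O}$-free lattice $U$ in a simple $\hat{\mathbb{A}}_\delta$-module. You also make explicit two points the paper leaves implicit: that $\mathbb{A}j\otimes_{\mathbb{A}_\delta}U$ is $\mathcal{O}$-free because $\mathbb{A}j$ is projective over $\mathbb{A}_\delta$, and the splitting-field reduction of paragraph 6.1.
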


\begin{order}\rm
Set $\mathrm{Irr}_\mathcal{K}(\mathbb{A})=\{\hat{U}_1,\hat{U}_2,\hat{U}_3,\hat{U}_4\}$ 
and $\mathrm{Irr}_k(\mathbb{A})=\{\bar{W}_1,\bar{W}_2,\bar{W}_3\}$ 
if no confusion arises.
By the proof of Lemma \ref{dim of irre A_delta module}, 
we can describe the decomposition map $d_\mathbb{A}$ of $\mathbb{A}$ as follows.
When $\mathbb{A}$ is Morita equivalent to $\U{\rm A}_4$, then
for any $\mathrm{i}\in\{1,2,3\}$,
\begin{align}\label{decomp map morita to L}
d_\mathbb{A}([\hat{U}_\mathrm{i}])=[\bar{W}_\mathrm{i}]~\mathrm{and}~
d_\mathbb{A}([\hat{U}_4])=\sum_{\mathrm{i}=1}^3[\bar{W}_\mathrm{i}] 	
\end{align}
When $\mathbb{A}$ is Morita equivalent to the principal block $B_0(\mathrm{A}_5)$ of $\mathrm{A}_5$, then
\begin{align}\label{decomp map morita to principal block}
	d_\mathbb{A}([\hat{U}_1])=[\bar{W}_1]~\mathrm{and}~
	d_\mathbb{A}([\hat{U}_4])=\sum_{\mathrm{i}=1}^3[\bar{W}_\mathrm{i}]~\mathrm{and}~
	d_\mathbb{A}([\hat{U}_\mathrm{i}])=[\bar{W}_1]+[\bar{W}_\mathrm{i}] 	
\end{align}
for any $\mathrm{i}\in\{2,3\}$.
It is clear that the conjugation action of $P$ on $\mathbb{A}$ can 
induce the conjugation actions of $P$ on $\mathrm{Irr}_\mathcal{K}(\mathbb{A})$ 
and $\mathrm{Irr}_k(\mathbb{A})$, respectively.
\end{order}

\begin{order}\rm
On the other hand, since $L$ is isomorphic to ${\rm A}_4$,
we can view $L$ as a subgroup of ${\rm A}_5$ such that 
$D$ equals $\{(1),(12)(34),(13)(24),(14)(23)\}$ and 
$E_\mathfrak{h}$ equals $\{(1),(123),(132)\}$.
We denote $C_P(D)$ by $P_0$.
It is clear that $P/P_0$ is isomorphic to a $2$-subgroup of 
the automorphism group ${\rm Aut}(D)$ of $D$.
Then $P/P_0$ is either trivial or a cyclic group of order $2$.
We can define an action of $P/P_0$ on ${\rm A}_5$ which stabilizes $L$ as follows.
When $P/P_0$ has order $2$, the action of $P/P_0$ on ${\rm A}_5$ is given by 
the conjugation action of the transposition $(12)$ in ${\rm S}_5$. 
Here, ${\rm S}_5$ denotes the symmetric group on $5$ letters.
So it can induce an action of $P$ on ${\rm A}_5$ through the canonical map
$P\longrightarrow P/P_0$ such that this action stabilizes 
$L$ and the principal block $B_0({\rm A}_5)$.
Moreover, set $P/P_0=\langle u_0P_0\rangle$ when $P/P_0$ is nontrivial.
Then $T_1$ is the unique $P$-stable $kL$-simple module.
$T_2$ and $T_3$ are both $P_0$-stable $kL$-simple modules
and ${_{u_0}}T_2=T_3$. 
Note that in both cases,
the actions of $P/P_0$ on $L=D\rtimes E_\mathfrak{h}$ are the same as 
the conjugation actions of $P/P_0$ on $L$.
Now we can get the following $\Delta(P)$-equivariant property of 
the Morita equivalences occurring in Theorem \ref{MT of Klein four}.
\end{order}

\begin{prop}\label{Morita C_P(D)-stable for Klein 4}
Keep the notation as above.
Then the Morita equivalences occurring in Theorem \ref{MT of Klein four}	
are both $\Delta(P)$-equivariant, namely that 
the bimodules induce the Morita equivalences occurring in Theorem \ref{MT of Klein four}
are $\Delta(P)$-equivariant.
\end{prop}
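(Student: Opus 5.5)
The bimodules in Theorem \ref{MT of Klein four} are built in two stages. First $\mathbb{A}j$, which is $\Delta(P)$-equivariant by the construction in the proof of Corollary \ref{equivariant for bbA-module}. Then an $\mathbb{A}_\delta$-$\U L$-bimodule $M$ (case (i)) or $\mathbb{A}_\delta$-$B_0(\mathrm{A}_5)$-bimodule $N$ (case (ii)). So I would reduce everything to showing that $M$ and $N$ are isomorphic to their twists ${_{(u,u)}}M$ and ${_{(u,u)}}N$ for every $u\in P$. On $\mathbb{A}_\delta$ the twist is through conjugation by $a_{u,\delta}$, chosen as in Lemma \ref{action P on E_h}. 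On $\U L$ and $B_0(\mathrm{A}_5)$ the twist is through the action of $P$ via $P\to P/P_0$ fixed in the paragraph before the statement. Since these actions agree with conjugation on $L$, the twist of $\U L$ as an $L$-interior algebra is compatible with conjugation by $a_{u,\delta}$ on the copy of $\U L$ inside $\mathbb{A}_\delta$. The tensor product $\mathbb{A}j\mathop\otimes\limits_{\mathbb{A}_\delta}M$ is then equivariant, by the same kind of map $\tau$ used in the proof of Proposition \ref{equivariant of M_st}.

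\textbf{Case (i).} Here $M=\Omega^n_{\mathbb{A}_\delta\otimes(\U L)^\circ}(\mathbb{A}_\delta)$, where $\mathbb{A}_\delta$ is regarded as an $\mathbb{A}_\delta$-$\U L$-bimodule by restriction. By Lemma \ref{action P on E_h}, conjugation by $a_{u,\delta}$ gives an automorphism of $\mathbb{A}_\delta$ that stabilizes $\mathfrak{E}_\mathfrak{h}$ and $Dj$ and induces ${^u}(-)$ on $L$. Hence the map $\mathfrak{a}\mapsto a_{u,\delta}^{-1}\mathfrak{a}a_{u,\delta}$ is an isomorphism $\mathbb{A}_\delta\cong{_{(u,u)}}\mathbb{A}_\delta$ of $\mathbb{A}_\delta$-$\U L$-bimodules. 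Twisting by an algebra automorphism is an exact functor that preserves projective bimodules, so it commutes with the Heller operator. It follows that ${_{(u,u)}}M\cong\Omega^n({_{(u,u)}}\mathbb{A}_\delta)\cong M$.

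\textbf{Case (ii).} Here $N=\Omega^n(N')$, where $N'$ is the unique indecomposable summand of $\mathbb{A}_\delta\mathop\otimes\limits_{\U L}B_0(\mathrm{A}_5)$ that induces the stable equivalence. By the argument of case (i), the bimodule $\mathbb{A}_\delta\mathop\otimes\limits_{\U L}B_0(\mathrm{A}_5)$ is isomorphic to its $(u,u)$-twist: combine conjugation by $a_{u,\delta}$ with conjugation by $(12)$ on $\mathrm{A}_5$, which stabilizes $L$ and $B_0(\mathrm{A}_5)$. Therefore ${_{(u,u)}}N'$ is again an indecomposable summand of this bimodule that induces a stable equivalence, and it remains to show that it is isomorphic to $N'$. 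Following the last paragraph of the proof of \cite[Theorem 12.1.2]{L'book 2}, I would characterize $N'$ as the unique non-projective indecomposable summand. Alternatively, I would characterize it by the property that $N'\mathop\otimes\limits_{B_0(\mathrm{A}_5)}-$ sends the simple module $T_1$, and the modules restricting to $T_3^2$ and $T_2^3$, to $\Omega^{-n}$ of the simple modules $\bar W_\mathrm{i}$. Uniqueness would then force ${_{(u,u)}}N'\cong N'$, and applying $\Omega^n$ gives ${_{(u,u)}}N\cong N$.

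The main obstacle is this last uniqueness step. It has to hold compatibly with the possible swap of $T_2$ and $T_3$ by $u_0$ when $P/P_0$ has order $2$, and also with the matching permutation of $\bar W_2$ and $\bar W_3$. I would check this against the decomposition map (\ref{decomp map morita to principal block}), which is symmetric in the indices $2$ and $3$. For $u\in P_0$ I would instead use Corollary \ref{equivariant for bbA-module}, which shows that each simple module is stable. In that case the twist fixes the images of the simple modules, and since the stable equivalence sends these to the same modules as before, the twisted bimodule must coincide with $N'$.
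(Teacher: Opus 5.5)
Your proposal is correct and follows the paper's proof. In case (i), conjugation by $a_{u,\delta}$ gives $\mathbb{A}_\delta\cong{}_{(u,u)}\mathbb{A}_\delta$; this passes through $\Omega^n$, and then through $\mathbb{A}j\otimes_{\mathbb{A}_\delta}-$ via $aj\otimes m\mapsto u^{-1}aja_{u,\delta}\otimes\vartheta(m)$. In case (ii), the paper likewise twists $\mathbb{A}_\delta\otimes_{\mathcal{O}L}B_0(\mathrm{A}_5)$ and appeals to the uniqueness of $N'$.

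Your ``main obstacle'' is not really an obstacle. $N'$ is the unique non-projective indecomposable summand (\cite[Theorem 4.14.2]{L'book 1}, which is what the paper cites), so ${}_{(u,u)}N'\cong N'$ holds for every $u\in P$ without any compatibility check on $T_2,T_3$ or on the decomposition map. You should also drop the alternative argument through images of simple modules: a stable (or even Morita) equivalence is not determined up to isomorphism by what it does to simple modules.
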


\begin{proof}
We will borrow the notation from the paragraph 4.9 and
 the proof of Theorem \ref{MT of Klein four} and Lemma \ref{action P on E_h}.
Fix an element $u$ of $P$.
By Lemma \ref{action P on E_h},
there is an element $a_{u,\delta}$ of $A_\delta^\times$ such that 
${^{a_{u,\delta}}}\mathfrak{e}_{\tilde{g}}=\mathfrak{e}_{{^u}\tilde{g}}$
for any $\tilde{g}\in E_{\mathfrak{h}}$.

First, we assume that the hyperfocal subalgebra $\mathbb{A}$ is Morita equivalent to $\U L$.
Then the $\mathbb{A}$-$\U L$-bimodule $\mathbb{A}j\mathop\otimes\limits_{\mathbb{A}_\delta}M$ 
and its dual induce a Morita equivalence between $\mathbb{A}$ and $\U L$,
where $M=\Omega_{\mathbb{A}_\delta\mathop\otimes\limits_{\mathcal{O}}(\mathcal{O}L)^\circ}^n(\mathbb{A}_\delta)$.
Then it is easy to check that
the conjugation action of $a_{u,\delta}$ on $\mathbb{A}_\delta$ can induce an 
$\mathbb{A}_\delta$-$\U L$-bimodule isomorphism  between $\mathbb{A}_\delta$ and ${_{(u,u)}}\mathbb{A}_\delta$.
By the construction of the Heller operator $\Omega$,
we can get an $\mathbb{A}_\delta$-$\U L$-bimodule isomorphism $\vartheta:M\longrightarrow{_{(u,u)}}M$.
Now we can define a map 
$$\varTheta:\mathbb{A}j\mathop\otimes\limits_{\mathbb{A}_\delta}M\longrightarrow
{_{(u,u)}}(\mathbb{A}j\mathop\otimes\limits_{\mathbb{A}_\delta}M),~~~~
aj\mathop\otimes\limits_{\mathbb{A}_\delta}m\mapsto u^{-1}aja_{u,\delta}\mathop\otimes\limits_{\mathbb{A}_\delta}\vartheta(m),$$
for any $aj\in\mathbb{A}j$  and any $m\in M$.
Then it is routine to check that
this map is well-defined and gives an $\mathbb{A}$-$\U L$-bimodule isomorphism.
In particular, this Morita equivalence is $\Delta(P)$-equivariant.

The remaining case is where $\mathbb{A}$ is Morita equivalent to the principal block 
$B_0(\mathrm{A}_5)$ of $\mathrm{A}_5$.
We have the analogue notation ${_{(u,u)}}U^\prime$ 
and ${_{(u,u)}}M^\prime$ for any 
$\mathbb{A}_\delta$-$B_0(\mathrm{A}_5)$-bimodule $U^\prime$ 
and any $\mathbb{A}$-$B_0(\mathrm{A}_5)$-bimodule $M^\prime$, respectively.
By \cite[Theorem 4.14.2]{L'book 1},
the $\mathbb{A}_\delta$-$B_0(\mathrm{A}_5)$-bimodule $N^\prime$,
which is an indecomposable direct summand of 
$\mathbb{A}_\delta\mathop\otimes\limits_{\mathcal{O}L}B_0(\mathrm{A}_5)$, 
is unique up to isomorphism.
Similarly, the conjugation actions of $a_{u,\delta}$ on $\mathbb{A}_\delta$
and $u$ on $B_0({\rm A}_5)$ can induce
an isomorphism between $\mathbb{A}_\delta\mathop\otimes\limits_{\mathcal{O}L}B_0(\mathrm{A}_5)$ and
${_{(u,u)}}(\mathbb{A}_\delta\mathop\otimes\limits_{\mathcal{O}L}B_0(\mathrm{A}_5))$
as $\mathbb{A}_\delta$-$B_0(\mathrm{A}_5)$-bimodules.
Therefore, by the uniqueness of $N^\prime$,
we have an $\mathbb{A}_\delta$-$B_0(\mathrm{A}_5)$-bimodule isomorphism
$\vartheta^\prime: N\longrightarrow{_{(u,u)}}N$ with the same argument above
since $N=\Omega_{\mathbb{A}_\delta\mathop\otimes\limits_{\mathcal{O}}B_0(\mathrm{A}_5)^\circ}^n(N^\prime)$.
Similar to the construction of map $\varTheta$,
we can get an $\mathbb{A}_\delta$-$B_0(\mathrm{A}_5)$-bimodule isomorphism
$\varTheta^\prime: \mathbb{A}j\mathop\otimes\limits_{\mathbb{A}_\delta}N\longrightarrow
{_{(u,u)}}\mathbb{A}j\mathop\otimes\limits_{\mathbb{A}_\delta}N$,
sending $aj\mathop\otimes\limits_{\mathbb{A}_\delta}n$ to 
$u^{-1}aja_{u,\delta}\mathop\otimes\limits_{\mathbb{A}_\delta}\vartheta^\prime(n)$
for any $aj\in\mathbb{A}j$  and any $n\in N$.
We are done.
\end{proof}

\begin{order}\rm
Keep the notation as above.
For any subgroup $Z$ of $P$ containing $D$,
we denote $\mathop\bigoplus\limits_{zD\in Z/D}\mathbb{A}z$ by $\mathbb{A}_Z$.	
So $A$ is just $\mathbb{A}_P$.
Set $\hat{\mathbb{A}}_Z$ to be $\mathcal{K}\mathop\otimes\limits_{\mathcal{O}}\mathbb{A}_Z$.
It is clear that $\hat{\mathbb{A}}_Z$ is a split semi-simple $\K$-algebra.
Recall that a simple $\hat{\mathbb{A}}_Z$-module $\hat{X}$ is called \emph{covering}
a simple $\hat{\mathbb{A}}$-module $\hat{U}_\mathrm{i}$ if
$\mathrm{Hom}_{\hat{\mathbb{A}}}
(\mathrm{Res}^{\hat{\mathbb{A}}_Z}_{\hat{\mathbb{A}}}(\hat{X}),\hat{U}_\mathrm{i})\neq 0$.
Suppose that a simple $\hat{\mathbb{A}}$-module $\hat{U}_\mathrm{i}$ can be extended to 
$\hat{\mathbb{A}}_Z$.
We still denote an extension of $\hat{U}_\mathrm{i}$ by $\hat{U}_\mathrm{i}$
if no confusion arises throughout this subsection.
For any $\mu\in\mathrm{Irr}(Z/D)$,
we identify it with its representation and 
then obtain a simple $\hat{\mathbb{A}}_Z$-module, denoted by $\hat{U}_\mathrm{i}\mu$
covering the simple $\hat{\mathbb{A}}$-module $\hat{U}_\mathrm{i}$.
Here, the representation $\rho_\mu$ of $\hat{U}_\mathrm{i}\mu$ is defined as
$\rho_\mu(\mathfrak{a}z)=\rho(\mathfrak{a}z)\mathop\otimes\limits_\mathcal{K}\mu(z)$ 
for any $\mathfrak{a}\in\mathbb{A}$
and any $z\in Z$ with $\rho$ being the representation of 
the simple $\hat{\mathbb{A}}_Z$-module $\hat{U}_\mathrm{i}$.
In fact, by Clifford Theory,
$\{\hat{U}_\mathrm{i}\mu\mid\mu\in\mathrm{Irr}(Z/D)\}$ is the set of all pairwise nonisomorphic 
simple $\hat{\mathbb{A}}_Z$-modules covering the simple $\hat{\mathbb{A}}$-module $\hat{U}_\mathrm{i}$
and 
$\{\hat{U}_\mathrm{i}\mu\mid\mu\in\mathrm{Irr}(Z/D)~\mathrm{with}~\mu(1)=1\}$
is the set of all distinct extensions of the simple $\hat{\mathbb{A}}$-module $\hat{U}_\mathrm{i}$
to $\hat{\mathbb{A}}_Z$.
At last, we denote $\hat{A}\mathop\otimes\limits_{\hat{\mathbb{A}}_Z}\hat{Y}$ by 
$\mathrm{Ind}_Z^P(\hat{Y})$ for any $\hat{\mathbb{A}}_Z$-module $\hat{Y}$.
We adopt the analogue notation above for simple $\bar{\mathbb{A}}_Z$-modules.
\end{order}

\begin{order}\rm
There are two possibilities on $|l(A)|$ (see \cite[Theorem 1.1]{HZ19}).
Let us recall some notation and results from \cite{HZ19}.
Suppose that $P_0=P$, namely, $D$ is in the center of $P$.
Then $N_G(P_\gamma)$ controls fusion of the block $b$ 
and $|l(A)|=3=|l(\mathbb{A})|$.
Furthermore, by the proof of \cite[Lemma 2.4]{HZ19},
we can identify $E_\mathfrak{h}$ with the inertial quotient $N_G(P_\gamma)/PC_G(P)$
through the restriction
and then $P=D\times C_P(E_\mathfrak{h})$.
We denote $C_P(E_\mathfrak{h})$ by $R$ in this case.
The other case is that $P_0<P$.
In this case, $|l(A)|=2$ and $|P:P_0|=2$.
Furthermore,
$E_\mathfrak{h}$ can be identified with a subgroup of 
$\mathrm{Aut}(P_0)$ and then $P_0=D\times C_{P_0}(E_\mathfrak{h})$ with this identification.
We denote $C_{P_0}(E_\mathfrak{h})$ by $R_0$ in this case.
\end{order}

Now we can get the structures about $\mathrm{Irr}_\mathcal{K}(A)$ and $\mathrm{Irr}_k(A)$
as follows.

\begin{prop}\label{structure of characters for klein 4}
Keep the notation as above.
Then exactly one of the two following statements holds.

\noindent (i) 
Suppose that $P_0=P$.
Then every simple $\hat{\mathbb{A}}$-module and 
every simple $\bar{\mathbb{A}}$-module are both $P$-stable
and can be extended to $\hat{A}$ and $\bar{A}$, respectively.
Therefore, we have 
$$\mathrm{Irr}_\mathcal{K}(A)=\{\hat{U}_\mathrm{i}\mu\mid\mathrm{i}=1,2,3,4~\mathrm{and}~
\mu\in\mathrm{Irr}(R)\}$$ and
$$\mathrm{Irr}_k(A)=\{\bar{W}_\mathrm{i}\mid\mathrm{i}=1,2,3\}.$$

\noindent (ii)
Suppose that $P_0<P$.
Then $\bar{W}_1$ is $P$-stable and 
the stabilizer of $\bar{W}_2$ in $P$ is $P_0$.
So the action of $P$ can permute $\bar{W}_2$ and $\bar{W}_3$.
Moreover, $\bar{W}_1$ can be extended to $\bar{A}$ and 
$\bar{W}_\mathrm{i}$ can be extended to $\bar{\mathbb{A}}_{P_0}$
for $\mathrm{i}=1,2$.
For $\mathrm{Irr}_\mathcal{K}(A)$,
we can obtain that
$\hat{U}_1$ and $\hat{U}_4$ are $P$-stable and 
the stabilizer of $\hat{U}_2$ in $P$ is $P_0$.
So the action of $P$ can permute $\hat{U}_2$ and $\hat{U}_3$.
Moreover, $\hat{U}_1$ and $\hat{U}_4$ can be extended to $\hat{A}$ and
$\hat{U}_\mathrm{i}$ can be extended to $\hat{\mathbb{A}}_{P_0}$
for $\mathrm{i}=1,2$.
In conclusion, we have
$$\mathrm{Irr}_\mathcal{K}(A)=\{\hat{U}_\mathrm{i}\mu\mid\mathrm{i}=1,4~\mathrm{and}~
\mu\in\mathrm{Irr}(P/D)\}\bigcup
\{\mathrm{Ind}_{P_0}^P(\hat{U}_\mathrm{i}\mu)\mid\mathrm{i}=2~\mathrm{or}~3~\mathrm{and}~
\mu\in\mathrm{Irr}(R_0)\}$$
and
$$\mathrm{Irr}_k(A)=\{\bar{W}_1,\mathrm{Ind}_{P_0}^P(\bar{W}_2)
\cong\mathrm{Ind}_{P_0}^P(\bar{W}_3)\}.$$
\end{prop}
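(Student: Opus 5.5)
The plan is Clifford theory for the crossed product $A=\mathop\bigoplus_{uD\in P/D}\mathbb{A}u$, with the decomposition maps (\ref{decomp map morita to L}) and (\ref{decomp map morita to principal block}) used to pin down the $P$-action. I treat the two cases of the paragraph preceding the statement separately.

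\emph{Case (i), $P_0=P$.} Corollary \ref{equivariant for bbA-module} gives ${_v}M\cong M$ for every $\mathbb{A}$-module $M$ and every $v\in P$. So every simple $\bar{\mathbb{A}}$-module is $P$-stable, and the same holds for simple $\hat{\mathbb{A}}$-modules: apply the corollary to $\mathcal{O}$-forms, or argue over $\mathcal{K}$ with the same inner automorphism $z_{u,\delta}$ used in its proof. Since $P/D$ is a $p$-group, simple $\bar{\mathbb{A}}$-modules extend uniquely to $\bar{A}$ (this was already used in Lemma \ref{k(A) and l(A)}), which gives $\mathrm{Irr}_k(A)$.

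Over $\mathcal{K}$, extendibility of a stable simple module is governed by a class in $H^2(P/D;\mathcal{K}^\times)$. To get extensions I will use the identification $P=D\times R$, where $R=C_P(E_\mathfrak{h})$. By \cite[Proposition 4.8]{HZZ24}, or more precisely its proof, which identifies $A$ with $\mathbb{A}\otimes_\mathcal{O}\mathcal{O}R$ (this needs $D$ central and the splitting), every $\hat{U}_\mathrm{i}$ extends by letting $R$ act trivially. Clifford theory, as in the paragraph describing the modules $\hat{U}_\mathrm{i}\mu$, then produces exactly the $\hat{U}_\mathrm{i}\mu$ with $\mu\in\mathrm{Irr}(R)\cong\mathrm{Irr}(P/D)$. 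The main obstacle in this case is justifying $A\cong\mathbb{A}\otimes\mathcal{O}R$ without assuming $P$ abelian. If that route fails, the fallback is to show that the obstruction cocycle vanishes by using $P$-stability of the whole Morita bimodule (Proposition \ref{Morita C_P(D)-stable for Klein 4}) together with the fact that $L$ has trivial Schur multiplier at $p$.

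\emph{Case (ii), $P_0<P$, so $|P:P_0|=2$.} Let $u_0$ generate $P/P_0$. Apply Corollary \ref{equivariant for bbA-module} to $C_P(D)=P_0$: every module is $P_0$-stable. The $P$-action on simple modules is then determined by the $\Delta(P)$-equivariant Morita equivalence of Proposition \ref{Morita C_P(D)-stable for Klein 4}. This transports the action of $u_0$ on $\mathrm{Irr}_k(\mathbb{A})$ to the action of $(12)$ on $\mathrm{Irr}_k(L)$ or $\mathrm{Irr}_k(B_0(\mathrm{A}_5))$, where $T_1$ is fixed and $T_2,T_3$ are swapped. Hence $\bar{W}_1$ is $P$-stable and $\bar{W}_2,\bar{W}_3$ are swapped.

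Equivariance of the decomposition map, combined with (\ref{decomp map morita to L}) or (\ref{decomp map morita to principal block}), then shows that $\hat{U}_1,\hat{U}_4$ are fixed and $\hat{U}_2,\hat{U}_3$ are swapped. The fixed ones are distinguished by their decompositions: $d([\hat{U}_4])$ is the sum of all three classes, and $d([\hat{U}_1])=[\bar{W}_1]$.

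Extensions come next. Over $P_0$ every simple module extends, by the argument of case (i) applied with $P_0=D\times R_0$ playing the role of $P$; this is the same obstacle as before. For the $P$-stable modules $\bar{W}_1$, $\hat{U}_1$, $\hat{U}_4$, I extend from $\mathbb{A}_{P_0}$ to $A$ through the index-$2$ cyclic step. For $\hat{U}_1$ and $\hat{U}_4$ the plan is to pull back the trivial character and the degree-$3$ character of $L$, which extend to $L\rtimes\langle u_0\rangle$, again via the $\Delta(P)$-equivariant bimodule; at least one extension exists because the quotient $P/P_0$ is cyclic. Clifford theory then yields the listed $\mathrm{Irr}_\mathcal{K}(A)$: the $\hat{U}_\mathrm{i}\mu$ with $\mu\in\mathrm{Irr}(P/D)$ for $\mathrm{i}=1,4$, together with the induced modules $\mathrm{Ind}_{P_0}^P(\hat{U}_\mathrm{i}\mu)$, where one $\mathrm{i}\in\{2,3\}$ suffices since both give the same induced module. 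It likewise yields $\mathrm{Irr}_k(A)=\{\bar{W}_1,\mathrm{Ind}_{P_0}^P(\bar{W}_2)\}$. As a sanity check, this agrees with $l(A)=2$ from \cite{HZ19}.
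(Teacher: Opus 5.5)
Your handling of stabilizers and of the modular side is sound and follows the paper's proof. You get stability under $P$ (resp.\ $P_0$) from Corollary~\ref{equivariant for bbA-module} or Proposition~\ref{Morita C_P(D)-stable for Klein 4}, and you transfer the action to $\mathrm{Irr}_\mathcal{K}(\mathbb{A})$ through the decomposition maps. Stable simple $\bar{\mathbb{A}}$-modules extend because $P/D$ is a $2$-group, and Clifford theory finishes the argument.

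\textbf{The gap is the extension step over $\mathcal{K}$.} This is where the paper's proof does its real work. Over $\mathcal{K}$, stability together with ``$P/D$ is a $2$-group'' is not enough. For a general crossed product over a $2$-group, a stable simple module need not extend: a faithful linear character of $Z(Q_8)$ is $Q_8$-stable and has dimension $1$, yet it does not extend to $Q_8$. Neither of your routes closes this gap.
\begin{itemize}
\item The isomorphism $A\cong\mathbb{A}\otimes_\mathcal{O}\mathcal{O}R$ from \cite{HZZ24} is invoked in this paper only when $P$ is abelian (Proposition~\ref{Broue conj for Klein 4}). For nonabelian $P$ with $D$ central, nothing shows that $R$ can be chosen to centralize $\mathbb{A}$. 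Corollary~\ref{auto is inner} only makes each $a_{u,\delta}$, for $u\in C_P(D)$, act innerly on $\mathbb{A}_\delta$. Correcting these into a homomorphism $R\to C_{A_\delta}(\mathbb{A}_\delta)^\times$ is obstructed by a class in $H^2(R,Z(\mathbb{A}_\delta)^\times)$, which has no reason to vanish.
\item The fallback fails as well. $\mathrm{A}_4$ does \emph{not} have trivial Schur multiplier at $p=2$, since $H^2(\mathrm{A}_4,\mathbb{C}^\times)\cong\mathbb{Z}/2$. In any case, the obstruction you must kill lives in $H^2(P/D,\mathcal{K}^\times)$, not in anything attached to $L$.
\item Proposition~\ref{Morita C_P(D)-stable for Klein 4} only gives an isomorphism $M\cong{}_{(u,u)}M$ for each $u$ separately. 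It does not give a compatible $P$-action on the bimodule, so extensions cannot be carried over from the $L\rtimes\langle u_0\rangle$ side.
\item In case (ii) the same gap reappears at the step $\hat{\mathbb{A}}\to\hat{\mathbb{A}}_{P_0}$. Your cyclic step $\hat{\mathbb{A}}_{P_0}\to\hat A$ also presupposes a $P$-stable extension of $\hat U_1$ (resp.\ $\hat U_4$) to $\hat{\mathbb{A}}_{P_0}$, which you never produce. The extensions form a torsor under the linear characters of the $2$-group $P_0/D$, and $u_0$ permutes them without any guaranteed fixed point.
\end{itemize}

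\textbf{The missing ingredient is the one the paper uses.} By Proposition~\ref{KLN conj for klein 4}, every simple $\hat{\mathbb{A}}$-module has odd dimension. By \cite[Proposition 2.4]{KLN}, a $P$-stable simple $\hat{\mathbb{A}}$-module of dimension prime to $p$ extends to $\hat A$. This statement is specific to hyperfocal subalgebras, as the $Q_8$ example shows. It gives directly the extension of every $\hat U_\mathrm{i}$ to $\hat A$ in case (i), and of $\hat U_1,\hat U_4$ in case (ii). The same proof, with $P_0$ and $\hat{\mathbb{A}}_{P_0}$ in place of $P$ and $\hat A$, extends $\hat U_2,\hat U_3$ to $\hat{\mathbb{A}}_{P_0}$. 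With these extensions in place, your Clifford-theoretic description of $\mathrm{Irr}_\mathcal{K}(A)$ goes through as written.
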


\begin{proof}
For the case (i), by Proposition \ref{Morita C_P(D)-stable for Klein 4},
it is easy to see that every simple $\bar{\mathbb{A}}$-module is $P$-stable.
Then the extendibility is well known since $P$ is a $2$-group.
The description of $\mathrm{Irr}_k(A)$ can be deduced from this extendibility.
Let us consider simple $\hat{A}$-modules.
Obviously, the decomposition map $d_\mathbb{A}$ commutes with the action of $P$.
Then by the equations 
(\ref{decomp map morita to L}) and (\ref{decomp map morita to principal block}),
we can easily obtain that every simple $\hat{\mathbb{A}}$-module is $P$-stable. 
By Proposition \ref{KLN conj for klein 4},
$\mathrm{dim}_\mathcal{K}(\hat{U}_\mathrm{i})$ is coprime to $2$
for any $\mathrm{i}\in\{1,2,3,4\}$.
Then by \cite[Proposition 2.4]{KLN},
every simple $\hat{\mathbb{A}}$-module can be extended to $\hat{A}$.
Hence, the description of $\mathrm{Irr}_\mathcal{K}(A)$ follows from Clifford Theory
and the fact that $P/D$ is isomorphic to $R$.

For the case (ii),
by Proposition  \ref{Morita C_P(D)-stable for Klein 4},
every simple $\hat{\mathbb{A}}$-module and 
every simple $\bar{\mathbb{A}}$-module are both $P_0$-stable.
Moreover, the action of $P$ can permute the two simple $\bar{\mathbb{A}}$-module
$\bar{W}_2$ and $\bar{W}_3$.
Then the description of the action of $P$ on ${\rm Irr}_\mathcal{K}(\mathbb{A})$
can be similarly obtained by the equations  
(\ref{decomp map morita to L}) and (\ref{decomp map morita to principal block}).
With the same argument above, 
$\hat{U}_1$ and $\hat{U}_4$ can be both  extended to $\hat{A}$.
Note that the proof of \cite[Proposition 2.4]{KLN} is still valid if
we replace $P$ and $\hat{A}$ there by $P_0$ and $\hat{\mathbb{A}}_{P_0}$.
Then $\hat{U}_2$ and $\hat{U}_3$ can be both extended to $\hat{\mathbb{A}}_{P_0}$.
Similarly, the description of $\mathrm{Irr}_\mathcal{K}(A)$ follows from Clifford Theory
and the fact that $P_0/D$ is isomorphic to $R_0$.
\end{proof}
\vskip 1cm
		
\subsection{Cyclic group case}
\quad\, 
In this subsection, we will assume the hyperfocal subgroup $D$ is a nontrivial cyclic group.
In this case 
the hyperfocal quotient inertial $E_\mathfrak{h}$ is a nontrivial cyclic group of order divides $p-1$
and acts freely on $D-\{1\}$.
Then we can apply Theorem \ref{MT} to this case and get a stable equivalence of Morita type
between $\mathbb{A}$ and $\U L$. 
It is well-known that this stable equivalence of Morita type can be lifted to a 
Rickard equivalence by adopting the method in the proof of \cite[Theorem 11.12.1]{L'book 2}.
In the following, we will investigate the equivariant property of this Rickard equivalence.

\begin{order}\rm
Since the hyperfocal subgroup $D$ is cyclic,
by \cite[Theorem 3]{W14}, $N_G(P_\gamma)$ controls fusion of the block $b$.
So we can identify $E_\mathfrak{h}$ with the inertial quotient $E=N_G(P_\gamma)/PC_G(P)$
and then $P=D\rtimes C_P(E)$. 
We denote $C_P(E)$ by $R$.
Then the conjutation action of $R$ on $E$ is trivial.
Moreover, by \cite[Theorem 1]{W14}, $l(b)=|\mathrm{IBr}(b)|=|E|$.
Then $l(A)$ also equals $|E|$.
On the other hand, since $k\mathop\otimes\limits_{\mathcal{O}}\mathbb{A}$ is symmetric and  
stably equivalent to $kL$,
which is a serial algebra in this case, 
by \cite[Proposition 11.6.1]{L'book 2},
we have $l(\mathbb{A})=|\mathrm{IBr}(L)|=|E|$.
Hence, we have $l(\mathbb{A})=l(A)$ equal to the inertial index $|E|$ of the block.
It can be deduced from this equality that $P$ acts trivially on $\mathrm{Irr}_k(\mathbb{A})$
and then every simple $\bar{\mathbb{A}}$-module can be extended to $\bar{A}$.
\end{order}

\begin{order}\rm
Let $\mathfrak{i}$ and $\varsigma$ be primitive idempotents of $\mathbb{A}$ and $\U L$,
respectively.
For any $u\in P$, we have the twisted $\mathbb{A}$-$\U L$-bimodule ${_{(u,u)}}(\mathbb{A}\mathfrak{i}\mathop\otimes\limits_{\mathcal{O}}\varsigma\U L)$
(see the paragraph 4.9).
It is clear that the map sending 
$\mathfrak{a}\mathfrak{i}^{u^{-1}}\mathop\otimes\limits_{\mathcal{O}}\varsigma^{u^{-1}}x$ to 
$(\mathfrak{a}^u)\mathfrak{i}\mathop\otimes\limits_{\mathcal{O}}\varsigma x^u$ 
defines an isomorphism of $\mathbb{A}$-$\U L$-bimodules
from $\mathbb{A}\mathfrak{i}^u\mathop\otimes\limits_{\mathcal{O}}\varsigma^u\U L$ to 
${_{(u,u)}}(\mathbb{A}\mathfrak{i}\mathop\otimes\limits_{\mathcal{O}}\varsigma\U L)$,
where $\mathfrak{a}\in\mathbb{A}$ and $x\in L$.
By the paragraph above, $P$ acts trivially on $\mathrm{Irr}_k(\mathbb{A})$.
It is clear that $\mathbb{A}\mathfrak{i}^{u^{-1}}$ is isomorphic to $\mathbb{A}\mathfrak{i}$
as left $\mathbb{A}$-modules.
Similarly, $P$ acts on $\mathrm{IBr}(L)$ trivially and then
$\varsigma^{u^{-1}}\U L$ is isomorphic to $\varsigma\U L$ as right $\U L$-modules.
In conclusion, ${_{(u,u)}}(\mathbb{A}\mathfrak{i}\mathop\otimes\limits_{\mathcal{O}}\varsigma\U L)$
is isomorphic to $\mathbb{A}\mathfrak{i}\mathop\otimes\limits_{\mathcal{O}}\varsigma\U L$
for any $u\in P$.
Hence, we have the following equivariant property for projective $\mathbb{A}$-$\U L$-bimodules.
\end{order}

\begin{lem}\label{equivariant for proj. bimod}
Every finitely generated projetive $\mathbb{A}$-$\U L$-bimodule is $\Delta(P)$-stable.
Namely, if $\mathbb{P}$ is a finitely generated projective $\mathbb{A}$-$\U L$-bimodule
and $u$ belongs to $P$, then ${_{(u,u)}}\mathbb{P}$ is isomorphic to $\mathbb{P}$
as $\mathbb{A}$-$\U L$-bimodules.
\end{lem}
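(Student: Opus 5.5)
The plan is to reduce to indecomposable projective bimodules and then apply the paragraph immediately preceding the statement. Every finitely generated projective $\mathbb{A}$-$\U L$-bimodule $\mathbb{P}$ is a finitely generated projective module over $\mathbb{A}\mathop\otimes\limits_{\mathcal{O}}(\U L)^\circ$. By the Krull-Schmidt theorem over the complete ring $\U$, $\mathbb{P}$ is a finite direct sum of indecomposable projective bimodules. Twisting commutes with direct sums, since ${_{(u,u)}}(\mathbb{P}_1\oplus\mathbb{P}_2)={_{(u,u)}}\mathbb{P}_1\oplus{_{(u,u)}}\mathbb{P}_2$ for any $u\in P$. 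Hence it suffices to treat indecomposable projective bimodules.

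The first step is to identify the indecomposable projective $\mathbb{A}$-$\U L$-bimodules. Since $k$ is algebraically closed, the primitive idempotents of $\mathbb{A}\mathop\otimes\limits_{\mathcal{O}}(\U L)^\circ$ can be taken of the form $\mathfrak{i}\mathop\otimes\limits_{\mathcal{O}}\varsigma$, where $\mathfrak{i}$ is a primitive idempotent of $\mathbb{A}$ and $\varsigma$ one of $\U L$. The argument for this is as follows. Reduce modulo $J(\U)$; then simple modules of $\bar{\mathbb{A}}\mathop\otimes\limits_k(kL)^\circ$ are tensor products of simple modules, because $k$ is a splitting field. Their projective covers are $\bar{\mathbb{A}}\bar{\mathfrak{i}}\mathop\otimes\limits_k\bar{\varsigma}kL$, and we lift idempotents. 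Consequently every indecomposable projective bimodule is isomorphic to some $\mathbb{A}\mathfrak{i}\mathop\otimes\limits_{\mathcal{O}}\varsigma\U L$.

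The second step is to invoke the computation in the paragraph just before the lemma. For $u\in P$, the explicit map gives ${_{(u,u)}}(\mathbb{A}\mathfrak{i}\mathop\otimes\limits_{\mathcal{O}}\varsigma\U L)\cong\mathbb{A}\mathfrak{i}^{u}\mathop\otimes\limits_{\mathcal{O}}\varsigma^{u}\U L$ (up to the direction convention for $u$). Next, $\mathbb{A}\mathfrak{i}^u\cong\mathbb{A}\mathfrak{i}$ as left $\mathbb{A}$-modules. This holds because both are projective covers of the simple $\bar{\mathbb{A}}$-modules $\bar{\mathbb{A}}\bar{\mathfrak{i}}/J$ and its $u$-twist, and $P$ acts trivially on $\mathrm{Irr}_k(\mathbb{A})$ by the paragraph using $l(\mathbb{A})=l(A)=|E|$. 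Similarly $\varsigma^u\U L\cong\varsigma\U L$, since $P$ acts trivially on $\mathrm{IBr}(L)$: $R$ centralizes $E$ and $P=DR$, and the simple $kL$-modules are inflated from $E$. Tensoring these two isomorphisms gives the required isomorphism of bimodules, and summing over the indecomposable summands finishes the proof.

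The main obstacle I expect is justifying the identification of indecomposable projective bimodules with the modules $\mathbb{A}\mathfrak{i}\mathop\otimes\varsigma\U L$. This needs care with splitting fields and the lifting of idempotents over $\U$. It should go through by reducing to $k$, which is algebraically closed, and using the fact that the tensor product of two local algebras with residue field $k$ has a local reduction.
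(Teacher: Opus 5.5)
Your proposal is correct and follows the paper's argument, given in the paragraph just before the lemma. There, too, the twist of $\mathbb{A}\mathfrak{i}\otimes_{\mathcal{O}}\varsigma\mathcal{O}L$ is identified with $\mathbb{A}\mathfrak{i}^{u}\otimes_{\mathcal{O}}\varsigma^{u}\mathcal{O}L$, and one concludes from the trivial action of $P$ on $\mathrm{Irr}_k(\mathbb{A})$ (via $l(\mathbb{A})=l(A)=|E|$) and on $\mathrm{IBr}(L)$. Your Krull--Schmidt reduction and your identification of the indecomposable projective bimodules with the modules $\mathbb{A}\mathfrak{i}\otimes_{\mathcal{O}}\varsigma\mathcal{O}L$ spell out steps the paper leaves implicit.
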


\begin{order}\rm
Let $M_\cdot=(M_n,\partial_n)$ be a bounded complex of $\mathbb{A}$-$\U L$-bimodules.
For any $u\in P$, we can define the twisted complex ${_{(u,u)}}(M_\cdot)$
with $({_{(u,u)}}(M_\cdot))_n={_{(u,u)}}(M_n)$ and the morphism from 
${_{(u,u)}}(M_\cdot)_n$ to ${_{(u,u)}}(M_\cdot)_{n-1}$ is just $\partial_n$.
The complex $M_\cdot$ is called $P$-\emph{invariant} if
$M_\cdot$ is isomorphic to ${_{(u,u)}}(M_\cdot)$ in 
$\mathcal{D}^b(\mathbb{A}\mathop\otimes\limits_{\mathcal{O}}(\U L)^\circ)$.
Furthermore, if the complex $M_\cdot$ induces a derived equivalence 
between $\mathbb{A}$ and $\U L$,
then this derived equivalence is called $P$-\emph{equivariant}.
We refer to \cite[Definition 2.5]{M99} for these two definitions.
\end{order}

\begin{thm}\label{MT for cyclic case}
There is a $P$-invariant $2$-term Rickard  complex $\mathbb{M}_\cdot$ of 
$\mathbb{A}$-$\U L$-bimodules.
In particular,
there is a $P$-equivariant Rickard equivalence between $\mathbb{A}$ and $\U L$.
\end{thm}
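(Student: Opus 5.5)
The plan is to follow Rouquier's construction of a $2$-term Rickard complex from the stable equivalence of Morita type for cyclic defect blocks, as in the proof of \cite[Theorem 11.12.1]{L'book 2}, and to track $P$-equivariance at every step. We take the indecomposable $\mathbb{A}$-$\U L$-bimodule $M_{\mathrm{st}}$ of paragraph 5.16. By Theorem \ref{MT}, $M_{\mathrm{st}}$ and its dual induce a stable equivalence of Morita type. By Proposition \ref{equivariant of M_st}, ${_{(u,u)}}M_{\mathrm{st}}\cong M_{\mathrm{st}}$ for every $u\in P$.

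Since $kL$ is a Brauer tree algebra (a star with $|E|$ edges and exceptional multiplicity $(|D|-1)/|E|$), and since $\bar{\mathbb{A}}$ is symmetric, indecomposable and stably equivalent to it, \cite[Theorem 11.9.1 and Proposition 11.6.1]{L'book 2} give that $\bar{\mathbb{A}}$ is a Brauer tree algebra with the same numbers of edges and the same exceptional multiplicity. Following Linckelmann's/Rouquier's argument, we would define a bimodule map
\[
\pi:\ \mathbb{P}=\bigoplus_{\mathrm{i}}\mathbb{A}\mathfrak{i}_\mathrm{i}\mathop\otimes\limits_{\mathcal{O}}\varsigma_{\sigma(\mathrm{i})}\U L\longrightarrow M_{\mathrm{st}}.
\]
Here $\mathfrak{i}_\mathrm{i}$ and $\varsigma_{\sigma(\mathrm{i})}$ are primitive idempotents chosen according to which simple $\mathbb{A}$-modules $M_{\mathrm{st}}\otimes_{\U L}-$ fails to send to simples (or possibly their Heller translates). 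The complex would then be $\mathbb{M}_\cdot:\ \mathbb{P}\xrightarrow{\pi}M_{\mathrm{st}}$, after possibly first replacing $M_{\mathrm{st}}$ by a Heller translate $\Omega^n(M_{\mathrm{st}})$. The cited proof shows that for a suitable choice of $\pi$, given by sums of projective covers of images of simples, $\mathbb{M}_\cdot$ is a Rickard complex. It relies only on the Brauer tree shapes and on $M_{\mathrm{st}}$ inducing a stable equivalence of Morita type, so it transfers verbatim to the pair $(\mathbb{A},\U L)$.

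For $P$-invariance, Lemma \ref{equivariant for proj. bimod} gives ${_{(u,u)}}\mathbb{P}\cong\mathbb{P}$, and Proposition \ref{equivariant of M_st} gives ${_{(u,u)}}M_{\mathrm{st}}\cong M_{\mathrm{st}}$. Heller translates of bimodules are compatible with twisting, so the same holds for $\Omega^n(M_{\mathrm{st}})$. It remains to compare the differentials. We need isomorphisms $\alpha:\mathbb{P}\to{_{(u,u)}}\mathbb{P}$ and $\beta:M_{\mathrm{st}}\to{_{(u,u)}}M_{\mathrm{st}}$ with $\beta\pi=\pi\alpha$, at least up to homotopy, since we only need an isomorphism in $\mathcal{D}^b$. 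We would argue that $\pi$ is determined up to automorphisms of $\mathbb{P}$ and $M_{\mathrm{st}}$ by a characterizing property. The strongest available one: $\pi$ restricted to each summand $\mathbb{A}\mathfrak{i}\otimes\varsigma\U L$ is a nonzero map that does not factor through the radical. Also, $\mathrm{Hom}(\mathbb{A}\mathfrak{i}\otimes\varsigma\U L,M_{\mathrm{st}})\cong\mathfrak{i}M_{\mathrm{st}}\varsigma$ has top of dimension one modulo the radical of the endomorphism ring, which is local because $M_{\mathrm{st}}$ is indecomposable by paragraph 5.16. Then $\beta^{-1}\circ{_{(u,u)}}\pi\circ\alpha$ is another map with the same property. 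Consequently it differs from $\pi$ by a unit of $\mathrm{End}(M_{\mathrm{st}})$ together with an automorphism of $\mathbb{P}$, and the cone, i.e.\ the complex, is unchanged up to isomorphism.

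The main obstacle is exactly this uniqueness of $\pi$. If the characterizing property turns out to be insufficient, the fallback is as follows. The Rickard property holds for every choice of $\pi$ satisfying the nondegeneracy condition, and the twisted complex ${_{(u,u)}}\mathbb{M}_\cdot$ is again a Rickard complex of the same shape whose homology modules are twists. Since $P$ acts trivially on $\mathrm{Irr}_k(\mathbb{A})$ (paragraph 6.17) and on $\mathrm{IBr}(L)$, it induces the same map on the Green groups and on simple modules. One would then invoke \cite[Theorem 11.12.1]{L'book 2}'s uniqueness statement for $2$-term tilting complexes with prescribed images of simples to conclude that ${_{(u,u)}}\mathbb{M}_\cdot\cong\mathbb{M}_\cdot$ in $\mathcal{D}^b(\mathbb{A}\otimes_{\mathcal{O}}(\U L)^\circ)$. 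This gives the $P$-equivariant Rickard equivalence.
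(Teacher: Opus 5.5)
Your set-up agrees with the paper's. The complex is the $2$-term complex $\mathbb{Q}\to M_{\mathrm{st}}$ obtained as in \cite[Theorem 11.12.1]{L'book 2}, and the termwise isomorphisms come from Proposition \ref{equivariant of M_st} and Lemma \ref{equivariant for proj. bimod}. There is, however, a genuine gap at the one step with real content: showing that these termwise isomorphisms can be chosen to intertwine the differential.

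\textbf{The main argument.} It needs every nondegenerate map $\mathbb{P}\to M_{\mathrm{st}}$ to lie in the orbit of $\pi$ under $\mathrm{Aut}(M_{\mathrm{st}})\times\mathrm{Aut}(\mathbb{P})$. The ``top of dimension one'' remark does not give this. A map $\mathbb{A}\mathfrak{i}\otimes_{\mathcal{O}}\varsigma\mathcal{O}L\to M_{\mathrm{st}}$ can be changed by a homomorphism that factors through a radical map into a summand of the projective cover of $M_{\mathrm{st}}$ not occurring in your $\mathbb{P}$. This keeps your nondegeneracy condition, and nothing you say shows that the result has the form $\beta\pi\alpha$. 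Even a summand-by-summand statement would still have to produce a single $\beta\in\mathrm{Aut}(M_{\mathrm{st}})$ working for all summands at once.

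\textbf{The fallback.} It fails as stated. \cite[Theorem 11.12.1]{L'book 2} contains no uniqueness statement, and data on simple modules alone cannot settle the question. $P$ fixes every simple $kL$-module, yet for $u\in P\setminus C_P(D)$ it moves exceptional characters of $L$. So $\mathcal{O}L$, and $\mathcal{O}L$ twisted on one side by conjugation with $u$, both send every simple module to itself, yet they are not isomorphic bimodules.

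\textbf{What the paper does.} It does not characterize the differential intrinsically. It takes a bimodule projective cover $\pi:\mathbb{P}\to M_{\mathrm{st}}$ and restricts $\pi$ to a suitable summand $\mathbb{Q}$. Since ${_{(u,u)}}\pi$ is again a projective cover, a fixed isomorphism $\varphi_{\mathrm{st}}:M_{\mathrm{st}}\to{_{(u,u)}}M_{\mathrm{st}}$ lifts to $\psi_{\mathrm{st}}:\mathbb{P}\to{_{(u,u)}}\mathbb{P}$ with $\pi\psi_{\mathrm{st}}=\varphi_{\mathrm{st}}\pi$. The paper then uses multiplicity one of the summands of $\mathbb{P}$ and Lemma \ref{equivariant for proj. bimod} to identify $\psi_{\mathrm{st}}(\mathbb{Q})$ with ${_{(u,u)}}\mathbb{Q}$.

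That identification is itself delicate. Multiplicity one only makes $\psi_{\mathrm{st}}(\mathbb{Q})$ and ${_{(u,u)}}\mathbb{Q}$ isomorphic complements of the same $\mathbb{Q}'$. Such complements can differ by the graph of a radical map ${_{(u,u)}}\mathbb{Q}\to\mathbb{Q}'$, which is your perturbation problem in another form.

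\textbf{A way to close the gap.} This makes your fallback precise. Put $X=\mathbb{M}_\cdot$ and $X'={_{(u,u)}}\mathbb{M}_\cdot$, and suppose $X\otimes_{\mathcal{O}L}T\cong X'\otimes_{\mathcal{O}L}T$ in $\mathcal{D}^b(\mathbb{A})$ for every simple $kL$-module $T$.
\begin{itemize}
\item Then $X^*\otimes_{\mathbb{A}}X'$ fixes all simple modules, so it is isomorphic to an invertible bimodule.
\item That bimodule is stably isomorphic to $M_{\mathrm{st}}^*\otimes_{\mathbb{A}}{_{(u,u)}}M_{\mathrm{st}}\cong M_{\mathrm{st}}^*\otimes_{\mathbb{A}}M_{\mathrm{st}}$, hence to $\mathcal{O}L$.
\item It is indecomposable and nonprojective, so it is isomorphic to $\mathcal{O}L$, and therefore $X'\cong X$.
\end{itemize}
The hypothesis on simples holds because, in Rouquier's construction, $X\otimes_{\mathcal{O}L}T$ is either $M_{\mathrm{st}}\otimes_{\mathcal{O}L}T$ in degree $0$, or the kernel of its projective cover in degree $1$. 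Both are $u$-stable by Proposition \ref{equivariant of M_st} together with ${_u}T\cong T$.
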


\begin{proof}
Recall from the paragraph 5.16 that we have an indecomposable 
$\mathbb{A}$-$\U L$-bimodule
$M_{\mathrm{st}}$ that induces a stable equivalence of Morita type between $\mathbb{A}$ and $\U L$.
By Proposition \ref{equivariant of M_st}, 
the two modules $M_{\mathrm{st}}$ and ${_{(u,u)}}M_{\mathrm{st}}$ are isomorphic to each other
as $\mathbb{A}$-$\U L$-bimodules for any $u\in P$.
Fix an isomoprhism $\varphi_{\mathrm{st}}$ between these two bimodules.

Let $\mathbb{P}$ be a projective cover of the bimodule $M_{\mathrm{st}}$
as $\mathbb{A}$-$\U L$-bimodules and $\pi$ a corresponding surjective homomorphism
from $\mathbb{P}$ to $M_{\mathrm{st}}$.
By \cite[Theorem 1.1]{HZ25}, the hyperfocal subalgebra $\mathbb{A}$ is symmetric.
Then by a slight generalization of the proof of \cite[Theorem 11.12.1]{L'book 2},
there is a direct summand $\mathbb{Q}$ of $\mathbb{P}$, 
uniquely determined by $M_{\mathrm{st}}$ up to isomorphism,
such that the following $2$-term complex $\mathbb{M}_\cdot$ of $\mathbb{A}$-$\U L$-bimodules
	\begin{eqnarray*}
	\xymatrix{
		\mathbb{Q}\ar[r]^{\mathrm{Res}^{\mathbb{P}}_{\mathbb{Q}}(\pi)}  &
		M_{\mathrm{st}}
	}
\end{eqnarray*}
is a Rickard complex of $\mathbb{A}$-$\U L$-bimodules
(see the proof of \cite[Theorem 5.24]{HZZ24}).
For simplicity, 
we still denote $\mathrm{Res}^{\mathbb{P}}_{\mathbb{Q}}(\pi)$ by $\pi$. 
Obviously, $\pi:{_{(u,u)}}\mathbb{P}\rightarrow{_{(u,u)}}M_{\mathrm{st}}$ is still 
a surjective homomorphism of $\mathbb{A}$-$\U L$-bimodules
and then ${_{(u,u)}}\mathbb{P}$ is a projective cover of ${_{(u,u)}}M_{\mathrm{st}}$.
Due to the facts that $M_{\mathrm{st}}$ is isomorphic to ${_{(u,u)}}M_{\mathrm{st}}$ and
projective covers are unique up to isomorphism, 
there is an isomorphism $\psi_{\mathrm{st}}$ of $\mathbb{A}$-$\U L$-bimodules
between $\mathbb{P}$ and ${_{(u,u)}}\mathbb{P}$ such that the following diagram commutes
	\begin{eqnarray}\label{comm. diag for P}
	\xymatrix{
	\mathbb{P}\ar[r]^{\pi}\ar[d]_{\psi_{\mathrm{st}}}  & M_{\mathrm{st}}\ar[d]_{\mathrm{\varphi_{\mathrm{st}}}}\\
	{_{(u,u)}}\mathbb{P}\ar[r]^\pi & {_{(u,u)}}M_{\mathrm{st}}.
	}
\end{eqnarray}

By the construction of $\mathbb{P}$ (see \cite[Theorem 11.9.1]{L'book 1}),
it is clear that every indecomposable direct summand of $\mathbb{P}$ has multiplicity one
and $\mathbb{P}$ is isomorphic to $(\mathbb{A}\mathop\otimes\limits_{\mathcal{O}}(\U L)^\circ)\mathfrak{f}$
as $\mathbb{A}$-$\U L$-bimodules
for some idempotent $\mathfrak{f}$ of $\mathbb{A}\mathop\otimes\limits_{\mathcal{O}}(\U L)^\circ$.
We can write ${_{(u,u)}}\mathbb{P}$ as ${_{(u,u)}}\mathbb{Q}\oplus\mathbb{Q}^\prime$
for some projective $\mathbb{A}$-$\U L$-bimodule $\mathbb{Q}^\prime$.
By Lemma \ref{equivariant for proj. bimod},
$\mathbb{Q}$ is isomorphic to ${_{(u,u)}}\mathbb{Q}$ as $\mathbb{A}$-$\U L$-bimodules.
Then we have $\psi_{\mathrm{st}}(\mathbb{Q})$ isomorphic to 
${_{(u,u)}}\mathbb{Q}$ as $\mathbb{A}$-$\U L$-bimodules.
Therefore, by \cite[Lemma (44.7)]{Thevenaz},
we have ${_{(u,u)}}\mathbb{P}=\psi_{\mathrm{st}}(\mathbb{Q})\oplus\mathbb{Q}^\prime$
as $\mathbb{A}$-$\U L$-bimodules.
We can assume that ${_{(u,u)}}\mathbb{P}=(\mathbb{A}\mathop\otimes\limits_{\mathcal{O}}(\U L)^\circ)\mathfrak{f}$.
Then the two decompositions ${_{(u,u)}}\mathbb{P}={_{(u,u)}}\mathbb{Q}\oplus\mathbb{Q}^\prime$ and
${_{(u,u)}}\mathbb{P}=\psi_{\mathrm{st}}(\mathbb{Q})\oplus\mathbb{Q}^\prime$ 
yields two orthogonal decompositions of the idempotent $\mathfrak{f}$ as follows
$$\mathfrak{f}=\mathfrak{f}_u+\mathfrak{f}^\prime_u~\mathrm{and}~
\mathfrak{f}=\mathfrak{f}_{\mathrm{st}}+\mathfrak{f}^\prime_{\mathrm{st}}$$
with ${_{(u,u)}}\mathbb{Q}=(\mathbb{A}\mathop\otimes\limits_{\mathcal{O}}(\U L)^\circ)\mathfrak{f}_u$
and $\psi_{\mathrm{st}}(\mathbb{Q})=
(\mathbb{A}\mathop\otimes\limits_{\mathcal{O}}(\U L)^\circ)\mathfrak{f}_{\mathrm{st}}$ and
$(\mathbb{A}\mathop\otimes\limits_{\mathcal{O}}(\U L)^\circ)\mathfrak{f}_u^\prime=\mathbb{Q}^\prime=
(\mathbb{A}\mathop\otimes\limits_{\mathcal{O}}(\U L)^\circ)\mathfrak{f}_{\mathrm{st}}^\prime$.
By the equation 
$(\mathbb{A}\mathop\otimes\limits_{\mathcal{O}}(\U L)^\circ)\mathfrak{f}_u^\prime=
(\mathbb{A}\mathop\otimes\limits_{\mathcal{O}}(\U L)^\circ)\mathfrak{f}_{\mathrm{st}}^\prime$,
we can get that $\mathfrak{f}^\prime_u\mathfrak{f}^\prime_{{\rm st}}=\mathfrak{f}^\prime_u$
and $\mathfrak{f}^\prime_{{\rm st}}\mathfrak{f}^\prime_u=\mathfrak{f}^\prime_{{\rm st}}$.
Then it is easy to check that
$\mathfrak{f}_u\mathfrak{f}_{\mathrm{st}}^\prime=0=\mathfrak{f}_{\mathrm{st}}^\prime\mathfrak{f}_u$
and 
$\mathfrak{f}_{\mathrm{st}}\mathfrak{f}_u^\prime=0=\mathfrak{f}_u^\prime\mathfrak{f}_{\mathrm{st}}$.
Hence, 
$$\mathfrak{f}_u=\mathfrak{f}\mathfrak{f}_u=\mathfrak{f}_{\mathrm{st}}\mathfrak{f}_u=
\mathfrak{f}_{\mathrm{st}}(\mathfrak{f}_u+\mathfrak{f}_u^\prime)=
\mathfrak{f}_{{\rm st}}\mathfrak{f}=\mathfrak{f}_{\mathrm{st}}.$$
So we have $\psi_{\mathrm{st}}(\mathbb{Q})={_{(u,u)}}\mathbb{Q}$.
By restricting from $\mathbb{P}$ to $\mathbb{Q}$,
we can deduce from the commutative diagram (\ref{comm. diag for P}) the following 
diagram commuting
	\begin{eqnarray}\label{comm. diag for Q}
	\xymatrix{
		\mathbb{Q}\ar[r]^{\pi}\ar[d]_{\psi_{\mathrm{st}}}  & M_{\mathrm{st}}\ar[d]_{\mathrm{\varphi_{\mathrm{st}}}}\\
		{_{(u,u)}}\mathbb{Q}\ar[r]^\pi & {_{(u,u)}}M_{\mathrm{st}},
	}
\end{eqnarray}
which implies that the complex $\mathbb{M}_\cdot$ is isomorphic to ${_{(u,u)}}\mathbb{M}_\cdot$
as complexes of $\mathbb{A}$-$\U L$-bimodules.
In particular, the complex $\mathbb{M}_\cdot$ is $P$-invariant.
We are done.
\end{proof}

\begin{order}\rm
Keep the notation as above.
Obviously, the complex $\mathcal{K}\mathop\otimes\limits_{\mathcal{O}}\mathbb{M}_\cdot$ 
induces a derived equivalence between $\hat{\mathbb{A}}$ and $\K L$,
which are both semi-simple $\mathcal{K}$-algebras.
Then for any simple $\hat{\mathbb{A}}$-module $\hat{U}$,
it is well-known that $\hat{U}[m]$ is isomorphic to 
$(\mathcal{K}\mathop\otimes\limits_{\mathcal{O}}\mathbb{M}_\cdot)\mathop\otimes\limits_{\mathcal{K}L}\Xi$
in $\mathcal{D}^b(\hat{\mathbb{A}})$
for some simple $\K L$-module $\Xi$ and some integer $m$.
Here, $[m]$ denotes the shift functor of $\mathcal{D}^b(\hat{\mathbb{A}})$.
Therefore,
$$\mathrm{End}_{\mathcal{D}^b(\hat{\mathbb{A}})}(\hat{U})\cong
\mathrm{End}_{\mathcal{D}^b(\hat{\mathbb{A}})}(\hat{U}[m])\cong
\mathrm{End}_{\mathcal{D}^b(\mathcal{K}L)}(\Xi)$$
as $\mathcal{K}$-algebras.
But it is well-known that
$$\mathrm{End}_{\mathcal{D}^b(\hat{\mathbb{A}})}(\hat{U})\cong
\mathrm{End}_{\hat{\mathbb{A}}}(\hat{U})~\mathrm{and}~
\mathrm{End}_{\mathcal{D}^b(\mathcal{K}L)}(\Xi)\cong
\mathrm{End}_{\mathcal{K}L}(\Xi)$$ 
as $\mathcal{K}$-algebras.
In conclusion, we can get
$\mathrm{End}_{\hat{\mathbb{A}}}(\hat{U})\cong\mathrm{End}_{\mathcal{K}L}(\Xi)$,
which is just $\mathcal{K}$.
This implies that the field $\mathcal{K}$ is a splitting field for $\hat{\mathbb{A}}$.
Therefore, the set $\mathrm{Irr}_\mathcal{K}(\mathbb{A})$ is an orthonormal basis
of $R_\mathcal{K}(\mathbb{A})$.
\end{order}

\begin{order}\rm
Next, we will investigate the structures of $\mathrm{Irr}_\mathcal{K}(A)$ and $\mathrm{Irr}_\mathcal{K}(\mathbb{A})$.
Set $|D|=p^n$ with $n\geq 1$ and $|E|=e$. 
Clearly, $e$ divides $p-1$.
By Theorem \ref{MT for cyclic case},
$\mathrm{k}(\mathbb{A})=|\mathrm{Irr}_\mathcal{K}(\mathbb{A})|=
|\mathrm{Irr}(L)|=e+\frac{p^n-1}{e}$
and $l(\mathbb{A})=|\mathrm{Irr}_k(\mathbb{A})|=|\mathrm{IBr}(L)|=e$.
Denote by $\mathcal{M}$ a set of representatives of the $E$-conjugacy classes of nontrivial irreducible characters of $D$.
Clearly, $|\mathcal{M}|=\frac{p^n-1}{e}$.
We can identify $\mathrm{Irr}(E)$ with a subset of $\mathrm{Irr}(L)$
through the canonical way.
Then 
$\mathrm{Irr}(L)=\{\mathrm{Ind}_D^L(\chi)\mid\chi\in\mathcal{M}\}\cup
\{\lambda\mid\lambda\in\mathrm{Irr}(E)\}$.
So by the Rickard equivalence obtained in Theorem \ref{MT for cyclic case},
we can set
$\{\hat{U}_\chi,\hat{U}_\lambda\mid\chi\in\mathcal{M},\lambda\in\mathrm{Irr}(E)\}$
to be a set of representatives of the isomorphism classes of 
simple $\hat{\mathbb{A}}$-modules
and $\{\bar{W}_\lambda\mid\lambda\in\mathrm{Irr}(E)\}$ 
to be a set of representatives of the isomorphism classes of 
simple $\bar{\mathbb{A}}$-modules.
By \cite[Proposition 3.2]{KL10}, the decomposition map $d_\mathbb{A}$ is surjective.
At the same time, by Corollary \ref{nonsingular of cartan matrix},
the Cartan matrix of $\mathbb{A}$ is non-singular.
Therefore, by \cite[Proposition 2.2]{KL10},
we have $L^0(\mathbb{A})^\bot=\mathrm{Pr}_\mathcal{O}(\mathbb{A})$.
For the group algebra $\U L$, 
we denote by $R_\mathcal{K}(\U L)$ the set of all generalized characters of $L$.
We adopt the similar notation for $R_k(\U L)$.
\end{order}


\begin{order}\rm
By Theorem \ref{MT for cyclic case},
the Rickard equivalence induced by the complex $\mathbb{M}_\cdot$
can determine two bijections 
$\Phi_\mathcal{K}:R_\mathcal{K}(\mathbb{A})\rightarrow R_\mathcal{K}(\U L)$	and
$\Phi_k:R_k(\mathbb{A})\rightarrow R_k(\U L)$ such that
$\Phi_\mathcal{K}$ is an isometry and the following the diagram commutes
	\begin{eqnarray}\label{comm diag for biject and decom map cyclic}
	\xymatrix{
		R_\mathcal{K}(\mathbb{A})\ar[r]^{\Phi_\mathcal{K}}\ar[d]_{d_\mathbb{A}}  & 
		R_\mathcal{K}(\U L)\ar[d]_{d_{\mathcal{O}L}}\\
		R_k(\mathbb{A})\ar[r]^{\Phi_k}& R_k(\U L).
	}
\end{eqnarray}
Then the restricting of $\Phi_\mathcal{K}$ to $L^0(\mathbb{A})$ induces
a bijective isometry, still denoted by $\Phi_\mathcal{K}$,
between $L^0(\mathbb{A})$ and $L^0(\U L)$.
Furthermore, since $\Phi_\mathcal{K}$ is an isometry,
we can get an bijective isometry
$\Phi_\mathcal{K}: \mathrm{Pr}_\mathcal{O}(\mathbb{A})\rightarrow
\mathrm{Pr}_\mathcal{O}(\U L)$.
Then we can obtain $\mathbb{Z}$-bases of $\mathrm{Pr}_\mathcal{O}(\mathbb{A})$ and
$L^0(\mathbb{A})$ as follows.
\end{order}

\begin{prop}\label{structure of character fo bbA cyclic}
For any $\chi\in\mathcal{M}$ and any $\lambda\in\mathrm{Irr}(E)$,
there are signs $\epsilon_\chi\in\{\pm 1\}$ and $ \epsilon_\lambda\in\{\pm 1\}$ such that
the following holds.

\noindent (i) $\Phi_\mathcal{K}([\hat{U}_\chi])= \epsilon_\chi\chi$ and
$\Phi_\mathcal{K}([\hat{U}_\lambda])= \epsilon_\lambda\lambda$.

\noindent (ii) $\{ \epsilon_\chi[\hat{U}_\chi]-
\sum\limits_{\lambda\in\mathrm{Irr}(E)} \epsilon_\lambda[\hat{U}_\lambda]\mid\chi\in\mathcal{M}\}$ is a $\mathbb{Z}$-basis of $L^0(\mathbb{A})$.
In particular, we have $ \epsilon_\chi= \epsilon_{\chi^\prime}$, denoted by $\epsilon$ for 
any $\chi,\chi^\prime\in\mathcal{M}$.

\noindent (iii) $\{ \epsilon_\lambda[\hat{U}_\lambda]+
\epsilon\sum\limits_{\chi\in\mathcal{M}}[\hat{U}_\chi]\mid\lambda\in\mathrm{Irr}(E)\}$
is a $\mathbb{Z}$-basis of $\mathrm{Pr}_\mathcal{O}(\mathbb{A})$.
\end{prop}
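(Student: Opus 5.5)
The approach is to exploit that $\Phi_\mathcal{K}$ is an isometry between $R_\mathcal{K}(\mathbb{A})$ and $R_\mathcal{K}(\U L)$, both with orthonormal bases. For $\hat{\mathbb{A}}$ this is the paragraph showing $\mathcal{K}$ splits $\hat{\mathbb{A}}$; for $\mathcal{K}L$ it is $\mathrm{Irr}(L)$. An isometry of $\mathbb{Z}$-lattices preserving orthonormal bases up to sign must send each $[\hat{U}]$ to $\pm$ an irreducible character, and $\Phi_\mathcal{K}$ induces a bijection $\mathrm{Irr}_\mathcal{K}(\mathbb{A})\to\mathrm{Irr}(L)$ up to sign. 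We relabel the representatives $\hat{U}_\chi,\hat{U}_\lambda$ via this bijection, identifying $\mathrm{Ind}_D^L(\chi)$ with $\chi\in\mathcal{M}$ as in the statement. This gives (i) with some signs $\epsilon_\chi,\epsilon_\lambda$.

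For (ii), I first describe $L^0(\U L)$ directly. Since $L=D\rtimes E$ is Frobenius with kernel the $p$-group $D$, the decomposition map sends each $\lambda\in\mathrm{Irr}(E)$ to the simple module $\bar\lambda$. It sends $\mathrm{Ind}_D^L(\chi)$ to the reduction of the regular-like module $\mathrm{Ind}_D^L(k)$, whose Brauer character is $\sum_{\lambda}\bar\lambda$. Hence $L^0(\U L)$ has $\mathbb{Z}$-basis $\{\mathrm{Ind}_D^L(\chi)-\sum_{\lambda\in\mathrm{Irr}(E)}\lambda\mid\chi\in\mathcal{M}\}$, a free group of rank $|\mathcal{M}|=\mathrm{k}-l$. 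By the commutative diagram (\ref{comm diag for biject and decom map cyclic}), $\Phi_\mathcal{K}$ restricts to a bijection $L^0(\mathbb{A})\to L^0(\U L)$. Pulling back the basis with (i) gives the $\mathbb{Z}$-basis $\{\epsilon_\chi[\hat U_\chi]-\sum_\lambda\epsilon_\lambda[\hat U_\lambda]\}$ of $L^0(\mathbb{A})$, after multiplying each element by the sign $\epsilon_\chi$.

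The sign claim $\epsilon_\chi=\epsilon_{\chi'}$ is the delicate point, and I expect it to be the main obstacle. My first approach uses that $L^0(\mathbb{A})$ is the kernel of $d_\mathbb{A}$ and that decomposition numbers are nonnegative. Take the basis element $x_\chi=\epsilon_\chi[\hat U_\chi]-\sum_\lambda\epsilon_\lambda[\hat U_\lambda]$; then the difference $x_\chi-x_{\chi'}=\epsilon_\chi[\hat U_\chi]-\epsilon_{\chi'}[\hat U_{\chi'}]$ also lies in $L^0(\mathbb{A})$. Applying $d_\mathbb{A}$ gives $\epsilon_\chi d_\mathbb{A}([\hat U_\chi])=\epsilon_{\chi'}d_\mathbb{A}([\hat U_{\chi'}])$. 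Both decomposition vectors are nonzero with nonnegative coefficients, since a simple $\hat{\mathbb{A}}$-module has a nonzero $\U$-form. Equality is therefore impossible if the signs differ, so $\epsilon_\chi=\epsilon_{\chi'}$.

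For (iii), I use $L^0(\mathbb{A})^\perp=\mathrm{Pr}_\mathcal{O}(\mathbb{A})$, established in the preceding paragraph. The element $y_\lambda=\epsilon_\lambda[\hat U_\lambda]+\epsilon\sum_{\chi\in\mathcal{M}}[\hat U_\chi]$ is orthogonal to each $x_\chi$, since $\langle y_\lambda,x_\chi\rangle=\epsilon\epsilon_\chi-\epsilon_\lambda^2=1-1=0$. So each $y_\lambda$ lies in $\mathrm{Pr}_\mathcal{O}(\mathbb{A})$. It remains to show these $e$ elements form a $\mathbb{Z}$-basis of the full perp. The rank of the perp is $\mathrm{k}(\mathbb{A})-\mathrm{rank}\,L^0=e$, and the $y_\lambda$ are linearly independent by looking at the $[\hat U_\lambda]$-coordinates. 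For primitivity, take any $z=\sum a_\chi[\hat U_\chi]+\sum b_\lambda[\hat U_\lambda]$ in the perp. Orthogonality to $x_\chi$ forces $\epsilon a_\chi=\sum_\lambda\epsilon_\lambda b_\lambda$ for all $\chi$, which gives $z=\sum_\lambda \epsilon_\lambda b_\lambda y_\lambda$ with integer coefficients. Hence the $y_\lambda$ form a $\mathbb{Z}$-basis.
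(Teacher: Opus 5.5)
Your proof is correct and follows essentially the paper's route. Parts (i)--(iii) come from transporting bases through the isometry $\Phi_{\mathcal{K}}$; you compute $L^0(\mathbb{A})^\perp$ directly instead of pulling back the projective indecomposable characters of $L$, which amounts to the same thing. The sign equality comes from $\epsilon_\chi[\hat{U}_\chi]-\epsilon_{\chi'}[\hat{U}_{\chi'}]\in L^0(\mathbb{A})$ plus positivity: the paper applies $\dim_{\mathcal{K}}=\dim_k\circ d_{\mathbb{A}}$, and you apply $d_{\mathbb{A}}$ itself, which is the same idea. One small slip: the pullback of $\mathrm{Ind}_D^L(\chi)-\sum_{\lambda}\lambda$ is already $\epsilon_\chi[\hat{U}_\chi]-\sum_{\lambda}\epsilon_\lambda[\hat{U}_\lambda]$. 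So no further multiplication by $\epsilon_\chi$ is needed, and doing it would change the stated form.
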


\begin{proof}
	The statements in this proposition can be easily checked 
	from the arguments in the paragraph above except the one 
	$\epsilon_\chi=\epsilon_{\chi^\prime}$.
	We can assume that $|\mathcal{M}|\geq 2$.
	Take two distinct elements $\chi$ and $\chi^\prime$ of $\mathcal{M}$.
	Then $\epsilon_\chi[\hat{U}_\chi]-\epsilon_{\chi^\prime}[\hat{U}_{\chi^\prime}]$
	belongs to $L^0(\mathbb{A})$.
	By the lemma below, we have
	$\epsilon_\chi\mathrm{dim}_\mathcal{K}(\hat{U}_\chi)-
	\epsilon_{\chi^\prime}\mathrm{dim}_\mathcal{K}(\hat{U}_{\chi^\prime})=0$,
	which forces $\epsilon_\chi=\epsilon_{\chi^\prime}$.
\end{proof}

\begin{lem}\label{dim of L^0=0}
Let $\sum\limits_{\chi\in\mathcal{M},\lambda\in\mathrm{Irr}(E)}
(m_\chi[\hat{U}_\chi]+m_\lambda[\hat{U}_\lambda])$
be an element in $L^0(\mathbb{A})$.
Here, $m_\chi$ and $m_\lambda$ are integers.
We have 
$\sum\limits_{\chi\in\mathcal{M},\lambda\in\mathrm{Irr}(E)}
(m_\chi\mathrm{dim}_\mathcal{K}(\hat{U}_\chi)+
m_\lambda\mathrm{dim}_\mathcal{K}(\hat{U}_\lambda))=0$.
\end{lem}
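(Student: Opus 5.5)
The natural approach is to interpret the dimension map as a linear functional on $R_\mathcal{K}(\mathbb{A})$ and show that it factors through the decomposition map $d_\mathbb{A}$.

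Define $\dim_\mathcal{K}:R_\mathcal{K}(\mathbb{A})\to\mathbb{Z}$ by sending $[\hat{U}]$ to $\mathrm{dim}_\mathcal{K}(\hat{U})$, and define $\dim_k:R_k(\mathbb{A})\to\mathbb{Z}$ by sending $[\bar{W}]$ to $\mathrm{dim}_k(\bar{W})$. Both maps are additive on short exact sequences, so they are well-defined group homomorphisms.

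The key identity is
\[
\dim_k\circ d_\mathbb{A}=\dim_\mathcal{K}.
\]
To check it, let $X$ be a finite dimensional $\hat{\mathbb{A}}$-module. By paragraph 2.7 there is an $\mathbb{A}$-lattice $Y$, free of finite rank over $\U$, with $\mathcal{K}\mathop\otimes\limits_\mathcal{O}Y\cong X$. Then $d_\mathbb{A}([X])=[k\mathop\otimes\limits_\mathcal{O}Y]$. Moreover,
\[
\mathrm{dim}_k(k\mathop\otimes\limits_\mathcal{O}Y)=\mathrm{rank}_\mathcal{O}(Y)=\mathrm{dim}_\mathcal{K}(X),
\]
because $Y$ is $\U$-free. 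The dimension of a $\bar{\mathbb{A}}$-module equals the sum of the dimensions of its composition factors counted with multiplicity, so $\dim_k$ evaluated on $[k\mathop\otimes\limits_\mathcal{O}Y]$ is exactly $\mathrm{dim}_k(k\mathop\otimes\limits_\mathcal{O}Y)$. This proves the identity on the generators $[X]$, and linearity extends it to all of $R_\mathcal{K}(\mathbb{A})$.

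Now take an element $x=\sum(m_\chi[\hat{U}_\chi]+m_\lambda[\hat{U}_\lambda])$ of $L^0(\mathbb{A})$. By definition $d_\mathbb{A}(x)=0$, so
\[
\dim_\mathcal{K}(x)=\dim_k(d_\mathbb{A}(x))=\dim_k(0)=0.
\]
Expanding $\dim_\mathcal{K}(x)$ by linearity gives precisely the claimed equation.

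The only point needing care is that $d_\mathbb{A}$ is independent of the choice of lattice $Y$. This is standard Brauer–Nesbitt reasoning and is already implicit in paragraph 2.7, where $d_\mathbb{A}$ is asserted to be a well-defined map on Grothendieck groups. Since we only evaluate $d_\mathbb{A}$ on classes and never choose lattices beyond that, no genuine obstacle arises.
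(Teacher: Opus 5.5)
Your proposal is correct and follows essentially the same route as the paper: both define the $\mathbb{Z}$-linear dimension maps on $R_\mathcal{K}(\mathbb{A})$ and $R_k(\mathbb{A})$, verify ${\rm dim}_k\circ d_\mathbb{A}={\rm dim}_\mathcal{K}$ from the definition of the decomposition map via an $\U$-free lattice, and conclude because $L^0(\mathbb{A})$ is the kernel of $d_\mathbb{A}$.
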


\begin{proof}
It is clear that taking the dimension of any simple $\hat{\mathbb{A}}$-module
can induce a $\mathbb{Z}$-linear map ${\rm dim}_\mathcal{K}$ 
from $R_\mathcal{K}(\mathbb{A})$ to $\mathbb{Z}$.	
Similarly, taking the dimension of any simple $\bar{\mathbb{A}}$-module
can also induce a $\mathbb{Z}$-linear map ${\rm dim}_k$ 
from $R_k(\mathbb{A})$ to $\mathbb{Z}$.		
Let $X$ be a finite dimensional $\hat{\mathbb{A}}$-module.
Set $d_{\mathbb{A}}([X])=\sum\limits_{\lambda\in{\rm Irr}(E)}a_\lambda[\bar{W}_\lambda]$
for some nonnegative integer $a_\lambda$.
By the definition of the decomposition map $d_\mathbb{A}$,
we can get that 
${\rm dim}_\mathcal{K}(X)=\sum\limits_{\lambda\in{\rm Irr}(E)}a_\lambda{\rm dim}_k(\bar{W}_\lambda)$.
This implies that ${\rm dim}_k\circ d_\mathbb{A}={\rm dim}_\mathcal{K}$.
Since $L^0(\mathbb{A})$ is the kernel of the decomposition map $d_\mathbb{A}$,
we complete the proof of this lemma.
\end{proof}

\begin{order}\rm
Now we can verify the forward direction of the KLN conjecture 
when the hyperfocal subgroup $D$ is nontrivial cyclic.
We first recall a well-known fact that there is a simple $\hat{A}$-module 
of dimension coprime to $p$.
Indeed, by \cite[Proposition 6.11.11]{L'book 2},
the $p$-part of the dimension of a simple $\hat{A}$-module equals to the height
of its corresponding irreducible ordinary character in the block $b$
through the Morita equivalence induced by the bimodule $i\U G$.
\end{order}

\begin{lem}\label{dim coprime to p cyclic}
There is at least one simple $\hat{\mathbb{A}}$-module of dimension coprime to $p$.	
\end{lem}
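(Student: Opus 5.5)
The plan is to transfer the well-known fact recalled just before the statement to $\hat{\mathbb{A}}$ via restriction and Clifford theory. By \cite[Proposition 6.11.11]{L'book 2} and the existence of a height zero character in $b$, there is a simple $\hat{A}$-module $\hat{X}$ with $\mathrm{dim}_\mathcal{K}(\hat{X})$ coprime to $p$. I would restrict $\hat{X}$ to $\hat{\mathbb{A}}$. Since $A=\mathop\bigoplus\limits_{uD\in P/D}\mathbb{A}u$ and $\mathbb{A}$ is $P$-stable, $\hat{\mathbb{A}}$ is a $P/D$-graded subalgebra of $\hat{A}$ with identity component $\hat{\mathbb{A}}$, each component containing the unit $u$. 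So Clifford theory for crossed products (as used in paragraph 6.15) applies.

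By this Clifford theory, $\mathrm{Res}^{\hat{A}}_{\hat{\mathbb{A}}}(\hat{X})\cong e\cdot\bigoplus_{t}{_{t}}\hat{U}$. Here $\hat{U}$ is a simple $\hat{\mathbb{A}}$-module covered by $\hat{X}$, the sum runs over representatives $t$ of $P/P_{\hat{U}}$ with $P_{\hat{U}}$ the stabilizer of $\hat{U}$ in $P$, and $e$ is a common multiplicity. In particular
$$\mathrm{dim}_\mathcal{K}(\hat{X})=e\,|P:P_{\hat{U}}|\,\mathrm{dim}_\mathcal{K}(\hat{U}).$$
The twisted modules ${_t}\hat{U}$ all have the same dimension as $\hat{U}$. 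Since the left side is coprime to $p$, $\mathrm{dim}_\mathcal{K}(\hat{U})$ is coprime to $p$, which is what we want. If I want to avoid invoking the full Clifford decomposition, I only need that the restriction is semisimple (as $\hat{\mathbb{A}}$ is semisimple) and that its simple summands form a single $P$-orbit of equal-dimension modules with equal multiplicities. The single orbit follows because $\sum_{u}u\hat{V}$ is an $\hat{A}$-submodule for any simple summand $\hat{V}$. The equal multiplicities follow from transport by the invertible elements $u$.

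An alternative route, in case the grading argument needs more care, is via $\mathbb{A}_\delta$ and Remark \ref{general dim of char of bbA and Adelta}. Corollary \ref{dim of bbA and simple mod} gives a simple $k\otimes\mathbb{A}_\delta$-module of dimension prime to $p$. However, lifting this to characteristic zero is not automatic, so I prefer the restriction argument.

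The main obstacle is justifying the Clifford-theoretic statement for the algebra extension $\hat{\mathbb{A}}\subseteq\hat{A}$ rather than for groups. Specifically, I need to check that the orbit sum is $\hat{A}$-stable and that all multiplicities coincide. I would handle this directly using $u\hat{\mathbb{A}}u^{-1}=\hat{\mathbb{A}}$ and $\hat{A}=\sum_u\hat{\mathbb{A}}u$, as indicated above.
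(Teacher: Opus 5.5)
Your proposal is correct and follows the paper's route. The paper also takes a simple $\hat{A}$-module of dimension prime to $p$ (from a height zero character via \cite[Proposition 6.11.11]{L'book 2}), restricts it to $\hat{\mathbb{A}}$, and cites \cite[Proposition 2.3 (iv)]{KLN} for the Clifford-theoretic fact that you verify directly from $\hat{A}=\bigoplus_{uD\in P/D}\hat{\mathbb{A}}u$. You do not need equal multiplicities: every constituent of the restriction is a $P$-conjugate of $\hat{U}$, so $\mathrm{dim}_\mathcal{K}(\hat{U})$ already divides $\mathrm{dim}_\mathcal{K}(\hat{X})$.
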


\begin{proof}
	This can be easily deduced from the argument above and \cite[Proposition 2.3 (iv)]{KLN}.	
\end{proof}

\begin{prop}\label{KLN conj cyclic}
The dimension of every simple $\hat{\mathbb{A}}$-module is coprime to $p$ 
when the hyperfocal subgroup $D$ is nontrivial cyclic.
In particular, the forward direction of the KLN conjecture is true in this case.
\end{prop}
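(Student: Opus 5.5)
The plan is to combine the $\mathbb{Z}$-bases of $L^0(\mathbb{A})$ and $\mathrm{Pr}_\mathcal{O}(\mathbb{A})$ from Proposition \ref{structure of character fo bbA cyclic} with two divisibility facts. First, dimension is additive on $L^0(\mathbb{A})$ and vanishes there, by Lemma \ref{dim of L^0=0}. Second, dimensions of elements of $\mathrm{Pr}_\mathcal{O}(\mathbb{A})$ are divisible by $|D|=p^n$. The argument will show that the dimensions of the simple $\hat{\mathbb{A}}$-modules are either all divisible by $p$ or all prime to $p$. Lemma \ref{dim coprime to p cyclic} then rules out the first alternative.

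\emph{Step 1.} Fix $\chi\in\mathcal{M}$. The element $\epsilon[\hat{U}_\chi]-\sum_{\lambda\in\mathrm{Irr}(E)}\epsilon_\lambda[\hat{U}_\lambda]$ lies in $L^0(\mathbb{A})$ by Proposition \ref{structure of character fo bbA cyclic} (ii). Lemma \ref{dim of L^0=0} then gives
\[\epsilon\,\mathrm{dim}_\mathcal{K}(\hat{U}_\chi)=\sum_{\lambda\in\mathrm{Irr}(E)}\epsilon_\lambda\,\mathrm{dim}_\mathcal{K}(\hat{U}_\lambda).\]
In particular all $\hat{U}_\chi$ with $\chi\in\mathcal{M}$ have a common dimension, say $d$.

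\emph{Step 2.} I need that every element of $\mathrm{Pr}_\mathcal{O}(\mathbb{A})$ has dimension divisible by $p^n$. For an idempotent $\mathfrak{i}$ of $\mathbb{A}$, the module $\mathbb{A}\mathfrak{i}$ is a direct summand of $\mathbb{A}$ as a left $\mathbb{A}$-module, hence as a left $\U D$-module. Now $\mathbb{A}$ is a direct summand of the source algebra $A$ as $\U D$-module, and $A$ is a free $\U P$-module, hence a free $\U D$-module. So $\mathbb{A}$ is projective, i.e.\ free, over the local ring $\U D$. Alternatively one can use $A=\bigoplus_{v}\mathbb{A}v$ together with the structure in Lemma \ref{basic result of mod stru}. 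Consequently $\mathbb{A}\mathfrak{i}$ is a projective $\U D$-module, so its $\U$-rank is divisible by $p^n$.

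Applying this to the basis elements of Proposition \ref{structure of character fo bbA cyclic} (iii) gives, for every $\lambda\in\mathrm{Irr}(E)$,
\[\epsilon_\lambda\,\mathrm{dim}_\mathcal{K}(\hat{U}_\lambda)+\epsilon\,|\mathcal{M}|\,d\equiv 0 \pmod{p^n}.\]

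\emph{Step 3.} Since $|\mathcal{M}|=\frac{p^n-1}{e}$ is prime to $p$, Step 2 shows that $p\mid \mathrm{dim}_\mathcal{K}(\hat{U}_\lambda)$ if and only if $p\mid d$, for each $\lambda$. So either all simple $\hat{\mathbb{A}}$-modules have dimension divisible by $p$, or none do. Lemma \ref{dim coprime to p cyclic} excludes the first case, which proves the claim. The KLN statement then follows immediately, since $D$ is abelian and $\mathcal{K}$ is a splitting field for $\hat{\mathbb{A}}$ by the paragraph preceding the proposition.

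The only step needing care is Step 2, namely justifying that $\mathbb{A}$ is free as a left $\U D$-module so that projective $\mathbb{A}$-modules have rank divisible by $|D|$. I expect this to follow directly from the source algebra being free over $\U P$ together with $A=\bigoplus_{v\in\mathcal{S}}\mathbb{A}v$.
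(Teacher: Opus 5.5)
Your proof is correct and uses the same ingredients as the paper's proof: parts (ii) and (iii) of Proposition \ref{structure of character fo bbA cyclic}, the vanishing of dimension on $L^0(\mathbb{A})$ (Lemma \ref{dim of L^0=0}), divisibility by $|D|$ of the ranks of projective $\mathbb{A}$-modules, and Lemma \ref{dim coprime to p cyclic}. The difference is only in the order of the steps. You first show that all $\hat{U}_\chi$ have a common dimension $d$, and then use that $|\mathcal{M}|=\frac{p^n-1}{e}$ is prime to $p$ to link each $\hat{U}_\lambda$ to $d$ through a single basis element of $\mathrm{Pr}_\mathcal{O}(\mathbb{A})$. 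The paper instead compares the $\hat{U}_\lambda$ pairwise via differences of these basis elements (which uses $e\neq 1$) and then finds a $\chi_0$ of dimension prime to $p$ in a separate step, so your version is a slightly more streamlined form of the same argument.
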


\begin{proof}
Since the hyperfocal subgroup $D$ is assumed to be nontrivial,
the order $e$ of $E$ can not be $1$.
Then by Proposition \ref{structure of character fo bbA cyclic} (iii),
$\epsilon_\lambda[\hat{U}_\lambda]-\epsilon_\mu[\hat{U}_\mu]$ 
belongs to $\mathrm{Pr}_\mathcal{O}(\mathbb{A})$
for any two distinct elements $\lambda$ and $\mu$ of $\mathrm{Irr}(E)$.
Since $\mathbb{A}$ is projective as left $\U D$-module,
every finitely generated projective $\mathbb{A}$-module has $\U$-rank divided by $|D|$.
Hence, we have $p$ dividing 
$\epsilon_\lambda\mathrm{dim}_\mathcal{K}(\hat{U}_\lambda)-
\epsilon_\mu\mathrm{dim}_\mathcal{K}(\hat{U}_\mu)$.
In particular, $\mathrm{dim}_\mathcal{K}(\hat{U}_\lambda)$ is coprime to $p$ if and only if
$\mathrm{dim}_\mathcal{K}(\hat{U}_\mu)$ is coprime to $p$.
Suppose that $\mathrm{dim}_\mathcal{K}(\hat{U}_\lambda)$ is divided by $p$ 
for some $\lambda\in\mathrm{Irr}(E)$.
So is $\mathrm{dim}_\mathcal{K}(\hat{U}_\mu)$ for any $\mu\in\mathrm{Irr}(E)$.
Now by Proposition \ref{structure of character fo bbA cyclic} (ii) and
Lemma \ref{dim of L^0=0},
$\epsilon\mathrm{dim}_\mathcal{K}(\hat{U}_\chi)-
\sum\limits_{\lambda\in\mathrm{Irr}(E)}\epsilon_\lambda\mathrm{dim}_\mathcal{K}(\hat{U}_\lambda)=0$.
Hence, $\mathrm{dim}_\mathcal{K}(\hat{U}_\chi)$ is divided by $p$ 
for any $\chi\in\mathcal{M}$.
In conclusion, every simple $\hat{\mathbb{A}}$-module has dimension divided by $p$,
which contradicts Lemma \ref{dim coprime to p cyclic}.
So $\mathrm{dim}_\mathcal{K}(\hat{U}_\lambda)$ is coprime to $p$ for any $\lambda\in\mathrm{Irr}(E)$.
By Proposition \ref{structure of character fo bbA cyclic} (iii),
there is at least one element $\chi$ in $\mathcal{M}$ such that
$\mathrm{dim}_\mathcal{K}(\hat{U}_\chi)$ is coprime to $p$.
We denote it by $\chi_0$.
Now we can assume that $\mathcal{M}$ has at least two elements.
For any other $\chi\in\mathcal{M}$ different from $\chi_0$,
by Proposition \ref{structure of character fo bbA cyclic} (ii),
$\epsilon([\hat{U}_{\chi_0}]-[\hat{U}_\chi])$ belongs to $L^0(\mathbb{A})$.
In particular, by Lemma \ref{dim of L^0=0},
these two simple modules $\hat{U}_{\chi_0}$ and $\hat{U}_\chi$ has the same dimension.
Then we are done.
\end{proof}

\begin{order}\rm
Note that the structure of a $\mathbb{Z}$-basis of $\mathrm{Pr}_\mathcal{O}(\mathbb{A})$
in Proposition \ref{structure of character fo bbA cyclic} plays an important role 
in the proof of Proposition \ref{KLN conj cyclic}.
Furthermore, by this structure,
we can also calculate decomposition numbers of $\mathbb{A}$ and
get that these numbers are either $1$ or $0$ in the following.
This result can be regarded as a `hyperfocal decomposition number' version
of the classical fact that decomposition numbers of blocks with nontrivial cyclic defect groups
are either $1$ or $0$.
By \cite[\S 2]{KL10},
this is equivalent to showing that $[\mathcal{K}\mathop\otimes\limits_{\mathcal{O}}U]=
\sum\limits_{\theta\in\mathrm{Irr}(L)}a_{\theta}[\hat{U}_\theta]$
with $a_\theta$ equal to $1$ or $0$ 
for any projective indecomposable $\mathbb{A}$-module $U$.
One key technique in the context of blocks with nontrivial cyclic defect groups 
is making use of $\U$-pure submodules.
An $\mathbb{A}$-submodule $U^\prime$ of an $\mathbb{A}$-module $U$ is $\U$-\emph{pure}
in $U$ if it is a direct summand of $U$ as $\U$-modules (see \cite[Section 4.2]{L'book 1}).
Our proof here is similar to the one of \cite[Theorem 11.10.5]{L'book 2}.
We need the following two lemmas which are analogues of 
\cite[Lemma 11.10.3]{L'book 2} and \cite[Proposition 11.10.4]{L'book 2}.
\end{order}

\begin{lem}\label{lemma 11.10.3}
Let $U$ be a finitely generated $\U$-free $\mathbb{A}$-module such that
$\bar{U}=k\mathop\otimes\limits_{\mathcal{O}}U$ is nonprojective indecomposable.
Then the image $[\hat{U}]$ of $\hat{U}=\mathcal{K}\mathop\otimes\limits_{\mathcal{O}}U$ 
in $R_\mathcal{K}(\mathbb{A})$ is not contained in $\mathrm{Pr}_\mathcal{O}(\mathbb{A})$.
\end{lem}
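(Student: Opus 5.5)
We argue by contradiction, following the proof of \cite[Lemma 11.10.3]{L'book 2}. Suppose $[\hat{U}]\in\mathrm{Pr}_\mathcal{O}(\mathbb{A})$. Then there are finitely generated projective $\mathbb{A}$-modules $Y_1$ and $Y_2$ with $[\hat{U}]+[\mathcal{K}\mathop\otimes\limits_{\mathcal{O}}Y_2]=[\mathcal{K}\mathop\otimes\limits_{\mathcal{O}}Y_1]$ in $R_\mathcal{K}(\mathbb{A})$. Since $\hat{\mathbb{A}}$ is split semisimple, this gives an isomorphism $\hat{U}\oplus\hat{Y}_2\cong\hat{Y}_1$ of $\hat{\mathbb{A}}$-modules. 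Cancelling common projective summands (Krull--Schmidt over $\mathcal{O}$), we may assume $Y_1$ and $Y_2$ share no nonzero summand.

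The key step is to realize $U$ as an $\mathcal{O}$-pure submodule of a projective module. Since $\hat{U}$ is isomorphic to a direct summand of $\hat{Y}_1$, the image of $U$ under an embedding $\hat{U}\hookrightarrow\hat{Y}_1$ is a full $\mathcal{O}$-lattice. After scaling by a power of a uniformizer, it lies in $Y_1$ and is not contained in $J(\mathcal{O})Y_1$. Let $U'$ be the $\mathcal{O}$-pure hull $Y_1\cap\hat{U}$. Then $U'$ is $\mathcal{O}$-pure in $Y_1$, and $Y_1/U'$ is an $\mathcal{O}$-free lattice with $\mathcal{K}\otimes(Y_1/U')\cong\hat{Y}_2$. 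This gives a short exact sequence $0\to U'\to Y_1\to Y_1/U'\to 0$ of $\mathcal{O}$-free $\mathbb{A}$-modules. Reducing mod $J(\mathcal{O})$ yields $0\to\bar{U}'\to\bar{Y}_1\to\overline{Y_1/U'}\to 0$.

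Now use that $\bar{\mathbb{A}}$ is symmetric (\cite[Theorem 1.1]{HZ25}), hence self-injective, so the Heller operator and projective/injective arguments apply. Linckelmann's proof combines three facts: (a) $\bar U$ is indecomposable nonprojective; (b) a lattice and its pure replacement have the same $\mathcal{K}$-span and differ by a chain of lattices whose reductions have the same composition factors; (c) the Cartan matrix of $\mathbb{A}$ is nonsingular (Corollary \ref{nonsingular of cartan matrix}), so $L^0(\mathbb{A})^\bot=\mathrm{Pr}_\mathcal{O}(\mathbb{A})$ (\cite[Proposition 2.2]{KL10}). Rather than follow that route directly, I would use a cleaner Brauer-character argument. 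From (c), $[\hat U]\in\mathrm{Pr}_\mathcal{O}(\mathbb{A})$ means $[\hat U]$ is orthogonal to $L^0(\mathbb{A})$. By the $\mathcal{K}$–$k$ duality for symmetric algebras with nonsingular Cartan matrix (\cite[\S2]{KL10}), $d_\mathbb{A}([\hat U])=[\bar U]$ is then the image of a virtual projective under the Cartan map. It lies in $C\cdot\mathbb{Z}^{l}$ and in particular is a combination of $[\bar{P}_W]$.

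Then pass to $kL$ via the stable equivalence of Morita type $M_{\rm st}$ of Theorem \ref{MT}. This stable equivalence sends $\bar U$ to $\bar{U}_L\oplus(\text{proj})$, with $\bar U_L$ indecomposable and nonprojective. By \cite[Proposition 3.1]{KL10}, it is compatible with the isometry on $L^0$ and on $\mathrm{Pr}$. The conclusion then reduces to the known group case: for $L=D\rtimes E$ with $D$ cyclic, a nonprojective indecomposable lattice has character not in $\mathrm{Pr}_\mathcal{O}(\mathcal{O}L)$. This is \cite[Lemma 11.10.3]{L'book 2} applied to the block $\mathcal{O}L$ with cyclic defect group. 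Lifting $\bar U_L$ to a lattice $U_L=\mathcal{O}$-lattice $M_{\rm st}^*\otimes_{\mathbb{A}}U$ minus projective summands, its character is $\Phi([\hat U])$ up to projective characters, so it lies in $\mathrm{Pr}_\mathcal{O}(\mathcal{O}L)$. This contradicts the group case.

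The main obstacle is the compatibility step. We must show that the stable equivalence induced by $M_{\rm st}$, applied to the lattice $U$, produces a lattice whose reduction is the nonprojective indecomposable $\bar U_L$ plus projectives. We must also show that its character differs from the image of $[\hat U]$ only by elements of $\mathrm{Pr}_\mathcal{O}$. I would verify both by using that $M_{\rm st}$ is $\mathcal{O}$-free and projective on each side, so tensoring commutes with $k\otimes-$ and $\mathcal{K}\otimes-$. Then the projective bimodule summands of $M_{\rm st}^*\otimes M_{\rm st}$ contribute only projective modules, whose characters lie in $\mathrm{Pr}_\mathcal{O}$.
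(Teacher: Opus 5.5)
Your argument is correct, but it takes a different route from the paper.

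The paper never transports the hypothesis $[\hat U]\in\mathrm{Pr}_\mathcal{O}(\mathbb{A})$ itself; it converts it at once into a divisibility statement. Since $\mathbb{A}$ is free as a $\U D$-module, every element of $\mathrm{Pr}_\mathcal{O}(\mathbb{A})$ has dimension divisible by $|D|$, so $|D|$ divides $\mathrm{rank}_\mathcal{O}(U)$. The paper then passes to the Morita equivalent algebra $\mathbb{A}^\prime$ of Section 5, in which $\U L\subseteq\mathbb{A}^\prime$ and restriction induces a stable equivalence. There the unique nonprojective summand $V^\prime$ of $\mathrm{Res}^{\mathbb{A}^\prime}_{\mathcal{O}L}(U^\prime)$ has $\mathcal{O}$-rank divisible by $|D|$. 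On the other hand, $k\otimes_\mathcal{O}V^\prime$ is indecomposable nonprojective by \cite[Proposition 4.14.6]{L'book 1}, hence a uniserial $kL$-module of dimension $<|D|$, a contradiction.

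You instead transport the whole hypothesis along $M_{\rm st}$:
\begin{itemize}
\item $M_{\rm st}^*$ (or the inverse bimodule of Theorem \ref{MT}) is projective as a left $\U L$-module.
\item Hence $[X]\mapsto[(\mathcal{K}\otimes_\mathcal{O}M_{\rm st}^*)\otimes_{\hat{\mathbb{A}}}X]$ maps $\mathrm{Pr}_\mathcal{O}(\mathbb{A})$ into $\mathrm{Pr}_\mathcal{O}(\U L)$.
\item You then quote the group-case lemma \cite[Lemma 11.10.3]{L'book 2} for the block $\U L$, which has nontrivial cyclic defect group $D$.
\end{itemize}

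What each approach buys:
\begin{itemize}
\item Your version shows more generally that the property in the lemma is invariant under stable equivalences of Morita type between symmetric algebras. The cost is treating the cyclic-defect case as a black box, whose proof is essentially the paper's dimension count.
\item The paper's version is self-contained and shows exactly where the contradiction comes from: ranks divisible by $|D|$ versus uniserial modules of length $<|D|$.
\end{itemize}

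Three small remarks:
\begin{itemize}
\item Your opening paragraphs on pure hulls and on the Cartan map are never used and can be dropped.
\item The step you flag as the main obstacle is that $U_L$, obtained by discarding projective summands of $M_{\rm st}^*\otimes_{\mathbb{A}}U$, still has indecomposable nonprojective reduction (projective summands of the reduction lift). This is exactly \cite[Proposition 4.14.6]{L'book 1}, the same fact the paper invokes.
\item The compatibility with $\mathrm{Pr}_\mathcal{O}$ needs only the left $\U L$-projectivity of $M_{\rm st}^*$; the projective summands of $M_{\rm st}^*\otimes_{\mathbb{A}}M_{\rm st}$ play no role there.
\end{itemize}
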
	

\begin{proof}
Suppose that $[\hat{U}]$ is contained in ${\rm Pr}_\mathcal{O}(\mathbb{A})$.
By the argument in the proof of Proposition \ref{KLN conj cyclic},
$|D|$ divides the dimension ${\rm dim}_\mathcal{K}(\hat{U})$ of $\hat{U}$.
Recall from Section $5$ that 
there is an $\U$-algebra $\mathbb{A}^\prime$ Morita equivalent to $\mathbb{A}$ such that
it has a subalgebra isomorphic to $\U L$ and  a stable equivalence of Morita type 
between $\mathbb{A}^\prime$ and this subalgebra induced by induction and restriction exists.
We identify this subalgebra with $\U L$.
We denote by $U^\prime$ the $\mathbb{A}^\prime$-module 
corresponding to $U$ via the Morita equivalence.
Then the $\mathbb{A}^\prime$-module $U^\prime$ satisfies the same properties as 
the $\mathbb{A}$-module $U$.
In particular, $|D|$ also divides the $\U$-rank ${\rm rank}_\mathcal{O}(U^\prime)$ of $U^\prime$.	
Let $V^\prime$ be up to isomorphism the unique indecomposable nonprojective
direct summand of ${\rm Res}_{\mathcal{O}L}^{\mathbb{A}^\prime}(U^\prime)$.
Hence, the $\U$-rank ${\rm rank}_\mathcal{O}(V^\prime)$ of $V^\prime$ is also divided by $|D|$.
On the other hand, by \cite[Proposition 4.14.6]{L'book 1},
$k\mathop\otimes\limits_{\mathcal{O}}V^\prime$ remains indecomposable and nonprojective.
This implies that $k\mathop\otimes\limits_{\mathcal{O}}V^\prime$ 
is a uniserial nonprojective $kL$-module.
It is well-known that every uniserial nonprojective $kL$-module 
has dimension strictly less that $|D|$.
So we have a contradiction.
\end{proof}

\begin{lem}\label{prop 11.10.4}
Let $U$ be a projective indecomposable $\mathbb{A}$-module.
Denote $\mathcal{K}\mathop\otimes\limits_{\mathcal{O}}U$ by $\hat{U}$.
Suppose that there are nonzero finite dimensional $\hat{\mathbb{A}}$-modules
$\hat{U}_1$ and $\hat{U}_2$	such that $[\hat{U}]=[\hat{U}_1]+[\hat{U}_2]$.
Then none of $[\hat{U}_1]$ or $[\hat{U}_2]$ is contained in $\mathrm{Pr}_\mathcal{O}(\mathbb{A})$.
\end{lem}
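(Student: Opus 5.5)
This is the analogue of \cite[Proposition 11.10.4]{L'book 2}, with Lemma \ref{lemma 11.10.3} as the key input. The plan is to realize $\hat{U}_1$ and $\hat{U}_2$ inside $\hat{U}$ via an $\U$-pure submodule and then exploit indecomposability of the reduction.

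\textbf{Step 1 (construct the submodule).} Since $\hat{\mathbb{A}}$ is semisimple, I may assume $\hat{U}_1$ is an $\hat{\mathbb{A}}$-submodule of $\hat{U}$ with complement $\hat{U}_2$. I set $U_1=U\cap\hat{U}_1$, viewing $U\subseteq\hat{U}$. Then $U_1$ is an $\U$-pure $\mathbb{A}$-submodule of $U$ with $\mathcal{K}\otimes_\mathcal{O}U_1\cong\hat{U}_1$. The quotient $U_2=U/U_1$ is $\U$-free with $\mathcal{K}\otimes_\mathcal{O}U_2\cong\hat{U}_2$. Both are nonzero because $\hat{U}_1$ and $\hat{U}_2$ are nonzero.

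\textbf{Step 2 (the reductions are nonprojective indecomposable).} Because $\mathbb{A}$ is symmetric (\cite[Theorem 1.1]{HZ25}), $\bar{U}=k\otimes_\mathcal{O}U$ is a projective indecomposable $\bar{\mathbb{A}}$-module. It therefore has a simple socle $S$ and a simple top $T$.

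The submodule $\bar{U}_1=k\otimes_\mathcal{O}U_1$ is a nonzero submodule of $\bar{U}$ by purity, so its socle is $S$. It is proper, since $\bar U_2\ne 0$. Any nonzero submodule of $\bar U$ contains $S$, so $\bar U_1$ cannot be a direct sum of two nonzero modules; hence $\bar U_1$ is indecomposable.

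If $\bar U_1$ were projective, it would be injective because $\bar{\mathbb{A}}$ is self-injective. It would then be a direct summand of the indecomposable module $\bar U$, forcing $\bar U_1=\bar U$, which is impossible. So $\bar{U}_1$ is nonprojective indecomposable.

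Dually, $\bar U_2=\bar U/\bar U_1$ is a proper nonzero quotient of $\bar U$, so it has simple top $T$ and is therefore indecomposable. If $\bar U_2$ were projective, the surjection $\bar U\to\bar U_2$ would split, forcing $\bar U_1=0$. So $\bar U_2$ is also nonprojective indecomposable.

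\textbf{Step 3 (conclude).} Applying Lemma \ref{lemma 11.10.3} to $U_1$ and to $U_2$ gives $[\hat U_1]\notin\mathrm{Pr}_\mathcal{O}(\mathbb{A})$ and $[\hat U_2]\notin\mathrm{Pr}_\mathcal{O}(\mathbb{A})$.

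\textbf{Main obstacle.} The only delicate point is Step 2. It needs $\bar{\mathbb{A}}$ to be symmetric, hence self-injective, so that projective indecomposable modules have simple socle and every projective module is injective. This is supplied by the symmetry of $\mathbb{A}$ from \cite{HZ25}. Everything else is formal.
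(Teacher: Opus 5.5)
Your proof is correct and takes the same route as the paper. Both realize $\hat{U}_1$ through an $\mathcal{O}$-pure submodule $U_1$ of $U$, use the symmetry of $\mathbb{A}$ (simple socle, projectives injective) to show that $k\otimes_{\mathcal{O}}U_1$ is nonprojective indecomposable, and conclude with Lemma \ref{lemma 11.10.3}. The only cosmetic difference is that you handle $\hat{U}_2$ through the quotient $U/U_1$ and its simple top, whereas the paper leaves $\hat{U}_2$ to the same argument with the roles of $\hat{U}_1$ and $\hat{U}_2$ swapped.
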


\begin{proof}
Since $\hat{\mathbb{A}}$ is semi-simple,
we have $\hat{U}\cong\hat{U}_1\oplus\hat{U}_2$ as $\hat{\mathbb{A}}$-modules.
Then by \cite[Theorem 4.16.4]{L'book 1},
there is an $\U$-pure submodule $U_1$ of $U$ such that
$\hat{U}_1$ is isomorphic to $\mathcal{K}\mathop\otimes\limits_{\mathcal{O}}U_1$ as $\hat{\mathbb{A}}$-modules.
By \cite[Proposition 4.2.6]{L'book 1},
$k\mathop\otimes\limits_{\mathcal{O}}U_1$ can be viewed as a submodule of $k\mathop\otimes\limits_{\mathcal{O}}U$
which is a projective indecomposable $\bar{\mathbb{A}}$-module.
Since $\mathbb{A}$ is a symmetric $\U$-algebra,
the socle of $k\mathop\otimes\limits_{\mathcal{O}}U$ is simple
and $k\mathop\otimes\limits_{\mathcal{O}}U_1$ can not be projective
due to the indecomposability of $k\mathop\otimes\limits_{\mathcal{O}}U$. 
At the same time, this implies that 
the socle of $k\mathop\otimes\limits_{\mathcal{O}}U_1$ is also simple.
In particular, $k\mathop\otimes\limits_{\mathcal{O}}U_1$ is indecomposable.
By Lemma \ref{lemma 11.10.3}, $[\hat{U}_1]$ is not contained in $\mathrm{Pr}_\mathcal{O}(\mathbb{A})$.
\end{proof}

\begin{prop}\label{decomp number of bbA cyclic}
With the notation as in Proposition \ref{structure of character fo bbA cyclic}.
Let $U$ be a projective indecomposable $\mathbb{A}$-module.
Then either $[\hat{U}]=[\hat{U}_\lambda]+[\hat{U}_{\lambda^\prime}]$ 
for some $\lambda,\lambda^\prime\in\mathrm{Irr}(E)$ such that
$\epsilon_\lambda\neq\epsilon_{\lambda^\prime}$, or
$[\hat{U}]=[\hat{U}_\lambda]+\sum\limits_{\chi\in\mathcal{M}}[\hat{U}_\chi]$
for some $\lambda\in\mathrm{Irr}(E)$ such that $\epsilon_\lambda=\epsilon$.
In particular, all decomposition numbers of $\mathbb{A}$ are either $1$ or $0$.
\end{prop}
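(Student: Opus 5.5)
The argument mirrors the proof of \cite[Theorem 11.10.5]{L'book 2}, using Proposition \ref{structure of character fo bbA cyclic}(iii) together with Lemmas \ref{lemma 11.10.3} and \ref{prop 11.10.4}.

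Let $U$ be a projective indecomposable $\mathbb{A}$-module. Since $\mathcal{K}$ splits $\hat{\mathbb{A}}$, write $[\hat{U}]=\sum_{\theta}a_\theta[\hat{U}_\theta]$ with $a_\theta\geq 0$. Here $\theta$ runs over $\mathcal{M}\cup\mathrm{Irr}(E)$.

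\textbf{Step 1: expand in the basis of $\mathrm{Pr}_\mathcal{O}(\mathbb{A})$.} Since $[\hat{U}]\in\mathrm{Pr}_\mathcal{O}(\mathbb{A})$, Proposition \ref{structure of character fo bbA cyclic}(iii) gives integers $c_\lambda$ with
$$[\hat{U}]=\sum_{\lambda\in\mathrm{Irr}(E)}c_\lambda\Bigl(\epsilon_\lambda[\hat{U}_\lambda]+\epsilon\sum_{\chi\in\mathcal{M}}[\hat{U}_\chi]\Bigr).$$
Comparing coefficients yields $a_\lambda=c_\lambda\epsilon_\lambda$ and $a_\chi=\epsilon\sum_\lambda c_\lambda=:a$. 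In particular, all $a_\chi$ coincide. Moreover, $a_\lambda\neq0$ exactly when $c_\lambda\neq0$, and in that case $\mathrm{sign}(c_\lambda)=\epsilon_\lambda$.

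\textbf{Step 2: bound the coefficients using Lemma \ref{prop 11.10.4}.} Each basis element $\epsilon_\lambda[\hat{U}_\lambda]+\epsilon\sum_\chi[\hat{U}_\chi]$ with $\epsilon_\lambda=\epsilon=1$ is the class of a genuine module lying in $\mathrm{Pr}_\mathcal{O}(\mathbb{A})$. Similarly, each difference $[\hat{U}_\lambda]+[\hat{U}_{\lambda'}]$ with $\epsilon_\lambda=-\epsilon_{\lambda'}$ is a difference of two basis elements, so it also lies in $\mathrm{Pr}_\mathcal{O}(\mathbb{A})$.

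I would argue that $[\hat{U}]$ must itself be minimal, in the sense that it cannot be written as $[\hat{U}_1]+[\hat{U}_2]$ with both summands genuine nonzero modules and one of them in $\mathrm{Pr}_\mathcal{O}(\mathbb{A})$. This is exactly Lemma \ref{prop 11.10.4}.

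Now run a case analysis on the signs of the $c_\lambda$.

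Case $a>0$. Then $\epsilon\sum c_\lambda>0$. Pick $\lambda$ with $a_\lambda>0$ and $\epsilon_\lambda=\epsilon$; such a $\lambda$ must exist, and I would verify this by sign counting. The element $[\hat{U}_\lambda]+\sum_\chi[\hat{U}_\chi]$ is dominated by $[\hat{U}]$, so it is the class of a submodule-type summand. Lemma \ref{prop 11.10.4} then forces equality: $[\hat{U}]=[\hat{U}_\lambda]+\sum_\chi[\hat{U}_\chi]$.

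Case $a=0$. Then $\sum_\lambda c_\lambda=0$, so positive and negative $c_\lambda$ occur. Choose $\lambda,\lambda'$ with $\epsilon_\lambda\neq\epsilon_{\lambda'}$ and $a_\lambda,a_{\lambda'}>0$. Then $[\hat{U}_\lambda]+[\hat{U}_{\lambda'}]$ is dominated by $[\hat{U}]$ and lies in $\mathrm{Pr}_\mathcal{O}(\mathbb{A})$. Lemma \ref{prop 11.10.4} again forces equality.

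In both cases every $a_\theta$ is $0$ or $1$, which gives the statement on decomposition numbers via \cite[\S2]{KL10}.

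\textbf{Main obstacle.} The delicate point is Step 2. One must ensure that the chosen element of $\mathrm{Pr}_\mathcal{O}(\mathbb{A})$ is dominated coefficientwise by $[\hat{U}]$, with the complementary part nonzero unless equality already holds. One must also ensure that the sign bookkeeping guarantees a suitable $\lambda$ (with $\epsilon_\lambda=\epsilon$ when $a>0$, or an opposite-sign pair when $a=0$). I would handle this by writing $c_\lambda=\epsilon_\lambda a_\lambda$ and using $a=\sum_\lambda\epsilon\epsilon_\lambda a_\lambda$. If $a>0$, some term with $\epsilon\epsilon_\lambda=1$ and $a_\lambda>0$ exists. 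If $a=0$ and $[\hat{U}]\neq0$, both signs must occur among the nonzero $a_\lambda$. The case where all $a_\lambda$ vanish is impossible, because then $[\hat{U}]=0$.
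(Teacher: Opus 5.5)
Your proposal is correct and takes the same route as the paper. The paper's proof simply invokes the argument of \cite[Theorem 11.10.5]{L'book 2}, using the basis of $\mathrm{Pr}_\mathcal{O}(\mathbb{A})$ from Proposition \ref{structure of character fo bbA cyclic}(iii) together with Lemmas \ref{lemma 11.10.3} and \ref{prop 11.10.4}; your sign bookkeeping ($c_\lambda=\epsilon_\lambda a_\lambda$, $a=\sum_\lambda\epsilon\epsilon_\lambda a_\lambda\geq 0$) and your use of Lemma \ref{prop 11.10.4} to force the dominated element of $\mathrm{Pr}_\mathcal{O}(\mathbb{A})$ to equal $[\hat{U}]$ correctly fill in the details of that case split.
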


\begin{proof}
Due to the structure of $\mathrm{Pr}_\mathcal{O}(\mathbb{A})$ described in
Proposition \ref{structure of character fo bbA cyclic} (iii),
by Lemmas \ref{lemma 11.10.3} and \ref{prop 11.10.4},
the proof is similar to the one of \cite[Theorem 11.10.5]{L'book 2}. 	
\end{proof}

\begin{order}\rm
Now we will give an explicit description of $\mathrm{Irr}_\mathcal{K}(A)$
by the Clifford theoretic relationship between the representation theory of 
the source algebra $A$ and the hyperfocal subalgebra $\mathbb{A}$.
This description can be obtained once it can be proved that the isometry $\Phi_\mathcal{K}$
in the commutative diagram (\ref{comm diag for biject and decom map cyclic}) 
commutes with the conjugation actions of $P$ on 
$\mathrm{Irr}_\mathcal{K}(\mathbb{A})$ and $\mathrm{Irr}(L)$.
This fact seems to be established by directly applied \cite[Theorem 2.6]{M99} to our case
since the isometry $\Phi_\mathcal{K}$ is induced by 
the $P$-invariant Rickard complex $\mathbb{M}_\cdot$.
However, the validity of Theorem 2.6 in \cite{M99} depends on an assumption that 
the symmetrizing forms of $A$ and $\U(P\rtimes E)$ are $R$-invariant symmetrizing forms
for $\mathbb{A}$ and $\U L$,
which means that there is an $\mathbb{A}$-$\mathbb{A}$-bimodule
($\U L$-$\U L$-bimodule, resp.) isomorphism between $\mathbb{A}$ ($\U L$, resp.) and 
$\mathbb{A}^*$ ($(\U L)^*$, resp.) commuting with the conjugation action of $R$
(see \cite[5.1.A]{M'book}).
This assumption obviously holds for $\U L$ and $\U(P\rtimes E)$. 
For $\mathbb{A}$ and $A$, 
the proof of Theorem 1.1 in \cite{HZ25} implicitly demonstrates the validity of this assumption
(see \cite[Section 4]{HZ25}).
In conclusion, the isometry $\Phi_\mathcal{K}$ in the commutative diagram 
(\ref{comm diag for biject and decom map cyclic}) commutes with the conjugation actions of $P$.
\end{order}

\begin{order}\rm
We need some more notation to describe the structure of ${\rm Irr}_\mathcal{K}(A)$.
Let $\mathcal{M}_R$ be a set of representatives of the $E\times R$-conjugacy classes of 
nontrivial irreducible characters of $D$. 
Without loss of generality, we can assume that $\mathcal{M}_R$ is contained in $\mathcal{M}$.
For any $\theta\in\mathrm{Irr}(L)$,
set $R_\theta$ to be the subgroup of $R$ such that
$D\rtimes R_\theta$ is the stabilizer of $\theta$ under the conjugation action of $P$.
It is clear that $C_P(D)$ is contained in $D\rtimes R_\theta$ 
and then $R/R_\theta$ is cyclic for any $\theta\in\mathrm{Irr}(L)$.
Let us borrow the notation from the paragraph 6.15.
Moreover, since $Z=D\rtimes(Z\cap R)$ for any subgroup $Z$ of $P$ containing $D$,
we denote $\mathop\bigoplus\limits_{zD\in Z/D}\mathbb{A}z$ and the induced module
$\hat{A}\mathop\otimes\limits_{\hat{\mathbb{A}}_Z}\hat{Y}$
by $\mathbb{A}_{Z\cap R}$ and $\mathrm{Ind}_{Z\cap R}^R(\hat{Y})$
instead of $\mathbb{A}_Z$ and $\mathrm{Ind}_Z^P(\hat{Y})$
for any subgroup $Z$ of $P$ containing $D$ and 
any $\hat{\mathbb{A}}_Z$-module $\hat{Y}$, respectively.
Then we can obtain the structure of $\mathrm{Irr}_\mathcal{K}(A)$ as follows.
\end{order}

\begin{prop}\label{structure of characters for cyclic}
Keep the notation as above.	
The following statements hold.

\noindent (i) For any $\chi\in\mathcal{M}_R$,
the stabilizer of $\hat{U}_\chi$ under the conjugation action of $P$ is just $R_\chi$.
So $\hat{U}_\chi$ is $P$-stable if and only if $\chi$ is $P$-stable.
Moreover, for any $\chi\in\mathcal{M}_R$,
the simple $\hat{\mathbb{A}}$-module $\hat{U}_\chi$ can be extended to 
a simple $\hat{\mathbb{A}}_{R_\chi}$-module,
denoted by $\hat{U}_\chi^{\mathrm{ext}}$.

\noindent (ii) For any $\lambda\in\mathrm{Irr}(E)$,
the simple $\hat{\mathbb{A}}$-module $\hat{U}_\lambda$ is $P$-stable and 
then it can be extended to a simple $\hat{A}$-module, 
denoted by $\hat{U}_\lambda^{\mathrm{ext}}$.

\noindent (iii) 
$\mathrm{Irr}_\mathcal{K}(A)=
\{\mathrm{Ind}_{R_\chi}^R(\hat{U}_\chi^{\mathrm{ext}}\mu_\chi)\mid\chi\in\mathcal{M}_R,
\mu_\chi\in\mathrm{Irr}(R_\chi)\}
\bigcup\{\hat{U}_\lambda^{\mathrm{ext}}\mu\mid
\lambda\in\mathrm{Irr}(E), \mu\in\mathrm{Irr}(R)\}$.
\end{prop}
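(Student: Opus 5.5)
The plan is to transport the $P$-action along the isometry $\Phi_\mathcal{K}$ of the commutative diagram (\ref{comm diag for biject and decom map cyclic}), which commutes with the conjugation actions of $P$ by the paragraph preceding the statement. Then we combine this with the extension criterion of \cite[Proposition 2.4]{KLN} (in the relative form used in the proof of Proposition \ref{structure of characters for klein 4}) and Clifford theory relative to the $P/D\cong R$-graded algebra $A=\bigoplus_{uD\in P/D}\mathbb{A}u$.

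For the stabilizers, Proposition \ref{structure of character fo bbA cyclic}(i) gives $\Phi_\mathcal{K}([\hat{U}_\chi])=\epsilon_\chi\,\mathrm{Ind}_D^L(\chi)$ (reading $\chi$ as the induced irreducible character of $L$) and $\Phi_\mathcal{K}([\hat{U}_\lambda])=\epsilon_\lambda\lambda$. Since $\Phi_\mathcal{K}$ is $P$-equivariant and sends simple modules to signed irreducible characters, the stabilizer of $\hat{U}_\theta$ in $P$ equals the stabilizer of $\theta\in\mathrm{Irr}(L)$. For $\theta=\lambda\in\mathrm{Irr}(E)$ this stabilizer is all of $P$: $R=C_P(E)$ acts trivially on $E$, and $D$ acts trivially on $L/D\cong E$, so every inflated character $\lambda$ is $P$-stable. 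This gives the stability claim in (ii). For $\chi\in\mathcal{M}_R$ the stabilizer is $D\rtimes R_\chi$ by definition. Moreover, $\mathrm{Ind}_D^L(\chi)$ is $u$-stable exactly when ${}^u\chi$ is $E$-conjugate to $\chi$. This is the meaning of ``$\chi$ is $P$-stable'' in (i), and it yields the equivalence stated there.

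For extendibility, Proposition \ref{KLN conj cyclic} shows that every simple $\hat{\mathbb{A}}$-module has dimension prime to $p$. We then apply \cite[Proposition 2.4]{KLN} with $P$ replaced by the stabilizer $D\rtimes R_\theta$ and $\hat{A}$ replaced by $\hat{\mathbb{A}}_{R_\theta}$; this substitution is valid by the same argument noted in the proof of Proposition \ref{structure of characters for klein 4}. The result is an extension $\hat{U}_\chi^{\mathrm{ext}}$ to $\hat{\mathbb{A}}_{R_\chi}$, and an extension $\hat{U}_\lambda^{\mathrm{ext}}$ to $\hat{A}=\hat{\mathbb{A}}_R$.

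For (iii), Clifford theory for the strongly graded split semisimple algebra $\hat{A}$ over the $p$-group $P/D\cong R$ applies. Each simple $\hat{A}$-module covers a $P$-orbit of simple $\hat{\mathbb{A}}$-modules. By the first step, these orbits are represented by the $\hat{U}_\chi$ with $\chi\in\mathcal{M}_R$ (since the $E\times R$-classes of $\chi$ correspond to $P$-orbits on $\{\hat U_\chi\}$) together with all the $\hat{U}_\lambda$. The inertial algebra of $\hat{U}_\chi$ is $\hat{\mathbb{A}}_{R_\chi}$, and the simple modules covering $\hat{U}_\chi$ there are the twists $\hat{U}_\chi^{\mathrm{ext}}\mu$ with $\mu\in\mathrm{Irr}(R_\chi)$, as in the paragraph 6.15. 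Inducing these gives a bijection onto the simple $\hat{A}$-modules over that orbit. For the $\lambda$-orbits the inertial algebra is $\hat{A}$ itself, giving the modules $\hat{U}_\lambda^{\mathrm{ext}}\mu$ with $\mu\in\mathrm{Irr}(R)$. Taking the union gives (iii). The main obstacle I expect is the first step: the $P$-equivariance of $\Phi_\mathcal{K}$. This relies on the $R$-invariance of the symmetrizing forms discussed above, and on tracking the signs $\epsilon_\theta$ under conjugation. The signs are constant on orbits because $\Phi_\mathcal{K}$ commutes with $P$ and $P$ permutes $\mathrm{Irr}(L)$. Once equivariance is granted, the rest is formal Clifford theory.
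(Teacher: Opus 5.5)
Your proposal is correct and follows the paper's proof essentially step for step. The $P$-equivariance of $\Phi_\mathcal{K}$, from the paragraph preceding the statement, gives the stabilizers. The coprime dimensions from Proposition \ref{KLN conj cyclic}, combined with \cite[Proposition 2.4]{KLN} in the relative form used for Proposition \ref{structure of characters for klein 4}, give the extensions. Clifford theory then gives (iii).

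The only difference is in (iii): you spell out the orbit and Gallagher-type argument yourself, whereas the paper simply cites \cite[Theorem 1.6]{F09} or \cite[Proposition 2.3]{KLN}.
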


\begin{proof}
By the arguments in the paragraph 6.35,
the isometry $\Phi_\mathcal{K}$ in the 
commutative diagram (\ref{comm diag for biject and decom map cyclic})
preserves the actions of $P$ on $\mathrm{Irr}_\mathcal{K}(\mathbb{A})$ and $\mathrm{Irr}(L)$.
Then the statements on stabilizers of simple $\hat{\mathbb{A}}$-modules are clear.
For any $\chi\in\mathcal{M}_R$,
since $\mathrm{dim}_\mathcal{K}(\hat{U}_\chi)$ is coprime to $p$,
the same argument in the proof of Proposition \ref{structure of characters for klein 4} shows that
$\hat{U}_\chi$ can be extended to $\hat{\mathbb{A}}_{R_\chi}$.
For any $\lambda\in\mathrm{Irr}(E)$,
since $\hat{U}_\lambda$ is $P$-stable and has dimension coprime to $p$,
by \cite[Proposition 2.4]{KLN},
$\hat{U}_\lambda$ can be extended to $\hat{A}$.
This shows (i) and (ii).
The statement (iii) just follows from \cite[Theorem 1.6]{F09} or \cite[Proposition 2.3]{KLN}.
\end{proof}

As a conesquence, we can get the following decomposition numbers of the block
$b$ with a nontrivial cyclic hyperfocal subgroup.

\begin{cor}\label{decomp number of block cyclic}
With the notation as above,
all decomposition numbers of the block $b$ are either $p^l$ or $0$
with $1\leq p^l\leq|R/C_R(D)|$
when the hyperfocal subgroup $D$ is nontrivial cyclic.	
In particular,
when the hyperfocal subgroup $D$ is nontrivial cyclic and central in $P$,
all decomposition numbers of the block $b$ are either $1$ or $0$.
\end{cor}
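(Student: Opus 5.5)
We will compute decomposition numbers of the block $b$ through the source algebra $A$. By the canonical Morita equivalence between $A$ and $\U Gb$ (induced by $i\U G$), the decomposition matrix of $b$ coincides, up to permutation of rows and columns, with that of $A$. So it suffices to show that every multiplicity $[\bar{Y}:\bar{S}]$ is $0$ or $p^l$ with $p^l\le|R/C_R(D)|$. Here $\hat{Y}\in\mathrm{Irr}_\mathcal{K}(A)$, $\bar{Y}$ is a reduction modulo $J(\U)$ of an $\U$-form of $\hat{Y}$, and $\bar{S}\in\mathrm{Irr}_k(A)$.

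First I describe $\mathrm{Irr}_k(A)$. By paragraph 6.23, $P$ acts trivially on $\mathrm{Irr}_k(\mathbb{A})$ and each $\bar{W}_\lambda$ extends to $\bar{A}$. Since $P/D\cong R$ is a $p$-group and $k$ has characteristic $p$, the extension is unique; call it $\bar{W}_\lambda^{\rm ext}$. Then $\mathrm{Irr}_k(A)=\{\bar{W}_\lambda^{\rm ext}\mid\lambda\in\mathrm{Irr}(E)\}$, and $\mathrm{Res}_{\bar{\mathbb{A}}}(\bar{W}^{\rm ext}_\lambda)=\bar{W}_\lambda$. Because $\bar{A}=\bigoplus_{uD\in P/D}\bar{\mathbb{A}}u$ with $\bar{\mathbb{A}}$ unitary, restriction to $\bar{\mathbb{A}}$ sends composition series to composition series. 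Hence for any $\bar{A}$-module $\bar{Y}$ we get $[\bar{Y}:\bar{W}^{\rm ext}_\lambda]=[\mathrm{Res}_{\bar{\mathbb{A}}}\bar{Y}:\bar{W}_\lambda]$. Reduction also commutes with restriction, so the decomposition number of $\hat{Y}$ at $\bar{W}_\lambda^{\rm ext}$ equals the sum of $\mathbb{A}$-decomposition numbers of the constituents of $\mathrm{Res}_{\hat{\mathbb{A}}}(\hat{Y})$ at $\bar{W}_\lambda$.

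Next I restrict each member of $\mathrm{Irr}_\mathcal{K}(A)$ from Proposition \ref{structure of characters for cyclic}(iii) to $\hat{\mathbb{A}}$. For $\hat{U}_\lambda^{\rm ext}\mu$ the restriction is $\mu(1)$ copies of $\hat{U}_\lambda$. For $\mathrm{Ind}_{R_\chi}^R(\hat{U}_\chi^{\rm ext}\mu_\chi)$, Clifford theory (Mackey for $\hat{A}=\bigoplus\hat{\mathbb{A}}_{R_\chi}r$) gives $\mu_\chi(1)\bigoplus_{r\in R/R_\chi}{}^{r}\hat{U}_\chi$. Here the conjugates ${}^r\hat{U}_\chi=\hat{U}_{{}^r\chi}$ are pairwise distinct and lie in the $R$-orbit of $\chi$. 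By Proposition \ref{decomp number of bbA cyclic}, the $\mathbb{A}$-decomposition numbers are $0$ or $1$. Moreover, from the second form of projective indecomposables, a fixed $\bar{W}_\lambda$ occurring in some $\hat{U}_\chi$ occurs in all $\hat{U}_{\chi'}$, $\chi'\in\mathcal{M}$, with multiplicity $1$. Thus the decomposition numbers of $A$ are $0$, $\mu(1)$, or $\mu_\chi(1)|R:R_\chi|$.

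Then I bound these values. Since $R$ and its subgroups are $p$-groups, $\mu(1)$ and $\mu_\chi(1)$ are powers of $p$, and so all nonzero values are powers of $p$. For the bound, I use that $C_R(D)$ acts trivially on $\mathrm{Irr}_\mathcal{K}(\mathbb{A})$ by Corollary \ref{equivariant for bbA-module} (applied over $\mathcal{K}$), or directly by the $P$-equivariance of $\Phi_\mathcal{K}$. Thus $C_R(D)\le R_\chi$. For $\hat{U}_\lambda$, which is $P$-stable, I need $\mu\in\mathrm{Irr}(R)$ with $\mu(1)\le|R/C_R(D)|$. This is the step I expect to be the main obstacle, since it does not hold for arbitrary $p$-groups $R$.

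My plan for that step is to use the structure from paragraph 6.23: $P=D\rtimes R$ with $R=C_P(E)$ acting on cyclic $D$. Then $R/C_R(D)$ embeds in $\mathrm{Aut}(D)$ and is cyclic. Next I would argue that $C_R(D)$ is central in $R$, or at least that $R$ has an abelian normal subgroup of index $|R/C_R(D)|$. In the latter case Ito's theorem gives $\mu(1)\le|R:C_R(D)|$, and $\mu_\chi(1)|R:R_\chi|\le|R:C_R(D)|$ follows similarly by applying Ito to $R_\chi\supseteq C_R(D)$. If that structural claim fails, I would instead re-index using the extensions and twist by $\mu$ in a way that absorbs $\mu(1)$ into the multiplicity counted via $\mathrm{Irr}(R/C_R(D))$. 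Finally, when $D\le Z(P)$ we have $C_R(D)=R$, so every bound becomes $1$ and all decomposition numbers are $0$ or $1$.
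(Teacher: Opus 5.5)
Your reduction is correct. Passing to $A$, restricting to $\bar{\mathbb{A}}$ (each $\bar W_\lambda$ has a unique extension $\bar W_\lambda^{\mathrm{ext}}$ because $P/D$ is a $p$-group), and combining Proposition \ref{structure of characters for cyclic} with Proposition \ref{decomp number of bbA cyclic}, you correctly get that the nonzero decomposition numbers of $A$ are exactly:
\begin{itemize}
\item $\mu(1)$ at $\hat U_\lambda^{\mathrm{ext}}\mu$, for $\mu\in\mathrm{Irr}(R)$;
\item $|R:R_\chi|\,\mu_\chi(1)$ at $\mathrm{Ind}_{R_\chi}^R(\hat U_\chi^{\mathrm{ext}}\mu_\chi)$, for $\mu_\chi\in\mathrm{Irr}(R_\chi)$.
\end{itemize}
The gap is the step you flagged, and it cannot be filled, because the bound $p^l\le|R/C_R(D)|$ is false in general. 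The structural claim you hope for ($C_R(D)\le Z(R)$, or an abelian normal subgroup of $R$ of index $|R:C_R(D)|$) fails. No re-indexing can help, since $\mu(1)$ is an intrinsic decomposition number.

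Here is a counterexample. Take $p=3$, $L=D\rtimes E\cong\mathrm{S}_3$ with $D=\mathrm{A}_3$, $R'=3^{1+2}_+$ extraspecial, and $G=L\times R'$. Then $\mathcal{O}G$ is a single block $b$ with defect group $P=D\times R'$. Its hyperfocal subgroup is $P\cap O^3(G)=D$, and $E_\mathfrak{h}\cong C_2$ acts by inversion, so $b$ is in the scope of this subsection. Also $R=C_P(E)=R'$ and $D\le Z(P)$, so $|R/C_R(D)|=1$. Yet for $\vartheta\in\mathrm{Irr}(R')$ with $\vartheta(1)=3$, the character $1_L\times\vartheta$ has decomposition number $3$ at the trivial Brauer character. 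Likewise $\mathrm{Ind}_D^L(\chi)\times\vartheta$ has decomposition number $3$ at both Brauer characters. So both assertions of the corollary fail.

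The paper's own proof does not get past this point either. It asserts $\mathrm{Ind}_1^R(\hat U_\nu)\cong\bigoplus_{\vartheta\in\mathrm{Irr}(R)}\hat U_\nu^{\mathrm{ext}}\vartheta$ and $\mathrm{Ind}_1^R(\hat U_\chi)\cong\bigoplus_{\mu_\chi\in\mathrm{Irr}(R_\chi)}\mathrm{Ind}_{R_\chi}^R(\hat U_\chi^{\mathrm{ext}}\mu_\chi)$, which drops the multiplicities $\vartheta(1)$ and $\mu_\chi(1)$. Comparing dimensions ($|R|\dim\hat U_\nu$ against $\sum_\vartheta\vartheta(1)\dim\hat U_\nu$) shows these isomorphisms hold only when $R$ is abelian.

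If you add the hypothesis that $R$ is abelian, your argument finishes at once. Then $\mu(1)=\mu_\chi(1)=1$. Also $|R:R_\chi|\le|R:C_R(D)|$: $C_R(D)$ acts trivially on $L$, hence on $\mathrm{Irr}_\mathcal{K}(\mathbb{A})$ via the $P$-equivariant isometry $\Phi_\mathcal{K}$, so $C_R(D)\le R_\chi$. Without that hypothesis, the correct general statement is the one you actually proved: every nonzero decomposition number of $b$ is a power of $p$ of the form $\mu(1)$ or $|R:R_\chi|\,\mu_\chi(1)$, with no bound by $|R/C_R(D)|$.
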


\begin{proof}
Since the block algebra $\U Gb$ is Morita equivalent to the source algebra $A$,
it suffices to show that all decomposition numbers of the source algebra $A$
satisfy this property.
As before, we can get these numbers by calculating the image $[\hat{U}_A]$
of $\hat{U}_A$ in $R_\mathcal{K}(A)$
of any projective indecomposable  $A$-module $U_A$ 
with $\hat{U}_A=\mathcal{K}\mathop\otimes\limits_{\mathcal{O}}U_A$.
Since every simple $\bar{\mathbb{A}}$-module can be extended to $\bar{A}$
(see the paragraph 6.18),
then every  projective indecomposable $A$-module has the form
$A\mathfrak{i}$, which is isomorphic to $A\mathop\otimes\limits_{\mathbb{A}}\mathbb{A}\mathfrak{i}$
for some primitive idempotent $\mathfrak{i}$ of $\mathbb{A}$.
Now fix a primitive idempotent $\mathfrak{i}$ of $\mathbb{A}$.
By Proposition \ref{decomp number of bbA cyclic},
we can assume that as $\hat{\mathbb{A}}$-modules,
$\hat{\mathbb{A}}\mathfrak{i}\cong\hat{U}_\lambda\oplus\hat{U}_{\lambda^\prime}$
for some suitable distinct $\lambda,\lambda^\prime\in\mathrm{Irr}(E)$ or
$\hat{\mathbb{A}}\mathfrak{i}\cong\hat{U}_\mu\oplus
(\mathop\bigoplus\limits_{\chi\in\mathcal{M}}\hat{U}_\chi)$ for some suitable $\mu\in\mathrm{Irr}(E)$.
Fix these notation.
Then as $\hat{A}$-modules,
$\hat{A}\mathfrak{i}\cong\hat{A}\mathop\otimes\limits_{\hat{\mathbb{A}}}\mathbb{A}\mathfrak{i}\cong
\mathrm{Ind}_1^R(\hat{U}_\lambda)\oplus\mathrm{Ind}_1^R(\hat{U}_{\lambda^\prime})$
or
$\hat{A}\mathfrak{i}\cong\hat{A}\mathop\otimes\limits_{\hat{\mathbb{A}}}\mathbb{A}\mathfrak{i}\cong
\mathrm{Ind}_1^R(\hat{U}_\mu)\oplus
(\mathop\bigoplus\limits_{\chi\in\mathcal{M}}\mathrm{Ind}_1^R(\hat{U}_\chi))$.
By Proposition \ref{structure of characters for cyclic},
as $\hat{A}$-modules,
$\mathrm{Ind}_1^R(\hat{U}_\nu)\cong\mathop\bigoplus\limits_{\vartheta\in\mathrm{Irr}(R)}
\hat{U}_\nu^{\mathrm{ext}}\vartheta$ for any $\nu\in\mathrm{Irr}(E)$ 
and
$\mathrm{Ind}_1^R(\hat{U}_\chi)\cong\mathop\bigoplus\limits_{\mu_\chi\in\mathrm{Irr}(R_\chi)}
\mathrm{Ind}_{R_\chi}^R(\hat{U}_\chi^{\mathrm{ext}}\mu_\chi)$ 
for any $\chi\in\mathcal{M}$.
When two elements $\chi$ and $\chi^\prime$ of $\mathcal{M}$ 
belong to the same $R$-orbit,
the two $\hat{A}$-modules $\mathrm{Ind}_1^R(\hat{U}_\chi)$ 
and $\mathrm{Ind}_1^R(\hat{U}_{\chi^\prime})$ are isomorphic to each other.
Since $C_R(D)$ acts trivially on $\mathrm{Irr}_\mathcal{K}(\mathbb{A})$,
it is easy to get the information on decomposition numbers of $A$
as stated in this corollary.
\end{proof}

	\end{document}